\newtheorem{thm}{Theorem}[section]
\newtheorem{cor}[thm]{Corollary}
\newtheorem{lem}[thm]{Lemma}
\newtheorem{prop}[thm]{Proposition}
\newtheorem{problem}[thm]{Problem}
\theoremstyle{definition}
\newtheorem{exmpl}[thm]{Example}
\newtheorem{definition}[thm]{Definition}
\newtheorem{remark}[thm]{Remark}
\renewcommand{\epsilon}{\varepsilon}
\renewcommand{\phi}{\varphi}
\newcommand{\defeq}{\mathrel{\mathop:}=}                         
\newcommand{\eqdef}{\mathrel{\mathopen={\mathclose:}}}          
\DeclareMathOperator{\I}{I}
\DeclareMathOperator{\Diff}{D}
\DeclareMathOperator{\Ed}{E}
\DeclareMathOperator{\Qalg}{Q}
\DeclareMathOperator{\B}{B}
\DeclareMathOperator{\GL}{GL}
\DeclareMathOperator{\T}{T}
\DeclareMathOperator{\lat}{L}
\DeclareMathOperator{\Mean}{M}
\DeclareMathOperator{\Samuel}{S}
\DeclareMathOperator{\RUCB}{RUCB}
\DeclareMathOperator{\UEB}{UEB}
\DeclareMathOperator{\RUEB}{RUEB}
\DeclareMathOperator{\RUC}{RUC}
\DeclareMathOperator{\Cont}{C}
\DeclareMathOperator{\UCB}{UCB}
\DeclareMathOperator{\Lip}{Lip}
\DeclareMathOperator{\E}{\mathbb{E}}
\DeclareMathOperator{\N}{\mathbb{N}}
\DeclareMathOperator{\Z}{\mathbb{Z}}
\DeclareMathOperator{\R}{\mathbb{R}}
\DeclareMathOperator{\id}{id}
\DeclareMathOperator{\diam}{diam}
\DeclareMathOperator{\Ker}{Ker}
\DeclareMathOperator{\Sub}{Sub}
\DeclareMathOperator{\Pfin}{\mathscr{P}_{fin}}
\DeclareMathOperator{\cent}{Z}
\DeclareMathOperator{\Lev}{Lev}
\DeclareMathOperator{\Irr}{Irr}
\DeclareMathOperator{\Max}{Max}
\DeclareMathOperator{\Nest}{\mathscr{N}\!}
\DeclareMathOperator{\Nestmax}{\mathscr{N}_{max}}
\DeclareMathOperator{\Flag}{\mathscr{F}\!}
\DeclareMathOperator{\Flagmax}{\mathscr{F}_{max}}
\DeclareMathOperator{\Prob}{Prob}
\begin{document}

\setlist{noitemsep}

\author{Friedrich Martin Schneider}
\address{F.M.S., Institute of Discrete Mathematics and Algebra, TU Bergakademie Freiberg, 09596 Freiberg, Germany}
\email{martin.schneider@math.tu-freiberg.de}

\title[Concentration of invariant means]{Concentration of invariant means and dynamics of chain stabilizers in continuous geometries}
\date{\today}

\begin{abstract} 
	We prove a concentration inequality for invariant means on topological groups, namely for such adapted to a chain of amenable topological subgroups. The result is based on an application of Azuma's martingale inequality and provides a method for establishing extreme amenability. Building on this technique, we exhibit new examples of extremely amenable groups arising from von Neumann's continuous geometries. Along the way, we also answer a question by Pestov on dynamical concentration in direct products of amenable topological groups.
\end{abstract}

\subjclass[2020]{22A10, 43A07, 06C20, 16E50}

\keywords{Concentration of measure, topological group, extreme amenability, continuous geometry, continuous ring}

\maketitle


\tableofcontents

\section{Introduction}

Concentration of measure, a phenomenon originally extracted by Vitali Milman from earlier work of Paul L\'evy~\cite{levy} and brought to the asymptotic geometry of Banach spaces~\cite{Milman71,MilmanSchechtman}, is known to have striking applications in topological dynamics. In the realm of topological groups, such dynamical consequences are frequently linked with the concept of extreme amenability: a topological group $G$ is said to be \emph{extremely amenable} if every continuous action of $G$ on a non-void compact Hausdorff space admits a fixed point. In Milman's seminal joint work with Misha Gromov~\cite{GromovMilman}, concentration of measure was identified as a source of extreme amenability: if a topological group $G$ contains a directed family of compact subgroups whose union is dense in $G$ and whose normalized Haar measures concentrate in~$G$, then~$G$ is extremely amenable. Following the examples of extremely amenable groups discovered in~\cite{GromovMilman}, this method has since found numerous further applications~\cite{glasner,GiordanoPestov,Pestov07,DowerkThom,CarderiThom} and extensions~\cite{Pestov02,FarahSolecki,PestovWhirly,PestovSchneider,Schneider,SchneiderSolecki}.

The purpose of this work is twofold. First, expanding the scope of dynamical ramifications of concentration techniques, we aim to initiate the study of \emph{concentration of invariant means} on groups. This includes abstract sources of concentration (Theorem~\ref{theorem:main.concentration} and Proposition~\ref{proposition:amenable.folding}), dynamical consequences (Theorem~\ref{theorem:extreme.amenability} and Corollary~\ref{corollary:extreme.amenability.1}), as well as obstructions and counterexamples (Theorem~\ref{theorem:pestov} and Corollary~\ref{corollary:pestov.2}). Second, building on the resulting techniques, we exhibit new examples of extremely amenable topological groups in the context of John von Neumann's continuous geometries (Theorem~\ref{theorem:stable.groups} and Corollaries~\ref{corollary:prefinal}, \ref{corollary:final}, \ref{corollary:levitzki.extremely.amenable}).

At the core of the present work is a new concentration inequality concerning certain measures on the \emph{Samuel compactification} $\Samuel (G)$ of an arbitrary topological group $G$, i.e., the Gelfand space of the algebra of all right-uniformly continuous bounded functions on $G$. Due to the Riesz--Markov--Kakutani representation theorem, the space $\Mean (G)$ of all means (that is, positive linear forms) on the same algebra admits a natural affine homeomorphism $\Mean (G) \to \Prob(\Samuel (G)), \, \mu \mapsto \hat{\mu}$ onto the space $\Prob(\Samuel (G))$ of all regular Borel probability measures on $\Samuel (G)$ (see Section~\ref{section:concentration}). Furthermore, this map constitutes a monoid isomorphism with regard to the respective convolution products on $\Mean (G)$ and $\Prob(\Samuel (G))$ inherited from $G$. Now, with any chain of subgroups $\{ e \} = G_{0} \leq \ldots \leq G_{n} = G$ and any bounded, continuous, right-invariant pseudo-metric on $G$, one may associate the quantity \begin{displaymath}
	\ell(G_{0},\ldots,G_{n};d) \, \defeq \, \left(\sum\nolimits_{i=0}^{n-1} \left(\sup\nolimits_{g \in G} \diam \left(G_{i+1}/G_{i},d^{g}_{i}\right) \right)^{2} \right)^{1/2},
\end{displaymath} where \begin{displaymath}
	d_{i}^{g} \colon \, G_{i+1}/G_{i} \times G_{i+1}/G_{i} \, \longrightarrow \, \R_{\geq 0}, \quad (xG_{i},yG_{i}) \, \longmapsto \, \inf\nolimits_{h \in G_{i}} d(gx,gyh) .
\end{displaymath} The following result, our first main contribution, generalizes a concentration inequality for normalized Haar measures on compact groups due to Milman--Schechtman~\cite[I, Theorem~7.12(i)]{MilmanSchechtman} (see Corollary~\ref{corollary:precompact}) to invariant means on amenable topological groups.

\begin{thm}[Theorem~\ref{theorem:main.concentration}]\label{theorem:main} Consider a chain of amenable topological groups\footnote{each equipped with the relative topology inherited from $G$} \begin{displaymath}
	\{ e \} \, = \, G_{0} \, \leq \, \ldots \, \leq \, G_{n} \, = \, G .
\end{displaymath} For each $i \in \{ 1,\ldots,n\}$ pick a $G_{i}$-left-invariant mean $\nu_{i} \in \Mean (G_{i})$, and consider \begin{displaymath}
	\mu \, \defeq \, \nu_{n}\cdots \nu_{1} \, \in \, \Mean (G) .
\end{displaymath} Moreover, let $d$ be a bounded, continuous, right-invariant pseudo-metric on $G$. Then, for every $f \in \Lip_{1}(G,d)$ and every $\epsilon \in \R_{>0}$, \begin{displaymath}
	\hat{\mu}\left(\left\{ \xi \in \Samuel (G) \left\vert \, \left\lvert \xi(f) - \mu(f) \right\rvert \geq \epsilon \right\}\right) \right. \! \, \leq \, 2 \exp \left( -\tfrac{\epsilon^{2}}{2 \ell(G_{0},\ldots,G_{n};d)^{2}} \right) .
\end{displaymath} \end{thm}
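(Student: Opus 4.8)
The plan is to unfold the iterated convolution defining $\mu$ into a telescoping sum of $n$ ``bounded martingale differences'' over the means $\nu_{1},\ldots,\nu_{n}$, and then to invoke Azuma's inequality — concretely, an $n$-fold application of Hoeffding's lemma followed by a Chernoff estimate. Since $d$ is bounded, continuous and right-invariant, $\Lip_{1}(G,d) \subseteq \RUCB (G)$ and $e^{\lambda f} \in \RUCB (G)$ for every $f \in \Lip_{1}(G,d)$ and $\lambda \in \R$; hence $f$ and $e^{\lambda f}$ have continuous Gelfand transforms on $\Samuel (G)$, with $\widehat{e^{\lambda f}} = e^{\lambda \hat{f}}$ and $\int_{\Samuel (G)} \hat{\psi}\, d\hat{\mu} = \mu(\psi)$ for every $\psi \in \RUCB (G)$. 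Abbreviating $\ell \defeq \ell(G_{0},\ldots,G_{n};d)$ and $D_{i} \defeq \sup_{g \in G} \diam (G_{i+1}/G_{i},d_{i}^{g})$, so that $\ell^{2} = \sum_{i=0}^{n-1} D_{i}^{2}$, Markov's inequality applied to $e^{\pm\lambda\hat{f}}$ reduces the theorem (the case $\ell = 0$ being trivial) to the moment estimate
\begin{displaymath}
	\mu\bigl(e^{\lambda(f-\mu(f))}\bigr) \, \leq \, \exp\bigl(\tfrac{1}{2}\lambda^{2}\ell^{2}\bigr) \qquad \text{for every } \lambda \in \R ,
\end{displaymath}
after which the choice $\lambda = \epsilon/\ell^{2}$ and a union bound over the two tails conclude the argument.

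For the telescoping I would use the description of the product on $\Mean (G)$ (see Section~\ref{section:concentration}): one has $\mu(\phi) = \nu_{n}\bigl(x_{n}\mapsto\nu_{n-1}\bigl(x_{n-1}\mapsto\cdots\nu_{1}(x_{1}\mapsto\phi(x_{n}x_{n-1}\cdots x_{1}))\bigr)\bigr)$ for $\phi\in\RUCB(G)$, with $x_{i}$ ranging over $G_{i}$ and $\nu_{1}$ innermost. For $0\leq k\leq n$ put
\begin{displaymath}
	f^{(k)}(x_{n},\ldots,x_{k+1}) \, \defeq \, \nu_{k}\bigl(x_{k}\mapsto\cdots\nu_{1}(x_{1}\mapsto f(x_{n}\cdots x_{1}))\bigr) ,
\end{displaymath}
a function of $x_{n}\in G_{n},\ldots,x_{k+1}\in G_{k+1}$; thus $f^{(0)}=f(x_{n}\cdots x_{1})$, $f^{(n)}$ is the constant $\mu(f)$, and $f^{(k)}=\nu_{k}(x_{k}\mapsto f^{(k-1)})$. (A routine verification confirms that at each stage the function to which a mean is applied is right-uniformly continuous on the subgroup in question, with a modulus that may depend on the remaining variables.) Putting $\Delta_{k}\defeq f^{(k-1)}-f^{(k)}$ yields the telescoping identity $f(x_{n}\cdots x_{1})-\mu(f)=\sum_{k=1}^{n}\Delta_{k}$, in which $\Delta_{k}$ depends only on $x_{n},\ldots,x_{k}$ and $\nu_{k}(x_{k}\mapsto\Delta_{k})=0$.

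The crucial step is the uniform bound $\lvert\Delta_{k}\rvert\leq D_{k-1}$. Fix $k\in\{1,\ldots,n\}$ and set $g\defeq x_{n}\cdots x_{k+1}$. The $G_{k-1}$-left-invariance of $\nu_{k-1}$ forces $s\mapsto f^{(k-1)}(\ldots,s)$ to be constant on the left cosets $sG_{k-1}$, so it factors through $G_{k}/G_{k-1}$; moreover, for $s,s'\in G_{k}$ and $h\in G_{k-1}$, repeatedly invoking $\lvert\nu(\psi_{1})-\nu(\psi_{2})\rvert\leq\lVert\psi_{1}-\psi_{2}\rVert_{\infty}$ through the means $\nu_{k-1},\ldots,\nu_{1}$, the $1$-Lipschitz property of $f$, and the right-invariance of $d$ (which cancels the common right factors $x_{k-1}\cdots x_{1}$) produces $\lvert f^{(k-1)}(\ldots,s)-f^{(k-1)}(\ldots,s')\rvert\leq d(gs,gs'h)$. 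Taking the infimum over $h\in G_{k-1}$ and the supremum over $s,s'$ bounds the oscillation of $s\mapsto f^{(k-1)}(\ldots,s)$ by $\diam(G_{k}/G_{k-1},d_{k-1}^{g})\leq D_{k-1}$; since a mean of a function lies between its infimum and supremum, $\lvert\Delta_{k}\rvert\leq D_{k-1}$ follows. One then evaluates $\mu(e^{\lambda(f-\mu(f))})$ by peeling the nested means from the inside outwards: at the $k$-th step only $\Delta_{k}$ involves $x_{k}$, so the remaining factors pull out, and Hoeffding's lemma, applied to the mean-zero $\Delta_{k}$ with values in $[-D_{k-1},D_{k-1}]$, gives $\nu_{k}(x_{k}\mapsto e^{\lambda\Delta_{k}})\leq\exp(\tfrac{1}{2}\lambda^{2}D_{k-1}^{2})$ uniformly in the other variables; multiplying these $n$ bounds produces $\mu(e^{\lambda(f-\mu(f))})\leq\exp(\tfrac{1}{2}\lambda^{2}\sum_{i=0}^{n-1}D_{i}^{2})=\exp(\tfrac{1}{2}\lambda^{2}\ell^{2})$, as required.

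I expect the main obstacle to be this estimate $\lvert\Delta_{k}\rvert\leq D_{k-1}$, and in particular getting exactly right the interplay between the $G_{k-1}$-left-invariance of $\nu_{k-1}$ — which collapses the relevant comparison to the quotient $G_{k}/G_{k-1}$ and is precisely what accounts for the infimum over $G_{k-1}$ in the definition of $d_{k-1}^{g}$ — and the right-invariance of $d$, which is what permits discarding the ``future'' factors appearing on the right; the quantities $d_{i}^{g}$ are tailored so that these two features fit together. A secondary technical point is to check, at every stage of the unfolding and of the peeling, that each function to which one of the means is applied genuinely belongs to the algebra of right-uniformly continuous bounded functions on the relevant subgroup, and that $e^{\lambda f}\in\RUCB(G)$, so that the convolution formalism and the Gelfand/Chernoff machinery are legitimately available.
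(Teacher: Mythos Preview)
Your argument is correct and yields the same bound. The core idea --- a telescoping decomposition into bounded mean-zero increments followed by Hoeffding/Azuma --- is shared with the paper, but the implementations differ. The paper works on $\Samuel(G)$ as a genuine probability space under $\hat\mu$: it introduces a filtration via the pull-back $\sigma$-algebras along $\Samuel(\pi_{G_i})\colon\Samuel(G)\to\Samuel(G/G_i)$, proves a standalone lemma (Lemma~\ref{lemma:conditional.expectation}) identifying $\E_{\hat\mu}(\overline{f}\mid\Samuel(\pi_{G_i}))$ with $\overline{\Phi_{\mu_i}f}$ for $\mu_i=\nu_i\cdots\nu_1$, bounds the resulting $L^\infty$ martingale differences by the diameters $D_i$ via Lemma~\ref{lemma:diameter}, and then invokes Azuma's inequality (Theorem~\ref{theorem:azuma}) as a black box. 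Your route stays in the mean algebra throughout: you unfold $\mu$ as an iterated mean, peel off $\nu_1,\ldots,\nu_n$ from the inside, apply Hoeffding's lemma directly to each $\nu_k$ (valid since its proof uses only positivity, linearity and unitality), and touch $\Samuel(G)$ only at the final Chernoff/Markov step. This buys you an elementary argument that sidesteps the conditional-expectation lemma and the regularity/Urysohn work behind it, at the price of reproving Azuma inline; the paper's approach, conversely, isolates the identification $\E_{\hat\mu}(\,\cdot\mid\Samuel(\pi_H))\leftrightarrow\Phi_\nu$ as a reusable result and makes the martingale structure on $\Samuel(G)$ explicit.
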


Among the most important ingredients in the proof of Theorem~\ref{theorem:main}, next to Kazuoki Azuma's martingale inequality~\cite{azuma}, is a description of certain conditional expectations on the Samuel compactification of a topological group in terms of convolution operators on the underlying algebra of right-uniformly continuous functions (Lemma~\ref{lemma:conditional.expectation}).

Motivated by Theorem~\ref{theorem:main}, we introduce the \emph{amenable length functional} (Definition~\ref{definition:amenable.length}), which maps every bounded, continuous, right-invariant pseudo-metric on an amenable topological group to some non-negative real number. Using Theorem~\ref{theorem:main}, we prove that vanishing amenable length implies extreme amenability (Theorem~\ref{theorem:extreme.amenability}  and Corollaries~\ref{corollary:extreme.amenability.1}--\ref{corollary:extreme.amenability.2}). This result has some non-trivial ramifications, such as the amplification technique established by the following corollary.

\begin{cor}[Proposition~\ref{proposition:amenable.folding} $+$ Corollary~\ref{corollary:extreme.amenability.2}]\label{corollary:intro} Let $G$ be a topological group and $d$ be a right-invariant metric generating the topology of $G$. Suppose that there exist a dense subset $D \subseteq [0,1]$ with $\{0,1\} \subseteq D$ as well as a family of continuous endomorphisms $\phi_{t} \colon G \to G$ $(t \in D)$ such that \begin{itemize}
	\item[$(1)$] $\phi_{0}(G) = \{ e \}$ and $\phi_{1}(G) = G$,
	\item[$(2)$] if $s,t \in D$ and $s\leq t$, then $\phi_{s}(G) \subseteq \phi_{t}(G)$, and
	\item[$(3)$] there exists $C \in \R_{\geq 0}$ such that \begin{displaymath}
			\qquad \forall s,t \in D \ \forall g,h \in G \colon \quad d(g\phi_{s}(h),g\phi_{t}(h)) \, \leq \, C\cdot \vert s-t \vert .
		\end{displaymath}
\end{itemize} If $G$ is amenable, then $G$ is extremely amenable. \end{cor}

For instance, this technique may be used to recover the extreme amenability of the group of measurable maps with values in an amenable topological group (Example~\ref{example:pestov.schneider}). New examples of extremely amenable groups resulting from Corollary~\ref{corollary:intro} are to be found in the context of von Neumann's continuous geometries, as discussed below.

Before exploring further applications, one may wonder about potential improvements and extensions of Theorem~\ref{theorem:main}. Indeed, the present work has been crucially inspired by the following problem by Vladimir Pestov, which was communicated to the author in 2016 and published in~\cite[Question~4.5]{PestovSchneider}.

\begin{problem}[Pestov]\label{problem} For each $n \in \N_{>0}$, let $d_{n}$ be a compatible right-invariant metric on an amenable second-countable group $G_{n}$ such that $\diam (G_{n},d_{n}) \leq 1$. For every $n \in \N_{>0}$, consider the topological group $H_{n} \defeq \prod_{i=1}^{n} G_{i}$ equipped with the compatible right-invariant metric \begin{displaymath}
	d'_{n} \colon \, H_{n} \times H_{n} \, \longrightarrow \, [0,1], \quad (x,y) \, \longmapsto \, \tfrac{1}{n} \sum\nolimits_{i=1}^{n} d_{i}(x_{i},y_{i}) .
\end{displaymath} For each $n \in \N_{>0}$, fix an $H_{n}$-left-invariant mean $\mu_{n} \in \Mean (H_{n})$. Is it true that, for every sequence $f_{n} \in \Lip_{1}(H_{n},d'_{n})$ $(n \in \N_{>0})$, \begin{displaymath}
	\mu_{n} \left( (f_{n} - \mu_{n}(f_{n}))^{2} \right) \, \xrightarrow{n\to\infty} \, 0 \ ?
\end{displaymath} \end{problem}

Problem~\ref{problem} suggests a substantial improvement of our Theorem~\ref{theorem:main} in the particular case of direct products of topological groups equipped with normalized Hamming ($\ell^{1}$-type sum) distances: an affirmative answer to Pestov's question would assert concentration for any arbitrary sequence of invariant means on such topological groups rather than just convolution products. However, as part of the present work, Problem~\ref{problem} is resolved in the negative (Corollary~\ref{corollary:pestov.2}).

The source of concentration of invariant means explored in this work is von Neumann's continuous geometry~\cite{VonNeumannBook}, a continuous version of projective geometry. A \emph{continuous geometry} is a complete, complemented, modular lattice whose operations furthermore satisfy a certain continuity property (see Section~\ref{section:continuous.geometries}). Examples of such objects include all orthocomplemented complete modular lattices~\cite{kaplansky}, such as the projection lattice of any finite von Neumann algebra (see, e.g.,~\cite[Propositions~6.3, 6.14]{redei}), as well as the completion of a natural inductive limit of finite-dimensional projective geometries over a fixed division algebra with respect to a normalized dimension distance~\cite{NeumannExamples} (see also~\cite[Section~VIII.10]{BirkhoffBook}). Generalizing the classical Veblen--Young theorem, von Neumann~\cite{VonNeumannBook} proved the following deep and technically outstanding \emph{coordinatization theorem}: for every complemented modular lattice $L$ with an order greater than or equal to four there exists an (up to isomorphism unique) regular ring $R$ such that $L$ is isomorphic to the lattice $\lat(R)$ of principal right ideals of $R$. In case of an irreducible continuous geometry $L$, the corresponding, necessarily irreducible, regular ring $R$ admits a unique rank function $\rho_{R} \colon R \to [0,1]$, and the induced metric $d_{R} \colon R \times R \to [0,1], \, (a,b) \mapsto \rho_{R}(a-b)$ is complete. For such an irreducible \emph{continuous ring} $R$, we apply the concentration methods outlined above to study the dynamics of topological subgroups of the unit group $\GL(R)$ carrying the topology generated by the metric $d_{R}$.

A cornerstone in our treatment of groups of invertible elements in a non-discrete irreducible, continuous ring $R$ is the \emph{continuous triangularization theorem} (Theorem~\ref{theorem:invariant.flags}), which asserts that every subring $S \leq R$ that contains and is finite-dimensional over the center $\cent(R)$ must stabilize some maximal chain in $\lat(R)$ under multiplication from the left. In particular, this applies to every single element of $R$ algebraic over $\cent(R)$. Since the set of $\cent(R)$-algebraic elements is dense in $R$ by another fundamental result of von Neumann~\cite{VonNeumann37,Halperin62} (see also Section~\ref{section:algebraic.elements}), this suggests to examine maximal chains in $\lat(R)$ and their stabilizer subgroups in $\GL(R)$ in greater detail. To this end, we develop a representation of chains in $\lat(R)$ by \emph{nests} in $R$, i.e., chains of idempotent elements of $R$ (Proposition~\ref{proposition:flags.vs.nests} and Theorem~\ref{theorem:maximal.nests}). Each such nest $E$ in $R$ gives rise to a \emph{stabilizer ring} $R_{E} \leq R$ (Definition~\ref{definition:stabilizer}) and, by a mechanism inspired by the classical Jordan--Chevalley decomposition, induces a closure operator \begin{displaymath}
	\GL(R_{E}) \geq G \, \longmapsto \, [G]_{E} \leq \GL(R_{E}) 
\end{displaymath} on the subgroup lattice of the unit group of $R_{E}$ (Definition~\ref{definition:jordan}). Using Corollary~\ref{corollary:intro}, this operator will be identified as a source of extremely amenable topological groups as follows.

\begin{thm}[Theorem~\ref{theorem:stable.groups}]\label{theorem:amenable.subgroups} Let $E$ be a maximal nest in a non-discrete irreducible, continuous ring $R$. If a topological subgroup $G \leq \GL(R_{E})$ is amenable, then $[G]_{E}$ is extremely amenable. \end{thm}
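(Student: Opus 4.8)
The plan is to deduce Theorem~\ref{theorem:amenable.subgroups} from Corollary~\ref{corollary:intro} by constructing, on the group $[G]_E$ equipped with the metric $d_R$, a suitable family of continuous endomorphisms $\phi_t$ indexed by a dense subset $D \subseteq [0,1]$ containing $\{0,1\}$. The key point is that a maximal nest $E$ in $R$, being a maximal chain of idempotents, carries an intrinsic "interval structure": the rank function $\rho_R$ restricted to $E$ identifies $E$ with a dense subset of $[0,1]$ (with $0$ and $1$ corresponding to the idempotents $0$ and $1$), and for each idempotent $e \in E$ one obtains a corner ring $eRe$ together with the complementary corner; more usefully, each $e \in E$ induces a "truncation" of the stabilizer ring $R_E$. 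First, I would spell out the Jordan--Chevalley-type closure operator from Definition~\ref{definition:jordan}: elements of $[G]_E$ are, roughly, units of $R_E$ expressed via their action along the nest $E$, and the nest provides a filtration of $R_E$ by the ideals/corners it defines. For a parameter $t$ corresponding (via $\rho_R$) to an idempotent $e_t \in E$, define $\phi_t$ on $[G]_E$ to be the endomorphism that "replaces" a unit by $e_t u e_t + (1-e_t)$ — i.e., it keeps the action of $u$ on the part of the nest below $e_t$ and acts trivially above; one must check this lands in $[G]_E$ (this is where the closure operator and the fact that $R_E$ is built exactly to make such truncations well-defined enters) and is a continuous endomorphism.

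Second, I would verify conditions $(1)$–$(3)$ of Corollary~\ref{corollary:intro}. Condition $(1)$ is immediate: $t=0$ gives $e_0 = 0$, so $\phi_0 \equiv e$, and $t=1$ gives $e_1 = 1$, so $\phi_1 = \id$. Condition $(2)$ — nesting of the images — follows from the fact that $s \leq t$ forces $e_s \leq e_t$ in $E$, so the range of $\phi_s$ sits inside the range of $\phi_t$ because truncating at a lower idempotent produces a unit supported lower in the nest. Condition $(3)$, the Lipschitz estimate $d_R(g\phi_s(h), g\phi_t(h)) \leq C|s-t|$, is the crux: using right-invariance of $d_R$ and the definition $d_R(a,b) = \rho_R(a-b)$, one reduces to bounding $\rho_R(\phi_s(h) - \phi_t(h))$, and for $s \le t$ the difference $\phi_s(h) - \phi_t(h)$ is supported on the "band" of the nest strictly between $e_s$ and $e_t$; the rank of any element supported on that band is at most $\rho_R(e_t - e_s)$, which equals $t - s$ by our identification of $E$ with a subset of $[0,1]$ via $\rho_R$. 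So condition $(3)$ holds with $C = 1$ (or a small universal constant absorbing the corrections coming from the $(1-e_t)$ term and from passing through $g$).

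Third, I would need the density of $D = \rho_R(E) \subseteq [0,1]$ with $\{0,1\}\subseteq D$: this is exactly the content of $E$ being a \emph{maximal} nest in a \emph{non-discrete} irreducible continuous ring, where $\rho_R$ takes all values in $[0,1]$ and maximality of the chain of idempotents forces its rank-image to be dense (a gap in $\rho_R(E)$ would contradict maximality, since $R$ being non-discrete means one could refine the nest inside the gap). Also $d_R$ is right-invariant and generates the topology of $\GL(R_E)$ by hypothesis on $R$, so the metric hypotheses of Corollary~\ref{corollary:intro} are met. Finally, amenability: $[G]_E$ is amenable because $G$ is amenable by assumption and $[G]_E$ should be, up to the closure operation, an amenable-by-amenable extension of $G$ — more precisely the closure $[\,\cdot\,]_E$ preserves amenability, which I would either cite from the development around Definition~\ref{definition:jordan} or check directly (the closure adds only "unipotent-like" directions along the nest, which form an amenable — indeed, in good cases, a limit of nilpotent — normal piece). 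With $(1)$–$(3)$ and amenability in hand, Corollary~\ref{corollary:intro} yields that $[G]_E$ is extremely amenable.

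The main obstacle I anticipate is getting the endomorphisms $\phi_t$ genuinely well-defined \emph{as continuous group endomorphisms} of $[G]_E$ landing inside $[G]_E$: the naive truncation $u \mapsto e_t u e_t + (1-e_t)$ need not be multiplicative on all of $\GL(R)$, and it is precisely the passage to the stabilizer ring $R_E$ and its closure $[G]_E$ — designed around the nest — that should make these maps multiplicative and continuous. Pinning down that compatibility, and with it the exact constant in condition $(3)$, is where the real work lies; everything else is bookkeeping with $\rho_R$ and the order structure of $E$.
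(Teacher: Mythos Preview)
Your proposal is correct and follows essentially the same route as the paper: the truncation endomorphisms $\phi_t(u)=e_t u e_t+(1-e_t)$ are exactly the maps $\psi_{e_t}$ of Lemma~\ref{lemma:idempotent.difference.2} and Lemma~\ref{lemma:folding}, your Lipschitz estimate with $C=1$ is Lemma~\ref{lemma:folding}(3), $E$-stability of $[G]_E$ (your ``$\phi_t$ lands in $[G]_E$'') is Lemma~\ref{lemma:stable.envelopes}, and amenability of $[G]_E$ is Corollary~\ref{corollary:amenable.envelope}. Two small sharpenings: $\rho_R\vert_E$ is not merely onto a dense set but is an order isomorphism $E\to[0,1]$ (Corollary~\ref{corollary:maximal.nests.2}), so you may take $D=[0,1]$; and for the bound $d_R(g\phi_s(h),g\phi_t(h))\le|s-t|$ you need left- (hence bi-) invariance of $d_R$, not right-invariance.
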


For instance, this result directly entails the extreme amenability of the topological groups naturally arising from the Levitzki radicals of the considered stabilizer rings (Corollary~\ref{corollary:levitzki.extremely.amenable}). Combining Theorem~\ref{theorem:amenable.subgroups} with our continuous triangularization theorem (Theorem~\ref{theorem:invariant.flags}), we moreover deduce the following corollary, which---in view of the density of algebraic elements in the unit group of any non-discrete irreducible, continuous ring (Proposition~\ref{proposition:algebraic.units})---illustrates the abundance of extremely amenable groups in the context of continuous geometries.

\begin{cor}[Corollary~\ref{corollary:final}] Let $R$ be a non-discrete irreducible, continuous ring. Every element of $\GL(R)$ algebraic over $\cent(R)$ is contained in a locally solvable, extremely amenable topological subgroup of $\GL(R)$. \end{cor}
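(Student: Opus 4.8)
The plan is to carry out the deduction advertised in the introduction: feed the given element into the continuous triangularization theorem (Theorem~\ref{theorem:invariant.flags}) to place it inside the unit group of the stabilizer ring of a maximal nest, and then apply Theorem~\ref{theorem:stable.groups} to an abelian---hence amenable---subgroup containing it. So fix $g \in \GL(R)$ algebraic over $Z \defeq \cent(R)$ (recall $Z$ is a field). I would first observe that the subring $S \defeq Z[g] \leq R$ is commutative and, since $g$ is $Z$-algebraic, finite-dimensional over $Z$, and that moreover $g^{-1} \in S$: the minimal polynomial of $g$ over $Z$ has non-zero constant term---otherwise $g$, being a unit, would satisfy a polynomial of smaller degree---so $g^{-1}$ is a $Z$-polynomial in $g$. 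Thus $S$ is a finite-dimensional $Z$-subalgebra of $R$ containing $Z$ and both $g$ and $g^{-1}$.

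Next I would apply Theorem~\ref{theorem:invariant.flags} to $S$, obtaining a maximal chain in $\lat(R)$ that $S$ stabilizes under left multiplication; translating through the nest--flag correspondence (Proposition~\ref{proposition:flags.vs.nests} and Theorem~\ref{theorem:maximal.nests}) I get a maximal nest $E$ in $R$ with $S \leq R_{E}$, whence $g, g^{-1} \in R_{E}$, i.e.\ $g \in \GL(R_{E})$. Now put $G \defeq \overline{\langle g \rangle} \leq \GL(R_{E})$, the closure of the cyclic group generated by $g$. Since $G$ is abelian, it is an amenable topological group, so Theorem~\ref{theorem:stable.groups} yields that $[G]_{E}$ is an extremely amenable topological subgroup of $\GL(R_{E}) \leq \GL(R)$; and $g \in [G]_{E}$ because the closure operator $[\cdot]_{E}$ is extensive. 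It then remains to see that $[G]_{E}$ is locally solvable, which I would extract from the structure of the Jordan--Chevalley-type closure operator developed around Definition~\ref{definition:jordan} (cf.\ Corollary~\ref{corollary:prefinal}): applied to a locally solvable subgroup of $\GL(R_{E})$---and the abelian group $G$ certainly qualifies---the operator returns a locally solvable group. This produces the required subgroup.

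The genuine work is not in this assembly but in the two inputs it rests on: the continuous triangularization theorem, which converts ``$g$ is algebraic over $Z$'' into ``$g$ stabilizes a maximal nest'', and the analysis of $[\cdot]_{E}$ guaranteeing that forming the extremely amenable closure keeps the group locally solvable. The one subtlety I anticipate in the deduction itself is that one must know $g$ to be invertible \emph{inside} $R_{E}$, not merely an element of $R_{E}$ that happens to be a unit of the ambient ring $R$---which is precisely why it pays to record at the outset that $g^{-1} \in Z[g] \leq R_{E}$.
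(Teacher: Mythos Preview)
Your proposal is correct and follows essentially the same route as the paper: place $g$ in $\GL(R_{E})$ for some maximal nest $E$ via continuous triangularization, then take the $E$-envelope of the abelian cyclic subgroup and invoke Theorem~\ref{theorem:stable.groups} for extreme amenability together with Remark~\ref{remark:solvable}(2) for local solvability (this is exactly the content of Corollary~\ref{corollary:prefinal}, which the paper simply cites). One small difference worth noting: you secure $g^{-1} \in R_{E}$ by the elementary observation that $g^{-1} \in Z[g]$, whereas the paper sidesteps this via Proposition~\ref{proposition:units.in.nest.stabilizers}, which uses direct finiteness of $R$ (Lemma~\ref{lemma:rank.functions.vs.units}(2)) to prove $\GL(R_{E}) = R_{E} \cap \GL(R)$ in general. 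Both arguments are valid; yours is self-contained for this corollary, while the paper's yields a reusable structural fact. Taking the closure $\overline{\langle g \rangle}$ rather than $\langle g \rangle$ is harmless but unnecessary.
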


In order to point out a notable global consequence of the results described above, let us isolate the following dynamical property: a topological group $G$ will be called \emph{inert} if, for every continuous action of $G$ on a non-void compact Hausdorff space $X$, each element of $G$ admits a fixed point in $X$, i.e., \begin{displaymath}
	\forall g \in G \ \exists x \in X \colon \qquad gx \, = \, x .
\end{displaymath} Inertness is a reinforced form of negation of the existence of a free continuous action on a non-empty compact Hausdorff space. For instance, if a topological group $G$ is inert, then every continuous homomorphism from $G$ to any locally compact group must be constant, since the latter topological group---by work of William Veech~\cite[Theorem~2.2.1]{veech}---acts freely and continuously on its own Samuel compactification. Of course, every extremely amenable topological group is inert.

\begin{cor}[Corollary~\ref{corollary:inert.final}]\label{corollary:inert} Let $R$ be a non-discrete irreducible, continuous ring. Then the union of its extremely amenable topological subgroups is dense in $\GL(R)$. In particular, the topological group $\GL(R)$ is inert. \end{cor}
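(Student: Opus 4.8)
The plan is to derive both assertions from Corollary~\ref{corollary:final} together with the density of $\cent(R)$-algebraic units, the second statement (inertness) following softly from the first by a compactness argument. First I would record the density claim. By Corollary~\ref{corollary:final}, every element of $\GL(R)$ that is algebraic over $\cent(R)$ is contained in some (locally solvable) extremely amenable topological subgroup of $\GL(R)$; hence the union $U$ of all extremely amenable topological subgroups of $\GL(R)$ contains the set $A$ of all $\cent(R)$-algebraic elements of $\GL(R)$. Since $R$ is a non-discrete irreducible, continuous ring, Proposition~\ref{proposition:algebraic.units} asserts that $A$ is dense in $\GL(R)$; therefore $U$ is dense as well, which is the first assertion.

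Next I would deduce inertness from this. Let $\GL(R)$ act continuously on a non-void compact Hausdorff space $X$, and put
\[
	F \, \defeq \, \{ g \in \GL(R) \mid \exists\, x \in X \colon gx = x \} .
\]
The key observation is that $F$ is closed in $\GL(R)$: the map $\GL(R) \times X \to X \times X, \, (g,x) \mapsto (gx,x)$ is continuous, so the preimage $Z$ of the diagonal of $X$ — which is closed because $X$ is Hausdorff — is closed in $\GL(R) \times X$; as $X$ is compact, the projection $\GL(R) \times X \to \GL(R)$ is a closed map, and $F$ is precisely the image of $Z$ under it. On the other hand, for any extremely amenable topological subgroup $H \leq \GL(R)$, restricting the given action to $H$ yields a continuous action of $H$ on $X$, which therefore admits a fixed point; hence $H \subseteq F$. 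It follows that $F \supseteq U$ is dense, and since $F$ is closed we conclude $F = \GL(R)$, i.e., every element of $\GL(R)$ fixes some point of $X$. As the action was arbitrary, $\GL(R)$ is inert.

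I do not expect a genuine obstacle here: all the substantive input has been isolated in Corollary~\ref{corollary:final} and Proposition~\ref{proposition:algebraic.units}. The one point requiring a moment's care is the observation that ``admitting a fixed point'' is a closed condition on group elements as soon as the phase space is compact Hausdorff; this is exactly what promotes the mere density of $U$ to the statement that \emph{every} element of $\GL(R)$ is fixed somewhere, and it would break down without compactness of $X$.
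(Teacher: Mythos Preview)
Your proposal is correct and follows essentially the same approach as the paper: the density statement is obtained from Corollary~\ref{corollary:final} and Proposition~\ref{proposition:algebraic.units}, and inertness is deduced via the closedness of the fixed-point set, proved exactly as you do (the paper isolates this last step as a separate lemma, Lemma~\ref{lemma:inert}, but the argument is identical).
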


In general, the conclusion of Corollary~\ref{corollary:inert} cannot be strengthened to extreme amenability of the unit group: in fact, if a (discrete) group $G$ is not inner amenable in the sense of~\cite{effros} (for instance, if $G$ is a non-abelian free group), then the ring of operators affiliated with the group von Neumann algebra of $G$ constitutes a non-discrete irreducible, continuous ring whose unit group is non-amenable with respect to the topology generated by the corresponding rank metric, as proved by the present author in~\cite{SchneiderFactor}.

This article is organized as follows. In Section~\ref{section:concentration} we put together some general background on concentration of means in uniform spaces, and in Section~\ref{section:topological.groups} we set up the relevant terminology and notation concerning topological groups, (extreme) amenability, and convolution algebras. The subsequent Section~\ref{section:concentration.of.invariant.means} contains our main results about concentration of invariant means in topological groups. Pestov's Problem~\ref{problem} will be solved in Section~\ref{section:pestov}. In Section~\ref{section:continuous.geometries} we provide some background on (irreducible) continuous geometries and characterize their maximal chains. Our Section~\ref{section:continuous.rings} contains some material on continuous rings and their rank functions, as well as a characterization of the maximal nests in an irreducible continuous ring. In Section~\ref{section:algebraic.elements} we discuss von Neumann's density theorem for algebraic elements in non-discrete irreducible continuous rings, along with a slight variation thereof. The aforementioned continuous triangularization theorem is stated and proved in Section~\ref{section:triangularization}. The corresponding concept of nest envelopes is developed in Section~\ref{section:nest.envelopes}. The final Section~\ref{section:stability} is devoted to dynamical consequences of our results, including new examples of extremely amenable topological groups. In the Appendix~\ref{section:levitzki} we compile some relevant facts about nilpotency and the Levitzki radical in unital rings.

\section{Concentration in uniform spaces}\label{section:concentration}

The objective of this preliminary section is to put together some material on concentration of means in uniform spaces, which will be fundamental to our subsequent study of topological groups. For this purpose, we will suitably adapt well-established accounts on concentration of measure in metric spaces~\cite{MilmanSchechtman,ledoux,Pestov02a} and uniform spaces~\cite{Pestov02,PestovBook}.

Before getting to uniform spaces, let us briefly clarify some general terminology and notation used throughout the manuscript. Henceforth, if $E$ is a Banach space, then we let $E^{\ast}$ denote the Banach space of all bounded linear forms on $E$, and $\B(E)$ denote the unital Banach algebra of all bounded linear endomorphisms of $E$. Now, let $X$ be a set. Then $\mathscr{P}(X)$ denotes the power set of $X$, and $\Pfin (X)$ denotes the set of all finite subsets of $X$. Furthermore, we let $\ell^{\infty}(X)$ denote the unital Banach algebra of all bounded real-valued functions on $X$, equipped with the supremum norm \begin{displaymath}
	\Vert f \Vert_{\infty} \, \defeq \, \sup \{ \vert f(x) \vert \mid x \in X\} \qquad \left( f \in \ell^{\infty}(X) \right) .
\end{displaymath} The \emph{indicator function} of a subset $B \subseteq X$ will be denoted by \begin{displaymath}
	\chi_{B} \colon \, X \, \longrightarrow \, \{ 0,1 \}, \quad x \, \longmapsto \, \begin{cases}
																							\, 1 & \text{if } x \in B, \\
																							\, 0 & \text{otherwise.}
																						\end{cases}
\end{displaymath} Suppose that $d$ is a pseudo-metric on $X$. For every $r \in \R_{>0}$ and every $x \in X$, we let $\B_{d}(x,r) \defeq \{ y \in X \mid d(x,y)< r\}$. If $k \in \R_{\geq 0}$, then we define \begin{displaymath}
	\left. \Lip_{k}(X,d;S) \, \defeq \, \left\{ f \in S^{X} \, \right\vert \forall x,y \in X \colon \, \vert f(x) - f(y) \vert \leq k d(x,y) \right\}
\end{displaymath} for every subset $S \subseteq \R$, as well as \begin{displaymath}
	\Lip_{k}(X,d) \, \defeq \, \Lip_{k}(X,d; \R), \qquad \Lip_{k}^{\infty}(X,d) \, \defeq \, \Lip_{k}(X,d) \cap \ell^{\infty}(X) .
\end{displaymath} Given any subset $A \subseteq X$, let us define \begin{displaymath}
	\diam (A,d) \, \defeq \, \sup \{ d(x,y) \mid x,y \in A \} \, \in \, [0,\infty] .
\end{displaymath} For any subset $B \subseteq \R $, we put $\diam B \defeq \sup \{ \vert x-y \vert \mid x,y \in B \} \in [0,\infty]$.

\begin{remark}\label{remark:lipschitz.approximation} Let $(X,d)$ be a pseudo-metric space, let $\ell, \epsilon \in \R_{\geq 0}$, let $s,t \in \R$ with $s \leq t$, and let $f \colon X \to [s,t]$. Then the following holds: \begin{equation}\tag{$\ast$}\label{approximation}
	\begin{split}
		&\bigl( \forall x,y \in X \colon \, \vert f(x) - f(y) \vert \leq \ell d(x,y) + \epsilon \bigr) \\
		& \qquad \qquad \qquad \qquad \Longrightarrow \ \left( \exists g \in \Lip_{\ell}(X,d; [s,t]) \colon \, \Vert f-g \Vert_{\infty} \leq \epsilon \right) .
	\end{split}
\end{equation} Indeed, if $f$ satisfies the premise of~\eqref{approximation}, then \begin{displaymath}
	g \colon \, X \, \longrightarrow \, [s,t], \quad x \, \longmapsto \, \left( \inf\nolimits_{y \in X} f(y) + \ell d(x,y) \right) \wedge t
\end{displaymath} will verify the conclusion of~\eqref{approximation}. A proof of this very well-known fact is to be found, for instance, in~\cite[Lemma~5.2]{SchneiderThomRandomWalks}. \end{remark}

Furthermore, if $X$ is a compact Hausdorff space, then $\Cont(X)$ denotes the set of all continuous real-valued functions, which is well known to form a closed unital subalgebra of $\ell^{\infty}(X)$.  

We proceed to some basic terminology concerning uniform spaces. The reader is referred to~\cite{PachlBook} for a more comprehensive account on uniformities and associated spaces of functions and measures. A \emph{uniformity} on a set $X$ is a filter $\mathscr{E}$ on the set $X \times X$ such that \begin{itemize}
	\item[---] $\{ (x,x) \mid x \in X\} \subseteq E$ for every $E \in \mathscr{E}$,
	\item[---] $\{ (y,x) \mid (x,y) \in E \} \in \mathscr{E}$ for every $E \in \mathscr{E}$, and 
	\item[---] for every $E_{0} \in \mathscr{E}$ there exists $E_{1} \in \mathscr{E}$ such that \begin{displaymath}
				\qquad \{ (x,y) \in X \times X \mid \exists z \in X \colon \, (x,z), (z,y) \in E_{1} \} \, \subseteq \, E_{0} .
			\end{displaymath}
\end{itemize} The elements of a uniformity are usually referred to as \emph{entourages}. A \emph{uniform space} is a set $X$ together with a uniformity on $X$. If $\mathscr{E}$ is a uniformity on a set $X$ and $Y \subseteq X$, then ${\mathscr{E}\!\!\upharpoonright_{Y}} \defeq \{ E \cap (Y \times Y) \mid E \in \mathscr{E} \}$ constitutes a uniformity on $Y$ and is called the \emph{relative uniformity} (or \emph{subspace uniformity}) induced by $\mathscr{E}$ on $Y$. A \emph{uniform subspace} of a uniform space $X$ is a subset of $X$ equipped with the induced relative uniformity. As usual, any pseudo-metric space $(X,d)$ will be viewed as a uniform space, carrying the induced uniformity \begin{displaymath}
	\{ E \subseteq X \times X \mid \exists r \in \R_{>0} \forall x,y \in X \colon \, d(x,y) < r \Longrightarrow (x,y) \in E \} .
\end{displaymath} This particularly applies to $\R$ with respect to the Euclidean metric.

Let $X$ and $Y$ be two uniform spaces. A map $h \colon X \to Y$ is called \emph{uniformly continuous} if, for every entourage $F$ of $Y$, there exists an entourage $E$ of $X$ such that $\{ (h(x),h(y)) \mid (x,y) \in E\} \subseteq F$. Furthermore, a set $H \subseteq Y^{X}$ is said to be \emph{uniformly equicontinuous} if, for every entourage $F$ of $Y$, there exists an entourage $E$ of $X$ such that $\{ (h(x),h(y)) \mid (x,y) \in E, \, h \in H \} \subseteq F$.

Now, let $X$ be a uniform space. We let $\UCB(X)$ denote the commutative unital real Banach algebra of all bounded uniformly continuous real-valued functions on $X$. A \emph{mean} on $X$ is a mean on $\UCB(X)$, i.e., a (necessarily continuous) positive unital linear form on $\mathrm{UCB}(X)$. The collection $\Mean (X)$ of all means on $X$ constitutes a weak-$\ast$ closed subset of the closed unit ball of $\UCB(X)^{\ast}$, whence $\Mean (X)$ is compact with respect to the relative weak-$\ast$ topology (see, e.g., \cite[2.1, Theorem~1.8(i), p.~68]{AnalysisOnSemigroups}). The set $\Samuel(X)$ of all (necessarily positive and linear) unital ring homomorphisms from $\UCB(X)$ to~$\mathbb{R}$ is a weak-$\ast$ closed (thus compact) subspace of~$\Mean (X)$ and is called the \emph{Samuel compactification} of~$X$. The map $\eta_{X} \colon X \to \Samuel (X)$ defined by \begin{displaymath}
	\eta_{X}(x)(f) \, \defeq \, f(x) \qquad (x \in X, \, f \in \UCB(X))
\end{displaymath} is uniformly continuous, and its the image $\eta_{X}(X)$ is a dense subset of $\Samuel(X)$ (see, e.g., \cite[2.1, Theorem~1.8(iv), p.~68]{AnalysisOnSemigroups}). The induced map \begin{displaymath}
	\Cont (\Samuel (X)) \, \longrightarrow \, \UCB(X), \quad f \, \longmapsto \, f \circ \eta_{X}
\end{displaymath} constitutes an isometric isomorphism of unital Banach algebras, the inverse of which is given by \begin{displaymath}
	\UCB(X) \, \longrightarrow \, \Cont (\Samuel (X)), \quad f \, \longmapsto \, \overline{f} , 
\end{displaymath} where \begin{displaymath}
	\overline{f}(\xi) \, \defeq \, \xi(f) \qquad (f \in \UCB(X), \, \xi \in \Samuel (X)) 
\end{displaymath} (cf.~\cite[2.1, Corollary~1.9, p.~69]{AnalysisOnSemigroups}). In turn, the Riesz--Markov--Kakutani representation theorem\footnote{The Riesz--Markov--Kakutani representation theorem as stated in~\cite[IV.6, Theorem~5]{DunfordSchwartz} moreover establishes an isometric isomorphism between $\UCB (X)^{\ast}$ and the Banach space of all regular finite signed Borel measures on~$\Samuel(X)$ equipped with the total variation norm.} (see, e.g.,~\cite[IV.6, Theorem~3]{DunfordSchwartz} or~\cite[Theorem~P.31]{PachlBook}) asserts that, for each $\mu \in \Mean (X)$, there exists a unique regular Borel probability measure $\hat{\mu}$ on $\Samuel (X)$ such that \begin{displaymath}
	\forall f \in \Cont (\Samuel (X)) \colon \qquad \int f \, \mathrm{d}\hat{\mu} \, = \, \mu(f \circ \eta_{X}) .
\end{displaymath}

\begin{remark}\label{remark:subspace.mean} Let $X$ be a uniform space and let $Y \subseteq X$. Then the linear map \begin{displaymath}
	\iota_{Y,X} \colon \, \UCB(Y)^{\ast} \, \longrightarrow \, \UCB(X)^{\ast} , \quad \mu \, \longmapsto \, (f \mapsto \mu(f\vert_{Y}))
\end{displaymath} is an isometric embedding with regard to the respective supremum norms as well as a topological embedding with regard to the respective weak-$\ast$ topologies. This is a consequence of the extension theorem for uniformly continuous bounded real-valued functions (see, e.g.,~\cite[Theorem~2.22]{PachlBook}). It also follows that $\iota_{Y,X}(\Mean (Y))$ is a weak-$\ast$ compact (thus closed) subspace of $\Mean (X)$, and that $\iota_{Y,X}(\Samuel (Y))$ is such of $\Samuel (X)$. Furthermore, a straightforward calculation shows that $\iota_{Y,X}(\eta_{Y}(y)) = \eta_{X}(y)$ for all $y \in Y$. \end{remark}

\begin{lem}\label{lemma:subspace.mean} Let $X$ be a uniform space and let $Y \subseteq X$. Then \begin{displaymath}
	\iota_{Y,X} (\Mean (Y)) \, = \, \{ \mu \in \Mean (X) \mid \hat{\mu}(\iota_{Y,X}(\Samuel (Y))) = 1 \} .
\end{displaymath} \end{lem}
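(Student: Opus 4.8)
The plan is to prove the two inclusions separately, exploiting the isometric embedding $\iota_{Y,X}$ and the correspondence $\mu \mapsto \hat\mu$ between means and regular Borel probability measures on the Samuel compactification.

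For the inclusion ``$\subseteq$'', fix $\nu \in \Mean(Y)$ and set $\mu \defeq \iota_{Y,X}(\nu) \in \Mean(X)$, which lies in $\Mean(X)$ by Remark~\ref{remark:subspace.mean}. I want to show $\hat\mu$ is supported on the compact set $\iota_{Y,X}(\Samuel(Y)) \subseteq \Samuel(X)$. Since $\hat\nu$ is a regular Borel probability measure on $\Samuel(Y)$, its pushforward under the topological embedding $\iota_{Y,X}|_{\Samuel(Y)} \colon \Samuel(Y) \to \Samuel(X)$ is a regular Borel probability measure $\lambda$ on $\Samuel(X)$ concentrated on $\iota_{Y,X}(\Samuel(Y))$. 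By uniqueness in the Riesz--Markov--Kakutani theorem it suffices to check that $\lambda$ represents $\mu$, i.e. $\int_{\Samuel(X)} \overline{f}\, \mathrm{d}\lambda = \mu(f)$ for all $f \in \UCB(X)$. Unwinding: by the change-of-variables formula the left-hand side equals $\int_{\Samuel(Y)} \overline{f} \circ \iota_{Y,X}\, \mathrm{d}\hat\nu$, and the key computation is that $\overline{f}(\iota_{Y,X}(\xi)) = \iota_{Y,X}(\xi)(f) = \xi(f|_Y) = \overline{f|_Y}(\xi)$ for $\xi \in \Samuel(Y)$; hence this is $\int \overline{f|_Y}\, \mathrm{d}\hat\nu = \nu(f|_Y) = \mu(f)$ by definition of $\iota_{Y,X}$. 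Thus $\hat\mu = \lambda$ and $\hat\mu(\iota_{Y,X}(\Samuel(Y))) = 1$.

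For the reverse inclusion, suppose $\mu \in \Mean(X)$ with $\hat\mu(\iota_{Y,X}(\Samuel(Y))) = 1$. Since $\iota_{Y,X}|_{\Samuel(Y)}$ is a homeomorphism onto its (compact, hence closed) image $K \defeq \iota_{Y,X}(\Samuel(Y))$, the measure $\hat\mu$ restricts to a regular Borel probability measure on $K$, which transports back along $(\iota_{Y,X}|_{\Samuel(Y)})^{-1}$ to a regular Borel probability measure $\tau$ on $\Samuel(Y)$. By Riesz--Markov--Kakutani again, $\tau = \hat\nu$ for a unique $\nu \in \Mean(Y)$, explicitly $\nu(g) = \int_{\Samuel(Y)} \overline{g}\, \mathrm{d}\tau$ for $g \in \UCB(Y)$. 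It then remains to verify $\iota_{Y,X}(\nu) = \mu$, i.e. $\nu(f|_Y) = \mu(f)$ for every $f \in \UCB(X)$; this follows by running the same change-of-variables identity as above in reverse, using $\hat\mu(\Samuel(X) \setminus K) = 0$ to replace the integral over $\Samuel(X)$ by one over $K$.

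I expect the main technical point to be the bookkeeping around the change-of-variables / pushforward argument and the verification that the transported measure is genuinely regular Borel — but regularity is automatic here since both $\Samuel(X)$ and $\Samuel(Y)$ are compact Hausdorff and the map between them is a homeomorphism onto a closed subspace, so pushforward and restriction preserve regularity. The only genuinely essential input beyond Remark~\ref{remark:subspace.mean} is the uniqueness clause of the Riesz--Markov--Kakutani representation, which pins down $\hat\mu$ from its action on $\Cont(\Samuel(X)) \cong \UCB(X)$; everything else is a direct unravelling of the definitions of $\iota_{Y,X}$, $\eta_X$, $\eta_Y$, and the bar-operation $f \mapsto \overline f$.
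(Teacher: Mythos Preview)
Your proof is correct and takes a genuinely different route from the paper's. The paper argues $(\subseteq)$ via Urysohn's lemma and outer regularity: for any open $U \supseteq \iota_{Y,X}(\Samuel(Y))$ it produces a continuous bump function to show $\hat\mu(U)=1$, then invokes regularity. You instead identify $\hat\mu$ \emph{explicitly} as the pushforward of $\hat\nu$ along the embedding $\iota_{Y,X}\vert_{\Samuel(Y)}$ and appeal to uniqueness in Riesz--Markov--Kakutani; this is cleaner and yields more information (namely, what $\hat\mu$ actually is, not merely where it lives).

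For $(\supseteq)$ the contrast is sharper. The paper works on the function side: it shows $\mu$ annihilates every $f\in\UCB(X)$ with $f\vert_Y=0$, then invokes the extension theorem for bounded uniformly continuous functions to factor $\mu$ through the restriction map $\UCB(X)\to\UCB(Y)$, and finally checks positivity and unitality of the resulting $\nu$ by hand. You work on the measure side: restrict $\hat\mu$ to the closed set $K=\iota_{Y,X}(\Samuel(Y))$, pull back along the homeomorphism $(\iota_{Y,X}\vert_{\Samuel(Y)})^{-1}$, and read off $\nu\in\Mean(Y)$ directly from Riesz--Markov--Kakutani. Your route avoids the second explicit appeal to the extension theorem and the manual verification that $\nu$ is a mean; the paper's route is perhaps more elementary in that it never manipulates measures on subspaces, only integrals of continuous functions. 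Both are perfectly valid; yours is the more structural argument.
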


\begin{proof} Let us abbreviate $\iota \defeq \iota_{Y,X}$. 

($\subseteq$) Let $\mu \in \iota (\Mean(Y))$. Then there exists $\nu \in \Mean (Y)$ such that $\mu = \iota(\nu)$. Recall that $\iota(\Samuel(Y))$ is closed in $\Samuel(X)$ by Remark~\ref{remark:subspace.mean}. Hence, if $U$ is an open subset of $\Samuel (X)$ containing $\iota (\Samuel(Y))$, then Urysohn's lemma asserts the existence of some $f \in \Cont (\Samuel (X))$ with $0 \leq f \leq 1$, $f(\iota (\Samuel(Y))) \subseteq \{ 1 \}$ and $f(\Samuel(X)\setminus U) \subseteq \{ 0 \}$, wherefore \begin{displaymath}
	(f \circ \eta_{X})(y) \, = \, f(\eta_{X}(y)) \, \stackrel{\ref{remark:subspace.mean}}{=} \, f(\iota(\eta_{Y}(y))) \, = \, 1 
\end{displaymath} for all $y \in Y$, i.e., $(f \circ \eta_{X})\vert_{Y} = 1$, and thus \begin{displaymath}
	\hat{\mu}(U) \, \geq \, \int f \, \mathrm{d}\hat{\mu} \, = \, \mu(f\circ \eta_{X}) \, = \, \nu ((f \circ \eta_{X})\vert_{Y}) \, = \, 1 .
\end{displaymath} Since $\hat{\mu}$ is regular, this entails that $\hat{\mu}(\iota (\Samuel(Y))) = 1$. 

($\supseteq$) Let $\mu \in \Mean (X)$ with $\hat{\mu}(\iota(\Samuel (Y))) = 1$. We are going to show that \begin{equation}\tag{$\ast$}\label{factoring}
	\forall f \in \UCB(X) \colon \qquad f\vert_{Y} = 0 \ \Longrightarrow \ \mu(f) = 0 .
\end{equation} To this end, let $f \in \UCB(X)$ such that $f\vert_{Y} = 0$. Then \begin{displaymath}
	\overline{f}(\iota (\xi)) \, = \, \iota(\xi)(f) \, = \, \xi(f\vert_{Y}) \, = \, 0
\end{displaymath} for every $\xi \in \Samuel (Y)$, that is, $\overline{f}\vert_{\iota(\Samuel(Y))} = 0$. As $\hat{\mu}(\iota(\Samuel (Y))) = 1$, we conclude that \begin{displaymath}
	\mu(f) \, = \, \mu \left( \overline{f} \circ {\eta_{X}} \right)\, = \, \int \overline{f} \, \mathrm{d}\hat{\mu} \, = \, \int_{\iota(\Samuel(Y))} \overline{f} \, \mathrm{d}\hat{\mu} \, = \, 0 ,
\end{displaymath} which readily proves~\eqref{factoring}. According to~\eqref{factoring} and the surjectivity of the linear operator $\UCB(X) \to \UCB(Y), \, f \mapsto f\vert_{Y}$ (see, e.g.,~\cite[Theorem~2.22]{PachlBook}), there exists a unique linear map $\nu \colon \UCB(Y) \to \R$ such that $\iota(\nu) = \mu$. As $\mu(1) = 1$, it follows that $\nu(1) = 1$. Moreover, positivity of $\mu$ is easily seen to imply that $\nu$ is positive, too. Hence, $\nu \in \Mean (Y)$ and therefore $\mu \in \iota (\Mean (Y))$. \end{proof}

For later use (see Lemma~\ref{lemma:conditional.expectation} below), we include a brief remark about functoriality of the Samuel compactification. If $X$ and $Y$ are uniform spaces and $\phi \colon X \to Y$ is uniformly continuous, then the map $\Samuel (\phi) \colon \Samuel (X) \to \Samuel (Y)$ defined by \begin{displaymath}
	\Samuel (\phi)(\xi)(f) \, \defeq \, \xi (f \circ \phi) \qquad (\xi \in \Samuel (X), \, f \in \UCB(Y))
\end{displaymath} is continuous. With these assignments, $\Samuel$ constitutes a functor from the category of uniform spaces and uniformly continuous maps into the category of compact Hausdorff spaces and continuous maps. The transformation $\eta$ is natural in the following sense.

\begin{remark}\label{remark:natural} If $X$ and $Y$ are uniform spaces and $\phi \colon X \to Y$ is uniformly continuous, then $\Samuel (\phi) \circ \eta_{X} = \eta_{Y} \circ \phi$. \end{remark}

We now begin our study of concentration of means on uniform spaces.

\begin{lem}[Markov inequality]\label{lemma:markov.chebyshev.general} Let $X$ be a uniform space and let $\mu \in \Mean (X)$. For every $f \in \UCB(X)$ with $f \geq 0$, \begin{displaymath}
	\hat{\mu}(\{ \xi \in \Samuel(X) \mid \xi(f) \geq 1 \}) \, \leq \, \mu(f) .
\end{displaymath} \end{lem}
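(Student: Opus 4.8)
The plan is to transport the statement to the compact space $\Samuel(X)$ and then invoke the classical Markov inequality for the probability measure $\hat{\mu}$. First I would pass from $f$ to the associated continuous function $\overline{f} \in \Cont(\Samuel(X))$, which satisfies $\overline{f}(\xi) = \xi(f)$ for all $\xi \in \Samuel(X)$ and $\overline{f} \circ \eta_{X} = f$ by the isometric isomorphism $\Cont(\Samuel(X)) \cong \UCB(X)$ recalled above. Since every $\xi \in \Samuel(X)$ is a unital ring homomorphism $\UCB(X) \to \R$, hence a character of norm one and therefore a positive linear form, and since $f \geq 0$, it follows that $\overline{f} \geq 0$ on all of $\Samuel(X)$ (alternatively, $\overline{f}$ is continuous and nonnegative on the dense subset $\eta_{X}(X)$, hence everywhere).

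Next I would note that the set in question,
\begin{displaymath}
	A \, \defeq \, \{ \xi \in \Samuel(X) \mid \xi(f) \geq 1 \} \, = \, \overline{f}^{-1}([1,\infty)) ,
\end{displaymath}
is closed in $\Samuel(X)$ by continuity of $\overline{f}$, hence Borel, so that $\hat{\mu}(A)$ is well defined. On $A$ one has $\overline{f} \geq 1 = \chi_{A}$, while on the complement $\Samuel(X) \setminus A$ one has $\overline{f} \geq 0 = \chi_{A}$; thus $\chi_{A} \leq \overline{f}$ pointwise on $\Samuel(X)$.

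Finally, integrating this inequality against the Borel probability measure $\hat{\mu}$ and using the defining property of $\hat{\mu}$ from the Riesz--Markov--Kakutani theorem together with $\overline{f} \circ \eta_{X} = f$, I would conclude
\begin{displaymath}
	\hat{\mu}(A) \, = \, \int \chi_{A} \, \mathrm{d}\hat{\mu} \, \leq \, \int \overline{f} \, \mathrm{d}\hat{\mu} \, = \, \mu(\overline{f} \circ \eta_{X}) \, = \, \mu(f) ,
\end{displaymath}
which is the asserted bound. There is no genuine obstacle in this argument; the only points deserving a word of care are the nonnegativity of $\overline{f}$ and the bookkeeping behind the identity $\mu(f) = \int \overline{f} \, \mathrm{d}\hat{\mu}$, both of which are immediate consequences of the material assembled earlier in this section.
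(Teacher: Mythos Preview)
Your proposal is correct and follows essentially the same route as the paper's proof: both arguments pass to $\overline{f}$ on $\Samuel(X)$, observe the pointwise inequality $\chi_{\{\xi \,:\, \xi(f)\geq 1\}} \leq \overline{f}$, and then integrate against $\hat{\mu}$ using the identity $\int \overline{f}\,\mathrm{d}\hat{\mu} = \mu(f)$. The only difference is that you spell out the nonnegativity of $\overline{f}$ and the measurability of the set, which the paper leaves implicit.
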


\begin{proof} If $f \in \UCB(X)$ and $f \geq 0$, then $\chi_{\{ \xi \in \Samuel(X) \mid \xi(f) \geq 1\}} \leq \overline{f}$ and thus \begin{displaymath}
	\hat{\mu}(\{ \xi \in \Samuel(X) \mid \xi(f) \geq 1 \}) \, \leq \, \int \overline{f} \, \mathrm{d}\hat{\mu} \, = \, \mu\left( \overline{f} \circ \eta_{X} \right) \, = \, \mu(f) . \qedhere
\end{displaymath} \end{proof}

\begin{lem}\label{lemma:convergence} Let $X$ be a uniform space. Furthermore, let $\mu \in \Mean (X)$, $f \in \UCB(X)$, and $\epsilon \in \R_{>0}$. Then the following hold. \begin{itemize}
	\item[$(1)$] $\hat{\mu}(\{ \xi \in \Samuel(X) \mid \vert \xi(f) - \mu(f) \vert \geq \epsilon \}) \leq \tfrac{1}{\epsilon^{2}}\mu\left( (f-\mu(f))^{2} \right)$.
	\item[$(2)$] $\mu\left( (f-\mu(f))^{2} \right) \leq (\diam f(X))^{2}\hat{\mu}(\{ \xi \in \Samuel(X) \mid \vert \xi(f) - \mu(f) \vert \geq \epsilon \}) + \epsilon^{2}$.
\end{itemize} \end{lem}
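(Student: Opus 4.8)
The plan is to derive $(1)$ from the Markov inequality (Lemma~\ref{lemma:markov.chebyshev.general}) applied to a suitable nonnegative function, and to obtain $(2)$ by a truncation argument for the push-forward measure $\hat{\mu}$ on $\Samuel(X)$. Throughout I would use the two facts already established above: that $f \mapsto \overline{f}$ is an isometric isomorphism of unital Banach algebras $\UCB(X) \to \Cont(\Samuel(X))$, and that $\mu(g) = \int \overline{g} \, \mathrm{d}\hat{\mu}$ for every $g \in \UCB(X)$ (which follows from the Riesz--Markov--Kakutani correspondence together with $\overline{g} \circ \eta_{X} = g$).

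For $(1)$, I would set $g \defeq \tfrac{1}{\epsilon^{2}}(f - \mu(f))^{2}$. Since $\UCB(X)$ is a unital Banach algebra, $f - \mu(f) \in \UCB(X)$ and hence so is $g$, and clearly $g \geq 0$. The point is to identify the superlevel set of $\overline{g}$: for every $\xi \in \Samuel(X)$, using that $\xi$ is a unital ring homomorphism, $\xi(g) = \tfrac{1}{\epsilon^{2}}\xi(f-\mu(f))^{2} = \tfrac{1}{\epsilon^{2}}(\xi(f) - \mu(f))^{2}$, so that $\{ \xi \in \Samuel(X) \mid \xi(g) \geq 1 \} = \{ \xi \in \Samuel(X) \mid \lvert \xi(f) - \mu(f)\rvert \geq \epsilon \}$. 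Now Lemma~\ref{lemma:markov.chebyshev.general} applied to $g$, together with $\mu(g) = \tfrac{1}{\epsilon^{2}}\mu((f-\mu(f))^{2})$, gives $(1)$ at once.

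For $(2)$, I would pass to $\Samuel(X)$ and put $h \defeq \overline{f} - \mu(f) \in \Cont(\Samuel(X))$. Since $f \mapsto \overline{f}$ is multiplicative, $\overline{(f-\mu(f))^{2}} = h^{2}$, and therefore $\mu((f-\mu(f))^{2}) = \int h^{2} \, \mathrm{d}\hat{\mu}$. I would then split this integral over the closed Borel set $A \defeq \{ \xi \in \Samuel(X) \mid \lvert \xi(f) - \mu(f)\rvert \geq \epsilon \}$ and over its complement: on $\Samuel(X) \setminus A$ one has $h^{2} < \epsilon^{2}$, contributing at most $\epsilon^{2}\hat{\mu}(\Samuel(X)\setminus A) \leq \epsilon^{2}$; on $A$ one bounds $h^{2}$ by its global supremum. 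The remaining ingredient is the estimate $\Vert h \Vert_{\infty} \leq \diam f(X)$: since $\eta_{X}(X)$ is dense in $\Samuel(X)$ and $\overline{f}$ is continuous, $\overline{f}(\Samuel(X)) = \overline{f(X)} \subseteq [\inf f(X), \sup f(X)]$, while positivity and unitality of $\mu$ force $\mu(f)$ into the same interval, whose length is exactly $\diam f(X)$; hence $\lvert \xi(f) - \mu(f) \rvert \leq \diam f(X)$ for all $\xi$. Combining the two estimates yields $\int h^{2}\,\mathrm{d}\hat{\mu} \leq (\diam f(X))^{2}\hat{\mu}(A) + \epsilon^{2}$, which is $(2)$.

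The only mildly delicate points are the two places where multiplicativity of the Samuel evaluations is needed — namely the identities $\xi((f-\mu(f))^2) = (\xi(f) - \mu(f))^2$ and $\overline{(f-\mu(f))^{2}} = (\overline{f} - \mu(f))^{2}$ — and pinning down that both $\xi(f)$ and $\mu(f)$ lie in the closed interval spanned by $f(X)$, so that their difference is controlled by $\diam f(X)$ rather than merely by $2\Vert f \Vert_{\infty}$. Everything else reduces to one-line computations with the correspondence $\mu(g) = \int \overline{g}\,\mathrm{d}\hat{\mu}$ and with the fact that $\hat{\mu}$ is a probability measure.
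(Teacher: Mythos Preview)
Your proposal is correct and follows essentially the same route as the paper: for $(1)$ you apply the Markov inequality (Lemma~\ref{lemma:markov.chebyshev.general}) to $g = \tfrac{1}{\epsilon^{2}}(f-\mu(f))^{2}$ after identifying the relevant superlevel set via multiplicativity of $\xi$, and for $(2)$ you split the integral of $(\overline{f}-\mu(f))^{2}$ over the set $A$ and its complement, using density of $\eta_{X}(X)$ and positivity/unitality of $\mu$ to secure the bound $\lvert \xi(f)-\mu(f)\rvert \leq \diam f(X)$. The paper does exactly this, with only cosmetic differences in notation.
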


\begin{proof} (1) We observe that \begin{align*}
	\hat{\mu}(\{ \xi \in \mathrm{S}(X) \mid \vert \xi(f) - \mu(f) \vert \geq \epsilon \}) \, & = \, \hat{\mu}\left(\left\{ \xi \in \mathrm{S}(X) \left\vert \, (\xi(f) - \mu(f))^{2} \geq \epsilon^{2} \right\}\right) \right. \\
		& = \, \hat{\mu}\left(\bigl\{ \xi \in \mathrm{S}(X) \, \big\vert \, \xi\bigl(\tfrac{1}{\epsilon^{2}}(f - \mu(f))^{2}\bigr) \geq 1 \bigr\}\right) \\
		& \stackrel{\ref{lemma:markov.chebyshev.general}}{\leq} \, \mu \left(\tfrac{1}{\epsilon^{2}}(f - \mu(f))^{2}\right) \, = \, \tfrac{1}{\epsilon^{2}}\mu\left((f - \mu(f))^{2}\right) .
\end{align*}
	
(2) Our hypothesis entails that $\Mean (X) \ne \emptyset$ and thus $X \ne \emptyset$. Consequently, both $s \defeq \inf f(X) \in \R$ and $t \defeq \sup f(X) \in \R$. As $s\leq f \leq t$ and $\mu \in \Mean (X)$, it follows that $s \leq \mu(f) \leq t$. Hence, for every $x \in X$, \begin{displaymath}
	\left\lvert \overline{f}(\eta_{X}(x)) - \mu (f)\right\rvert \, = \, \vert f(x) - \mu(f) \vert \, \leq \, t-s \, = \, \diam f(X) .
\end{displaymath} Since $\eta_{X}(X)$ is dense in $\Samuel (X)$, this implies that \begin{equation}\tag{$\ast$}\label{diam}
	\forall \xi \in \Samuel (X) \colon \qquad \left\lvert \overline{f}(\xi) - \mu(f) \right\rvert \, \leq \, \diam f(X) .
\end{equation} Considering the closed subset \begin{displaymath}
	B \, \defeq \, \left\{ \xi \in \Samuel(X) \left\vert \, \left\lvert \overline{f}(\xi) - \mu(f) \right\rvert \geq \epsilon \right\} \! \right. \, = \, \{ \xi \in \Samuel(X) \mid \vert \xi(f) - \mu(f) \vert \geq \epsilon \}
\end{displaymath} of $\Samuel (X)$, we now conclude that \begin{align*}
	\mu \left( (f-\mu(f))^{2} \right) \, & = \, \int \overline{(f-\mu(f))^{2}} \, \mathrm{d}\hat{\mu} \, = \, \int \left(\overline{f}-\mu(f)\right)^{2} \, \mathrm{d}\hat{\mu} \\
		& = \, \int_{B} \left(\overline{f}-\mu(f)\right)^{2} \, \mathrm{d}\hat{\mu} + \int_{\Samuel (X) \setminus B} \left(\overline{f}-\mu(f)\right)^{2} \, \mathrm{d}\hat{\mu} \\
		&\stackrel{\eqref{diam}}{\leq} \, (\diam f(X))^{2}\hat{\mu}(B) + \epsilon^{2} .\qedhere
\end{align*} \end{proof}

\begin{prop}\label{proposition:uniform.convergence} Let $X$ be a uniform space, $(\mu_{i})_{i \in I}$ be a net in $\Mean (X)$, and $B$ be a norm-bounded subset of $\UCB(X)$. The following are equivalent. \begin{itemize}
	\item[$(1)$] $\sup\nolimits_{f \in B} \mu_{i}\left( (f-\mu_{i}(f))^{2} \right) \, \longrightarrow \, 0$ as $i \to I$.
	\item[$(2)$] For every $\epsilon \in \R_{>0}$, \begin{displaymath}
			\qquad \sup\nolimits_{f \in B} \hat{\mu}_{i}(\{ \xi \in \Samuel(X) \mid \vert \xi(f) - \mu_{i}(f) \vert \geq \epsilon \})  \, \longrightarrow \, 0 \quad (i \to I) .
		\end{displaymath}
\end{itemize} \end{prop}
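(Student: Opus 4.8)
The plan is to obtain both implications as immediate consequences of the two estimates isolated in Lemma~\ref{lemma:convergence}, the only extra input being that norm-boundedness of $B$ yields a \emph{uniform} bound on the diameters $\diam f(X)$. Concretely, I would first record $c \defeq \sup_{f \in B} \lVert f \rVert_{\infty} < \infty$, so that $\diam f(X) \leq 2c$ for every $f \in B$; consequently all the quantities appearing in (1) and (2) are bounded by $4c^{2}$ and $1$ respectively, and in particular the suprema over $f \in B$ are finite. (If $I = \emptyset$ or $B = \emptyset$, both (1) and (2) hold vacuously under the convention $\sup \emptyset = 0$, and otherwise $\Mean(X) \neq \emptyset$ forces $X \neq \emptyset$, which is what Lemma~\ref{lemma:convergence} requires.)

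For $(1) \Rightarrow (2)$: fix $\epsilon \in \R_{>0}$. Lemma~\ref{lemma:convergence}(1) gives, for each $i \in I$ and each $f \in B$,
\begin{displaymath}
	\hat{\mu}_{i}(\{ \xi \in \Samuel(X) \mid \vert \xi(f) - \mu_{i}(f) \vert \geq \epsilon \}) \, \leq \, \tfrac{1}{\epsilon^{2}}\mu_{i}\bigl( (f-\mu_{i}(f))^{2} \bigr) ,
\end{displaymath}
so taking suprema over $f \in B$ and then letting $i \to I$, the right-hand side tends to $0$ by (1). As $\epsilon$ was arbitrary, (2) follows. For $(2) \Rightarrow (1)$: fix $\delta \in \R_{>0}$ and set $\epsilon \defeq \sqrt{\delta/2}$. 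Using Lemma~\ref{lemma:convergence}(2) together with $\diam f(X) \leq 2c$, for each $i \in I$ and each $f \in B$,
\begin{displaymath}
	\mu_{i}\bigl( (f-\mu_{i}(f))^{2} \bigr) \, \leq \, 4c^{2}\,\hat{\mu}_{i}(\{ \xi \in \Samuel(X) \mid \vert \xi(f) - \mu_{i}(f) \vert \geq \epsilon \}) + \tfrac{\delta}{2} ,
\end{displaymath}
whence $\sup_{f \in B}\mu_{i}( (f-\mu_{i}(f))^{2} ) \leq 4c^{2}\sup_{f \in B}\hat{\mu}_{i}(\{ \xi \mid \vert \xi(f) - \mu_{i}(f) \vert \geq \epsilon \}) + \tfrac{\delta}{2}$; by (2) applied with this particular $\epsilon$, the first summand is eventually $\leq \delta/2$, so the supremum on the left is eventually $\leq \delta$. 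Since $\delta$ was arbitrary, (1) holds.

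There is essentially no genuine obstacle: the substantive work has already been done in Lemma~\ref{lemma:convergence}. The only points deserving a modicum of care are the uniform diameter bound supplied by the norm-boundedness of $B$ (needed to keep the constant $4c^{2}$ independent of $f$ and $i$), the harmless degenerate cases noted above, and the order of the quantifiers in the direction $(2) \Rightarrow (1)$ — one must commit to $\delta$, then define $\epsilon$ in terms of $\delta$, and only afterwards invoke the convergence furnished by (2) for that fixed $\epsilon$.
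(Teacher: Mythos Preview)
Your proposal is correct and follows exactly the same approach as the paper: both implications are obtained from the two parts of Lemma~\ref{lemma:convergence}, with norm-boundedness of $B$ supplying the uniform diameter bound needed for $(2)\Rightarrow(1)$. The paper's proof is a two-line reference to that lemma, and your write-up simply unpacks the $\epsilon$--$\delta$ bookkeeping that the paper leaves implicit.
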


\begin{proof} While the implication (1)$\Longrightarrow$(2) is due to Lemma~\ref{lemma:convergence}(1), the implication (2)$\Longrightarrow$(1) follows from Lemma~\ref{lemma:convergence}(2) and norm-boundedness of $B$. \end{proof}

The following fact casts concentration in uniform spaces (Definition~\ref{definition:concentration}) as a uniform version of convergence to multiplicative homomorphisms.

\begin{prop}\label{proposition:convergence} Let $X$ be a uniform space and $(\mu_{i})_{i \in I}$ be a net in $\Mean (X)$. The following are equivalent. \begin{itemize}
	\item[$(1)$] Every weak-$\ast$ accumulation point of $(\mu_{i})_{i \in I}$ belongs to $\Samuel(X)$.
	\item[$(2)$] For every $f \in \UCB(X)$, \begin{displaymath}
			\qquad \mu_{i}\left( (f-\mu_{i}(f))^{2} \right) \, \longrightarrow \, 0 \quad (i \to I) .
		\end{displaymath}
	\item[$(3)$] For every $f \in \UCB(X)$ and every $\epsilon \in \R_{>0}$, \begin{displaymath}
			\qquad \hat{\mu}_{i}(\{ \xi \in \Samuel(X) \mid \vert \xi(f) - \mu_{i}(f) \vert \geq \epsilon \})  \, \longrightarrow \, 0 \quad (i \to I) .
		\end{displaymath}
\end{itemize}\end{prop}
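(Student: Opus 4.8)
\emph{Proof proposal.} The plan is to dispatch the equivalence (2)$\Leftrightarrow$(3) by a direct appeal to Proposition~\ref{proposition:uniform.convergence}, and then to prove (1)$\Leftrightarrow$(2) by means of a single elementary device: the ``variance'' functional attached to each $f \in \UCB(X)$. For the first part, observe that any singleton $B = \{ f \}$ with $f \in \UCB(X)$ is norm-bounded, so Proposition~\ref{proposition:uniform.convergence}, applied to this $B$, yields that $\mu_{i}\bigl((f-\mu_{i}(f))^{2}\bigr) \to 0$ if and only if $\hat{\mu}_{i}\bigl(\{ \xi \in \Samuel(X) \mid \lvert \xi(f) - \mu_{i}(f) \rvert \geq \epsilon \}\bigr) \to 0$ for every $\epsilon \in \R_{>0}$; since (2) and (3) are precisely the universal quantifications of these two conditions over $f \in \UCB(X)$, they are equivalent.

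For (1)$\Leftrightarrow$(2) I would introduce, for each $f \in \UCB(X)$, the map
\[
	V_{f} \colon \, \Mean(X) \, \longrightarrow \, \R_{\geq 0} , \qquad V_{f}(\mu) \, \defeq \, \mu\bigl((f - \mu(f))^{2}\bigr) \, = \, \mu(f^{2}) - \mu(f)^{2} ,
\]
and record two facts about it. First, $V_{f}$ is weak-$\ast$ continuous, being a polynomial expression in the weak-$\ast$ continuous evaluation maps $\mu \mapsto \mu(f^{2})$ and $\mu \mapsto \mu(f)$. Second, the Samuel compactification is cut out by the vanishing of these functionals:
\[
	\Samuel(X) \, = \, \bigcap\nolimits_{f \in \UCB(X)} V_{f}^{-1}(0) .
\]
Indeed, if $V_{f}(\mu) = 0$ for all $f \in \UCB(X)$, then the polarization identity $fg = \tfrac{1}{2}\bigl((f+g)^{2} - f^{2} - g^{2}\bigr)$ forces $\mu(fg) = \mu(f)\mu(g)$ for all $f,g \in \UCB(X)$, so that the (already linear and unital) mean $\mu$ is a ring homomorphism, i.e., belongs to $\Samuel(X)$; the reverse inclusion is immediate. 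These two observations reduce the remaining equivalence to soft topology.

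Granting them, (2)$\Rightarrow$(1) is read off at once: if $V_{f}(\mu_{i}) \to 0$ for every $f$ and $\nu$ is a weak-$\ast$ accumulation point of $(\mu_{i})_{i \in I}$, then passing to a subnet converging to $\nu$ and using continuity of each $V_{f}$ gives $V_{f}(\nu) = 0$ for all $f$, i.e., $\nu \in \Samuel(X)$. For (1)$\Rightarrow$(2) I would argue by contradiction: if $V_{f}(\mu_{i}) \not\to 0$ for some $f$, there are $\delta \in \R_{>0}$ and a subnet with $V_{f}(\mu_{i_{j}}) \geq \delta$ throughout; by weak-$\ast$ compactness of $\Mean(X)$ this subnet has a further subnet converging to some $\nu \in \Mean(X)$, which is then a weak-$\ast$ accumulation point of $(\mu_{i})_{i \in I}$ and hence lies in $\Samuel(X)$ by (1), whereas continuity of $V_{f}$ forces $V_{f}(\nu) \geq \delta > 0$, contradicting $\nu \in \Samuel(X)$. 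Thus (2) holds. None of the steps presents a genuine obstacle: the one substantive input is the identification $\Samuel(X) = \bigcap_{f} V_{f}^{-1}(0)$ together with the continuity of the $V_{f}$, and the only thing requiring mild care is the net-theoretic bookkeeping of passing to sub-subnets while keeping track of accumulation points of the original net.
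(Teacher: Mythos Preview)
Your proof is correct and follows essentially the same strategy as the paper: both identify $\Samuel(X)$ as the common zero locus of the variance functionals $V_{f}(\mu)=\mu(f^{2})-\mu(f)^{2}$, both establish their weak-$\ast$ continuity, and both finish with a compactness/subnet argument, while (2)$\Leftrightarrow$(3) is handled in both cases by Proposition~\ref{proposition:uniform.convergence}. The only cosmetic difference is that the paper derives continuity of $V_{f}$ via an explicit estimate on $\lvert V_{f}(\mu)-V_{f}(\nu)\rvert$ and cites an external reference for the characterization of $\Samuel(X)$, whereas you observe directly that $V_{f}$ is a polynomial in weak-$\ast$ continuous evaluations and supply the polarization argument yourself; these are equivalent in content.
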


\begin{proof} (1)$\Longleftrightarrow$(2). By a general fact about arbitrary real (or complex) unital algebras (see, e.g.,~\cite[Lemma~2.1.5]{kaniuth}), \begin{displaymath}
	\Samuel (X) \, = \, \bigl\{ \mu \in \R^{\UCB(X)} \big\vert \ \mu \text{ linear, unital}, \, \forall f \in \UCB(X) \colon \mu\bigl(f^{2}\bigr) = \mu(f)^{2} \bigr\} . 
\end{displaymath} Also, if $\mu \in \R^{\UCB(X)}$ is linear and unital, then \begin{displaymath}
	\mu\bigl(f^{2}\bigr)-\mu(f)^{2}\! \, = \, \mu\bigl(f^{2}\bigr)-2\mu(f)^{2} + \mu(f)^{2}\! \, = \, \mu\bigl( f^{2} -2\mu(f)f + \mu(f)^{2} \bigr) \, = \, \mu\bigl((f-\mu(f))^{2}\bigr)
\end{displaymath} for every $f \in \UCB(X)$. Hence, \begin{equation}\tag{$\ast$}\label{samuel}
	\Samuel(X) \, = \, \bigl\{ \mu \in \R^{\UCB(X)} \big\vert \ \mu \text{ linear, unital}, \, \forall f \in \UCB(X) \colon \mu\bigl((f-\mu(f))^{2}\bigr) = 0 \bigr\} .
\end{equation} Moreover, for all $\mu, \nu \in \UCB(X)^{\ast}$ and $f \in \UCB(X)$, since \begin{align*}
	(f-\mu(f))^{2} - (f-\nu(f))^{2} \, &= \, ((f-\mu(f)) + (f-\nu(f)))((f-\mu(f)) - (f-\nu(f))) \\
	& = \, (2f-\mu(f)-\nu(f))(\nu(f)-\mu(f)) \\
	& = \, (\mu(f)+\nu(f)-2f)(\mu(f)-\nu(f))
\end{align*} and therefore \begin{align*}
	\left\lVert (f-\mu(f))^{2} - (f-\nu(f))^{2} \right\rVert_{\infty} \, &= \, \left\lVert \mu(f)+\nu(f)-2f \right\rVert_{\infty} \left\lvert \mu(f) - \nu(f) \right\rvert \\
	& \leq \, (\Vert \mu \Vert + \Vert \nu \Vert + 2) \left\lVert f \right\rVert_{\infty} \left\lvert \mu(f) - \nu(f) \right\rvert ,
\end{align*} we conclude that \begin{equation}\tag{$\ast\ast$}\label{samuel2}
	\begin{split}
		\left\lvert \mu \left( (f-\mu(f))^{2} \right) - \nu \left( (f-\nu(f))^{2} \right) \right\rvert \, &\leq \, \left\vert \mu\left( (f-\mu(f))^{2} \right) - \nu\left( (f-\mu(f))^{2} \right) \right\vert \\
		&\hspace{1mm} + \Vert \nu \Vert (\Vert \mu \Vert + \Vert \nu \Vert + 2)\left\lVert f \right\rVert_{\infty} \left\lvert \mu(f)-\nu(f) \right\rvert .
	\end{split}
\end{equation} Now, by~\eqref{samuel} and~\eqref{samuel2} and due to $(\mu_{i})_{i \in I}$ being norm-bounded, (2) implies (1). Conversely, to show that $\neg (2) \Longrightarrow \neg (1)$, suppose that there exist $f \in \UCB(X)$ and $\epsilon \in \R_{>0}$ such that the subset \begin{displaymath}
	J \, \defeq \, \bigl\{ i \in I \, \big\vert \, \mu_{i}\bigl( (f-\mu_{i}(f))^{2} \bigr) \geq \epsilon \bigr\} 
\end{displaymath} is cofinal in $I$. In particular, $J$ constitutes a directed set with respect to the preorder inherited from $I$. By weak-$\ast$ compactness of $\Mean (X)$, the net $(\mu_{j})_{j \in J}$ admits a weak-$\ast$ accumulation point $\mu \in \Mean (X)$, which then satisfies \begin{displaymath}
	\mu\left( (f-\mu(f))^{2}\right) \, \geq \, \epsilon
\end{displaymath} by~\eqref{samuel2}. Consequently, $\mu \notin \Samuel(X)$ by~\eqref{samuel}. Since $J$ is cofinal in $I$, the mean $\mu$ is a weak-$\ast$ accumulation point of $(\mu_{i})_{i \in I}$, confirming the negation of~(1).

(2)$\Longleftrightarrow$(3). This is due to Proposition~\ref{proposition:uniform.convergence}. \end{proof}

In order to specify a certain class of norm-bounded sets to be discussed in connection with Proposition~\ref{proposition:uniform.convergence}, let $X$ be a uniform space. Following~\cite[Definition~1.19]{PachlBook}, a subset $B \subseteq \mathrm{UCB}(X)$ will be called \emph{UEB}\footnote{which is short for \emph{uniformly equicontinuous, bounded}} if \begin{itemize}
	\item[---] $B$ is bounded in the supremum norm, and
	\item[---] $B$ is uniformly equicontinuous, i.e., for every $\epsilon \in \R_{>0}$ there exists an entourage $U$ of $X$ such that \begin{displaymath}
	\qquad \forall f \in B \ \forall (x,y) \in U \colon \qquad \vert f(x) - f(y) \vert \, \leq \, \epsilon .
\end{displaymath}
\end{itemize} The collection $\mathrm{UEB}(X)$ of all UEB subsets of $\mathrm{UCB}(X)$ constitutes a convex vector bornology on the vector space $\mathrm{UCB}(X)$. The set $\UEB(X)$ is an essential ingredient in Pachl's monograph~\cite{PachlBook}.

\begin{definition}\label{definition:concentration} Let $X$ be a uniform space. A net $(\mu_{i})_{i \in I}$ in $\Mean (X)$ is said to \emph{concentrate in $X$} if \begin{displaymath}
	\forall B \in \UEB (X) \colon \qquad \sup\nolimits_{f \in B} \mu_{i}\left( (f-\mu_{i}(f))^{2} \right) \, \longrightarrow \, 0 \quad (i \to I) .
\end{displaymath} \end{definition}

\begin{cor}\label{corollary:concentration} Let $X$ be a uniform space and let $(\mu_{i})_{i \in I}$ be a net in $\Mean (X)$. The following are equivalent. \begin{itemize}
	\item[$(1)$] $(\mu_{i})_{i \in I}$ concentrates in $X$.
	\item[$(2)$] For every $B \in \UEB(X)$ and every $\epsilon \in \R_{>0}$, \begin{displaymath}
			\qquad \sup\nolimits_{f \in B} \hat{\mu}_{i}(\{ \xi \in \Samuel(X) \mid \vert \xi(f) - \mu_{i}(f) \vert \geq \epsilon \})  \, \longrightarrow \, 0 \quad (i \to I) .
		\end{displaymath}
\end{itemize} \end{cor}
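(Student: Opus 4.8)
The plan is to reduce the statement directly to Proposition~\ref{proposition:uniform.convergence}. Recall that, by definition, every $B \in \UEB(X)$ is bounded in the supremum norm; in particular, any such $B$ is a norm-bounded subset of $\UCB(X)$, so Proposition~\ref{proposition:uniform.convergence} applies to it. Fixing an arbitrary $B \in \UEB(X)$, that proposition asserts that the condition $\sup_{f \in B} \mu_{i}\bigl((f-\mu_{i}(f))^{2}\bigr) \to 0$ (as $i \to I$) is equivalent to the condition that, for every $\epsilon \in \R_{>0}$, one has $\sup_{f \in B} \hat{\mu}_{i}(\{\xi \in \Samuel(X) \mid |\xi(f) - \mu_{i}(f)| \geq \epsilon\}) \to 0$ (as $i \to I$).

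Now, by Definition~\ref{definition:concentration}, statement~(1) of the corollary is exactly the assertion that the first of these two conditions holds for every $B \in \UEB(X)$, whereas statement~(2) is the assertion that the second holds for every $B \in \UEB(X)$. Since the two conditions are equivalent for each individual $B$ by the previous paragraph, quantifying this equivalence over all $B \in \UEB(X)$ yields $(1) \Longleftrightarrow (2)$, which is the claim. There is no genuine obstacle here: the only point worth flagging is that supremum-norm boundedness is part of the very definition of a UEB set, which is precisely what is needed to invoke Proposition~\ref{proposition:uniform.convergence}; the uniform equicontinuity clause in the definition of $\UEB(X)$ plays no role in this particular deduction (it becomes relevant only later, when one wants to exhibit concentrating nets).
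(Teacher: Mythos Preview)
Your proof is correct and follows exactly the paper's approach: the paper's own proof consists of the single sentence ``This is due to Proposition~\ref{proposition:uniform.convergence},'' and your argument simply spells out why that proposition applies (namely, because every $B \in \UEB(X)$ is norm-bounded by definition) and how the quantification over all such $B$ yields the equivalence.
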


\begin{proof} This is due to Proposition~\ref{proposition:uniform.convergence}. \end{proof}

\section{Amenability and convolution algebras}\label{section:topological.groups}

In this section, we briefly collect some preliminary material concerning general topological groups and their convolution algebras from the literature~\cite{AnalysisOnSemigroups,PestovBook,PachlBook,GrigorchukDeLaHarpe}. The presentation is focused on aspects relevant to our subsequent discussion of amenability and extreme amenability.

Let $G$ be a topological group. For each $g \in G$, we define \begin{displaymath}
	\lambda_{g} \colon \, G \, \longrightarrow \, G, \quad x \, \longmapsto \, gx .
\end{displaymath} Let $\mathscr{U}(G)$ denote the neighborhood filter of the neutral element $e = e_{G} \in G$. Henceforth, $G$ is viewed as a uniform space, carrying the \emph{right uniformity} \begin{displaymath}
	\mathscr{E}_{\Rsh}(G) \, \defeq \, \left\{ E \subseteq G \times G \left\vert \, \exists U \in \mathscr{U}(G) \, \forall x,y \in G \colon \, xy^{-1}\! \in U \Longrightarrow \, (x,y) \in E \right\} \right.\! .
\end{displaymath} We put $\RUCB(G) \defeq \UCB(G,\mathscr{E}_{\Rsh}(G))$ and note that \begin{displaymath}
	\RUCB(G) \, = \, \left\{ f \in \ell^{\infty}(G) \left\vert \, \forall \epsilon \in \R_{>0} \, \exists U \in \mathscr{U}(G) \, \forall g \in U\colon \, \Vert f-(f\circ \lambda_{g}) \Vert_{\infty} \leq \epsilon \right\} . \right.
\end{displaymath} Moreover, let us observe that a norm-bounded subset $B \subseteq \ell^{\infty}(G)$ belongs to $\RUEB (G) \defeq \UEB (G,\mathscr{E}_{\Rsh}(G))$ if and only if \begin{displaymath}
	\forall \epsilon \in \R_{>0} \, \exists U \in \mathscr{U}(G) \, \forall g \in U \, \forall f \in B \colon \quad \Vert f-(f\circ \lambda_{g}) \Vert_{\infty} \leq \epsilon .
\end{displaymath} The topological group $G$ admits a continuous action on the compact Hausdorff space $\Mean (G) \defeq \Mean (G,\mathscr{E}_{\Rsh}(G))$ defined by \begin{displaymath}
	(g\mu)(f) \, \defeq \, \mu(f \circ \lambda_{g}) \qquad (g \in G, \, \mu \in \Mean (G), \, f \in \RUCB(G)) .
\end{displaymath} Furthermore, $\Samuel (G) \defeq \Samuel (G,\mathscr{E}_{\Rsh}(G))$ constitutes a $G$-invariant subset of $\Mean (G)$. The topological group $G$ is said to be \emph{amenable} (resp., \emph{extremely amenable}) if $\Mean (G)$ (resp., $\Samuel(G)$) contains a $G$-fixed point. A prominent characterization~\cite[Proposition~3.6]{GrigorchukDeLaHarpe} (resp.,~\cite[Theorem~1]{mitchell}) asserts that $G$ is amenable (resp., extremely amenable) if and only if every continuous\footnote{that is, jointly continuous} action of $G$ on a non-void compact Hausdorff space admits an invariant regular Borel probability measure (resp., a fixed point).

The reader is referred to~\cite{PestovBook,GrigorchukDeLaHarpe} for a more comprehensive account on (extreme) amenability of topological groups, while we confine ourselves to noting following basic characterization.

\begin{prop}\label{proposition:fremlin} Let $G$ be a topological group. The following are equivalent. \begin{itemize}
	\item[$(1)$] $G$ is extremely amenable.
	\item[$(2)$] For all $F \in \Pfin (\RUCB(G))$, $E \in \Pfin (G)$ and $\epsilon \in \R_{>0}$, there is $\mu \in \Mean (G)$ such that, for each $f \in F$, \begin{displaymath}
					\qquad \quad \, \sup\nolimits_{g \in E} \left\lvert \mu(f) - \mu(f \circ \lambda_{g}) \right\rvert \leq \epsilon , \quad \, \hat{\mu}(\{ \xi \in \Samuel (G) \mid \vert \xi(f) - \mu(f) \vert \geq \epsilon \}) \leq \epsilon .
				\end{displaymath}
\end{itemize} \end{prop}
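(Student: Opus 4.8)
The plan is to prove the two implications separately; $(1)\Rightarrow(2)$ is immediate while $(2)\Rightarrow(1)$ carries all the weight.

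For $(1)\Rightarrow(2)$ I would take a $G$-fixed point $\xi_{0}\in\Samuel(G)$ and simply set $\mu\defeq\xi_{0}$, irrespective of $F$, $E$, $\epsilon$. Then $\mu(f\circ\lambda_{g})=(g\mu)(f)=\mu(f)$ for every $g\in G$, so the first estimate holds with value $0$. For the second, I would observe that $\hat{\mu}=\hat{\xi_{0}}$ is the Dirac measure at $\xi_{0}$: for every $h\in\Cont(\Samuel(G))$, writing $h=\overline{f}$ with $f\defeq h\circ\eta_{G}\in\RUCB(G)$, the defining identity of $\hat{\xi_{0}}$ gives $\int h\,\mathrm{d}\hat{\xi_{0}}=\xi_{0}(f)=\overline{f}(\xi_{0})=h(\xi_{0})$, whence $\hat{\xi_{0}}=\delta_{\xi_{0}}$ by uniqueness in the Riesz--Markov--Kakutani theorem. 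Since $\xi_{0}$ does not belong to $\{\xi\in\Samuel(G)\mid\lvert\xi(f)-\mu(f)\rvert\geq\epsilon\}$, that set has $\hat{\mu}$-measure $0\leq\epsilon$, as required.

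For $(2)\Rightarrow(1)$ I would assemble the approximating means into a net. Let $I\defeq\Pfin(\RUCB(G))\times\Pfin(G)\times\R_{>0}$ be directed by declaring $(F,E,\epsilon)\preceq(F',E',\epsilon')$ iff $F\subseteq F'$, $E\subseteq E'$ and $\epsilon\geq\epsilon'$, and for each $i=(F,E,\epsilon)\in I$ fix $\mu_{i}\in\Mean(G)$ witnessing $(2)$ for that triple. Two properties of this net drive the argument. First, for each $f\in\RUCB(G)$ and $g\in G$ the weak-$\ast$ continuous quantity $\mu_{i}(f)-\mu_{i}(f\circ\lambda_{g})$ tends to $0$ along $I$, since every $i\succeq(\{f\},\{g\},\delta)$ yields $\lvert\mu_{i}(f)-\mu_{i}(f\circ\lambda_{g})\rvert\leq\epsilon\leq\delta$. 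Second, for each $f\in\RUCB(G)$ one has $\mu_{i}\bigl((f-\mu_{i}(f))^{2}\bigr)\to 0$ along $I$: by Lemma~\ref{lemma:convergence}(2) applied with the $\epsilon$ attached to $i$, every $i$ with $f\in F$ satisfies $\mu_{i}\bigl((f-\mu_{i}(f))^{2}\bigr)\leq(\diam f(G))^{2}\epsilon+\epsilon^{2}$, and the right-hand side vanishes in the limit (here $\diam f(G)<\infty$ since $f$ is bounded).

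Now I would invoke weak-$\ast$ compactness of $\Mean(G)$ to obtain a weak-$\ast$ accumulation point $\mu_{\infty}\in\Mean(G)$ of $(\mu_{i})_{i\in I}$. The second property together with Proposition~\ref{proposition:convergence} (its implication $(2)\Rightarrow(1)$) forces $\mu_{\infty}\in\Samuel(G)$. The first property, together with continuity of $\mu\mapsto\mu(f)-\mu(f\circ\lambda_{g})$ and the fact that a continuous image of an accumulation point of a net is an accumulation point of the image net — which here converges to $0$ — forces $\mu_{\infty}(f)=\mu_{\infty}(f\circ\lambda_{g})=(g\mu_{\infty})(f)$ for all $f\in\RUCB(G)$ and $g\in G$, so that $g\mu_{\infty}=\mu_{\infty}$ for all $g$. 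Thus $\mu_{\infty}$ is a $G$-fixed point in $\Samuel(G)$, i.e.\ $G$ is extremely amenable. The only genuinely delicate point — that the nonlinear assignment $\mu\mapsto\mu((f-\mu(f))^{2})$ need not be weak-$\ast$ continuous, so that $\mu_{i}((f-\mu_{i}(f))^{2})\to 0$ does not by itself pin the limit down inside $\Samuel(G)$ — is precisely what Proposition~\ref{proposition:convergence} was designed to handle; with it in hand the rest is bookkeeping over the directed set $I$ and over the genuinely linear functionals entering the approximate-invariance clause.
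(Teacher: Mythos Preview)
Your proof is correct and follows essentially the same route as the paper: the paper's argument is the one-liner ``the former clearly implies the latter, the converse implication follows by weak-$\ast$ compactness of $\Mean(G)$ and Proposition~\ref{proposition:convergence}'', and you have faithfully unpacked that sentence, supplying the net over $\Pfin(\RUCB(G))\times\Pfin(G)\times\R_{>0}$, the passage through Lemma~\ref{lemma:convergence}(2) to verify hypothesis~(2) of Proposition~\ref{proposition:convergence}, and the routine invariance check for the accumulation point.
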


\begin{proof} While the former clearly implies the latter, the converse implication follows by weak-$\ast$ compactness of $\Mean (G)$ and Proposition~\ref{proposition:convergence}. \end{proof}

Our next objective is to recollect a well-known construction of convolution algebras for general topological groups, following~\cite[Section~2.2]{AnalysisOnSemigroups} (see also~\cite[Lemma~3.2]{SchneiderThomRandomWalks} and~\cite[Section~9.2]{PachlBook}). Let $G$ be a topological group. If $\mu \in \RUCB(G)^{\ast}$, then the map $\Phi_{\mu} \colon \RUCB(G) \to \RUCB(G)$ defined by \begin{displaymath}
	(\Phi_{\mu}f)(g) \, \defeq \, \mu(f \circ \lambda_{g}) \qquad (f \in \RUCB(G), \, g \in G)
\end{displaymath} is a bounded linear operator with $\Vert \Phi_{\mu} \Vert = \Vert \mu \Vert$. The Banach space $\RUCB (G)^{\ast}$, equipped with the multiplication given by \begin{displaymath}
	\mu \nu \, \defeq \, \mu \circ \Phi_{\nu} \qquad (\mu,\nu \in \RUCB(G)^{\ast}) ,
\end{displaymath} constitutes a unital real Banach algebra, whose multiplicative unit is $\eta_{G}(e)$. Moreover, both $\Mean (G)$ and $\Samuel (G)$ are submonoids of the multiplicative monoid of $\RUCB (G)^{\ast}$. The following well-known facts are easily verified, too.

\begin{remark}\label{remark:convolution} Let $G$ be a topological group. Then the following hold. \begin{itemize}
	\item[$(1)$] The mapping \begin{displaymath}
		\qquad \RUCB(G)^{\ast} \, \longrightarrow \, \B (\RUCB(G)), \quad \mu \, \longmapsto \, \Phi_{\mu}
	\end{displaymath} is an isometric unital Banach algebra embedding.
	\item[$(2)$] $\Mean (G) = \{ \mu \in \RUCB(G)^{\ast} \mid \Phi_{\mu} \text{ positive and unital} \}$.
	\item[$(3)$] $\Samuel (G) = \{ \mu \in \RUCB(G)^{\ast} \mid \Phi_{\mu} \text{ unital ring homomorphism} \}$.
\end{itemize} \end{remark}

Now, if $H$ is a topological subgroup of a topological group $G$, then \begin{displaymath}
	\mathscr{E}_{\Rsh}(H) \, = \, \mathscr{E}_{\Rsh}(G)\!\!\upharpoonright_{H} ,
\end{displaymath} whence Remark~\ref{remark:subspace.mean} asserts that the linear map \begin{displaymath}
	\iota_{H,G}\colon \, \RUCB(H)^{\ast} \, \longrightarrow \, \RUCB(G)^{\ast}, \quad \mu \, \longmapsto \, (f \mapsto \mu(f\vert_{H})) 
\end{displaymath} is both an isometric embedding relative to the respective supremum norms and a topological embedding relative to the respective weak-$\ast$ topologies.

\begin{lem}\label{lemma:multiplicative.embedding} Let $G$ be a topological group and let $H \leq G$. Then \begin{displaymath}
	\iota_{H,G} \colon \, \RUCB(H)^{\ast} \, \longrightarrow \, \RUCB(G)^{\ast}
\end{displaymath} is a multiplicative monoid homomorphism. \end{lem}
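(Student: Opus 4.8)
The plan is to unwind both convolution products to the level of the operators $\Phi$ and then reduce everything to the fact, recorded in Remark~\ref{remark:subspace.mean}, that restriction of functions $\RUCB(G)\to\RUCB(H),\ f\mapsto f\vert_H$ is surjective and intertwines the relevant structure. Write $\iota\defeq\iota_{H,G}$. For $\mu,\nu\in\RUCB(H)^{\ast}$ and $f\in\RUCB(G)$ I want to compute $(\iota(\mu\nu))(f)=(\mu\nu)(f\vert_H)=\mu(\Phi^{H}_{\nu}(f\vert_H))$, where $\Phi^{H}$ denotes the convolution operator on $\RUCB(H)$ and $\Phi^{G}$ that on $\RUCB(G)$, and compare it with $(\iota(\mu)\iota(\nu))(f)=\iota(\mu)(\Phi^{G}_{\iota(\nu)}f)=\mu\bigl((\Phi^{G}_{\iota(\nu)}f)\vert_H\bigr)$. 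So the whole statement reduces to the pointwise identity
\begin{equation*}
	\forall h\in H\colon\qquad \bigl(\Phi^{H}_{\nu}(f\vert_H)\bigr)(h)\,=\,\bigl(\Phi^{G}_{\iota(\nu)}f\bigr)(h).
\end{equation*}

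First I would verify this identity by unravelling both sides from the definitions. The left-hand side is $\nu\bigl((f\vert_H)\circ\lambda_h^{H}\bigr)$, where $\lambda_h^{H}\colon H\to H$ is left translation within $H$. The right-hand side is $\iota(\nu)\bigl(f\circ\lambda_h^{G}\bigr)=\nu\bigl((f\circ\lambda_h^{G})\vert_H\bigr)$. Now for $h\in H$ and $x\in H$ one has $\lambda_h^{G}(x)=hx=\lambda_h^{H}(x)$, i.e.\ $\lambda_h^{G}\vert_H=\lambda_h^{H}$ as maps $H\to H\subseteq G$; hence $(f\circ\lambda_h^{G})\vert_H=(f\vert_H)\circ\lambda_h^{H}$, and the two sides agree. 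Feeding this back gives $(\iota(\mu\nu))(f)=\mu\bigl(\Phi^{H}_{\nu}(f\vert_H)\bigr)=\mu\bigl((\Phi^{G}_{\iota(\nu)}f)\vert_H\bigr)=(\iota(\mu)\iota(\nu))(f)$ for all $f\in\RUCB(G)$, so $\iota(\mu\nu)=\iota(\mu)\iota(\nu)$. Finally, since the multiplicative unit of $\RUCB(H)^{\ast}$ is $\eta_H(e)$ and that of $\RUCB(G)^{\ast}$ is $\eta_G(e)$, the computation $\iota(\eta_H(e))(f)=\eta_H(e)(f\vert_H)=(f\vert_H)(e)=f(e)=\eta_G(e)(f)$ (using $\iota(\eta_H(y))=\eta_G(y)$ from Remark~\ref{remark:subspace.mean}, or this direct check) shows $\iota$ is unital, so it is a monoid homomorphism.

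There is really no serious obstacle here; the only point that requires a moment's care is to keep the translation operators on $H$ and on $G$ notationally distinct and to observe that they are compatible on $H$ — the restriction $f\vert_H$ only "sees" left translations by elements of $H$, which is exactly what makes the identity above go through. (If one did not restrict to $h\in H$ the corresponding statement would be false, which is why $\iota$ is only a homomorphism of monoids and not, say, compatible with the full $G$-module structure.) I would present the proof in essentially three lines: state the reduction to the pointwise identity, verify the identity via $\lambda_h^{G}\vert_H=\lambda_h^{H}$, and note unitality.

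\begin{proof} Write $\iota\defeq\iota_{H,G}$, and for a topological group $K$ let $\Phi^{K}$ denote the convolution operator on $\RUCB(K)$ and $\lambda^{K}_{k}\colon K\to K,\, x\mapsto kx$. Fix $\nu\in\RUCB(H)^{\ast}$ and $f\in\RUCB(G)$. For every $h\in H$ we have $\lambda^{G}_{h}\vert_{H}=\lambda^{H}_{h}$, so $(f\circ\lambda^{G}_{h})\vert_{H}=(f\vert_{H})\circ\lambda^{H}_{h}$, whence \begin{displaymath}
	\bigl(\Phi^{G}_{\iota(\nu)}f\bigr)(h) \, = \, \iota(\nu)\bigl(f\circ\lambda^{G}_{h}\bigr) \, = \, \nu\bigl((f\circ\lambda^{G}_{h})\vert_{H}\bigr) \, = \, \nu\bigl((f\vert_{H})\circ\lambda^{H}_{h}\bigr) \, = \, \bigl(\Phi^{H}_{\nu}(f\vert_{H})\bigr)(h) .
\end{displaymath} Therefore $\bigl(\Phi^{G}_{\iota(\nu)}f\bigr)\big\vert_{H} = \Phi^{H}_{\nu}(f\vert_{H})$. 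Consequently, for all $\mu,\nu\in\RUCB(H)^{\ast}$ and all $f\in\RUCB(G)$, \begin{displaymath}
	\bigl(\iota(\mu\nu)\bigr)(f) \, = \, (\mu\nu)(f\vert_{H}) \, = \, \mu\bigl(\Phi^{H}_{\nu}(f\vert_{H})\bigr) \, = \, \mu\bigl((\Phi^{G}_{\iota(\nu)}f)\vert_{H}\bigr) \, = \, \iota(\mu)\bigl(\Phi^{G}_{\iota(\nu)}f\bigr) \, = \, \bigl(\iota(\mu)\iota(\nu)\bigr)(f) ,
\end{displaymath} so $\iota(\mu\nu)=\iota(\mu)\iota(\nu)$. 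Finally, $\eta_{H}(e)$ is the multiplicative unit of $\RUCB(H)^{\ast}$ and, for every $f\in\RUCB(G)$, $\iota(\eta_{H}(e))(f)=\eta_{H}(e)(f\vert_{H})=(f\vert_{H})(e)=f(e)=\eta_{G}(e)(f)$, so $\iota(\eta_{H}(e))=\eta_{G}(e)$ is the multiplicative unit of $\RUCB(G)^{\ast}$. Hence $\iota$ is a homomorphism of multiplicative monoids. \end{proof}
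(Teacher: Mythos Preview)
Your proof is correct and is essentially identical to the paper's own argument: both reduce the multiplicativity of $\iota$ to the pointwise identity $(\Phi_{\iota(\nu)}f)\vert_H=\Phi_\nu(f\vert_H)$, verify it via $(f\circ\lambda_h)\vert_H=(f\vert_H)\circ\lambda_h$ for $h\in H$, and then check unitality. The only cosmetic difference is that you decorate $\Phi$ and $\lambda$ with superscripts $H,G$ to keep the two groups straight, which the paper leaves implicit.
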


\begin{proof} Let us abbreviate $\iota \defeq \iota_{H,G}$. First of all, we note that $\iota (\eta_{H}(e)) = \eta_{G}(e)$ due to Remark~\ref{remark:subspace.mean}. Now, let $\mu, \nu \in \RUCB(H)^{\ast}$. Verifying that $\iota (\mu\nu) = \iota (\mu) \iota(\nu)$ amounts to showing that $\iota (\mu \nu)(f) = (\iota(\mu)\iota(\nu))(f)$ for every $f \in \RUCB(G)$. For this purpose, let $f \in \RUCB(G)$. Then \begin{displaymath}
	(\Phi_{\nu}(f\vert_{H}))(h) \, = \, \nu (f\vert_{H} \circ \lambda_{h}) \, = \, \nu ((f \circ \lambda_{h})\vert_{H}) \, = \, \iota (\nu)(f \circ \lambda_{h}) \, = \, (\Phi_{\iota(\nu)}f)(h)
\end{displaymath} for every $h \in H$, that is, \begin{equation}\tag{$\ast$}\label{suboperator}
	\Phi_{\nu}(f\vert_{H}) \, = \, (\Phi_{\iota(\nu)}f)\vert_{H} .
\end{equation} Therefore, as desired, \begin{align*}
	\iota(\mu \nu)(f) \, &= \, (\mu \nu)(f\vert_{H}) \, = \, \mu (\Phi_{\nu}(f\vert_{H})) \, \stackrel{\eqref{suboperator}}{=} \, \mu((\Phi_{\iota(\nu)}f)\vert_{H}) \\
	& = \, \iota(\mu)(\Phi_{\iota(\nu)}f) \, = \, (\iota(\mu)\iota(\nu))(f) . \qedhere
\end{align*} \end{proof}

Let us agree on the following notational convention.

\begin{remark}\label{remark:convention} Let $G$ be a topological group and let $H \leq G$. Henceforth, based on Remark~\ref{remark:subspace.mean} and Lemma~\ref{lemma:multiplicative.embedding}, we will view $\RUCB(H)^{\ast}$ as a (norm-closed unital) subalgebra of $\RUCB(G)^{\ast}$ by identifying $\RUCB(H)^{\ast}$ with its image under $\iota_{H,G}$. Correspondingly, $\Mean (H)$ will be regarded as a (weak-$\ast$ closed) submonoid of $\Mean (G)$, and likewise $\Samuel (H)$ as such of $\Samuel (G)$. \end{remark}

For convenience, we set up additional terminology. Let $G$ be a topological group and let $H$ be a subgroup of $G$. A function $f \colon G \to \R$ is called \emph{$H$-right-invariant} if $f(xh) = f(x)$ for all $x \in G$ and $h \in H$. A mean $\mu \in \Mean (G)$ is called \emph{$H$-left-invariant} if $\mu(f \circ \lambda_{h}) = \mu(f)$ for all $f \in \RUCB(G)$ and $h \in H$.

\begin{lem}\label{lemma:invariance} Let $G$ be a topological group, let $H \leq G$ and let $\mu \in \Mean (G)$. The following are equivalent. \begin{itemize}
	\item[$(1)$] $\mu$ is $H$-left-invariant.
	\item[$(2)$] $\Phi_{\mu}f$ is $H$-right-invariant for every $f \in \RUCB(G)$.
	\item[$(3)$] $\nu\mu = \mu$ for every $\nu \in \Mean(H)$.
\end{itemize} \end{lem}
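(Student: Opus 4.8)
The plan is to prove the equivalence of the three conditions for a mean $\mu \in \Mean(G)$ by establishing $(1)\Longleftrightarrow(2)$ directly from the definitions and then $(1)\Longleftrightarrow(3)$ using the convolution structure on $\RUCB(G)^{\ast}$.

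For $(1)\Longleftrightarrow(2)$: by definition, $\mu$ is $H$-left-invariant iff $\mu(f\circ\lambda_h)=\mu(f)$ for all $f\in\RUCB(G)$ and $h\in H$, while $\Phi_\mu f$ is $H$-right-invariant iff $(\Phi_\mu f)(xh)=(\Phi_\mu f)(x)$ for all $x\in G$, $h\in H$. The key observation is the identity $(\Phi_\mu f)(xh)=\mu(f\circ\lambda_{xh})=\mu(f\circ\lambda_x\circ\lambda_h)$ together with $(\Phi_\mu f)(x)=\mu(f\circ\lambda_x)$. So $(2)$ says $\mu((f\circ\lambda_x)\circ\lambda_h)=\mu(f\circ\lambda_x)$ for all $x\in G$, $h\in H$, $f\in\RUCB(G)$. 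Since $f\circ\lambda_x$ ranges over a subset of $\RUCB(G)$ as $x$ and $f$ vary, this clearly follows from $(1)$; conversely, taking $x=e$ recovers $(1)$. (One should note $f\circ\lambda_x\in\RUCB(G)$ whenever $f\in\RUCB(G)$, which is standard, and used implicitly.)

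For $(1)\Longleftrightarrow(3)$: recall from Remark~\ref{remark:convolution} and the surrounding text that, under the convention of Remark~\ref{remark:convention}, for $\nu\in\RUCB(H)^{\ast}\subseteq\RUCB(G)^{\ast}$ the product $\nu\mu$ is defined by $\nu\mu=\nu\circ\Phi_\mu$, i.e. $(\nu\mu)(f)=\nu(\Phi_\mu f)=\nu\bigl((\Phi_\mu f)\vert_H\bigr)$. For $(2)\Longrightarrow(3)$: if $\Phi_\mu f$ is $H$-right-invariant, then $(\Phi_\mu f)\vert_H$ is constant, equal to $(\Phi_\mu f)(e)=\mu(f)$; hence $(\nu\mu)(f)=\nu\bigl((\Phi_\mu f)\vert_H\bigr)=\mu(f)\,\nu(1)=\mu(f)$ for every $\nu\in\Mean(H)$, using $\nu(1)=1$. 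For $(3)\Longrightarrow(1)$: assume $\nu\mu=\mu$ for all $\nu\in\Mean(H)$, and apply this to $\nu=\eta_H(h)$ for $h\in H$, which lies in $\Mean(H)$. Then $\mu(f)=(\eta_H(h)\mu)(f)=\eta_H(h)(\Phi_\mu f)=(\Phi_\mu f)\vert_H$ evaluated at... more precisely $\eta_H(h)\bigl((\Phi_\mu f)\vert_H\bigr)=(\Phi_\mu f)(h)=\mu(f\circ\lambda_h)$, giving $\mu(f)=\mu(f\circ\lambda_h)$ for all $f\in\RUCB(G)$ and $h\in H$, which is $(1)$.

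The argument is essentially bookkeeping with definitions, so there is no serious obstacle; the only point requiring care is making sure the convolution on $\RUCB(G)^{\ast}$ restricts correctly to elements coming from $\RUCB(H)^{\ast}$ via $\iota_{H,G}$ — but this is exactly the content of Lemma~\ref{lemma:multiplicative.embedding} together with the formula $(\iota_{H,G}(\nu))(g)=\nu(g\vert_H)$, and the identity $\eta_H(h)=\iota_{H,G}^{-1}(\eta_G(h)\vert\cdots)$ is recorded in Remark~\ref{remark:subspace.mean}. I would organize the proof as the cycle $(1)\Rightarrow(2)\Rightarrow(3)\Rightarrow(1)$, which is the shortest route and avoids redundancy.
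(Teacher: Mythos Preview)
Your proposal is correct and follows essentially the same approach as the paper: the paper proves the cycle $(1)\Rightarrow(2)\Rightarrow(3)\Rightarrow(1)$ using precisely the identities you isolate, namely $(\Phi_\mu f)(xh)=\mu(f\circ\lambda_x\circ\lambda_h)$, the constancy of $(\Phi_\mu f)\vert_H$ under $(2)$, and the evaluation at $\nu=\eta_H(h)$ for $(3)\Rightarrow(1)$. Your additional remark that $(2)\Rightarrow(1)$ follows by taking $x=e$ is correct but redundant in the cyclic organization you ultimately recommend.
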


\begin{proof} (1)$\Longrightarrow$(2). If $f \in \RUCB(G)$, then \begin{displaymath}
	(\Phi_{\mu}f)(xh) \, = \, \mu(f \circ \lambda_{xh}) \, = \, \mu(f \circ \lambda_{x} \circ \lambda_{h}) \, \stackrel{(1)}{=} \, \mu(f \circ \lambda_{x}) \, = \, (\Phi_{\mu}f)(x)
\end{displaymath} for all $x \in G$ and $h \in H$, which means that $\Phi_{\mu}f$ is $H$-right-invariant.
	
(2)$\Longrightarrow$(3). If $\nu \in \Mean(H)$, then \begin{displaymath}
	(\nu\mu)(f) \, = \, \nu((\Phi_{\mu}f)\vert_{H}) \, \stackrel{(2)}{=} \, \nu((\Phi_{\mu}f)(e)\cdot 1) \, = \, (\Phi_{\mu}f)(e) \, = \, \mu(f)
\end{displaymath} for every $f \in \RUCB(G)$, i.e., $\nu\mu = \mu$.
	
(3)$\Longrightarrow$(1). For all $h \in H$ and $f \in \RUCB(G)$, \begin{displaymath}
	\mu(f \circ \lambda_{h}) \, = \, (\Phi_{\mu}f)(h) \, = \, \eta_{H}(h)((\Phi_{\mu}f)\vert_{H})\, = \, (\eta_{H}(h) \mu)(f) \, \stackrel{(3)}{=} \, \mu(f) ,
\end{displaymath} wherefore $\mu$ is $H$-left-invariant. \end{proof}

\begin{remark}\label{remark:ideal.of.invariant.means} Let $H$ be a subgroup of a topological group $G$. If $\mu \in \Mean (G)$ is $H$-left-invariant, then so is $\mu \nu$ for every $\nu \in \Mean (G)$. For instance, this easily follows from~\cite[2.2, Proposition~2.3(v)(1), p.~73]{AnalysisOnSemigroups}. \end{remark}

The discussion of invariance properties of functions naturally leads to coset spaces of topological groups. In order to clarify some related notation and terminology, let $H$ be a subgroup of a topological group $G$. We endow the set $G/H = \{ xH \mid x \in G\}$ with the \emph{right uniformity} \begin{displaymath}
	\left\{ E \subseteq (G/H) \times (G/H) \left\vert \, \exists U \in \mathscr{U}(G) \, \forall x,y \in G \colon \, xy^{-1}\! \in U \Longrightarrow \, (xH,yH) \in E \right\} , \right.\! 
\end{displaymath} which is the finest uniformity on $G/H$ such that \begin{displaymath}
	\pi_{H} \colon \, G \, \longrightarrow \, G/H, \quad x \, \longmapsto \, xH
\end{displaymath} is uniformly continuous with respect to the right uniformity on $G$ (see, for instance,~\cite[Lemma~6.2.4]{PestovBook}). Henceforth, whenever $G/H$ is considered as a uniform space, we will be referring to the right uniformity.

\begin{remark}\label{remark:quotient.spaces} Let $H$ be a subgroup of a topological group $G$. Then, \begin{displaymath}
	\{ f \in \RUCB(G) \mid f \text{ $H$-right-invariant} \} \, = \, \{ f \circ \pi_{H} \mid f \in \UCB(G/H)\} .
\end{displaymath} \end{remark} 

We conclude this preliminary section by recalling that, with respect to certain pseudo-metrics on a topological group, convolution operators induced by means preserve Lipschitz properties of functions. Let us clarify the relevant terminology. A pseudo-metric $d$ on a topological space~$X$ is called \emph{continuous} (resp., \emph{compatible}) if the topology generated by $d$ is contained in (resp., coincides with) the topology of $X$. A pseudo-metric $d$ on a group $G$ is called \emph{right-invariant} (resp., \emph{left-invariant}) if $d(xg,yg) = d(x,y)$ (resp., $d(gx,gy) = d(x,y)$) for all $g,x,y \in G$. A pseudo-metric on a group is said to be \emph{bi-invariant} if it is both left- and right-invariant.

\begin{remark}\label{remark:lipschitz.right.uniformly.continuous} Let $d$ be a continuous right-invariant pseudo-metric on a topological group $G$. For all $k,r \in \R_{\geq 0}$, \begin{displaymath}
	\Lip_{k}(G,d;[-r,r]) \, \in \, \RUEB(G) .
\end{displaymath} In particular, $\Lip_{1}^{\infty}(G,d) \subseteq \RUCB(G)$. \end{remark}

The following lemma is well known among experts (for an extensive generalization, see~\cite[Proof of Lemma~9.1]{PachlBook}).

\begin{lem}\label{lemma:lipschitz} Let $d$ be a continuous right-invariant pseudo-metric on a topological group $G$. If $f \in \Lip_{1}^{\infty}(G,d)$ and $\mu \in \Mean (G)$, then $\Phi_{\mu}f \in \Lip_{1}^{\infty}(G,d)$. \end{lem}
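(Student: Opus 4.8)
The plan is to show that $\Phi_\mu f$ is $1$-Lipschitz with respect to $d$ and bounded; boundedness is immediate from $\Vert \Phi_\mu f\Vert_\infty \le \Vert \mu\Vert\, \Vert f\Vert_\infty = \Vert f\Vert_\infty$ (using $\mu \in \Mean(G)$, so $\Vert\mu\Vert = 1$), and the point is the Lipschitz estimate. First I would fix $x,y \in G$ and try to bound $\vert(\Phi_\mu f)(x) - (\Phi_\mu f)(y)\vert = \vert \mu(f\circ\lambda_x) - \mu(f\circ\lambda_y)\vert \le \Vert f\circ\lambda_x - f\circ\lambda_y\Vert_\infty$, again because $\mu$ has norm $1$ (and is positive, so $\vert\mu(h)\vert \le \mu(\vert h\vert) \le \Vert h\Vert_\infty$). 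So it suffices to check that $\Vert f\circ\lambda_x - f\circ\lambda_y\Vert_\infty \le d(x,y)$.

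Next I would compute this supremum pointwise: for $g \in G$, $\vert f(xg) - f(yg)\vert \le d(xg, yg) = d(x,y)$, where the inequality is the $1$-Lipschitz property of $f$ and the equality is right-invariance of $d$. Taking the supremum over $g \in G$ gives $\Vert f\circ\lambda_x - f\circ\lambda_y\Vert_\infty \le d(x,y)$, and combining with the previous paragraph yields $\vert(\Phi_\mu f)(x) - (\Phi_\mu f)(y)\vert \le d(x,y)$. Since $x,y$ were arbitrary, $\Phi_\mu f \in \Lip_1(G,d)$, and together with boundedness this gives $\Phi_\mu f \in \Lip_1^\infty(G,d)$. (One should also note at the outset that $\Phi_\mu f$ is a well-defined element of $\RUCB(G)$: this is part of the convolution-algebra setup recalled above, since $\Phi_\mu$ maps $\RUCB(G)$ into itself, and $f \in \Lip_1^\infty(G,d) \subseteq \RUCB(G)$ by Remark~\ref{remark:lipschitz.right.uniformly.continuous}.)

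There is essentially no obstacle here — the only mild subtlety is making sure one invokes positivity of $\mu$ (not merely $\Vert\mu\Vert \le 1$) to get $\vert\mu(h)\vert \le \Vert h\Vert_\infty$ cleanly, and that right-invariance of $d$ (as opposed to left-invariance) is precisely what matches the left-translation $\lambda_x$ appearing in the definition of $\Phi_\mu$. The argument does not need $f$ to be bounded for the Lipschitz part, only for the conclusion $\Phi_\mu f \in \Lip_1^\infty$ rather than $\Lip_1$; this is why the lemma is stated for $\Lip_1^\infty(G,d)$.
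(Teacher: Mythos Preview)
Your proposal is correct and follows essentially the same argument as the paper's proof: reduce $\lvert(\Phi_\mu f)(x)-(\Phi_\mu f)(y)\rvert$ to $\lVert f\circ\lambda_x - f\circ\lambda_y\rVert_\infty$ using that $\mu$ is a mean, then bound this pointwise by $d(xg,yg)=d(x,y)$ via the Lipschitz property of $f$ and right-invariance of $d$. The paper's proof is slightly more terse (it does not separately comment on boundedness or well-definedness), but the substance is identical.
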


\begin{proof} If $f \in \Lip_{1}^{\infty}(G,d)$ and $\mu \in \Mean (G)$, then \begin{align*}
	\left\lvert (\Phi_{\mu}f)(g) - (\Phi_{\mu}f)(h) \right\rvert \, &= \, \left\lvert \mu (f \circ \lambda_{g}) - \mu (f \circ \lambda_{h}) \right\rvert \, = \, \left\lvert \mu((f \circ \lambda_{g})-(f \circ \lambda_{h})) \right\rvert \\
		&\leq \, \left\lVert (f\circ \lambda_{g}) - (f\circ \lambda_{h}) \right\rVert_{\infty} \, = \, \sup\nolimits_{x \in G} \vert f(gx) - f(hx) \vert \\
		&\leq \, \sup\nolimits_{x \in G} d(gx,hx) \, = \, d(g,h)
\end{align*} for all $g,h \in G$, i.e., $\Phi_{\mu}f \in \Lip_{1}^{\infty}(G,d)$. \end{proof}

\section{Concentration of invariant means}\label{section:concentration.of.invariant.means}

This section contains our main concentration result (Theorem~\ref{theorem:main.concentration}), along with some abstract dynamical consequences (Theorem~\ref{theorem:extreme.amenability} and Corollaries~\ref{corollary:extreme.amenability.1}--\ref{corollary:extreme.amenability.2}) and a resulting amplification technique (Proposition~\ref{proposition:amenable.folding}). The two principal ingredients in the proof of Theorem~\ref{theorem:main.concentration} are Azuma's martingale inequality (Theorem~\ref{theorem:azuma}) and a dynamical description of certain expectation operators (Lemma~\ref{lemma:conditional.expectation}).

We now recall some basic elements of probability theory relevant to the proof of Theorem~\ref{theorem:main.concentration}. Let $(X,\mathscr{B},\mu)$ be a probability space, let $\mathscr{B}_{0}$ be a sub-$\sigma$-algebra of $\mathscr{B}$, and let $f \in \mathscr{L}^{1}(X,\mathscr{B},\mu)$. A function $g \in \mathscr{L}^{1}(X,\mathscr{B}_{0},\mu\vert_{\mathscr{B}_{0}})$ is said to be a \emph{version of the conditional expectation} of $f$ relative to $\mathscr{B}_{0}$ if \begin{displaymath}
	\forall B \in \mathscr{B}_{0} \colon \qquad \int \chi_{B}\cdot f \, \mathrm{d}\mu \, = \, \int \chi_{B}\cdot g \, \mathrm{d}\mu .
\end{displaymath} The set of all versions of the conditional expectation of $f$ relative to $\mathscr{B}_{0}$ constitutes an element of $L^{1}(X,\mathscr{B}_{0},\mu\vert_{\mathscr{B}_{0}})$ (see, e.g.,~\cite[Chapter~23]{schilling}), which will be denoted by $\E_{\mu}(f\vert \, \mathscr{B}_{0})$. As a matter of course, $\E_{\mu}(f\vert \, \{ \emptyset, X \})$ takes the value $\E_{\mu}(f) \defeq \int f \, \mathrm{d}\mu$ $\mu$-almost everywhere. Moreover, if $f \in \mathscr{L}^{\infty}(X,\mathscr{B},\mu)$, then $\E_{\mu}(f\vert \, \mathscr{B}_{0})$ belongs to $L^{\infty}(X,\mathscr{B}_{0},\mu\vert_{\mathscr{B}_{0}})$. 

Let us specify some additional notation. If $h \colon X \to X'$ is any mapping and $\mathscr{B}'$ is a $\sigma$-algebra on $X'$, then $h^{-1}[\mathscr{B}'] \defeq \{ h^{-1}(B) \mid B \in \mathscr{B}' \}$ constitutes a $\sigma$-algebra on the set $X$, which we will refer to as the \emph{pull-back ($\sigma$-algebra)} of $\mathscr{B}'$ along $h$. Now, let $(X,\mathscr{B},\mu)$ be a probability space, $(X',\mathscr{B}')$ be a measurable space and $h \colon (X,\mathscr{B}) \to (X',\mathscr{B}')$ be a measurable map. As usual, the \emph{push-forward (measure)} of $\mu$ along $h$ is defined to be the probability measure \begin{displaymath}
	h_{\ast}(\mu) \colon \, \mathscr{B}'\! \, \longrightarrow \, [0,1], \quad B \, \longmapsto \, \mu\left(h^{-1}(B)\right) .
\end{displaymath} Furthermore, for every $f \in \mathscr{L}^{1}(X,\mathscr{B},\mu)$, we let \begin{displaymath}
	\E_{\mu}(f\vert \, h) \, \defeq \, \E_{\mu} \left(f \left\vert \, h^{-1}[\mathscr{B}']\right) . \right.
\end{displaymath}

\begin{thm}[Azuma~\cite{azuma}]\label{theorem:azuma} Let $(X,\mathscr{B},\mu)$ be a probability space, let $n \in \N$ and let $f \in \mathscr{L}^{\infty}(X,\mathscr{B},\mu)$. Consider a chain of sub-$\sigma$-algebras \begin{displaymath}
	\{ \emptyset,X\} \, = \, \mathscr{B}_{n} \, \subseteq \, \ldots \, \subseteq \, \mathscr{B}_{0} \, = \, \mathscr{B}
\end{displaymath} and let \begin{displaymath}
	d_{i} \, \defeq \, \left\lVert \E_{\mu}(f\vert \, \mathscr{B}_{i}) - \E_{\mu}(f\vert\,\mathscr{B}_{i+1}) \right\rVert_{\infty} \qquad (i \in \{ 0,\ldots,n-1 \}) .
\end{displaymath} Then, for every $\epsilon \in \R_{>0}$, \begin{displaymath}
	\mu\left(\left\{ x \in X \left\vert \, \left\lvert f(x) - \E_{\mu}(f) \right\rvert \geq \epsilon \right\}\right) \right. \! \, \leq \, 2 \exp \left( -\tfrac{\epsilon^{2}}{2 \sum\nolimits_{i=0}^{n-1} d_{i}^{2}} \right) .
\end{displaymath} \end{thm}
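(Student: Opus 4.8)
The plan is to run the standard martingale-difference argument behind the Azuma--Hoeffding inequality. First I would reverse the indexing of the filtration: setting $\mathscr{F}_{k} \defeq \mathscr{B}_{n-k}$ for $k \in \{0,\ldots,n\}$ produces an \emph{increasing} chain $\{\emptyset,X\} = \mathscr{F}_{0} \subseteq \cdots \subseteq \mathscr{F}_{n} = \mathscr{B}$, and I consider the associated Doob martingale $M_{k} \defeq \E_{\mu}(f \vert\, \mathscr{F}_{k})$. Since $f$ is $\mathscr{B}$-measurable and $\mathscr{F}_{0}$ is trivial, $M_{n} = f$ and $M_{0} = \E_{\mu}(f)$ hold $\mu$-almost everywhere. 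Writing $Y_{k} \defeq M_{k} - M_{k-1}$, the tower property gives $\E_{\mu}(Y_{k}\vert\,\mathscr{F}_{k-1}) = 0$, by definition $\lVert Y_{k}\rVert_{\infty} = d_{n-k}$, and telescoping yields $f - \E_{\mu}(f) = \sum_{k=1}^{n} Y_{k}$ $\mu$-a.e.

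The crucial ingredient is Hoeffding's lemma for bounded, conditionally centred increments: if $Y \in \mathscr{L}^{\infty}(X,\mathscr{B},\mu)$, if $\mathscr{G}$ is a sub-$\sigma$-algebra of $\mathscr{B}$, if $\E_{\mu}(Y\vert\,\mathscr{G}) = 0$ and $\lvert Y\rvert \leq c$ $\mu$-a.e., then $\E_{\mu}(e^{\lambda Y}\vert\,\mathscr{G}) \leq e^{\lambda^{2}c^{2}/2}$ for every $\lambda \in \R$. I would prove this by bounding $t \mapsto e^{\lambda t}$ on $[-c,c]$ above by the affine function agreeing with it at the endpoints (convexity), taking conditional expectation, cancelling the linear term via $\E_{\mu}(Y\vert\,\mathscr{G}) = 0$ to land on $\cosh(\lambda c)$, and then invoking the elementary estimate $\cosh x \leq e^{x^{2}/2}$ (term-by-term comparison of Taylor series).

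Next I would estimate the exponential moment of $f - \E_{\mu}(f)$ by iterated conditioning: as $Y_{1},\ldots,Y_{k-1}$ are $\mathscr{F}_{k-1}$-measurable, the ``take out what is known'' property and the lemma give $\E_{\mu}\bigl(e^{\lambda\sum_{j=1}^{k}Y_{j}}\bigr) = \E_{\mu}\bigl(e^{\lambda\sum_{j=1}^{k-1}Y_{j}}\,\E_{\mu}(e^{\lambda Y_{k}}\vert\,\mathscr{F}_{k-1})\bigr) \leq e^{\lambda^{2}d_{n-k}^{2}/2}\,\E_{\mu}\bigl(e^{\lambda\sum_{j=1}^{k-1}Y_{j}}\bigr)$; iterating from $k = n$ down to $k = 1$ gives $\E_{\mu}\bigl(e^{\lambda(f-\E_{\mu}(f))}\bigr) \leq \exp\bigl(\tfrac{\lambda^{2}}{2}\sum_{i=0}^{n-1}d_{i}^{2}\bigr)$ for all $\lambda \in \R$. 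A Chernoff bound then yields $\mu(\{f - \E_{\mu}(f) \geq \epsilon\}) \leq e^{-\lambda\epsilon}\E_{\mu}\bigl(e^{\lambda(f-\E_{\mu}(f))}\bigr) \leq \exp\bigl(-\lambda\epsilon + \tfrac{\lambda^{2}}{2}\sum_{i}d_{i}^{2}\bigr)$ for each $\lambda > 0$; choosing $\lambda = \epsilon/\sum_{i}d_{i}^{2}$ (when this sum is positive) gives the one-sided bound $\exp(-\epsilon^{2}/(2\sum_{i}d_{i}^{2}))$, and applying the same reasoning to $-f$ and summing the two tails produces the factor $2$. In the degenerate case $\sum_{i}d_{i}^{2} = 0$ one has $f = \E_{\mu}(f)$ $\mu$-a.e., so the left-hand side is $0$, matching the convention $e^{-\epsilon^{2}/0} = 0$.

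I do not expect a genuine obstacle here---this is a classical argument---so the only points requiring care are the measure-theoretic bookkeeping (the identifications $\E_{\mu}(f\vert\,\mathscr{B}) = f$ and $\E_{\mu}(f\vert\,\{\emptyset,X\}) = \E_{\mu}(f)$ $\mu$-a.e., and the step extracting the $\mathscr{F}_{k-1}$-measurable factor), the reindexing of the given decreasing chain into an increasing filtration, and the optimisation over $\lambda$. Citing Azuma's paper directly is of course an option, but the self-contained proof above is only a few lines longer.
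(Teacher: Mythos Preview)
Your proposal is correct and is precisely the standard Azuma--Hoeffding argument. The paper, however, does not give a proof at all: its entire ``proof'' is a reference to \cite[4.1, Lemma~4.1]{ledoux} for the statement, so your self-contained version is strictly more than what the paper provides rather than a different route to the same end.
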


\begin{proof} See~\cite[4.1, Lemma~4.1]{ledoux} for the precise statement. \end{proof}

In order to deduce Theorem~\ref{theorem:main.concentration} from Theorem~\ref{theorem:azuma}, we need to connect the probabilistic tools outlined above with our dynamical setting, which happens via the following two lemmata. Concerning the notation used in Lemma~\ref{lemma:conditional.expectation}, we refer to both Remark~\ref{remark:natural} and Remark~\ref{remark:quotient.spaces} as well as the paragraphs preceding those two remarks. 

\begin{lem}\label{lemma:expectation} Let $G$ be a topological group and let $H$ be a topological subgroup of $G$. If $f \in \RUCB(G)$ is $H$-right-invariant, then \begin{displaymath}
	\forall \mu \in \Mean (H) \ \forall g \in \RUCB(G) \colon \qquad \Phi_{\mu}(f\cdot g) \, = \, f \cdot (\Phi_{\mu}g) .
\end{displaymath} \end{lem}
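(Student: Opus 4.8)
The plan is to prove the identity by direct computation using the definition of the convolution operator $\Phi_\mu$, exploiting the $H$-right-invariance of $f$ together with the left-multiplication action. Fix $\mu \in \Mean(H)$ and $g \in \RUCB(G)$; I want to show $\Phi_\mu(f\cdot g) = f\cdot(\Phi_\mu g)$ as functions on $G$, so it suffices to evaluate both sides at an arbitrary point $x \in G$. By definition, $(\Phi_\mu(f\cdot g))(x) = \mu\bigl((f\cdot g)\circ\lambda_x\bigr) = \mu\bigl((f\circ\lambda_x)\cdot(g\circ\lambda_x)\bigr)$, where the product is taken in the algebra $\RUCB(G)$ and then restricted to $H$ before applying $\mu$ (recall the convention of Remark~\ref{remark:convention}: $\mu \in \Mean(H)$ acts via $\mu(f) = \mu(f\vert_H)$). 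The right-hand side is $f(x)\cdot(\Phi_\mu g)(x) = f(x)\cdot\mu(g\circ\lambda_x)$.

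The key observation is that $(f\circ\lambda_x)\vert_H$ is the constant function with value $f(x)$. Indeed, for $h \in H$ we have $(f\circ\lambda_x)(h) = f(xh) = f(x)$ by $H$-right-invariance of $f$. Hence, as an element of $\RUCB(H)$, the restriction $(f\circ\lambda_x)\vert_H$ equals $f(x)\cdot 1$. Therefore $\bigl((f\circ\lambda_x)\cdot(g\circ\lambda_x)\bigr)\vert_H = f(x)\cdot(g\circ\lambda_x)\vert_H$, and applying the linear functional $\mu$ and using $\mu(f(x)\cdot 1) = f(x)\mu(1)$ together with $\mu(1)=1$ yields
\begin{displaymath}
	\bigl(\Phi_\mu(f\cdot g)\bigr)(x) \, = \, \mu\bigl(f(x)\cdot(g\circ\lambda_x)\vert_H\bigr) \, = \, f(x)\cdot\mu(g\circ\lambda_x) \, = \, f(x)\cdot(\Phi_\mu g)(x) ,
\end{displaymath}
which is exactly $\bigl(f\cdot(\Phi_\mu g)\bigr)(x)$. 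Since $x \in G$ was arbitrary, this proves the desired identity.

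There is no serious obstacle here; the only points requiring a modicum of care are bookkeeping ones. First, one should check that $f\cdot g \in \RUCB(G)$ so that $\Phi_\mu(f\cdot g)$ is defined: this holds because $\RUCB(G)$ is a Banach algebra (as recalled in Section~\ref{section:topological.groups}), and similarly $f\cdot(\Phi_\mu g) \in \RUCB(G)$ since $\Phi_\mu g \in \RUCB(G)$. Second, one must be consistent about the identification in Remark~\ref{remark:convention}, under which $\mu \in \Mean(H) \subseteq \Mean(G)$ acts on $\RUCB(G)$ by $\varphi \mapsto \mu(\varphi\vert_H)$; with this understood, $\Phi_\mu$ on $\RUCB(G)$ is $(\Phi_\mu \varphi)(g) = \mu((\varphi\circ\lambda_g)\vert_H)$, and the computation above goes through verbatim. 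The genuinely substantive input—that restriction to $H$ turns $f\circ\lambda_x$ into a scalar—is an immediate consequence of the hypothesis, so the proof is short.
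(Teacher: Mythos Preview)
Your proof is correct and follows essentially the same route as the paper's: both evaluate $\Phi_\mu(f\cdot g)$ at an arbitrary $x\in G$, use that $(f\circ\lambda_x)\vert_H$ is the constant $f(x)$ by $H$-right-invariance, and pull the scalar out of $\mu$. Your extra remarks on bookkeeping (that $\RUCB(G)$ is an algebra and the identification of Remark~\ref{remark:convention}) are accurate but not strictly necessary.
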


\begin{proof} Suppose that $f \in \RUCB(G)$ is $H$-right-invariant. Now, if $\mu \in \Mean (H)$ and $g \in \RUCB(G)$, then \begin{align*}
	\Phi_{\mu}(f\cdot g)(x) \, & = \, \mu (((f\cdot g)\circ \lambda_{x})\vert_{H}) \, = \, \mu((f\circ \lambda_{x})\vert_{H} \cdot (g\circ \lambda_{x})\vert_{H}) \\
		& = \, \mu(f(x) \cdot (g\circ \lambda_{x})\vert_{H}) \, = \, f(x) \cdot \mu((g\circ \lambda_{x})\vert_{H}) \\
		& = \, f(x) \cdot (\Phi_{\mu}g)(x) \, = \, (f \cdot (\Phi_{\mu}g))(x)
\end{align*} for every $x \in G$, i.e., $\Phi_{\mu}(f\cdot g) = f \cdot (\Phi_{\mu}g)$. \end{proof}

\begin{lem}\label{lemma:conditional.expectation} Let $G$ be a topological group, let $H$ be a topological subgroup of~$G$, let $\nu \in \Mean(H)$ be $H$-left-invariant, and let $\mu \in \Mean (G)$ with $\mu\nu = \mu$. Then, \begin{displaymath}
		\forall f \in \RUCB (G) \colon \qquad \! \left. \E_{\hat{\mu}}\left(\overline{f} \, \right\vert \mathrm{S}(\pi_{H})\right) \, = \, \overline{\Phi_{\nu}f} \, \text{ $\hat{\mu}$-almost everywhere} .
\end{displaymath} \end{lem}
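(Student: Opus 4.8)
The plan is to verify the defining property of conditional expectation directly: I need to show that for every Borel set $B$ in the pull-back $\sigma$-algebra $\Samuel(\pi_{H})^{-1}[\mathscr{B}']$ (where $\mathscr{B}'$ is the Borel $\sigma$-algebra of $\Samuel(G/H)$), one has $\int \chi_{B}\cdot \overline{f}\,\mathrm{d}\hat{\mu} = \int \chi_{B}\cdot \overline{\Phi_{\nu}f}\,\mathrm{d}\hat{\mu}$, together with the measurability statement that $\overline{\Phi_{\nu}f}$ is $\Samuel(\pi_{H})^{-1}[\mathscr{B}']$-measurable. The latter is the easier half: by Lemma~\ref{lemma:invariance} (applied with $\nu$ in place of the ambient mean, using that $\nu$ is $H$-left-invariant on $H$) together with Lemma~\ref{lemma:expectation}-type reasoning, $\Phi_{\nu}f$ is $H$-right-invariant, so by Remark~\ref{remark:quotient.spaces} it factors as $\Phi_{\nu}f = \tilde{f}\circ\pi_{H}$ for some $\tilde{f}\in\UCB(G/H)$; then $\overline{\Phi_{\nu}f} = \overline{\tilde{f}}\circ\Samuel(\pi_{H})$ by Remark~\ref{remark:natural} and density of $\eta_{G}(G)$, which is manifestly $\Samuel(\pi_{H})^{-1}[\mathscr{B}']$-measurable.

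For the integral identity, I would first reduce the class of test sets $B$. Since $\Cont(\Samuel(G/H))$ separates points of $\Samuel(G/H)$, the sets of the form $\Samuel(\pi_{H})^{-1}(\{\overline{h}\geq t\})$ for $h\in\UCB(G/H)$ generate $\Samuel(\pi_{H})^{-1}[\mathscr{B}']$; by a monotone-class / Dynkin argument it suffices to check the identity with $\chi_{B}$ replaced by $\overline{h\circ\pi_{H}}$ for an arbitrary $h\in\UCB(G/H)$ — equivalently, by $\overline{k}$ for an arbitrary $H$-right-invariant $k\in\RUCB(G)$. So the goal becomes
\begin{displaymath}
	\int \overline{k}\cdot\overline{f}\,\mathrm{d}\hat{\mu} \, = \, \int \overline{k}\cdot\overline{\Phi_{\nu}f}\,\mathrm{d}\hat{\mu} \qquad \text{for all } H\text{-right-invariant } k\in\RUCB(G).
\end{displaymath}
Now I unwind both sides through the Riesz isomorphism $\UCB\cong\Cont(\Samuel(\cdot))$: the left side equals $\mu(k\cdot f)$ and the right side equals $\mu(k\cdot\Phi_{\nu}f)$, using that $\overline{\,\cdot\,}$ is an algebra homomorphism so $\overline{k}\cdot\overline{f} = \overline{k\cdot f}$ and $\overline{k}\cdot\overline{\Phi_{\nu}f} = \overline{k\cdot\Phi_{\nu}f}$, and that $\int\overline{\varphi}\,\mathrm{d}\hat{\mu} = \mu(\varphi)$.

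Thus everything reduces to the purely algebraic identity $\mu(k\cdot f) = \mu(k\cdot\Phi_{\nu}f)$ for $H$-right-invariant $k$. Here is where I use $\mu\nu=\mu$: write $\mu(k\cdot f) = (\mu\nu)(k\cdot f) = \mu(\Phi_{\nu}(k\cdot f))$. Since $k$ is $H$-right-invariant, Lemma~\ref{lemma:expectation} gives $\Phi_{\nu}(k\cdot f) = k\cdot\Phi_{\nu}f$, and therefore $\mu(k\cdot f) = \mu(k\cdot\Phi_{\nu}f)$, as required. I expect the main obstacle to be the measure-theoretic bookkeeping in the reduction step — making precise that the cylinder sets $\Samuel(\pi_{H})^{-1}(\{\overline{h}\geq t\})$ form a generating $\pi$-system for the pull-back $\sigma$-algebra and that the two finite measures $A\mapsto\int_{A}\overline{f}\,\mathrm{d}\hat{\mu}$ (suitably shifted to be non-negative, e.g. by adding $\|f\|_{\infty}\hat{\mu}$) agree on it, hence on the generated $\sigma$-algebra by Dynkin's lemma. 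Once that scaffolding is in place, the algebraic core is a two-line computation built entirely from $\mu\nu=\mu$, Lemma~\ref{lemma:expectation}, and the functoriality remarks.
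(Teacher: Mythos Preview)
Your proposal is correct and follows essentially the same architecture as the paper's proof: identical measurability argument (factor $\Phi_{\nu}f$ through $\pi_{H}$ via Lemma~\ref{lemma:invariance} and Remark~\ref{remark:quotient.spaces}, then use Remark~\ref{remark:natural}), and identical algebraic core $\mu(k\cdot f)=(\mu\nu)(k\cdot f)=\mu(\Phi_{\nu}(k\cdot f))=\mu(k\cdot\Phi_{\nu}f)$ via Lemma~\ref{lemma:expectation}. The only difference is the bridge from Borel test sets to continuous test functions: the paper approximates $\chi_{B}$ directly by a continuous function using regularity of the push-forward $\Samuel(\pi_{H})_{\ast}(\hat{\mu})$ and Urysohn's lemma (an $\epsilon$-argument), whereas you invoke a monotone-class/Dynkin reduction. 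One caution with your route: the $\sigma$-algebra generated by $\Cont(\Samuel(G/H))$ is the Baire $\sigma$-algebra, which need not coincide with the Borel $\sigma$-algebra on a non-metrizable compact Hausdorff space, so your Dynkin step as written only yields the identity on the pull-back of the Baire $\sigma$-algebra; to reach all Borel sets you still need a regularity argument (which then amounts to the paper's approach). This is a bookkeeping point rather than a gap in the idea, and you already anticipated that the reduction step is where the care is needed.
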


\begin{proof} Let $f \in \RUCB(G)$. Being a member of $\Cont(\Samuel(G))$, the function $\overline{\Phi_{\nu}f}$ is integrable with respect to the Borel probability measure $\hat{\mu}$. By Lemma~\ref{lemma:invariance} and Remark~\ref{remark:quotient.spaces}, there is $f' \in \UCB (G/H)$ such that $\Phi_{\nu}f = f' \circ \pi_{H}$, whence \begin{displaymath}
	\Phi_{\nu}f \, = \, f' \circ \pi_{H} \, = \, \overline{f'} \circ \eta_{G/H} \circ \pi_{H} \, \stackrel{\ref{remark:natural}}{=} \, \overline{f'} \circ \Samuel(\pi_{H}) \circ \eta_{G} ,
\end{displaymath} i.e., $\overline{\Phi_{\nu}f} = \overline{f'} \circ \Samuel(\pi_{H})$. As $\overline{f'} \in \Cont(\Samuel(G/H))$, this entails that $\overline{\Phi_{\nu}f}$ is measurable with respect to the pull-back of the Borel $\sigma$-algebra of $\Samuel(G/H)$ along $\Samuel(\pi_{H})$. It thus remains to verify that \begin{equation}\tag{$\ast$}\label{game}
	\begin{split}
		&\forall B \subseteq \Samuel(G/H) \text{ Borel } \, \forall \epsilon \in \R_{> 0} \colon \\
		& \qquad \qquad \quad \left\lvert \int \chi_{\Samuel(\pi_{H})^{-1}(B)} \cdot \overline{f} \, \mathrm{d}\hat{\mu} - \int \chi_{\Samuel(\pi_{H})^{-1}(B)} \cdot \overline{\Phi_{\nu}f} \, \mathrm{d}\hat{\mu} \right\rvert \, \leq \, \epsilon .
	\end{split}
\end{equation} To this end, consider a Borel subset $B \subseteq \Samuel(G/H)$ and let $\epsilon \in \R_{>0}$. Since $\Samuel(\pi_{H})$ is a continuous map between compact Hausdorff spaces, regularity of $\hat{\mu}$ implies regularity of the push-forward Borel probability measure $\Samuel(\pi_{H})_{\ast}(\hat{\mu})$ on $\Samuel(G/H)$. Combining the regularity of $\Samuel(\pi_{H})_{\ast}(\hat{\mu})$ with a standard application of Urysohn's lemma, we find $h \in \Cont(\Samuel(G/H))$ such that \begin{displaymath}
	\int \vert \chi_{B} - h \vert \, \mathrm{d} \Samuel(\pi_{H})_{\ast}(\hat{\mu}) \, \leq \, \tfrac{\epsilon}{2\Vert f \Vert_{\infty} + 1} .
\end{displaymath} Consider $h' \defeq h \circ \Samuel(\pi_{H}) \circ \eta_{G} \in \RUCB(G)$ and note that $\overline{h'} = h \circ \Samuel(\pi_{H})$. As \begin{displaymath}
	h' \, = \, h \circ \Samuel(\pi_{H}) \circ \eta_{G} \, \stackrel{\ref{remark:natural}}{=} \, h \circ \eta_{G/H} \circ \pi_{H} ,
\end{displaymath} the function $h'$ is $H$-right-invariant by Remark~\ref{remark:quotient.spaces}. We conclude that \begin{align*}
	&\int (h \circ \Samuel(\pi_{H})) \cdot \overline{f} \, \mathrm{d} \hat{\mu} \, = \, \int \overline{h'\cdot f} \, \mathrm{d} \hat{\mu} \, = \, \mu\left( \overline{h'\cdot f} \circ {\eta_{G}} \right) \, = \, \mu (h'\cdot f) \, = \, (\mu\nu)(h'\cdot f) \\
		& \quad \qquad = \, \mu (\Phi_{\nu}(h'\cdot f)) \, \stackrel{\ref{lemma:expectation}}{=} \, \mu (h'\cdot (\Phi_{\nu}f)) \, = \, \mu \left((h \circ \Samuel(\pi_{H}) \circ \eta_{G}) \cdot \left(\overline{\Phi_{\nu}f} \circ {\eta_{G}} \right)\right) \\
		& \quad \qquad = \, \mu \left( \left((h \circ \Samuel(\pi_{H})) \cdot \overline{\Phi_{\nu}f}\right) \circ \eta_{G} \right) \, = \, \int (h \circ \Samuel(\pi_{H})) \cdot \overline{\Phi_{\nu}f} \, \mathrm{d} \hat{\mu} ,
\end{align*} and therefore \begin{align*}
	&\left\lvert \int \chi_{\Samuel(\pi_{H})^{-1}(B)} \cdot \overline{f} \, \mathrm{d} \hat{\mu} - \int \chi_{\Samuel(\pi_{H})^{-1}(B)} \cdot \overline{\Phi_{\nu}f} \, \mathrm{d} \hat{\mu} \right\rvert \\
	& \qquad \quad \leq \, \left\lvert \int \chi_{\Samuel(\pi_{H})^{-1}(B)} \cdot \overline{f} \, \mathrm{d} \hat{\mu} - \int  (h \circ \Samuel(\pi_{H})) \cdot \overline{f} \, \mathrm{d} \hat{\mu} \right\rvert \\
	& \qquad \qquad \qquad + \left\lvert \int (h \circ \Samuel(\pi_{H})) \cdot \overline{\Phi_{\nu}f} \, \mathrm{d} \hat{\mu} - \int \chi_{\Samuel(\pi_{H})^{-1}(B)} \cdot \overline{\Phi_{\nu}f} \, \mathrm{d} \hat{\mu} \right\rvert \\
	& \qquad \quad \leq \, \left( \left\lVert \overline{f} \right\rVert_{\infty} + \left\lVert \overline{\Phi_{\nu}f}\right\rVert_{\infty} \right) \cdot \int \left\lvert \chi_{\Samuel(\pi_{H})^{-1}(B)} - (h \circ \Samuel(\pi_{H})) \right\rvert \, \mathrm{d} \hat{\mu} \\
	& \qquad \quad = \, \left( \Vert f \Vert_{\infty} + \Vert \Phi_{\nu}f\Vert_{\infty} \right) \cdot \int \vert \chi_{B} - h \vert \, \mathrm{d} \Samuel(\pi_{H})_{\ast}(\hat{\mu}) \\
	& \qquad \quad \leq \, 2\Vert f \Vert_{\infty} \cdot \int \vert \chi_{B} - h \vert \, \mathrm{d} \Samuel(\pi_{H})_{\ast}(\hat{\mu}) \, \leq \, \epsilon .
\end{align*} This proves~\eqref{game} and hence completes the argument. \end{proof}

The quantitative statement of Theorem~\ref{theorem:main.concentration} requires some additional concepts (Definition~\ref{definition:amenable.length}). To clarify the relevant notation, let $d$ be a continuous right-invariant pseudo-metric on a topological group $G$. For every $g \in G$, \begin{displaymath}
	d^{g} \colon \, G \times G \, \longrightarrow \, \R_{\geq 0}, \quad (x,y) \, \longmapsto \, d(gx,gx)
\end{displaymath} is a continuous right-invariant pseudo-metric on $G$, too. Moreover, if $H$ is any subgroup of $G$, then \begin{displaymath}
	d_{G/H} \colon \, G/H \times G/H \, \longrightarrow \, \R_{\geq 0}, \quad (xH,yH) \, \longmapsto \, \inf\nolimits_{h \in H} d(x,yh)
\end{displaymath} constitutes a well-defined pseudo-metric on $G/H = \{ xH \mid x \in G \}$.

\begin{lem}\label{lemma:diameter} Let $G$ be a topological group and let $G_{0} \leq G_{1} \leq G$. Furthermore, let $d$ be a continuous right-invariant pseudo-metric on $G$ and let $f \in \Lip_{1}^{\infty}(G,d)$ be $G_{0}$-right-invariant. Then, \begin{displaymath}
	\sup \{ \vert f(x) - f(xy) \vert \mid x \in G, \, y \in G_{1} \} \, \leq \, \sup\nolimits_{g \in G}\diam \left( G_{1}/G_{0},d^{g}_{G/G_{0}}\right) .
\end{displaymath} In particular, if $\mu \in \Mean (G_{1})$, then \begin{displaymath}
	\left\lVert f - \Phi_{\mu}f \right\rVert_{\infty} \, \leq \, \sup\nolimits_{g \in G}\diam \left( G_{1}/G_{0},d^{g}_{G/G_{0}}\right) .
\end{displaymath} \end{lem}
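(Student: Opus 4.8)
The plan is to prove the first inequality directly and then deduce the ``in particular'' clause from it via Lemma~\ref{lemma:expectation} and Lemma~\ref{lemma:lipschitz}. For the first inequality, fix $x \in G$ and $y \in G_{1}$. I want to bound $\vert f(x) - f(xy) \vert$ by the diameter of $G_{1}/G_{0}$ in the pseudo-metric $d^{g}_{G/G_{0}}$ for a suitable $g$ --- and in fact I will take $g = x$. Since $f$ is $G_{0}$-right-invariant, we have $f(x) = f(xh_{0})$ and $f(xy) = f(xyh_{1})$ for all $h_{0}, h_{1} \in G_{0}$; combining this with $f \in \Lip_{1}^{\infty}(G,d)$ and right-invariance of $d$,
\begin{displaymath}
	\vert f(x) - f(xy) \vert \, = \, \vert f(xh_{0}) - f(xyh_{1}) \vert \, \leq \, d(xh_{0}, xyh_{1}) .
\end{displaymath}
Taking the infimum over $h_{0}, h_{1} \in G_{0}$ on the right, and recalling that $d^{x}(a,b) = d(xa,xb)$ while $d_{G/G_{0}}^{x}(aG_{0}, bG_{0}) = \inf_{h \in G_{0}} d^{x}(a, bh) = \inf_{h_{0},h_{1} \in G_{0}} d(xh_{0}, xbh_{1})$ (using right-invariance of $d$ to absorb the extra $G_{0}$-factor on the left), I get $\vert f(x) - f(xy) \vert \leq d_{G/G_{0}}^{x}(xG_{0}, xyG_{0})$. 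Since both $xG_{0}$ and $xyG_{0}$ lie in $(x G_{1})/G_{0}$, which under the identification $G_{1}/G_{0} \hookrightarrow G/G_{0}$ (or more carefully, after translating: $x G_{1}$ is a coset of $G_{1}$ and its image under $\pi_{G_{0}}$ has the same $d_{G/G_{0}}^{x}$-diameter as $G_{1}/G_{0}$ does in $d_{G/G_{0}}^{x}$ by right-invariance) the right-hand side is at most $\diam(G_{1}/G_{0}, d_{G/G_{0}}^{x}) \leq \sup_{g \in G} \diam(G_{1}/G_{0}, d_{G/G_{0}}^{g})$. Taking the supremum over $x \in G$ and $y \in G_{1}$ finishes the first claim.

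For the ``in particular'' part, let $\mu \in \Mean(G_{1})$. The operator identity $(\Phi_{\mu}f)(x) = \mu((f \circ \lambda_{x})\vert_{G_{1}})$ together with $\mu(1) = 1$ gives
\begin{displaymath}
	\vert f(x) - (\Phi_{\mu}f)(x) \vert \, = \, \left\lvert \mu\bigl( f(x)\cdot 1 - (f \circ \lambda_{x})\vert_{G_{1}} \bigr) \right\rvert \, \leq \, \bigl\Vert f(x)\cdot 1 - (f \circ \lambda_{x})\vert_{G_{1}} \bigr\Vert_{\infty} \, = \, \sup\nolimits_{y \in G_{1}} \vert f(x) - f(xy) \vert ,
\end{displaymath}
where the inequality uses that $\mu$ is a mean (norm $1$). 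Taking the supremum over $x \in G$ and applying the first part yields $\Vert f - \Phi_{\mu}f \Vert_{\infty} \leq \sup_{g \in G} \diam(G_{1}/G_{0}, d_{G/G_{0}}^{g})$, as desired.

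The only genuinely delicate point is the bookkeeping in the first step: one must be careful that the infimum of $d(xh_{0}, xyh_{1})$ over $h_{0}, h_{1} \in G_{0}$ is exactly $d_{G/G_{0}}^{x}(xG_{0}, xyG_{0})$ rather than merely bounded by it, which requires invoking right-invariance of $d$ to collapse the double-coset expression, and that translating $G_{1}/G_{0}$ by $x$ does not change the relevant diameter (again by right-invariance of $d$, since $d^{x}(ay, by) = d^{x}(a,b)$ for $y$ in the group, and the cosets of $G_{0}$ inside $xG_{1}$ are precisely the $x$-translates of cosets of $G_{0}$ inside $G_{1}$). Everything else is a routine application of the Lipschitz bound, $G_{0}$-right-invariance of $f$, and the defining property of a mean; no new idea beyond Lemma~\ref{lemma:expectation}-style manipulation is needed.
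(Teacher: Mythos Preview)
Your approach matches the paper's: use $G_0$-right-invariance and the Lipschitz bound to get $|f(x)-f(xy)| \leq \inf_{h\in G_0} d(x,xyh)$, identify this as a value of the quotient pseudo-metric, and for the second part apply $|\mu(\cdot)| \leq \|\cdot\|_\infty$ directly (your announced use of Lemma~\ref{lemma:expectation} and Lemma~\ref{lemma:lipschitz} never actually occurs, nor is it needed).

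There is one bookkeeping slip that tangles your write-up. After taking the infimum you should land on $d^{x}_{G/G_0}(eG_0, yG_0)$, not $d^{x}_{G/G_0}(xG_0, xyG_0)$: by the paper's definition $d^{g}_{G/G_0}(aG_0,bG_0)=\inf_{h} d(ga,gbh)$, so substituting $g=x$, $a=x$, $b=xy$ would give $\inf_h d(x^2,x^2yh)$, which is not the quantity you computed. With the correct labels $a=e$, $b=y$, the cosets $eG_0,yG_0$ already lie in $G_1/G_0$, so the bound by $\diam(G_1/G_0, d^{x}_{G/G_0})$ is immediate and your entire paragraph about translating $xG_1/G_0$ back to $G_1/G_0$ becomes unnecessary. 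The paper's proof does exactly this (choosing a near-optimal $z\in G_0$ via an $\epsilon$-argument rather than writing the infimum explicitly) and is otherwise identical to yours.
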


\begin{proof} First, if $\epsilon \in \R_{>0}$, $x \in G$ and $y \in G_{1}$, then there exists $z \in G_{0}$ such that \begin{displaymath}
	d(x,xyz) \, \leq \, \sup\nolimits_{g \in G} \diam \left( G_{1}/G_{0},d^{g}_{G/G_{0}}\right) + \epsilon ,
\end{displaymath} whence $f$ being a $G_{0}$-right-invariant element of $\Lip_{1}^{\infty}(G,d)$ implies that \begin{align*}
	\vert f(x) - f(xy) \vert \, &= \, \vert f(x) - f(xyz) \vert \, \leq \, d(x,xyz) \, \leq \, \sup\nolimits_{g \in G} \diam \left( G_{1}/G_{0},d^{g}_{G/G_{0}}\right) + \epsilon .
\end{align*} Thus, as desired, \begin{equation}\tag{$\ast$}\label{difference}
	\sup \{ \vert f(x) - f(xy) \vert \mid x \in G, \, y \in G_{1} \} \, \leq \, \sup\nolimits_{g \in G}\diam \left( G_{1}/G_{0},d^{g}_{G/G_{0}}\right) .
\end{equation} In particular, if $\mu \in \Mean (G_{1})$, then \begin{align*}
	\left\lVert f - \Phi_{\mu}f \right\rVert_{\infty} \, &= \, \sup\nolimits_{x \in G} \left\lvert f(x) - \mu\left((f \circ \lambda_{x})\vert_{G_{1}}\right) \right\rvert \\
		& = \, \sup\nolimits_{x \in G} \left\lvert \mu\left((f(x) - (f \circ \lambda_{x}))\vert_{G_{1}}\right) \right\rvert \\
		& \leq \, \sup\nolimits_{x \in G} \left\lVert (f(x) - (f \circ \lambda_{x}))\vert_{G_{1}} \right\rVert_{\infty} \\
		& = \, \sup \{ \vert f(x) - f(xy) \vert \mid x \in G, \, y \in G_{1} \} \\
		& \stackrel{\eqref{difference}}{\leq} \, \sup\nolimits_{g \in G}\diam \left( G_{i+1}/G_{i},d^{g}_{G/G_{i}}\right) . \qedhere
\end{align*} \end{proof}

\begin{remark}\label{remark:biinvariant} Let $G$ be a topological group and let $G_{0} \leq G_{1} \leq G$. If $d$ is a continuous bi-invariant pseudo-metric on $G$, then \begin{displaymath}
	\sup\nolimits_{g \in G}\diam \left( G_{1}/G_{0},d^{g}_{G/G_{0}}\right) \, = \, \diam \left( G_{1}/G_{0},d_{G/G_{0}}\right) .
\end{displaymath} \end{remark}

The following concept is crucially inspired by the notion of length of a finite metric space by Schechtman~\cite{schechtman82} (see also~\cite[I, Definition~7.7]{MilmanSchechtman}) and its generalization for metric measure spaces by Pestov~\cite[Definition~4.3.16]{PestovBook}.

\begin{definition}\label{definition:amenable.length} Let $G$ be a topological group. Let $\mathscr{A}(G)$ denote the set of all amenable topological subgroups of $G$ and let \begin{displaymath}
	\mathscr{A}^{\ast}(G) \, \defeq \, \bigcup\nolimits_{n \in \N } \! \left. \left\{ (G_{0},\ldots,G_{n}) \in \mathscr{A}(G)^{n+1} \right\vert \{ e \} = G_{0} \leq \ldots \leq G_{n} = G \right\} .
\end{displaymath}  Let $\Delta (G)$ denote the set of all bounded, continuous, right-invariant pseudo-metrics on $G$. For every $d \in \Delta(G)$ and every $(G_{0},\ldots,G_{n}) \in \mathscr{A}^{\ast}(G)$, we define \begin{displaymath}
	\ell(G_{0},\ldots,G_{n}; d) \, \defeq \, \left(\sum\nolimits_{i=0}^{n-1} \left(\sup\nolimits_{g \in G} \diam \left(G_{i+1}/G_{i},d^{g}_{G/G_{i}}\right) \right)^{2} \right)^{1/2} .
\end{displaymath} If $d \in \Delta(G)$, then we will refer to \begin{displaymath}
	\ell(G,d) \, \defeq \, \inf \{ \ell(G_{0},\ldots,G_{n};d) \mid (G_{0},\ldots,G_{n}) \in \mathscr{A}^{\ast}(G) \} \, \in \, [0,\infty]
\end{displaymath} as the \emph{amenable length} of $(G,d)$. \end{definition}

\begin{remark}\label{remark:approximate.amenable.length} Let $G$ be a topological group. Then the following hold. \begin{itemize}
	\item[$(1)$] If $G$ is amenable, then $\ell(G,d) \leq \diam (G,d) < \infty$ for every $d \in \Delta (G)$. Conversely, if $\ell(G,d) < \infty$ for some $d \in \Delta (G)$, then $G$ is necessarily amenable, due to a formal triviality.
	\item[$(2)$] $\ell(G,t\cdot d) = t \cdot \ell(G,d)$ for all $d \in \Delta (G)$ and $t \in \R_{>0}$.
\end{itemize} \end{remark}

Everything is in place for the desired concentration inequality.

\begin{thm}\label{theorem:main.concentration} Let $G$ be an amenable topological group, $(G_{0},\ldots,G_{n}) \in \mathscr{A}^{\ast}(G)$. For each $i \in \{ 1,\ldots,n\}$, let $\nu_{i} \in \Mean (G_{i})$ be $G_{i}$-left-invariant, and define \begin{displaymath}
	\mu \, \defeq \, \nu_{n}\cdots \nu_{1} \, \in \, \Mean (G) .
\end{displaymath} For all $d \in \Delta(G)$, $f \in \Lip_{1}(G,d)$, and $\epsilon \in \R_{>0}$, \begin{displaymath}
	\hat{\mu}\left(\left\{ \xi \in \Samuel (G) \left\vert \, \left\lvert \xi(f) - \mu(f) \right\rvert \geq \epsilon \right\}\right) \right. \! \, \leq \, 2 \exp \left( -\tfrac{\epsilon^{2}}{2 \ell(G_{0},\ldots,G_{n};d)^{2}} \right) .
\end{displaymath}\end{thm}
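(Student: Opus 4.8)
The plan is to invoke Azuma's martingale inequality (Theorem~\ref{theorem:azuma}) on the probability space $(\Samuel(G),\mathscr{B},\hat{\mu})$, where $\mathscr{B}$ is the Borel $\sigma$-algebra, applied to the random variable $\overline{f}$. Since $d$ is bounded, every $f \in \Lip_{1}(G,d)$ is automatically bounded, hence lies in $\Lip_{1}^{\infty}(G,d) \subseteq \RUCB(G)$ by Remark~\ref{remark:lipschitz.right.uniformly.continuous}, so that $\overline{f} \in \Cont(\Samuel(G)) \subseteq \mathscr{L}^{\infty}(\Samuel(G),\mathscr{B},\hat{\mu})$. First I would set up the filtration: for $i \in \{0,\ldots,n\}$ let $\mathscr{B}_{i}$ be the pull-back of the Borel $\sigma$-algebra of $\Samuel(G/G_{i})$ along $\Samuel(\pi_{G_{i}})$. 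Using naturality of $\eta$ and functoriality of $\Samuel$ (Remark~\ref{remark:natural}) together with the factorisations $\pi_{G_{i}} = q_{i} \circ \pi_{G_{i-1}}$ through the quotient maps $q_{i} \colon G/G_{i-1} \to G/G_{i}$, one checks that $\mathscr{B}_{i} \subseteq \mathscr{B}_{i-1}$; since $G_{0} = \{e\}$ and $G/G_{n}$ is a one-point space, this yields exactly a chain $\{\emptyset,\Samuel(G)\} = \mathscr{B}_{n} \subseteq \cdots \subseteq \mathscr{B}_{0} = \mathscr{B}$ of the kind required by Theorem~\ref{theorem:azuma}.

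The heart of the matter is to identify the conditional expectations $\E_{\hat{\mu}}(\overline{f} \mid \mathscr{B}_{i})$, and the key observation is that Lemma~\ref{lemma:conditional.expectation} should be applied not with the single mean $\nu_{i}$ but with the \emph{compound mean} $\lambda_{i} \defeq \nu_{i}\nu_{i-1}\cdots\nu_{1} \in \Mean(G_{i})$ (with $\lambda_{0} \defeq \eta_{G}(e)$, so $\lambda_{n} = \mu$). Indeed $\lambda_{i}$ is $G_{i}$-left-invariant by Remark~\ref{remark:ideal.of.invariant.means} (being $\nu_{i}$ times an element of $\Mean(G_{i})$), hence idempotent, $\lambda_{i}\lambda_{i} = \lambda_{i}$, by Lemma~\ref{lemma:invariance}; and writing $\mu = (\nu_{n}\cdots\nu_{i+1})\lambda_{i}$ and using associativity gives $\mu\lambda_{i} = (\nu_{n}\cdots\nu_{i+1})\lambda_{i}\lambda_{i} = \mu$. (By contrast $\mu\nu_{i} = \mu$ fails in general, which is why the naive choice does not work.) Lemma~\ref{lemma:conditional.expectation}, applied with $H = G_{i}$ and $\nu = \lambda_{i}$, therefore yields $\E_{\hat{\mu}}(\overline{f} \mid \mathscr{B}_{i}) = \overline{\Phi_{\lambda_{i}}f}$ $\hat{\mu}$-almost everywhere for every $i$, the endpoints $i = 0$ and $i = n$ recovering $\overline{f}$ and the constant $\mu(f)$ respectively.

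For the martingale increments, put $F_{i} \defeq \Phi_{\lambda_{i}}f$. Iterating Lemma~\ref{lemma:lipschitz} shows $F_{i} \in \Lip_{1}^{\infty}(G,d)$, and $F_{i}$ is $G_{i}$-right-invariant by Lemma~\ref{lemma:invariance} (when $i \geq 1$; trivially when $i = 0$). Since $\lambda_{i+1} = \nu_{i+1}\lambda_{i}$ we have $F_{i+1} = \Phi_{\nu_{i+1}}F_{i}$, so, as the map $g \mapsto \overline{g}$ is isometric,
\[
	d_{i} \, \defeq \, \bigl\lVert \E_{\hat{\mu}}(\overline{f} \mid \mathscr{B}_{i}) - \E_{\hat{\mu}}(\overline{f} \mid \mathscr{B}_{i+1}) \bigr\rVert_{\infty} \, = \, \bigl\lVert F_{i} - \Phi_{\nu_{i+1}}F_{i} \bigr\rVert_{\infty} \, \leq \, \sup\nolimits_{g \in G} \diam\bigl(G_{i+1}/G_{i},d^{g}_{G/G_{i}}\bigr),
\]
where the last inequality is Lemma~\ref{lemma:diameter} applied with the inclusions $G_{i} \leq G_{i+1} \leq G$, the function $F_{i}$, and the mean $\nu_{i+1} \in \Mean(G_{i+1})$. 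Hence $\sum_{i=0}^{n-1} d_{i}^{2} \leq \ell(G_{0},\ldots,G_{n};d)^{2}$, and since $\E_{\hat{\mu}}(\overline{f}) = \mu(f)$ and $\overline{f}(\xi) = \xi(f)$ for $\xi \in \Samuel(G)$, Theorem~\ref{theorem:azuma} delivers the claimed bound (the degenerate case $\ell(G_{0},\ldots,G_{n};d) = 0$, in which $\overline{f} = \mu(f)$ $\hat{\mu}$-almost everywhere, being trivial).

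I expect the principal obstacle to lie in the second paragraph: recognising that the tower of quotient $\sigma$-algebras must be matched against the partial convolution products $\lambda_{i}$ rather than the individual $\nu_{i}$, verifying $\mu\lambda_{i} = \mu$ cleanly from the semigroup identities of Section~\ref{section:topological.groups}, and aligning the pull-back $\sigma$-algebras $\mathscr{B}_{i}$ with the maps $\Samuel(\pi_{G_{i}})$ featuring in Lemma~\ref{lemma:conditional.expectation}. Everything else is routine bookkeeping with Lemmata~\ref{lemma:lipschitz} and~\ref{lemma:diameter} and Azuma's inequality.
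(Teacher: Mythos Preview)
Your proposal is correct and follows essentially the same route as the paper's proof: your compound means $\lambda_{i}$ are exactly the paper's $\mu_{i} \defeq \nu_{i}\cdots\nu_{0}$, and both arguments set up the filtration via $\Samuel(\pi_{G_{i}})$, identify the conditional expectations as $\overline{\Phi_{\mu_{i}}f}$ through Lemma~\ref{lemma:conditional.expectation}, bound the increments by Lemma~\ref{lemma:diameter}, and apply Azuma. Your verification of $\mu\lambda_{i}=\mu$ via idempotency is a slightly cleaner variant of the paper's use of Lemma~\ref{lemma:invariance}, and your explicit handling of the degenerate case $\ell=0$ is a welcome addition the paper omits.
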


\begin{proof} For convenience of notation, we put $\nu_{0} \defeq \eta_{G_{0}}(e) \in \Mean (G_{0}) \subseteq \Mean (G)$. For each $i \in \{ 1,\ldots,n \}$, by Remark~\ref{remark:ideal.of.invariant.means}, the mean \begin{displaymath}
	\mu_{i} \, \defeq \, \nu_{i}\cdots \nu_{0} \, \in \, \Mean (G_{i})
\end{displaymath} is $G_{i}$-left-invariant. In particular, $\mu = \mu_{n} \in \Mean (G)$ is $G$-left-invariant, and \begin{displaymath}
	\mu\mu_{i} \, = \, \nu_{n}\cdots \nu_{1}\nu_{i}\cdots \nu_{0} \, \stackrel{\ref{lemma:invariance}}{=} \, \nu_{n}\cdots \nu_{i+1}\nu_{i}\cdots \nu_{0} \, = \, \mu
\end{displaymath} for each $i \in \{ 0,\ldots,n\}$. Moreover, let $\mathscr{B}$ denote the Borel $\sigma$-algebra of $\Samuel (G)$ and, for each $i \in \{ 0,\ldots,n\}$, let $\mathscr{B}_{i}$ denote the pull-back of the Borel $\sigma$-algebra of $\Samuel(G/G_{i})$ along $\Samuel(\pi_{G_{i}})$. Note that \begin{displaymath}
	\{ \emptyset,\Samuel (G)\} \, = \, \mathscr{B}_{n} \, \subseteq \, \ldots \, \subseteq \, \mathscr{B}_{0} \, = \, \mathscr{B} .
\end{displaymath} Now, let $d \in \Delta (G)$ and note that $\Lip_{1}(G,d) = \Lip_{1}^{\infty}(G,d) \subseteq \RUCB(G)$ according to Remark~\ref{remark:lipschitz.right.uniformly.continuous}. Consider any $f \in \Lip_{1}(G,d)$. For every $i \in \{ 0,\ldots,n \}$, the function $f_{i} \defeq \Phi_{\mu_{i}}f$ belongs to $\Lip_{1}(G,d)$ thanks to Lemma~\ref{lemma:lipschitz} and is $G_{i}$-right-invariant by Lemma~\ref{lemma:invariance}. For every $i \in \{ 0,\ldots,n-1\}$, \begin{displaymath}
	\Phi_{\nu_{i+1}}(f_{i}) \, = \, \Phi_{\nu_{i+1}}(\Phi_{\mu_{i}}f) \, \stackrel{\ref{remark:convolution}(1)}{=} \, \Phi_{\nu_{i+1}\mu_{i}}f \, = \, \Phi_{\mu_{i+1}}f \, = \, f_{i+1}
\end{displaymath} and thus \begin{align*}
	\left\lVert \left. \E_{\hat{\mu}}\left(\overline{f} \, \right\vert \mathscr{B}_{i} \right) - \left. \E_{\hat{\mu}}\left(\overline{f} \, \right\vert \mathscr{B}_{i+1} \right) \right\rVert_{\hat{\mu},\infty} \, &= \, \left\lVert \left. \E_{\hat{\mu}}\left(\overline{f} \, \right\vert \mathrm{S}(\pi_{G_{i}})\right) - \left. \E_{\hat{\mu}}\left(\overline{f} \, \right\vert \mathrm{S}(\pi_{G_{i+1}})\right) \right\rVert_{\hat{\mu},\infty} \\
	& \hspace{-40mm} \stackrel{\ref{lemma:conditional.expectation}}{=} \, \left\lVert \overline{f_{i}} - \overline{f_{i+1}} \right\rVert_{\hat{\mu},\infty} \, \leq \, \left\lVert \overline{f_{i}} - \overline{f_{i+1}} \right\rVert_{\infty} \, = \, \left\lVert f_{i} - f_{i+1} \right\rVert_{\infty} \, = \, \left\lVert f_{i} - \Phi_{\nu_{i+1}}(f_{i}) \right\rVert_{\infty} \\
	& \hspace{-40mm} \stackrel{\ref{lemma:diameter}}{\leq} \, \sup\nolimits_{g \in G}\diam \left( G_{i+1}/G_{i},d^{g}_{G/G_{i}}\right) .
\end{align*} Consequently, for every $\epsilon \in \R_{>0}$, \begin{align*}
	\hat{\mu}\left(\left\{ \xi \in \Samuel (G) \left\vert \, \left\lvert \xi(f) - \mu(f) \right\rvert \geq \epsilon \right\}\right) \right. \! \, &= \, \hat{\mu}\left(\left\{ \xi \in \Samuel (G) \left\vert \, \left\lvert \overline{f}(\xi) - \E_{\hat{\mu}}\bigl(\overline{f}\bigr) \right\rvert \geq \epsilon \right\}\right) \right. \\
		& \stackrel{\ref{theorem:azuma}}{\leq} \, 2 \exp \left( -\tfrac{\epsilon^{2}}{2 \ell(G_{0},\ldots,G_{n};d)^{2}} \right) . \qedhere
\end{align*} \end{proof}

A topological group $G$ is said to be \emph{precompact} if, for every $U \in \mathscr{U}(G)$, there exists a finite subset $F \subseteq G$ such that $G = UF$. It is well known that, if $G$ is a precompact topological group, then $\Mean (G)$ contains a unique $G$-left-invariant mean (see~\cite[paragraph preceding Theorem~3.3]{mpu}). From this fact and our Theorem~\ref{theorem:main.concentration}, we readily deduce the following corollary, which---in view of the simplification suggested by Remark~\ref{remark:biinvariant}---entails a concentration inequality for bi-invariant metrics on compact groups by Milman and Schechtman~\cite[I, Theorem~7.12(i)]{MilmanSchechtman} (see also~\cite[Theorem~4.5.3]{PestovBook}).

\begin{cor}[\cite{MilmanSchechtman}, I, Theorem~7.12(i)]\label{corollary:precompact} Consider a chain of precompact topological groups $\{ e \} = G_{0} \leq \ldots \leq \, G_{n} = G$. Let $d$ be a continuous right-invariant pseudo-metric on $G$ and let $\mu \in \Mean (G)$ be the unique $G$-left-invariant mean. For every $f \in \Lip_{1}(G,d)$ and every $\epsilon \in \R_{>0}$, \begin{displaymath}
	\hat{\mu}\left(\left\{ \xi \in \Samuel (G) \left\vert \, \left\lvert \xi(f) - \mu(f) \right\rvert \geq \epsilon \right\}\right) \right. \! \, \leq \, 2 \exp \left( -\tfrac{\epsilon^{2}}{2 \ell(G_{0},\ldots,G_{n};d)^{2}} \right) .
\end{displaymath} \end{cor}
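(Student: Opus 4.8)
The plan is to derive this directly from Theorem~\ref{theorem:main.concentration}. The key point is that a precompact topological group is amenable — indeed, for a precompact group $H$ the unique $H$-left-invariant mean $\nu \in \Mean(H)$ exists by the fact cited just before the statement — so a chain of precompact subgroups is in particular an element of $\mathscr{A}^{\ast}(G)$ (after recording that each $G_i$ carries the relative topology inherited from $G$, which is precompact). Thus all the hypotheses of Theorem~\ref{theorem:main.concentration} are in place once we choose the means correctly.

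First I would fix, for each $i \in \{1,\ldots,n\}$, the unique $G_i$-left-invariant mean $\nu_i \in \Mean(G_i)$ (uniqueness from~\cite{mpu}), and form $\mu' \defeq \nu_n \cdots \nu_1 \in \Mean(G)$ as in Theorem~\ref{theorem:main.concentration}. The inequality of Theorem~\ref{theorem:main.concentration} then applies verbatim to $\mu'$, $d$, $f$, and $\epsilon$. It remains only to identify $\mu'$ with the mean $\mu$ named in the corollary, i.e. to check that $\mu' = \nu_n \cdots \nu_1$ is itself the unique $G$-left-invariant mean on $G$. Existence and uniqueness of a $G$-left-invariant mean on $G$ is again guaranteed because $G = G_n$ is precompact, so it suffices to verify $G$-left-invariance of $\mu'$. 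This is immediate: by Remark~\ref{remark:ideal.of.invariant.means}, since $\nu_n$ is $G_n$-left-invariant (i.e. $G$-left-invariant), the product $\nu_n \nu$ is $G$-left-invariant for every $\nu \in \Mean(G)$; applying this with $\nu = \nu_{n-1}\cdots\nu_1$ gives that $\mu' = \nu_n(\nu_{n-1}\cdots\nu_1)$ is $G$-left-invariant. Hence $\mu' = \mu$, and the desired concentration inequality follows.

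There is essentially no obstacle here: the corollary is a specialization, and the only things to spell out are (i) precompact $\Rightarrow$ amenable, so that the precompact chain lies in $\mathscr{A}^{\ast}(G)$, and (ii) the convolution $\nu_n\cdots\nu_1$ of the canonical invariant means is $G$-left-invariant and therefore equals the canonical invariant mean $\mu$ on $G$. Both are one-line consequences of results already recorded (the paragraph before the statement for (i), and Remark~\ref{remark:ideal.of.invariant.means} together with uniqueness for (ii)). The simplification for bi-invariant metrics alluded to in the remark preceding the corollary is Remark~\ref{remark:biinvariant} and plays no role in the proof itself; it only serves to recover the classical Milman--Schechtman form of the bound once one additionally assumes $d$ bi-invariant.
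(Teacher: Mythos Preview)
Your proposal is correct and matches the paper's proof essentially line for line: pick the unique $G_i$-left-invariant means $\nu_i$, observe via Remark~\ref{remark:ideal.of.invariant.means} that $\nu_n\cdots\nu_1$ is $G$-left-invariant and hence equals $\mu$ by uniqueness, then invoke Theorem~\ref{theorem:main.concentration}. The additional remarks you make about precompactness implying amenability and about the bi-invariant case are accurate side comments but not part of the formal argument.
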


\begin{proof} For each $i \in \{ 1,\ldots,n\}$, we let $\mu_{i}$ denote the unique $G_{i}$-left-invariant element of $\Mean (G_{i})$. Since $\mu_{n}\cdots \mu_{1}$ is $G$-left-invariant by Remark~\ref{remark:ideal.of.invariant.means}, it follows that $\mu = \mu_{n}\cdots \mu_{1}$. Therefore, the desired conclusion is an immediate consequence of Theorem~\ref{theorem:main.concentration}. \end{proof}

Theorem~\ref{theorem:main.concentration} provides a method for proving extreme amenability. For this particular purpose, vanishing of the following slightly technical relative of the amenable length functional (Definition~\ref{definition:amenable.length}) will be sufficient.

\begin{definition}\label{definition:approximate.amenable.length} Let $G$ be a topological group. If $U \in \mathscr{U}(G)$ and $E \in \Pfin (G)$, then we define \begin{displaymath}
	\T_{G}(U,E) \, \defeq \, \{ (H,d) \mid H \in \mathscr{A}(G), \, d \in \Delta(H), \, E \subseteq UH, \, \B_{d}(e,1) \subseteq U \} .
\end{displaymath} Moreover, we let \begin{displaymath}
	\left. \ell(G) \, \defeq \, \sup \left\{  \inf\nolimits_{(H,d) \in \T_{G}(U,E)} \ell(H,d) \, \right\vert U \in \mathscr{U}(G), \, E \in \Pfin(G) \right\} \, \in \, [0,\infty] .
\end{displaymath} \end{definition}

\begin{remark}\label{remark:finite.amenable.length} Let $G$ be a topological group. \begin{itemize}
	\item[$(1)$] If $G$ is amenable, then $\ell(G) \leq 1$. Indeed, for every $U \in \mathscr{U}(G)$, there is $d \in \Delta(G)$ with $\B_{d}(e,1) \subseteq U$ and $\diam (G,d) \leq 1$~\cite[Theorem~P.3]{PachlBook}, whence amenability of $G$ entails that $(G,d) \in \T_{G}(U,E)$ and thus \begin{displaymath}
			\qquad \inf\nolimits_{(H,d') \in \T_{G}(U,E)} \ell(H,d') \, \leq \, \ell(G,d) \, \stackrel{\ref{remark:approximate.amenable.length}(1)}{=} \, \diam (G,d) \, \leq \, 1
		\end{displaymath} for each $E \in \Pfin(G)$, as desired.
	\item[$(2)$] If $\ell(G) < \infty$, then $G$ is amenable, since the former entails that \begin{displaymath}
			\qquad \forall U \in \mathscr{U}(G) \ \forall E \in \Pfin(G) \ \exists H \in \mathscr{A}(G) \colon \quad E \subseteq UH ,
		\end{displaymath} which implies amenability of $G$ by a straightforward argument using weak-$\ast$ compactness of $\Mean (G)$.
	\item[$(3)$] It follows by~(1) and~(2) that $\ell(G) \in [0,1] \cup \{ \infty\}$, and that $\ell(G) < \infty$ if and only if $G$ is amenable.
	\item[$(4)$] If $G$ is non-trivial and has no small subgroups, then $\ell(G) \geq 1$. In~fact, if $U \in \mathscr{U}(G)$ does not contain any non-trivial subgroup of $G$, then, for every $d \in \Delta(G)$ with $\B_{d}(e,1) \subseteq U$ and any chain of subgroups $\{ e \} = G_{0} \leq \ldots \leq G_{n} \leq G$ with $G_{n} \ne \{ e \}$, it follows that \begin{align*}
			\qquad \qquad &\left(\sum\nolimits_{i=0}^{n-1} \left(\sup\nolimits_{g \in G} \diam \left(G_{i+1}/G_{i},d^{g}_{i}\right) \right)^{2} \right)^{1/2} \\
			&\qquad \qquad \geq \, \diam \left(G_{j+1}/G_{j},d_{G/G_{j}}\right) \, = \, \diam \left(G_{j+1},d\right) \, \geq \, 1 ,
		\end{align*} where $j \defeq \max \{ i \in \{ 0,\ldots,n\} \mid G_{i}=\{ e \} \}$, which implies that $\ell(G)\geq 1$, due to non-triviality of $G$.
\end{itemize} \end{remark}

\begin{thm}\label{theorem:extreme.amenability} Let $G$ be a topological group such that $\ell (G) = 0$. Then $G$ is extremely amenable. \end{thm}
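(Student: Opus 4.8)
The plan is to verify the criterion of Proposition~\ref{proposition:fremlin}: given $F \in \Pfin(\RUCB(G))$, $E \in \Pfin(G)$ and $\epsilon \in \R_{>0}$ (which, as the two conditions there are monotone in $\epsilon$, we may take $< 1$), we must produce one $\mu \in \Mean(G)$ that is $\epsilon$-almost $E$-invariant and $\epsilon$-concentrated on each $f \in F$. First I would fix the auxiliary constant $M \defeq 1 + \max_{f\in F}\Vert f\Vert_\infty$ together with $\delta \defeq \epsilon\bigl(M\sqrt{32\ln(2/\epsilon)}\,\bigr)^{-1}$, and, using right-uniform continuity of the finitely many members of $F$, pick $U \in \mathscr{U}(G)$ so small that $\Vert f - f\circ\lambda_u\Vert_\infty \leq \epsilon/4$ for all $f \in F$, $u \in U$. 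The hypothesis $\ell(G) = 0$ then supplies, for this $U$ and $E$, an amenable topological subgroup $(H,d) \in \T_G(U,E)$ with $\ell(H,d) < \delta$ and, unravelling that infimum, a chain of amenable subgroups $\{e\} = G_0 \leq \cdots \leq G_n = H$ with $\ell(G_0,\ldots,G_n;d) < \delta$. Choosing $G_i$-left-invariant means $\nu_i \in \Mean(G_i)$, I would set $\mu \defeq \nu_n\cdots\nu_1 \in \Mean(H) \subseteq \Mean(G)$, which is $H$-left-invariant by Remark~\ref{remark:ideal.of.invariant.means}.

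That $\mu$ satisfies the first condition of Proposition~\ref{proposition:fremlin} is immediate: writing $g \in E$ as $g = uh$ with $u \in U$ and $h \in H$ (possible since $E \subseteq UH$), the $H$-left-invariance of $\mu$ and the identity $f\circ\lambda_g = (f\circ\lambda_u)\circ\lambda_h$ give $\mu(f\circ\lambda_g) = \mu(f\circ\lambda_u)$, whence $\lvert\mu(f) - \mu(f\circ\lambda_g)\rvert = \lvert\mu(f-f\circ\lambda_u)\rvert \leq \Vert f - f\circ\lambda_u\Vert_\infty \leq \epsilon/4 \leq \epsilon$.

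The substance of the argument is the second condition, and the obstacle here is that $f\vert_H$ need not be $d$-Lipschitz, so Theorem~\ref{theorem:main.concentration} does not apply to it directly. The resolution is that the constraint $\B_d(e,1) \subseteq U$ built into $\T_G(U,E)$ forces an approximate Lipschitz bound: for $x,y \in H$ with $d(x,y) < 1$, right-invariance of $d$ puts $xy^{-1}$ into $\B_d(e,1) \subseteq U$, so $\lvert f(x)-f(y)\rvert = \lvert(f\circ\lambda_{xy^{-1}})(y) - f(y)\rvert \leq \Vert f - f\circ\lambda_{xy^{-1}}\Vert_\infty \leq \epsilon/4$, while for $d(x,y) \geq 1$ one has trivially $\lvert f(x)-f(y)\rvert \leq 2\Vert f\Vert_\infty \leq 2\Vert f\Vert_\infty\, d(x,y)$; hence $\lvert f(x)-f(y)\rvert \leq 2\Vert f\Vert_\infty\, d(x,y) + \epsilon/4$ throughout $H$. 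Feeding this into Remark~\ref{remark:lipschitz.approximation} yields $\tilde f \in \Lip_{2\Vert f\Vert_\infty}(H,d) \subseteq \Lip_1(H,2Md)$ with $\Vert f\vert_H - \tilde f\Vert_\infty \leq \epsilon/4$. Now $2Md \in \Delta(H)$ and, directly from Definition~\ref{definition:amenable.length}, $\ell(G_0,\ldots,G_n;2Md) = 2M\,\ell(G_0,\ldots,G_n;d) < 2M\delta$, so Theorem~\ref{theorem:main.concentration}, applied over $H$ to the chain $(G_0,\ldots,G_n)$, the means $\nu_i$, the pseudo-metric $2Md$, the function $\tilde f$ and the threshold $\epsilon/2$, gives
\[
	\hat\mu\bigl(\{\zeta \in \Samuel(H) \mid \lvert\zeta(\tilde f) - \mu(\tilde f)\rvert \geq \epsilon/2\}\bigr) \, \leq \, 2\exp\Bigl(-\tfrac{(\epsilon/2)^2}{2\,(2M\delta)^2}\Bigr) \, = \, 2\exp\bigl(-\ln(2/\epsilon)\bigr) \, = \, \epsilon ,
\]
the value of $\delta$ having been reverse-engineered to make the right-hand side exactly $\epsilon$ (here $\hat\mu$ denotes the measure on $\Samuel(H)$ attached to $\mu$). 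It remains to transfer this back to $\Samuel(G)$: by Lemma~\ref{lemma:subspace.mean}, together with the uniqueness clause of the Riesz--Markov--Kakutani theorem, the Borel measure on $\Samuel(G)$ attached to $\mu$ is the push-forward of $\hat\mu$ along $\iota_{H,G}\colon\Samuel(H) \to \Samuel(G)$; since $\iota_{H,G}(\zeta)(f) = \zeta(f\vert_H)$ and $\mu(f) = \mu(f\vert_H)$ under the identification $\Mean(H)\subseteq\Mean(G)$, the set $\{\xi\in\Samuel(G)\mid\lvert\xi(f)-\mu(f)\rvert\geq\epsilon\}$ has the same measure as $\{\zeta\in\Samuel(H)\mid\lvert\zeta(f\vert_H)-\mu(f\vert_H)\rvert\geq\epsilon\}$, which, because $\Vert f\vert_H-\tilde f\Vert_\infty\leq\epsilon/4$, is contained in $\{\zeta\in\Samuel(H)\mid\lvert\zeta(\tilde f)-\mu(\tilde f)\rvert\geq\epsilon/2\}$ and so has measure at most $\epsilon$ by the display. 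Thus $\mu$ fulfils both conditions of Proposition~\ref{proposition:fremlin} for every $f \in F$, and $G$ is extremely amenable.

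The main obstacle I anticipate is precisely the approximate-Lipschitz step: one has to notice that $\B_d(e,1)\subseteq U$ is exactly the hypothesis that converts uniform continuity of an arbitrary $f \in \RUCB(G)$ into a Lipschitz-type inequality, at the price of a Lipschitz constant growing with $\Vert f\Vert_\infty$ — which is why the constant $M$ must be introduced and $\delta$ chosen uniformly over the finite family $F$. Everything else — the invariance estimate, the rescaling $d \rightsquigarrow 2Md$ with the accompanying scaling of $\ell(\cdot)$, and the descent from $\Samuel(H)$ to $\Samuel(G)$ — is routine bookkeeping on top of Theorem~\ref{theorem:main.concentration} and Proposition~\ref{proposition:fremlin}.
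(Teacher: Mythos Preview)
Your proof is correct and follows essentially the same route as the paper's: verify Proposition~\ref{proposition:fremlin} by choosing $U$ from right-uniform continuity, extracting $(H,d)$ with small amenable length, building $\mu$ as a convolution of invariant means along a chain, and handling the non-Lipschitz $f\vert_H$ via the approximate-Lipschitz bound afforded by $\B_d(e,1)\subseteq U$ together with Remark~\ref{remark:lipschitz.approximation}. The only cosmetic differences are your choice of constants ($\epsilon/4$, $M$, $\delta$ versus the paper's $\epsilon/3$, $s$, $\ell$) and that you rescale the metric $d\rightsquigarrow 2Md$ where the paper rescales the function $f'\rightsquigarrow (2s)^{-1}f'$; the transfer from $\Samuel(H)$ to $\Samuel(G)$ via Lemma~\ref{lemma:subspace.mean} is likewise the same.
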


\begin{proof} We are going to verify condition~(2) of Proposition~\ref{proposition:fremlin}. To this end, let $F \in \Pfin(\RUCB(G))$, $E \in \Pfin (G)$ and $\epsilon \in (0,1]$. Then we find $U \in \mathscr{U}(G)$ such that $\Vert f - (f\circ\lambda_{u}) \Vert_{\infty} \leq \tfrac{\epsilon }{3}$ for all $u \in U$ and $f \in F$. Put $s \defeq \sup\nolimits_{f \in F} \Vert f \Vert_{\infty} +1$ and consider \begin{displaymath}
	\ell \, \defeq \, \tfrac{\epsilon}{s\sqrt{72\ln (2/ \epsilon)}} \, \in \, \R_{> 0} .
\end{displaymath} Since $\ell (G) = 0$, there is $(H,d) \in \T_{G}(U,E)$ with $\ell (H,d) \leq \ell$. By Theorem~\ref{theorem:main.concentration} and Remark~\ref{remark:ideal.of.invariant.means}, there exists an $H$-left-invariant mean $\mu \in \Mean (H)$ such that \begin{equation}\tag{$\ast$}\label{submean}
	\sup\nolimits_{f \in \Lip_{1}(H,d)} \hat{\mu}\left(\left\{ \xi \in \Samuel (H) \left\vert \, \left\lvert \xi(f) - \mu(f) \right\rvert \geq \tfrac{\epsilon}{6s} \right\}\right) \right. \! \, \leq \, 2 \exp \left( -\tfrac{(\epsilon /6s)^{2}}{2 \ell^{2}} \right) \, = \, \epsilon .
\end{equation} We now claim that, for every $f \in F$, \begin{equation}\tag{$\ast \ast$}\label{submean.claim}
	\sup\nolimits_{g \in E} \left\lvert \mu(f) - \mu(f \circ \lambda_{g}) \right\rvert \, \leq \, \epsilon , \quad \hat{\mu}(\{ \xi \in \Samuel (G) \mid \vert \xi(f) - \mu(f) \vert \geq \epsilon \}) \, \leq \, \epsilon .
\end{equation} To this end, let $f \in F$. First, if $g \in E$, then there exist $u \in U$ and $h \in H$ such that $g=uh$, whence $\mu$ being $H$-left-invariant implies that \begin{align*}
	\left\lvert \mu(f) - \mu(f \circ \lambda_{g}) \right\rvert \, &= \, \left\lvert \mu(f) - \mu(f \circ \lambda_{u} \circ \lambda_{h}) \right\rvert \, = \, \left\lvert \mu(f) - \mu(f \circ \lambda_{u}) \right\rvert \\
	& = \, \left\lvert \mu(f - (f \circ \lambda_{u})) \right\rvert  \, \leq \, \left\lVert f - (f \circ \lambda_{u}) \right\rVert_{\infty} \, \leq \, \tfrac{\epsilon}{3} \, \leq \, \epsilon .
\end{align*} Furthermore, our choices of $U$ and $d$ entail that \begin{align*}
	\vert f(x) - f(y) \vert \, = \, \left\lvert f(x) - f\left(yx^{-1}x\right) \right\rvert \, &\leq \, \max \left\{ \tfrac{\epsilon }{3}, \, 2sd\left(e,yx^{-1}\right) \right\} \\
	& \leq \, 2sd\left(e,yx^{-1}\right) + \tfrac{\epsilon }{3} \, = \, 2sd(x,y) + \tfrac{\epsilon }{3}
\end{align*} for all $x,y \in H$. Therefore, Remark~\ref{remark:lipschitz.approximation} asserts the existence of some function $f' \in \Lip_{2s}(H,d)$ such that $\Vert f\vert_{H} - f' \Vert_{\infty} \leq \tfrac{\epsilon }{3}$. We conclude that \begin{align*}
	&\hat{\mu}\left(\left\{ \xi \in \Samuel (G) \left\vert \, \left\lvert \xi(f) - \mu(f) \right\rvert \geq \epsilon \right\}\right) \right. \! \, \stackrel{\ref{lemma:subspace.mean}+\ref{remark:convention}}{=} \, \hat{\mu}\left(\left\{ \xi \in \Samuel (H) \left\vert \, \left\lvert \xi(f\vert_{H}) - \mu(f\vert_{H}) \right\rvert \geq \epsilon \right\}\right) \right. \! \\
	&\qquad \leq \hat{\mu}\left(\left\{ \xi \in \Samuel (H) \left\vert \, \left\lvert \xi(f') - \mu(f') \right\rvert \geq \tfrac{\epsilon}{3} \right\}\right) \right. \! \\
	&\qquad = \hat{\mu}\left(\left\{ \xi \in \Samuel (H) \left\vert \, \left\lvert \xi\left((2s)^{-1}f'\right) - \mu\left((2s)^{-1}f'\right) \right\rvert \geq \tfrac{\epsilon}{6s} \right\}\right) \right. \! \, \stackrel{\eqref{submean}}{\leq} \, \epsilon . 
\end{align*} This proves~\eqref{submean.claim} and hence completes the argument by Proposition~\ref{proposition:fremlin}. \end{proof}

For the sake of completeness, let us note that the proof of Theorem~\ref{theorem:extreme.amenability} provides a construction of an asymptotically left-invariant net of means concentrating with respect to the right uniformity (in the sense of Definition~\ref{definition:concentration}) on any topological group $G$ with $\ell(G) = 0$. We continue with a few consequences of Theorem~\ref{theorem:extreme.amenability}.

\begin{cor}\label{corollary:extreme.amenability.1} Let $G$ be a topological group. Suppose that $G$ admits a bounded compatible right-invariant metric $d$ as well as a set $\mathscr{H}$ of amenable topological subgroups such that \begin{itemize}
	\item[$(1)$] $(\mathscr{H},{\subseteq})$ is directed,
	\item[$(2)$] $G = \overline{\bigcup \mathscr{H}}$,
	\item[$(3)$] $\liminf_{H \in \mathscr{H}} \ell (H,{d\vert_{H \times H}}) = 0$.
\end{itemize} Then $\ell(G) = 0$. In particular, $G$ is extremely amenable. \end{cor}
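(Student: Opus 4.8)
The plan is to establish directly that $\ell(G) = 0$ in the sense of Definition~\ref{definition:approximate.amenable.length}; extreme amenability of $G$ is then immediate from Theorem~\ref{theorem:extreme.amenability}. Unwinding the definition of $\ell(G)$, it suffices to prove that for every $U \in \mathscr{U}(G)$, every $E \in \Pfin(G)$, and every $\epsilon \in \R_{>0}$ there exists a pair $(H,d') \in \T_{G}(U,E)$ with $\ell(H,d') \leq \epsilon$. The construction takes $H$ to be a sufficiently large member of $\mathscr{H}$ and $d'$ an appropriate positive rescaling of ${d\vert_{H \times H}}$.

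First I would fix $r \in \R_{>0}$ with $\B_{d}(e,r) \subseteq U$, which is possible because $d$ is compatible and hence the balls $\B_{d}(e,r)$ form a neighbourhood basis at $e$. Using condition~$(2)$, i.e.\ density of $\bigcup \mathscr{H}$, together with right-invariance of $d$, for each $g \in E$ I can choose $h_{g} \in \bigcup \mathscr{H}$ with $d(g,h_{g}) < r$, so that $g h_{g}^{-1} \in \B_{d}(e,r) \subseteq U$ and therefore $g \in U h_{g}$. Each $h_{g}$ lies in some member of $\mathscr{H}$, and since $(\mathscr{H},{\subseteq})$ is directed (condition~$(1)$) a finite induction yields a single $H_{0} \in \mathscr{H}$ with $h_{g} \in H_{0}$ for all $g \in E$; consequently $E \subseteq U H_{0}$. (When $E = \emptyset$ one simply takes any $H_{0} \in \mathscr{H}$, which exists as $\bigcup \mathscr{H}$ is dense in the non-empty space $G$.)

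Next, interpreting condition~$(3)$ as a $\liminf$ over the directed set $(\mathscr{H},{\subseteq})$ of non-negative reals, it says exactly that for every $H_{0} \in \mathscr{H}$ and every $\delta \in \R_{>0}$ there is $H \in \mathscr{H}$ with $H_{0} \subseteq H$ and $\ell(H,{d\vert_{H \times H}}) < \delta$. Applying this with $\delta \defeq \epsilon r$ produces such an $H$, and I would set $d' \defeq \tfrac{1}{r}\, d\vert_{H \times H}$. One then checks that $(H,d') \in \T_{G}(U,E)$: the subgroup $H$ is amenable; $d'$ is a bounded, continuous, right-invariant pseudo-metric on $H$, so $d' \in \Delta(H)$; $E \subseteq U H_{0} \subseteq U H$; and $\B_{d'}(e,1) = \B_{d}(e,r) \cap H \subseteq U$. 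Finally, Remark~\ref{remark:approximate.amenable.length}(2) gives $\ell(H,d') = \tfrac{1}{r}\,\ell(H,{d\vert_{H \times H}}) < \epsilon$, whence $\inf_{(K,d'') \in \T_{G}(U,E)} \ell(K,d'') \leq \epsilon$. As $\epsilon$, $U$, and $E$ were arbitrary, $\ell(G) = 0$, and Theorem~\ref{theorem:extreme.amenability} yields that $G$ is extremely amenable.

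The argument is essentially a matter of careful bookkeeping, so I do not expect a genuine obstacle. The two points needing the most attention are choosing the scaling factor $1/r$ so that the normalisation condition $\B_{d'}(e,1) \subseteq U$ comes out exactly, and reading the $\liminf$ in~$(3)$ correctly so that it simultaneously supplies a subgroup $H$ that \emph{contains} the prescribed $H_{0}$ (needed to preserve $E \subseteq UH$) and has amenable length below the prescribed threshold $\epsilon r$.
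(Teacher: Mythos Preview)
Your proof is correct and follows essentially the same approach as the paper: rescale the given metric to meet the normalisation $\B_{d'}(e,1)\subseteq U$, then use conditions (1)--(3) to locate a single $H\in\mathscr{H}$ with $E\subseteq UH$ and small amenable length, and invoke Remark~\ref{remark:approximate.amenable.length}(2) together with Theorem~\ref{theorem:extreme.amenability}. The only difference is cosmetic---the paper writes the scaling factor as $t\geq 1$ rather than $1/r$ and compresses your two-step use of (1), (2), (3) into a single sentence---so your more detailed unpacking of how directedness and density combine to give $E\subseteq UH$ is a welcome expansion of what the paper leaves implicit.
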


\begin{proof} Consider any $\epsilon \in \R_{>0}$, $U \in \mathscr{U}(G)$ and $E \in \Pfin (G)$. Since $d$ generates the topology of $G$, there exists $t \in \R_{\geq 1}$ such that $U \supseteq \B_{d}(e,1/t) = \B_{t\cdot d}(e,1)$. Thanks to (1)--(3), we find $H \in \mathscr{H}$ such that $E \subseteq UH$ and $\ell (H,{d\vert_{H \times H}}) \leq \tfrac{\epsilon}{t}$. Hence, $(H,t\cdot d\vert_{H \times H}) \in \T_{G}(U,E)$ and therefore \begin{displaymath}
	\inf\nolimits_{(H',d') \in \T_{G}(U,E)} \ell(H',d') \, \leq \, \ell(H,{t\cdot d\vert_{H \times H}}) \, \stackrel{\ref{remark:approximate.amenable.length}(2)}{=} \, t\cdot\ell(H,{d\vert_{H \times H}}) \, \leq \, \epsilon .
\end{displaymath} This shows that $\ell (G) = 0$, which entails extreme amenability of $G$ according to Theorem~\ref{theorem:extreme.amenability}. \end{proof}

\begin{cor}\label{corollary:extreme.amenability.2} Let $G$ be a topological group and let $d \in \Delta (G)$ be generating the topology of $G$. If $\ell(G,d) = 0$, then $\ell(G) = 0$, thus $G$ is extremely amenable. \end{cor}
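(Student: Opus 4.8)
The plan is to deduce the statement from Theorem~\ref{theorem:extreme.amenability}: once we know that $\ell(G) = 0$, extreme amenability of $G$ is immediate. In fact the claim is just the special case of Corollary~\ref{corollary:extreme.amenability.1} in which one takes the singleton family $\mathscr{H} = \{G\}$ — this is trivially directed, has union with dense (indeed full) closure $G = \overline{\bigcup\mathscr{H}}$, and satisfies $\liminf_{H \in \mathscr{H}} \ell(H,{d\vert_{H\times H}}) = \ell(G,d) = 0$ by hypothesis — so one option is simply to cite that corollary. For completeness I would also record the short direct argument, which moreover works verbatim for $d$ a pseudo-metric.

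First I would observe that $\ell(G,d) = 0 < \infty$ forces $G$ to be amenable, by the formal triviality noted in Remark~\ref{remark:approximate.amenable.length}(1); hence $G \in \mathscr{A}(G)$, i.e.\ $G$ qualifies as one of its own amenable topological subgroups. Next, fix $U \in \mathscr{U}(G)$ and $E \in \Pfin(G)$; unravelling the definition of $\ell(G)$, it suffices to show $\inf\nolimits_{(H,d') \in \T_{G}(U,E)} \ell(H,d') = 0$. Since $d$ generates the topology of $G$, there is $t \in \R_{\geq 1}$ with $\B_{t\cdot d}(e,1) = \B_{d}(e,1/t) \subseteq U$. Then $t\cdot d \in \Delta(G)$, and because $E \subseteq G = UG$ together with $\B_{t\cdot d}(e,1) \subseteq U$ we obtain $(G, t\cdot d) \in \T_{G}(U,E)$; moreover $\ell(G, t\cdot d) = t\cdot \ell(G,d) = 0$ by Remark~\ref{remark:approximate.amenable.length}(2). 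This witnesses that the infimum in question equals $0$, and as $U$ and $E$ were arbitrary, $\ell(G) = 0$. Theorem~\ref{theorem:extreme.amenability} then gives that $G$ is extremely amenable.

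There is no genuine obstacle here: all the substance already resides in Theorem~\ref{theorem:main.concentration} (passed through Theorem~\ref{theorem:extreme.amenability}) and in the bookkeeping of Definitions~\ref{definition:amenable.length} and~\ref{definition:approximate.amenable.length}. The only point demanding the slightest attention is the rescaling $d \mapsto t\cdot d$ required to meet the ball condition $\B_{d'}(e,1) \subseteq U$ built into the definition of $\T_{G}(U,E)$; this is harmless precisely because of the homogeneity $\ell(G, t\cdot d) = t\cdot \ell(G,d)$.
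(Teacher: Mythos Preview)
Your proposal is correct and matches the paper's approach exactly: the paper's proof is the one-line ``This follows from Remark~\ref{remark:approximate.amenable.length}(1) and Corollary~\ref{corollary:extreme.amenability.1},'' and your argument is precisely the specialization $\mathscr{H}=\{G\}$ of Corollary~\ref{corollary:extreme.amenability.1} (with Remark~\ref{remark:approximate.amenable.length}(1) supplying amenability of $G$), plus an explicit unpacking of that corollary's proof via the rescaling $d\mapsto t\cdot d$.
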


\begin{proof} This follows from Remark~\ref{remark:approximate.amenable.length}(1) and Corollary~\ref{corollary:extreme.amenability.1}. \end{proof}

The following sufficient criterion, which may be viewed as an amplification technique, will be used to prove Theorem~\ref{theorem:stable.groups}.

\begin{prop}\label{proposition:amenable.folding} Let $G$ be a topological group and let $d \in \Delta (G)$. Consider any dense subset $D \subseteq [0,1]$ with $\{0,1\} \subseteq D$. Suppose that there exists a family of continuous endomorphisms $\phi_{t} \colon G \to G$ $(t \in D)$ such that \begin{itemize}
	\item[$(1)$] $\phi_{0}(G) = \{ e \}$ and $\phi_{1}(G) = G$,
	\item[$(2)$] if $s,t \in D$ and $s\leq t$, then $\phi_{s}(G) \subseteq \phi_{t}(G)$, and
	\item[$(3)$] there exists $C \in \R_{\geq 0}$ such that \begin{displaymath}
						\qquad \forall s,t \in D \ \forall g,h \in G \colon \quad d(g\phi_{s}(h),g\phi_{t}(h)) \, \leq \, C\cdot \vert s-t \vert .
					\end{displaymath}
\end{itemize} If $G$ is amenable, then $\ell(G,d) = 0$. \end{prop}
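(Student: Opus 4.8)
## Proof Proposal

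The plan is to produce, for each $n \in \N_{>0}$, a chain in $\mathscr{A}^{\ast}(G)$ whose $\ell$-value tends to $0$ as $n \to \infty$, thereby witnessing $\ell(G,d) = 0$. The natural candidate for such a chain is built from the endomorphisms $\phi_{t}$: one picks values $0 = t_{0} < t_{1} < \cdots < t_{n} = 1$ in $D$ that are (roughly) equally spaced, and sets $G_{i} \defeq \phi_{t_{i}}(G)$. By hypothesis~(2) these form an increasing chain, by~(1) we have $G_{0} = \{e\}$ and $G_{n} = G$, and each $G_{i}$ is a continuous homomorphic image of the amenable group $G$, hence amenable; so $(G_{0},\ldots,G_{n}) \in \mathscr{A}^{\ast}(G)$. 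Since $D$ is dense in $[0,1]$ and $\{0,1\} \subseteq D$, we can choose the $t_{i}$ with consecutive gaps $t_{i+1} - t_{i} \leq \tfrac{2}{n}$ (say), which will be the key quantitative input.

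The heart of the argument is then to estimate $\sup_{g \in G} \diam(G_{i+1}/G_{i}, d^{g}_{G/G_{i}})$ using hypothesis~(3). Fix $i$ and $g \in G$. Given two cosets $xG_{i}$ and $yG_{i}$ with $x,y \in G_{i+1} = \phi_{t_{i+1}}(G)$, write $x = \phi_{t_{i+1}}(a)$ and $y = \phi_{t_{i+1}}(b)$ for some $a, b \in G$. The element $\phi_{t_{i}}(a) \in G_{i}$, so $d^{g}_{G/G_{i}}(xG_{i}, yG_{i}) = \inf_{z \in G_{i}} d^{g}(x, yz) \leq d^{g}(x, x \cdot \phi_{t_{i}}(a)^{-1} \cdot \phi_{t_{i}}(a))$ — more to the point, one wants to compare $x G_i$ with $y G_i$ via an intermediate point that "collapses" under the quotient. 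The cleanest route: observe that $\phi_{t_{i+1}}(a) G_i = \phi_{t_i}(a) G_i$ is false in general, so instead bound $d^{g}(xG_i, yG_i)$ by inserting $\phi_{t_i}(\cdot)$-translates. Concretely, for a single coset $\phi_{t_{i+1}}(a)G_{i}$, since $\phi_{t_i}(a) \in G_i$ one has, using right-invariance of $d$ to move inside the coset, a representative at $d^g$-distance at most $d^{g}(\phi_{t_{i+1}}(a), \phi_{t_i}(a)) \cdot (\text{something})$ from a fixed base point; more carefully, $d^{g}_{G/G_i}(xG_i, \{e\}G_i) \le \inf_{z \in G_i} d(gx, gz)$, and taking $z = \phi_{t_i}(a)$ gives $\le d(g\phi_{t_{i+1}}(a), g\phi_{t_i}(a)) \le C|t_{i+1}-t_i|$ by~(3). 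By the triangle inequality the diameter of $G_{i+1}/G_i$ in $d^g_{G/G_i}$ is then at most $2C|t_{i+1}-t_i| \le \tfrac{4C}{n}$, uniformly in $g$.

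Plugging this into the definition of $\ell$ gives
\begin{displaymath}
	\ell(G_{0},\ldots,G_{n};d) \, \leq \, \left( \sum_{i=0}^{n-1} \left(\tfrac{4C}{n}\right)^{2}\right)^{1/2} \, = \, \tfrac{4C}{\sqrt{n}} \, \xrightarrow{n\to\infty} \, 0 ,
\end{displaymath}
so that $\ell(G,d) = \inf_{n} \ell(G_{0},\ldots,G_{n};d) = 0$, as required. The main obstacle I anticipate is the coset-distance estimate in the second paragraph: one has to be careful that the quotient pseudo-metric $d^{g}_{G/G_{i}}$ is computed correctly — in particular that, although $\phi_{t_{i}}$ need not send the coset $\phi_{t_{i+1}}(a)G$ to something literally inside $\phi_{t_{i}}(a)G_{i}$, the element $\phi_{t_{i}}(a)$ does lie in $G_{i}$ and hence serves as a legitimate element to infimize over, yielding exactly the bound $d(g\phi_{t_{i+1}}(a), g\phi_{t_i}(a))$ to which~(3) applies. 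Once the $\sup_{g}$ is absorbed (hypothesis~(3) is already uniform in $g$ and $h$, so no extra work is needed there), the rest is the routine $\ell^{2}$-summation above. One should also double-check the elementary point that each $\phi_{t_i}(G)$, equipped with the subspace topology from $G$, is a topological group and is amenable as a continuous image of the amenable group $G$ — both standard.
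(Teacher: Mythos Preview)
Your proposal is correct and follows essentially the same approach as the paper's proof: pick roughly equally spaced points $t_0,\ldots,t_n$ in $D$, set $G_i = \phi_{t_i}(G)$, observe that each $G_i$ is amenable as a continuous image of $G$, and bound each coset distance to the identity coset $G_i$ via hypothesis~(3) using the witness $\phi_{t_i}(a) \in G_i$, yielding $\sup_g \diam(G_{i+1}/G_i, d^g_{G/G_i}) \leq 2C|t_{i+1}-t_i|$ and hence $\ell(G_0,\ldots,G_n;d) \leq 4C/\sqrt{n}$. The paper carries out exactly this computation, with only cosmetic differences in how the $t_i$ are selected.
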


\begin{proof} Suppose that $G$ is amenable. It suffices to show that $\ell(G,d) \leq \epsilon$ for every $\epsilon \in \R_{>0}$. To this end, let $\epsilon \in \R_{>0}$. Let $C \in \R_{\geq 0}$ be as in~(3), and consider \begin{displaymath}
	n \, \defeq \, \max \left\{ \left\lceil \tfrac{16C^{2}}{\epsilon^{2}} \right\rceil , 1 \right\} \, \in \, \N_{>0} .
\end{displaymath} Let $t_{0} \defeq 0 \in D$ and $t_{n} \defeq 1 \in D$. Since $\overline{D} = [0,1]$, for each $i \in \{ 1,\ldots,n-1\}$ we find some $t_{i} \in D \cap \left[\tfrac{i}{n+1},\tfrac{i+1}{n+1}\right]$. Evidently, $t_{0} \leq \ldots \leq t_{n}$ and \begin{equation}\tag{$\ast$}\label{density}
	\forall i \in \{ 0,\ldots,n-1\} \colon \qquad \vert t_{i+1} - t_{i} \vert \, \leq \, \tfrac{2}{n+1} .
\end{equation} For each $i \in \{ 0,\ldots,n\}$, the topological subgroup \begin{displaymath}
	G_{i} \, \defeq \, \phi_{t_{i}}(G) \, \leq \, G 
\end{displaymath} is the image of an amenable topological group under a continuous homomorphism, thus amenable by~\cite[Theorem~4.6]{rickert}. By~(1) and~(2), \begin{displaymath}
	\{ e \} \, = \, G_{0} \, \leq \, G_{1} \, \leq \, \ldots \, \leq \, G_{n-1} \, \leq \, G_{n} \, = \, G .
\end{displaymath} Thus, $(G_{0},\ldots,G_{n}) \in \mathscr{A}^{\ast}(G)$. Furthermore, if $i \in \{ 0,\ldots,n-1\}$, then \begin{align*}
	&d^{g}_{G/G_{i}}(\phi_{t_{i+1}}(x)G_{i},\phi_{t_{i+1}}(y)G_{i}) \, \leq \, d^{g}_{G/G_{i}}(\phi_{t_{i+1}}(x)G_{i},G_{i}) + d^{g}_{G/G_{i}}(G_{i},\phi_{t_{i+1}}(y)G_{i}) \\
	&\qquad \leq \, d(g\phi_{t_{i+1}}(x),g\phi_{t_{i}}(x)) + d(g\phi_{t_{i}}(y),g\phi_{t_{i+1}}(y)) \, \stackrel{(3)}{\leq} \, 2C\vert t_{i+1} - t_{i} \vert \, \stackrel{\eqref{density}}{\leq} \, \tfrac{4C}{n+1}
\end{align*} for all $g,x,y \in G$, whence \begin{displaymath}
	\sup\nolimits_{g \in G} \diam \left(G_{i+1}/G_{i},d^{g}_{G/G_{i}}\right) \, \leq \, \tfrac{4C}{n+1} .
\end{displaymath} Consequently, \begin{displaymath}
	\ell(G_{0},\ldots,G_{n}; d) \, = \, \left(\sum\nolimits_{i=0}^{n-1} \left(\sup\nolimits_{g \in G} \diam \left(G_{i+1}/G_{i},d^{g}_{G/G_{i}}\right)\right)^{2} \right)^{1/2} \! \, \leq \, \tfrac{4C}{\sqrt{n}} \, \leq \, \epsilon 
\end{displaymath} and therefore $\ell(G,d) \leq \epsilon$, as desired. \end{proof}

As a first application of Theorem~\ref{theorem:extreme.amenability} and Proposition~\ref{proposition:amenable.folding}, we recover the extreme amenability of the group of measurable maps with values in an amenable topological group~\cite[Theorem~1.1, (1)$\Longrightarrow$(3)]{PestovSchneider}.

\begin{exmpl}\label{example:pestov.schneider} Let $G$ be a topological group and consider the Lebesgue measure $\lambda$ on the closed real interval $[0,1]$. The set $L^{0}(G)$ of all equivalence classes of $\lambda$-almost continuous\footnote{A mapping $f \colon X \to Y$ from a compact Hausdorff space $X$ to a topological space $Y$ is called \emph{$\mu$-almost continuous}~\cite{Fremlin} with respect to a regular Borel probability measure $\mu$ on $X$ if, for every $\epsilon \in \R_{>0}$, there exists a closed subset $A \subseteq X$ with $\mu (X \setminus A) \leq \epsilon$ such that $f\vert_{A} \colon A \to Y$ is continuous. If the target space is metrizable, then $\mu$-almost continuity is equivalent to $\mu$-measurability~\cite[Theorem~2B]{Fremlin}.} functions from $[0,1]$ to $G$ up to equality $\lambda$-almost everywhere, endowed with the group structure inherited from $G$ and the corresponding topology of convergence in measure with respect to~$\lambda$, constitutes a topological group. For every $n \in \N_{>0}$, the mapping $\psi_{n} \colon G^{n} \to L^{0}(G)$ defined by \begin{displaymath}
	\psi_{n}(g)\vert_{[(i-1)/n,i/n)} \, \equiv \, g_{i} \qquad \left(g \in G^{n}, \, i \in \{ 1,\ldots,n \}\right)
\end{displaymath} is a continuous homomorphism. Due to closure properties of the class of amenable topological groups~\cite[Corollary~4.5, Theorems~4.6, 4.7, 4.8]{rickert}, if $G$ is amenable, then so is \begin{displaymath}
	L^{0}(G) \, = \, \overline{\bigcup \left\{ \! \left. \psi_{2^{n}}\bigl(G^{2^{n}}\bigr) \, \right\vert n \in \N \right\}} .
\end{displaymath} Furthermore, for every $t \in [0,1]$, the map $\phi_{t} \colon L^{0}(G) \to L^{0}(G)$ defined by \begin{displaymath}
	\phi_{t}(f)\vert_{[0,t]} \, = \, f\vert_{[0,t]}, \qquad \phi_{t}(f)\vert_{(t,1]} \, \equiv \, e \qquad \left( f \in L^{0}(G) \right)
\end{displaymath} is a continuous endomorphism. Evidently, $\phi_{0} \equiv e_{L^{0}(G)}$ and $\phi_{1} = \id_{L^{0}(G)}$. Also, $\phi_{s} \circ \phi_{t} = \phi_{s \wedge t}$ for all $s,t \in [0,1]$. In particular, if $s,t \in [0,1]$ and $s \leq t$, then $\phi_{s}(L^{0}(G)) \subseteq \phi_{t}(L^{0}(G))$. Finally, if $d \in \Delta (G)$, then \begin{align*}
	d_{\lambda} \colon \, L^{0}(G) \times L^{0}(G) \, &\longrightarrow \, \R_{\geq 0}, \\
	 (f,g) \, &\longmapsto \, \inf \{ \epsilon \in \R_{>0} \mid \lambda (\{ x \in [0,1] \mid d(f(x),g(x)) > \epsilon \}) \leq \epsilon \}
\end{align*} is a member of $\Delta \left(L^{0}(G)\right)$, and \begin{displaymath}
	d_{\lambda}(g\phi_{s}(f),g\phi_{t}(f)) \, \leq \, \vert s - t \vert
\end{displaymath} for all $s,t \in [0,1]$ and $f,g \in L^{0}(G)$. Thus, if $G$ is amenable, then
\begin{displaymath}
	\ell\left(L^{0}(G),{r^{-1}} \cdot d_{\lambda}\right) \, \stackrel{\ref{remark:approximate.amenable.length}(2)}{=} \, {r^{-1}} \cdot \ell\left(L^{0}(G),d_{\lambda}\right) \, \stackrel{\ref{proposition:amenable.folding}}{=} \, 0
\end{displaymath} for all $d \in \Delta (G)$ and $r \in \R_{>0}$. Since \begin{displaymath}
	\left. \left\{ \B_{d_{\lambda}}(e_{L^{0}(G)},r) \, \right\vert d \in \Delta (G), \, r \in \R_{>0} \right\} \, = \, \left. \! \left\{ \B_{r^{-1}\cdot d_{\lambda}}(e_{L^{0}(G)},1) \, \right\vert d \in \Delta (G), \, r \in \R_{>0} \right\}
\end{displaymath} is a neighborhood basis at the neutral element of $L^{0}(G)$, we conclude the following: if $G$ is amenable, then $\ell(L^{0}(G)) = 0$, wherefore $L^{0}(G)$ is extremely amenable by Theorem~\ref{theorem:extreme.amenability}. \end{exmpl}

We conclude this section by unraveling Definition~\ref{definition:amenable.length} for direct products of topological groups.

\begin{lem}\label{lemma:normal.filtrations} Let $G$ be a topological group and let $G_{0}, G_{1} \unlhd G$ with $G_{0} \subseteq G_{1}$. If $d$ is a continuous right-invariant pseudo-metric on $G$, then \begin{displaymath}
	\sup\nolimits_{g \in G}\diam \left( G_{1}/G_{0},d^{g}_{G/G_{0}}\right) \, = \, \diam \left( G_{1}/G_{0},d_{G/G_{0}}\right) .
\end{displaymath} \end{lem}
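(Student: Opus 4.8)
The plan is to exploit the normality of $G_0$ and $G_1$ in $G$ to show that left translation by any $g \in G$ induces an isometry (in fact a bijective isometry) of the quotient pseudo-metric space, so that the supremum over $g$ collapses to the single value obtained at $g = e$. The inequality $\diam(G_1/G_0, d_{G/G_0}) \le \sup_{g \in G} \diam(G_1/G_0, d^g_{G/G_0})$ is immediate by taking $g = e$ (since $d^e = d$), so the real content is the reverse inequality, and for that it suffices to prove that for each fixed $g \in G$ the map $xG_0 \mapsto gxG_0$ is a well-defined bijection $G_1/G_0 \to G_1/G_0$ satisfying $d^g_{G/G_0}(xG_0, yG_0) = d_{G/G_0}(gxG_0, gyG_0)$ for all $x, y \in G_1$; then $\diam(G_1/G_0, d^g_{G/G_0}) = \diam(G_1/G_0, d_{G/G_0})$ follows directly.

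First I would check that $\lambda_g$ carries $G_1$ into $G_1 g$, and that since $G_1 \unlhd G$ we have $g G_1 = G_1 g$, so $\lambda_g$ maps $G_1$ bijectively onto itself as a set — wait, more carefully: $\lambda_g$ maps $G_1$ to $gG_1 = G_1 g \ne G_1$ in general, so it is not a self-map of $G_1$. The right fix is to work at the level of cosets: the map $xG_0 \mapsto gxG_0$ sends $G_1/G_0$ onto $gG_1/G_0 = G_1 g G_0/G_0$; using $G_0 \unlhd G$ one has $gxG_0 = gx g^{-1} \cdot g G_0 = (gxg^{-1}) G_0 \cdot$ — here I would instead verify directly that $\{gx : x \in G_1\} = gG_1 = G_1 g$, and that $G_1 g / G_0$ (cosets $hg G_0$ with $h \in G_1$) equals $G_1/G_0$ because $gG_0 = G_0 g$ (as $G_0 \unlhd G$) gives $hgG_0 = hG_0 g = \cdots$; the cleanest route is: $gxG_0 = g(xG_0) = g(x G_0 g^{-1}) g = (gxg^{-1})(g G_0 g^{-1}) g = (gxg^{-1})G_0 g$, and since conjugation by $g$ preserves $G_1$, the map $xG_0 \mapsto (gxg^{-1})G_0$ is a bijection of $G_1/G_0$ onto itself, while right multiplication of the coset representative by $g$ is harmless once we compare distances. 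So I would instead just compute distances directly: for $x, y \in G_1$,
\[
	d^g_{G/G_0}(xG_0, yG_0) = \inf\nolimits_{h \in G_0} d^g(x, yh) = \inf\nolimits_{h \in G_0} d(gx, gyh),
\]
and on the other hand $d_{G/G_0}(gxG_0, gyG_0) = \inf_{h \in G_0} d(gx, gy h)$, which is literally the same expression. Hence $d^g_{G/G_0}(xG_0, yG_0) = d_{G/G_0}(gxG_0, gyG_0)$ with no normality needed so far — the only place normality enters is to guarantee that $gxG_0$ and $gyG_0$ again run over all of $G_1/G_0$ (so that taking the supremum of the left side over $x, y \in G_1$ matches the supremum of the right side over the same range), i.e.\ that $\{gxG_0 : x \in G_1\} = \{zG_0 : z \in G_1\}$; this holds because $g G_1 = G_1 g$ and $G_0 g = g G_0$, so $gxG_0 = gxG_0$ ranges exactly over $G_1 g G_0 / G_0 = G_1 G_0 g / G_0 = G_1 g/G_0$, and $\{hg G_0 : h \in G_1\} = \{h g G_0 : h \in G_1\}$ which, since right multiplication by $g$ is a bijection of $G/G_0$ using $gG_0 = G_0 g$, equals $\{hG_0 : h \in G_1\} = G_1/G_0$.

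Putting these together: $\sup_{x,y \in G_1} d^g_{G/G_0}(xG_0, yG_0) = \sup_{x, y \in G_1} d_{G/G_0}(gxG_0, gyG_0) = \sup_{z, w \in G_1} d_{G/G_0}(zG_0, wG_0)$, i.e.\ $\diam(G_1/G_0, d^g_{G/G_0}) = \diam(G_1/G_0, d_{G/G_0})$ for every $g \in G$, and taking the supremum over $g$ gives the claim. I expect the main obstacle to be purely bookkeeping: being careful that the two normality hypotheses are used precisely where stated (namely to re-index the diameter supremum), and not accidentally assuming $\lambda_g$ is a self-map of $G_1$ when it is only so modulo $G_0$ after the coset identification; everything else is a one-line unfolding of the definition of $d^g$ and of the quotient pseudo-metric. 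This also subsumes Remark~\ref{remark:biinvariant}, which is the special case where $d$ is bi-invariant and one does not even need $G_0, G_1$ normal.
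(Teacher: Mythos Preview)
Your approach is essentially the paper's: both arguments reduce to the identity $d^{g}_{G/G_{0}}(xG_{0},yG_{0}) = d_{G/G_{0}}(gxG_{0},gyG_{0})$ and then re-index using conjugation by $g$ (normality of $G_{1}$) together with a right-translation invariance of $d_{G/G_{0}}$ (normality of $G_{0}$). However, your bookkeeping contains a slip that you flagged as the likely obstacle but did not avoid: the asserted \emph{equality} $\{gxG_{0} : x \in G_{1}\} = G_{1}/G_{0}$ is false unless $g \in G_{1}$. What is true is that this set equals $\{hgG_{0} : h \in G_{1}\}$, and that right multiplication by $g$ on $G/G_{0}$ is a $d_{G/G_{0}}$-isometry (since $d(xg,ygh) = d(x,y\,ghg^{-1})$ by right-invariance of $d$, and $ghg^{-1}$ ranges over $G_{0}$ by normality). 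The paper makes exactly this isometry explicit as a claim $(\ast)$ and then writes $gx = (gxg^{-1})g$ to conclude $d_{G/G_{0}}(gxG_{0},gyG_{0}) = d_{G/G_{0}}((gxg^{-1})G_{0},(gyg^{-1})G_{0}) \leq \diam(G_{1}/G_{0},d_{G/G_{0}})$. Your phrase ``right multiplication \ldots\ is harmless once we compare distances'' is the right intuition, but it needs to be stated and used as an isometry, not as a set equality; once you do that, your argument and the paper's coincide.
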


\begin{proof} Let $g \in G$. As $G_{0} \unlhd G$ and thus $gG_{0} = G_{0}g$, right invariance of $d$ entails that \begin{equation}\tag{$\ast$}\label{normal}
	\forall x,y \in G \colon \quad d_{G/G_{0}}(xgG_{0},ygG_{0}) \, = \, d_{G/G_{0}}(xG_{0},yG_{0}) .
\end{equation} Since $G_{1} \unlhd G$, if $x,y \in G_{1}$, then $gxg^{-1},gyg^{-1} \in G_{1}$ and therefore \begin{align*}
	d^{g}_{G/G_{0}}(xG_{0},yG_{0}) \, & = \, d_{G/G_{0}}(gxG_{0},gyG_{0}) \, = \, d_{G/G_{0}}\!\left(gxg^{-1}gG_{0},gyg^{-1}gG_{0}\right) \\
		& \stackrel{\eqref{normal}}{=} \,  d_{G/G_{0}}\!\left(gxg^{-1}G_{0},gyg^{-1}G_{0}\right) \, \leq \, \diam \left( G_{1}/G_{0},d_{G/G_{0}}\right) .
\end{align*}  Hence, $\diam \left( G_{1}/G_{0},d^{g}_{G/G_{0}}\right) \leq \diam \left( G_{1}/G_{0},d_{G/G_{0}}\right)$ as desired. \end{proof}

\begin{prop}\label{proposition:concentration.in.products} Let $n \in \N$. For each $i \in \{ 1,\ldots,n\}$, let $d_{i}$ be a right-invariant continuous pseudo-metric on a topological group $G_{i}$. Consider the topological group $G \defeq G_{1} \times \ldots \times G_{n}$ and its right-invariant continuous pseudo-metric \begin{displaymath}
	d \colon \, G \times G \, \longrightarrow \, \R, \quad (x,y) \, \longmapsto \, \sum\nolimits_{i=1}^{n} d_{i}(x_{i},y_{i}) .
\end{displaymath} For each $i \in \{ 0,\ldots,n \}$, consider \begin{displaymath}
	H_{i} \, \defeq \, G_{1} \times \ldots \times G_{i} \times \{ e \} \times \ldots \times \{ e \} \, \leq \, G .
\end{displaymath} Then the following hold. \begin{itemize}
	\item[$(1)$] For every $i \in \{ 0,\ldots,n-1 \}$, \begin{displaymath}
		\qquad \diam \left( H_{i+1}/H_{i},d_{G/H_{i}}\right) \, = \, \diam (G_{i+1},d_{i+1}) .
\end{displaymath}
	\item[$(2)$] If $G_{1},\ldots,G_{n}$ are amenable, then \begin{displaymath}
		\qquad \ell (G,d) \, \leq \, \ell(H_{0},\ldots,H_{n};d) \, = \, \left(\sum\nolimits_{i=1}^{n} \diam (G_{i},d_{i})^{2} \right)^{1/2} .
	\end{displaymath}
\end{itemize} \end{prop}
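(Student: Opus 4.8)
The plan is to unravel Definition~\ref{definition:amenable.length} directly, the essential external input being Lemma~\ref{lemma:normal.filtrations}. First I would record that each $H_{i}$ is a \emph{normal} subgroup of $G$: since $G$ is a direct product, any subgroup of the form $N_{1} \times \cdots \times N_{n}$ with $N_{j} \unlhd G_{j}$ for all $j$ is normal in $G$, and $H_{i}$ is of exactly this shape (taking $N_{j} = G_{j}$ for $j \leq i$ and $N_{j} = \{ e\}$ for $j > i$). Consequently, for every $i \in \{ 0,\ldots,n-1\}$, Lemma~\ref{lemma:normal.filtrations} applies to the pair $H_{i} \leq H_{i+1}$ and yields
\[
	\sup\nolimits_{g \in G} \diam \left( H_{i+1}/H_{i}, d^{g}_{G/H_{i}}\right) \, = \, \diam \left( H_{i+1}/H_{i}, d_{G/H_{i}}\right) ,
\]
which in particular will reduce part~(2) to part~(1) together with the defining inequality $\ell(G,d) \leq \ell(H_{0},\ldots,H_{n};d)$.

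For part~(1), I would compute the quotient pseudo-metric $d_{G/H_{i}}$ explicitly. Given $x,y \in G$, an element $h \in H_{i}$ is precisely one with $h_{j} \in G_{j}$ arbitrary for $j \leq i$ and $h_{j} = e$ for $j > i$, so
\[
	d_{G/H_{i}}(xH_{i},yH_{i}) \, = \, \inf\nolimits_{h \in H_{i}} \sum\nolimits_{j=1}^{n} d_{j}(x_{j},y_{j}h_{j}) \, = \, \sum\nolimits_{j=i+1}^{n} d_{j}(x_{j},y_{j}) ,
\]
the last equality because the summands with $j \leq i$ are driven to $0$ by the choice $h_{j} \defeq y_{j}^{-1}x_{j}$ while all summands are non-negative. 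Restricting now to $x,y \in H_{i+1}$, all coordinates $x_{j},y_{j}$ with $j \geq i+2$ equal $e$, so only the $(i+1)$-st term survives and $d_{G/H_{i}}(xH_{i},yH_{i}) = d_{i+1}(x_{i+1},y_{i+1})$. As $x,y$ range over $H_{i+1}$, the pair $(x_{i+1},y_{i+1})$ ranges over all of $G_{i+1} \times G_{i+1}$, whence $\diam(H_{i+1}/H_{i},d_{G/H_{i}}) = \diam(G_{i+1},d_{i+1})$, which is~(1).

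It remains to assemble~(2) under the amenability hypothesis. The topology $H_{i}$ inherits from $G$ is precisely the product topology of $G_{1} \times \cdots \times G_{i}$ (the trivial factors contributing nothing), so if $G_{1},\ldots,G_{n}$ are amenable then each $H_{i}$ is amenable, being a finite product of amenable topological groups~\cite{rickert}. Together with $H_{0} = \{ e\}$, $H_{n} = G$ and $H_{0} \leq \cdots \leq H_{n}$, this gives $(H_{0},\ldots,H_{n}) \in \mathscr{A}^{\ast}(G)$, and hence $\ell(G,d) \leq \ell(H_{0},\ldots,H_{n};d)$ by the definition of $\ell(G,d)$ as an infimum over $\mathscr{A}^{\ast}(G)$. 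Inserting the identity from the first paragraph and then part~(1) into the definition of $\ell(H_{0},\ldots,H_{n};d)$ finally gives
\[
	\ell(H_{0},\ldots,H_{n};d) \, = \, \left(\sum\nolimits_{i=0}^{n-1} \diam \left( H_{i+1}/H_{i},d_{G/H_{i}}\right)^{2} \right)^{1/2} \! \, = \, \left(\sum\nolimits_{i=1}^{n} \diam (G_{i},d_{i})^{2} \right)^{1/2} ,
\]
completing the argument. No step presents a genuine obstacle; the only place that calls for minor care is the explicit evaluation of $d_{G/H_{i}}$ and the observation that its restriction to $H_{i+1}/H_{i}$ recovers $d_{i+1}$ on $G_{i+1}$ with no loss.
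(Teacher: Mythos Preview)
Your proposal is correct and follows essentially the same route as the paper: both arguments observe that the $H_{i}$ are normal, invoke Lemma~\ref{lemma:normal.filtrations} to eliminate the $\sup_{g}$, compute the quotient pseudo-metric $d_{G/H_{i}}$ explicitly to identify it with $d_{i+1}$ on the $(i{+}1)$-st factor, and then assemble~(2) from~(1) using amenability of finite products via~\cite{rickert}. The only cosmetic difference is that you compute $d_{G/H_{i}}$ on all of $G$ first and then restrict, whereas the paper proves the two inequalities for the diameter separately.
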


\begin{proof} (1) Let $i \in \{ 0,\ldots,n-1 \}$. If $x,y \in H_{i+1}$, then \begin{displaymath}
	z \, \defeq \, \left(y_{1}^{-1}x_{1},\ldots,y_{i}^{-1}x_{i},e,\ldots, e\right) \, \in \, H_{i}
\end{displaymath} and \begin{align*}
	d(x,yz) \, &= \, d((x_{1},\ldots,x_{i},x_{i+1},e,\ldots,e),(x_{1},\ldots,x_{i},y_{i+1},e,\ldots,e)) \\
		& = \, d_{i+1}(x_{i+1},y_{i+1}) \, \leq \, \diam (G_{i+1},d_{i+1}) ,
\end{align*} whence \begin{displaymath}
	d_{G/H_{i}}(xH_{i},yH_{i}) \, \leq \, d(x,yz) \, \leq \, \diam (G_{i+1},d_{i+1}) .
\end{displaymath} Conversely, if $x,y \in G_{i+1}$, then \begin{displaymath}
	x' \, \defeq \, (e,\ldots,e,x,e,\ldots,e) \, \in \, H_{i+1}, \qquad y' \, \defeq \, (e,\ldots,e,y,e,\ldots,e) \, \in \, H_{i+1},
\end{displaymath} and \begin{displaymath}
	d_{i+1}(x,y) \, \leq \, d_{i+1}(x,y) + \sum\nolimits_{j=1}^{i} d_{j}(z_{j},z_{j}') \, = \, d(x'z,y'z')
\end{displaymath} for all $z,z' \in H_{i}$, that is, \begin{displaymath}
	d_{i+1}(x,y) \, \leq \, d_{G/H_{i}}(x'H_{i},y'H_{i}) \, \leq \, \diam \left( H_{i+1}/H_{i},d_{G/H_{i}}\right) .
\end{displaymath}

(2) Note that $H_{i} \unlhd G$ for each $i \in \{ 0,\ldots,n\}$. Now, if $G_{1},\ldots,G_{n}$ are amenable topological groups, then so are $H_{0},\ldots,H_{n}$ by~\cite[Theorem~4.8]{rickert}, thus \begin{align*}
	\ell (G,d) \, \leq \, \ell(H_{0},\ldots,H_{n};d) \, &\stackrel{\ref{lemma:normal.filtrations}}{=} \, \left(\sum\nolimits_{i=0}^{n-1} \diam \left(H_{i+1}/H_{i},d_{G/H_{i}}\right)^{2} \right)^{1/2} \\
	& \stackrel{(1)}{=} \, \left(\sum\nolimits_{i=0}^{n-1} \diam (G_{i+1},d_{i+1})^{2} \right)^{1/2} . \qedhere
\end{align*} \end{proof}

\section{Solution to a problem by Pestov}\label{section:pestov}

This entire section is devoted to solving Problem~\ref{problem}. Keeping the notation of Problem~\ref{problem} for a brief moment, let us note that, if the topological groups $G_{n}$ $(n \in \N_{>0})$ are precompact, then the answer to Pestov's question is affirmative: in such case, for each $n \in \N_{>0}$ the topological group $H_{n}$ will be precompact, too, whence indeed \begin{align*}
	\mu_{n}\left( (f_{n} - \mu_{n}(f_{n}))^{2} \right) \, &\stackrel{\ref{lemma:convergence}(2)}{\leq} \, \hat{\mu}_{n}\left(\left\{ \xi \in \Samuel (H_{n}) \left\vert \, \vert \xi(f_{n}) - \mu_{n}(f_{n}) \vert \geq n^{-1/4}\right\} \right) \right. + \tfrac{1}{\sqrt{n}} \\
	&\stackrel{\ref{corollary:precompact}+\ref{proposition:concentration.in.products}(2)}{\leq} \, 2\exp\left( -\tfrac{n}{2\sqrt{n}} \right) + \tfrac{1}{\sqrt{n}} \, = \, 2\exp\left( -\tfrac{\sqrt{n}}{2}\right) + \tfrac{1}{\sqrt{n}} \, \stackrel{n\to\infty}{\longrightarrow} \, 0 
\end{align*} for every sequence $f_{n} \in \Lip_{1}(H_{n},d'_{n})$ $(n \in \N_{>0})$. However, the precompact case turns out to be very particular. As revealed by Corollary~\ref{corollary:pestov.2}, the answer to Pestov's general question is, in fact, negative. 

The core argument for solving Problem~\ref{problem} is contained in the following Theorem~\ref{theorem:pestov}. For the sake of convenience, we set up some additional notation. Let $X$ be a set and let $n \in \N_{>0}$. If $f \in \R^{X}$, then we define \begin{displaymath}
	f_{n,i} \colon \, X^{n} \, \longrightarrow \, \R, \quad x \, \longmapsto \, \tfrac{1}{n} \sum\nolimits_{j=1}^{i} f(x_{j})
\end{displaymath} for each $i \in \{ 0,\ldots,n \}$. If $d$ is a metric on $X$, then we consider the induced \emph{normalized Hamming metric} \begin{displaymath}
	d_{n} \colon \, X^{n} \times X^{n} \, \longrightarrow \, \R_{\geq 0}, \quad (x,y) \, \longmapsto \, \tfrac{1}{n} \sum\nolimits_{i=1}^{n} d(x_{i},y_{i}) ,
\end{displaymath} and note that $\{f_{n,0},\ldots,f_{n,n}\} \subseteq \Lip_{1}(X^{n},d_{n};[0,1])$ for each $f \in \Lip_{1}(X,d;[0,1])$. For any topological group $G$, let $\RUC(G,[0,1]) \defeq \RUCB(G) \cap [0,1]^{G}$.

\begin{thm}\label{theorem:pestov} Let $G$ be a topological group such that $\Samuel(G)$ contains two disjoint closed, $G$-invariant, non-empty subsets. Then there exists $f \in \RUC (G,[0,1])$ such that \begin{displaymath}
	\forall n \in \N_{>0} \, \exists \nu \in \Mean (G^{n}) \, \forall \mu \in \Mean (G^{n}) \colon \quad (\mu\nu)\left( (f_{n,n} - (\mu\nu)(f_{n,n}))^{2} \right) \, = \, \tfrac{1}{4}.
\end{displaymath} \end{thm}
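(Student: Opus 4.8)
The plan is to exploit the two disjoint closed $G$-invariant nonempty subsets of $\Samuel(G)$ to manufacture a right-uniformly continuous $[0,1]$-valued function $f$ whose averages over a well-chosen invariant mean on $G^n$ behave like a fair coin flip. First I would fix disjoint closed $G$-invariant nonempty sets $K_0, K_1 \subseteq \Samuel(G)$ and use Urysohn's lemma to pick $F \in \Cont(\Samuel(G))$ with $0 \le F \le 1$, $F(K_0) \subseteq \{0\}$, $F(K_1) \subseteq \{1\}$; then set $f \defeq F \circ \eta_G \in \RUC(G,[0,1])$, so that $\overline{f} = F$. The key point is that for any $\xi_0 \in K_0$ and $\xi_1 \in K_1$, regarded as elements of $\Samuel(G) \subseteq \Mean(G)$, we have $\xi_0(f) = 0$ and $\xi_1(f) = 1$, and $G$-invariance of $K_0, K_1$ guarantees these values persist after translating by group elements, i.e.\ $\xi_b(f \circ \lambda_g) \in \{0,1\}$ correspondingly — this is what lets us control $f_{n,i} = \Phi_{\text{something}}$-type quantities on the product.

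Next I would choose the mean $\nu$ on $G^n$. The natural candidate is a product $\nu \defeq \zeta_1 \otimes \cdots \otimes \zeta_n$ where each $\zeta_j \in \Samuel(G)$ is chosen from $K_0$ or $K_1$ according to a pattern that makes exactly half of the $n$ coordinates "read $0$" and half "read $1$" (for odd $n$ one can still force the averaged variance to equal $\tfrac14$ by an appropriate limiting/convexity adjustment, or the statement is really being applied for the asymptotic conclusion so a $\tfrac14 + o(1)$ version suffices; but as stated I would aim for the exact $\tfrac14$, using that $\Mean(G^n)$ is closed under convex combinations and that one can average a "half-zeros" choice with its "half-ones" mirror to land exactly at $\tfrac14$). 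Using Lemma~\ref{lemma:multiplicative.embedding} and Remark~\ref{remark:convention}, the tensor factors sit inside $\Mean(G^n)$, and for $\mu \in \Mean(G^n)$ arbitrary, $\mu\nu = \mu \circ \Phi_\nu$. The function $\Phi_\nu(f_{n,n})$ should be computed coordinate-by-coordinate: since $f_{n,n}(x) = \tfrac1n \sum_j f(x_j)$ is a sum of functions each depending on a single coordinate, $\Phi_\nu$ acts on the $j$-th summand through $\zeta_j$ alone, and $\zeta_j$ being a homomorphism in $\Samuel(G)$ with $\zeta_j(f \circ \lambda_g) \in \{0,1\}$ forces $\Phi_{\zeta_j}(f) $ to be a constant ($0$ or $1$)-valued function. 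Hence $\Phi_\nu(f_{n,n})$ is a constant, equal to (number of "one"-coordinates)$/n = \tfrac12$.

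The punchline: since $\Phi_\nu(f_{n,n})$ is the constant $\tfrac12$, for \emph{every} $\mu \in \Mean(G^n)$ we get $(\mu\nu)(f_{n,n}) = \mu(\tfrac12) = \tfrac12$. It then remains to compute $(\mu\nu)\big((f_{n,n} - \tfrac12)^2\big)$, and here the same homomorphism property is the crucial lever: $\Phi_\nu\big((f_{n,n} - \tfrac12)^2\big)$ must be evaluated, and because $f_{n,n}^2$ expands into $\tfrac1{n^2}\sum_{j,k} f(x_j)f(x_k)$ with cross terms $j \ne k$ factoring through independent tensor components $\zeta_j, \zeta_k$ and diagonal terms $f(x_j)^2$ handled by multiplicativity of $\zeta_j$ (so $\zeta_j(f^2 \circ \lambda_g) = \zeta_j(f\circ\lambda_g)^2 \in \{0,1\}$), one finds $\Phi_\nu(f_{n,n}^2)$ is again a constant; a direct count gives $\Phi_\nu(f_{n,n}^2) = \big(\tfrac12\big)^2 + \tfrac1{n^2}\cdot(\text{something})$ — actually the independence of the factors forces the "variance" contribution from distinct coordinates to vanish and only the diagonal survives, yielding $\Phi_\nu\big((f_{n,n}-\tfrac12)^2\big)$ a constant whose value one checks equals $\tfrac14 - \tfrac{1}{4n} + \tfrac{1}{4n}$-type bookkeeping to land on exactly $\tfrac14$. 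Applying $\mu$ to this constant gives the claimed value $\tfrac14$ independently of $\mu$.

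I expect the main obstacle to be the exact-$\tfrac14$ bookkeeping: pinning down $\Phi_\nu\big((f_{n,n}-\tfrac12)^2\big)$ on the nose (rather than $\tfrac14 + O(1/n)$) will require either (a) a careful choice of $\nu$ — perhaps not a pure tensor but a convex combination of two "mirror" tensor configurations, using that $\Mean(G^n)$ is convex and that the square of the averaged function behaves affinely enough under such averaging — or (b) recognizing that $f_{n,n}$ relative to $\nu$ genuinely behaves like the mean of $n$ independent $\{0,1\}$-Bernoulli($\tfrac12$) variables, in which case the variance of the average is $\tfrac1{4n}$, not $\tfrac14$; so the correct reading must be that $\nu$ is chosen so that the \emph{coordinates are perfectly correlated}, e.g.\ $f \circ \lambda_g$ evaluated at $\zeta_j$ is the \emph{same} bit for all $j$ but $\nu$ is a $\tfrac12$-$\tfrac12$ mixture of the all-zeros tensor and the all-ones tensor — then $f_{n,n}$ takes value $0$ or $1$ each with "weight" $\tfrac12$ under $\mu\nu$, giving variance exactly $\tfrac14$. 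This correlated-mixture construction is, I believe, the intended one, and verifying that $\mu\nu$ assigns this two-point distribution regardless of $\mu$ (via $\Phi_\nu(g)$ being constant for every relevant $g$, because each tensor factor is a $G$-invariant point-evaluation) is the technical heart of the argument.
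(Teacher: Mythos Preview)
Your final paragraph lands on the right construction, and it is exactly the paper's approach: set $\nu \defeq \tfrac{1}{2}(\zeta_0 + \zeta_1)$ where $\zeta_i \in \Samuel(G^n)$ is the ``all-$i$'' mean (the paper realizes $\zeta_i$ as the convolution product $\xi_{i,1}\cdots\xi_{i,n}$ of the coordinate embeddings $\xi_{i,j} = \Samuel(\psi_j)(\xi_i)$ for a fixed $\xi_i \in K_i$), so that $G$-invariance of $K_i$ makes $\Phi_{\zeta_i}(f_{n,n})$ the constant $i$; then for every $\mu$ one has $(\mu\nu)(f_{n,n}) = \tfrac{1}{2}$ and, since each $\Phi_{\zeta_i}$ is a unital ring homomorphism (Remark~\ref{remark:convolution}(3)), $(\mu\zeta_i)\bigl((f_{n,n}-\tfrac{1}{2})^2\bigr) = \mu\bigl((i-\tfrac{1}{2})^2\bigr) = \tfrac{1}{4}$, giving the claimed $\tfrac{1}{4}$. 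Your earlier independent-coordinates attempt would indeed produce variance $\tfrac{1}{4n}$, as you correctly diagnosed before self-correcting.
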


\begin{proof} Fix any pair of closed, $G$-invariant, non-empty subsets $A_{0},A_{1} \subseteq \Samuel (G)$ such that $A_{0} \cap A_{1} = \emptyset$. Thanks to Urysohn's lemma, we find $h \in \Cont (\Samuel (G))$ with $0 \leq h \leq 1$, $h(A_{0}) = \{ 0 \}$ and $h(A_{1}) = \{ 1 \}$. Consider $f \defeq h \circ {\eta_{G}} \in \RUC (G,[0,1])$. Since $\eta_{G}(G)$ is dense in $\Samuel (G)$ and $h(\eta_{G}(x)) = f(x) = \eta_{G}(x)(f)$ for all $x \in G$, \begin{equation}\tag{1}\label{Samuel}
	\forall \xi \in \Samuel (G) \colon \qquad h(\xi) \, = \, \xi (f) .
\end{equation} For each $i \in \{ 0,1 \}$, as $A_{i} \ne \emptyset$, we may pick some $\xi_{i} \in A_{i}$. We observe that \begin{equation}\tag{2}\label{Samuel2}
	\forall i \in \{ 0,1 \} \colon \qquad \Phi_{\xi_{i}}f \, = \, i .
\end{equation} Indeed, if $i \in \{ 0,1\}$, then $A_{i}$ being $G$-invariant ensures that \begin{align*}
	\left(\Phi_{\xi_{i}}f \right)(g) \, & = \, \xi_{i}(f \circ \lambda_{g}) \, = \, (g \xi_{i})(f) \, \stackrel{\eqref{Samuel}}{=} \, h(g\xi_{i}) \, = \, i
\end{align*} for all $g \in G$, that is, $\Phi_{\xi_{i}}f = i$. Now, let $n\in \N_{>0}$. For every $j \in \{ 1,\ldots,n \}$, the map $\psi_{j} \colon G \to G^{n}$ defined by \begin{displaymath}
	\psi_{j}(g)_{k} \, \defeq \, \begin{cases}
										\, g & \text{if } k=j, \\
										\, e & \text{otherwise}
									\end{cases} \qquad (g \in G, \, k \in \{ 1,\ldots,n\})
\end{displaymath} is a continuous homomorphism, thus uniformly continuous with respect to the right uniformities on $G$ and $G^{n}$. It follows that, for each $i \in \{ 0,1 \}$ and every $j \in \{ 1,\ldots,n \}$, \begin{displaymath}
	\xi_{i,j} \, \defeq \, \Samuel (\psi_{j})(\xi_{i}) \colon \, \RUCB(G^{n}) \, \longrightarrow \, \R, \quad f \, \longmapsto \, \xi_{i}(f \circ {\psi_{j}})
\end{displaymath} is a well-defined member of $\Samuel (G^{n})$. We prove that \begin{equation}\tag{3}\label{Samuel3}
	\forall i \in \{ 0,1 \} \, \forall j \in \{ 0,\ldots,n\} \colon \qquad \Phi_{\xi_{i,n-j+1}\cdots \xi_{i,n}}(f_{n,n}) \, = \, f_{n,n-j} + \tfrac{j}{n}i .
\end{equation} To this end, let $i \in \{ 0,1 \}$. The proof proceeds by induction on $j \in \{ 0,\ldots,n\}$. Clearly, if $j = 0$, then \begin{displaymath}
	\Phi_{\xi_{i,n-j+1}\cdots \xi_{i,n}}(f_{n,n}) \, = \, \Phi_{\eta_{G}(e)}(f_{n,n}) \, \stackrel{\ref{remark:convolution}(1)}{=} \, f_{n,n} \, = \, f_{n,n-j} + \tfrac{j}{n}i .
\end{displaymath} For the inductive step, suppose that $\Phi_{\xi_{i,n-j+1}\cdots \xi_{i,n}}(f_{n,n}) \, = \, f_{n,n-j} + \tfrac{j}{n}i$ for some $j \in \{ 0,\ldots,n-1\}$. Then \begin{align*}
	\Phi_{\xi_{i,n-(j+1)+1}\cdots \xi_{i,n}}(f_{n,n}) (g) \, & \stackrel{\ref{remark:convolution}(1)}{=} \, \Phi_{\xi_{i,n-j}}\left( \Phi_{\xi_{i,n-j+1} \cdots \xi_{i,n-j+1}} (f_{n,n}) \right) (g) \\
	& = \, \Phi_{\xi_{i,n-j}}\left( f_{n,n-j} + \tfrac{j}{n}i \right)(g) \, \stackrel{\ref{remark:convolution}(2)}{=} \, \Phi_{\xi_{i,n-j}}(f_{n,n-j})(g) + \tfrac{j}{n}i \\
	& = \, \xi_{i,n-j}\left({f_{n,n-j}} \circ {\lambda_{g}}\right) + \tfrac{j}{n}i \, = \, \xi_{i}\left({f_{n,n-j}} \circ {\lambda_{g}} \circ {\psi_{n-j}}\right) + \tfrac{j}{n}i \\
	& = \, \xi_{i}\left( \tfrac{1}{n} \left( f(g_{1}) + \ldots + f(g_{n-j-1}) + \left(f \circ {\lambda_{g_{n-j}}}\right)\right) \right) + \tfrac{j}{n}i \\
	& = \, \tfrac{1}{n} \left( f(g_{1}) + \ldots + f(g_{n-j-1}) + \xi_{i}\left(f \circ {\lambda_{g_{n-j}}}\right)\right) + \tfrac{j}{n}i \\
	& = \, \tfrac{1}{n} \left( f(g_{1}) + \ldots + f(g_{n-j-1}) + (\Phi_{\xi_{i}}f)(g_{n-j}) \right) + \tfrac{j}{n}i \\
	& \stackrel{\eqref{Samuel2}}{=} \, \tfrac{1}{n} \left( f(g_{1}) + \ldots + f(g_{n-j-1}) + i \right) + \tfrac{j}{n}i \\
	& = \, \tfrac{1}{n} \left( f(g_{1}) + \ldots + f(g_{n-j-1}) \right) + \tfrac{j+1}{n}i \, = \, f_{n,n-(j+1)}(g) + \tfrac{j+1}{n}i 
\end{align*} for every $g \in G^{n}$, i.e., $\Phi_{\xi_{i,n-(j+1)+1}\cdots \xi_{i,n}}(f_{n,n}) \, = \, f_{n,n-(j+1)} + \tfrac{j+1}{n}i$. This completes our induction and hence proves~\eqref{Samuel3}. Finally, for each $i \in \{ 0,1\}$, we consider \begin{displaymath}
	\zeta_{i} \, \defeq \, \xi_{i,1}\cdots \xi_{i,n} \, \in \, \Samuel (G^{n}) 
\end{displaymath} and note that \begin{equation}\tag{4}\label{Samuel4}
	\Phi_{\zeta_{i}}(f_{n,n}) \, = \, \Phi_{\xi_{i,1}\cdots \xi_{i,n}}(f_{n,n}) \, \stackrel{\eqref{Samuel3}}{=} \, f_{n,0} + \tfrac{n}{n}i \, = \, i .
\end{equation} We claim that the mean \begin{displaymath}
	\nu \, \defeq \, \tfrac{1}{2}\left( \zeta_{0} + \zeta_{1} \right) \, \in \, \Mean (G^{n}) 
\end{displaymath} verifies the conclusion of the theorem, i.e., \begin{equation}\tag{5}\label{Samuel5}
	\forall \mu \in \Mean (G^{n}) \colon \qquad (\mu\nu)\left( (f_{n,n} - (\mu\nu)(f_{n,n}))^{2} \right) \, = \, \tfrac{1}{4}.
\end{equation} To prove this, let $\mu \in \Mean (G^{n})$. For each $i \in \{ 0,1 \}$, \begin{displaymath}
	\left(\mu \zeta_{i}\right)(f_{n,n}) \, = \, \mu\left( \Phi_{\zeta_{i}}(f_{n,n})\right) \, \stackrel{\eqref{Samuel4}}{=} \, \mu(i) \, = \, i .
\end{displaymath} Consequently, \begin{equation}\tag{6}\label{Samuel6}
	(\mu \nu)(f_{n,n}) \, = \, \left( \tfrac{1}{2}\left( \mu \zeta_{0} + \mu \zeta_{1} \right)\right)(f_{n,n}) \, = \, \tfrac{1}{2}\left( (\mu\zeta_{0})(f_{n,n}) + (\mu\zeta_{1})(f_{n,n}) \right) \, = \, \tfrac{1}{2}
\end{equation} and therefore \begin{align*}
	(\mu \nu)\left( (f_{n,n} - (\mu \nu)(f_{n,n}))^{2} \right) \, &\stackrel{\eqref{Samuel6}}{=} \, (\mu \nu)\left( \left(f_{n,n} - \tfrac{1}{2}\right)^{2} \right) \, = \, \left( \tfrac{1}{2}\left( \mu \zeta_{0} + \mu \zeta_{1} \right)\right)\left( \left(f_{n,n} - \tfrac{1}{2}\right)^{2} \right) \\	
	& = \, \tfrac{1}{2}\left( (\mu\zeta_{0})\left( \left(f_{n,n} - \tfrac{1}{2}\right)^{2} \right) + (\mu\zeta_{1})\left( \left(f_{n,n} - \tfrac{1}{2}\right)^{2} \right) \right) \\
	&  = \, \tfrac{1}{2}\left( \mu\left( \Phi_{\zeta_{0}}\left( \left(f_{n,n} - \tfrac{1}{2}\right)^{2} \right)\right) + \mu\left( \Phi_{\zeta_{1}}\left( \left(f_{n,n} - \tfrac{1}{2}\right)^{2} \right) \right) \right) \\
	& \stackrel{\ref{remark:convolution}(3)}{=} \, \tfrac{1}{2}\left( \mu\left( \left( \Phi_{\zeta_{0}}(f_{n,n}) - \tfrac{1}{2}\right)^{2} \right) + \mu\left( \left(\Phi_{\zeta_{1}}(f_{n,n}) - \tfrac{1}{2}\right)^{2} \right) \right) \\
	& \stackrel{\eqref{Samuel4}}{=} \, \tfrac{1}{2}\left( \mu\left( \left( 0 - \tfrac{1}{2}\right)^{2} \right) + \mu\left( \left(1 - \tfrac{1}{2}\right)^{2} \right) \right) \, = \, \tfrac{1}{4} .
\end{align*} This proves~\eqref{Samuel5} and thus completes the argument. \end{proof}

Contemplating the hypothesis of Theorem~\ref{theorem:pestov}, we now turn to a certain class of topological groups. Following Pachl~\cite{ambitable}, a topological group $G$ will be called \emph{ambitable}\footnote{This definition of ambitability is equivalent to the one in~\cite{ambitable} by~\cite[Lemma~19(2)]{ambitable}.} if \begin{displaymath}
	\forall B \in \RUEB (G) \, \exists f \in \RUCB (G) \colon \quad B \subseteq \{ \Phi_{\xi}f \mid \xi \in \Samuel(G) \} .
\end{displaymath} Precompact topological groups are non-ambitable~\cite[Theorem~2]{ambitable}. It is an open problem~\cite[Question~1]{ambitable} whether the converse is true as well, i.e., whether every topological group is either precompact or ambitable. Within the class of locally compact groups, this question has been answered in the affirmative~\cite[Corollary~15]{ambitable}. Another related result is the following.

\begin{remark}\label{remark:ambitable} Every separable topological group is either precompact or ambitable. This is a direct consequence of~\cite[Corollary~16]{ambitable}. \end{remark}

For our purposes, the following basic observation about ambitable topological groups will be relevant.

\begin{lem}\label{lemma:ambitable} If $G$ is an ambitable topological group, then $\Samuel (G)$ admits a continuum of pairwise disjoint, closed, $G$-invariant, non-empty subsets. \end{lem}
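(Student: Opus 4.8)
The plan is to produce a continuum of pairwise disjoint, closed, $G$-invariant, non-empty subsets of $\Samuel(G)$ from a single clever choice of function witnessing ambitability. First I would fix a large enough UEB set to work with: take a nonconstant $f_{0} \in \RUC(G,[0,1])$ (such a function exists unless $G$ is trivial, and for the trivial group the lemma is vacuous since then $\Samuel(G)$ is a point and ambitability fails), and form the set $B \defeq \{\,t \cdot f_{0} \mid t \in [0,1]\,\} \subseteq \RUCB(G)$. This $B$ is norm-bounded and uniformly equicontinuous (scaling by $t \le 1$ only shrinks oscillations), hence $B \in \RUEB(G)$. By ambitability, there is $f \in \RUCB(G)$ with $B \subseteq \{\Phi_{\xi} f \mid \xi \in \Samuel(G)\}$; so for every $t \in [0,1]$ we may pick $\xi_{t} \in \Samuel(G)$ with $\Phi_{\xi_{t}} f = t f_{0}$.

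Next I would convert each equation $\Phi_{\xi_{t}} f = t f_{0}$ into a description of a closed $G$-invariant set. Recall $(\Phi_{\xi} f)(g) = \xi(f \circ \lambda_{g}) = (g\xi)(f) = \overline{f}(g\xi)$. So $\Phi_{\xi} f$ is the function $g \mapsto \overline{f}(g\xi)$ on $G$, i.e.\ it records the values of $\overline{f}$ along the orbit closure of $\xi$. For each $t \in [0,1]$ consider the set
\begin{displaymath}
	C_{t} \, \defeq \, \{ \zeta \in \Samuel(G) \mid \overline{f}(\zeta) \in \overline{t \cdot f_{0}(G)} \},
\end{displaymath}
or rather the cleaner variant obtained by noting that $\overline{f_{0}(G)}$ is a compact subinterval $[a,b] \subseteq [0,1]$ with $a < b$ (nonconstancy), so $t f_{0}(G)$ has closure $[ta, tb]$; take $C_{t} \defeq \overline{f}^{-1}([ta,tb])$. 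Each $C_{t}$ is closed in $\Samuel(G)$ since $\overline{f} \in \Cont(\Samuel(G))$. To see $C_{t}$ is $G$-invariant I would instead work with the orbit closure $\overline{G\xi_{t}}$: for $g \in G$, $\overline{f}(g \xi_{t}) = (\Phi_{\xi_{t}} f)(g) = t f_{0}(g) \in [ta,tb]$, and by continuity this persists on $\overline{G\xi_{t}}$, so $\overline{G\xi_{t}} \subseteq C_{t}$; in particular $C_{t}$ contains a nonempty $G$-invariant closed set, so replacing $C_{t}$ by $\overline{G\xi_{t}}$ gives genuinely $G$-invariant, closed, non-empty sets $D_{t} \defeq \overline{G\xi_{t}}$, each contained in $\overline{f}^{-1}([ta,tb])$.

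Finally I would arrange disjointness, which is the main obstacle: the intervals $[ta,tb]$ for distinct $t$ are not disjoint (they all contain $0$ if $a=0$, and in general overlap), so the sets $D_{t}$ as defined need not be pairwise disjoint. The fix is to use $f_{0}$ itself more carefully: on $D_{t}$ the function $\overline{f}$ takes the value $t f_{0}(g)$ at $g\xi_{t}$, and as $g$ ranges over $G$ this sweeps out $t \cdot f_{0}(G)$, so $\overline{f}(D_{t}) \supseteq t f_{0}(G)$, which has $\sup$ equal to $tb$. Thus $\sup \overline{f}(D_{t}) \ge tb$; on the other hand by the containment in $\overline{f}^{-1}([ta,tb])$ we get $\sup \overline{f}(D_{t}) = tb$ exactly. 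Hence the invariant $\Theta(t) \defeq \sup\{\overline{f}(\zeta) \mid \zeta \in D_{t}\} = tb$ separates the $D_{t}$: if $s \ne t$ then $sb \ne tb$ (as $b \ne 0$), so $D_{s} \cap D_{t} = \emptyset$ — any common point would force $sb = \overline{f}(\text{that point's orbit sup}) = tb$. Wait, one must be slightly careful: $\Theta$ is a property of the whole set $D_{t}$, not of a single point, so to conclude disjointness I would instead note $\overline{f}(D_{t}) \subseteq [ta,tb]$ and $\overline{f}(D_{s}) \subseteq [sa,sb]$; for these to be disjoint it suffices that $tb < sa$, which fails in general. The robust route is therefore: restrict to $t$ ranging over $[1/2,1]$ only (still a continuum) and additionally pass to $\overline{f}^{-1}$ of the point $tb$ — more precisely replace $f_0$ by a function whose range is $\{0\} \cup [1/2,1]$-flavoured, or simply apply the ambitability witness to the set $B' = \{\, \tfrac12(1+t) f_0 \mid t\in[0,1]\,\}$ and use that $\overline{f}(D_t)$ then lies in $[\tfrac12(1+t)a, \tfrac12(1+t)b]$ with left endpoints strictly increasing in $t$ when $a>0$; if $a=0$, first translate $f_0$ to $\tfrac12(1+f_0)$ which has range in $[\tfrac12,1]$, restoring $a>0$. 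With $a>0$ the intervals $[\,ta, tb\,]$, $t\in[0,1]$, are pairwise disjoint precisely when $tb < sa$ for $t<s$, i.e.\ when $b/a < s/t$ for all $t<s$, which still fails — so the genuinely correct device is the separating functional argument above applied pointwise: define for $\zeta\in\Samuel(G)$ the orbit-supremum $\sigma(\zeta)\defeq \sup_{g\in G}\overline{f}(g\zeta)$, observe $\sigma$ is constant on orbit closures and $\sigma(\xi_t)=\sup_g t f_0(g) = tb$, so the fibres $\sigma^{-1}(tb)$, $t\in(0,1]$, are pairwise disjoint, closed (as $\sigma$ is upper semicontinuous, being a sup of continuous functions — and in fact continuous here since the relevant sup is attained), $G$-invariant, and each contains $\overline{G\xi_t}\ni\xi_t$ hence is non-empty. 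This yields a continuum $\{\sigma^{-1}(tb) \mid t\in(0,1]\}$ of the desired sets, completing the proof.
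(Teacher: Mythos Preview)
Your argument has a genuine gap at the final step. You define $\sigma(\zeta) = \sup_{g \in G} \overline{f}(g\zeta)$ and want the fibres $\sigma^{-1}(tb)$ to be closed. But a supremum of continuous functions is \emph{lower} semicontinuous, not upper semicontinuous as you claim: for each $c$ the set $\{\sigma > c\} = \bigcup_{g} \{\zeta : \overline{f}(g\zeta) > c\}$ is open, so $\{\sigma \le c\}$ is closed, while $\{\sigma \ge c\}$ need not be. Hence $\sigma^{-1}(tb) = \{\sigma \le tb\} \cap \{\sigma \ge tb\}$ need not be closed. The parenthetical remark that the supremum is attained does not repair this: pointwise attainment of a supremum of continuous functions does not force continuity of the supremum. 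Nor can you fall back on the orbit closures $D_{t} = \overline{G\xi_{t}}$ themselves, since nothing prevents $D_{s}$ and $D_{t}$ from intersecting; a common point $\zeta$ only yields $\sigma(\zeta) \le \min(sb,tb)$, which is no contradiction.

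The source of all this difficulty is your choice of the UEB set $B = \{t f_{0} : t \in [0,1]\}$ with $f_{0}$ nonconstant. The paper takes instead the UEB set of \emph{constant} functions $B = \{\,t \cdot 1 : t \in [0,1]\,\}$. Ambitability then produces $f \in \RUCB(G)$ such that for every $i \in [0,1]$ there is $\xi$ with $\Phi_{\xi} f \equiv i$, i.e.\ $(g\xi)(f) = i$ for all $g \in G$. One then sets
\[
T_{i} \;\defeq\; \{\, \xi \in \Samuel(G) \mid \forall g \in G \colon (g\xi)(f) = i \,\},
\]
which is closed as an intersection over $g$ of level sets of the continuous maps $\xi \mapsto (g\xi)(f)$, manifestly $G$-invariant and pairwise disjoint by definition, and non-empty by the previous sentence. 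No semicontinuity or orbit-closure gymnastics are needed.
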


\begin{proof} Let $G$ be an ambitable topological group. In particular, as the set $B$ of all constant functions on $G$ with values in $[0,1]$ is an element of $\RUEB (G)$, there exists $f \in \RUCB (G)$ with $B \subseteq \{ \Phi_{\xi}f \mid \xi \in \Samuel (G) \}$. We observe that \begin{displaymath}
	T_{i} \, \defeq \, \{ \xi \in \Samuel (G) \mid \forall g \in G \colon \, (g\xi)(f) = i \} \qquad (i \in [0,1])
\end{displaymath} constitutes a family of pairwise disjoint, closed, $G$-invariant subsets of $\Samuel (G)$. It remains to show that $T_{i} \ne \emptyset$ for every $i \in [0,1]$. To this end, let $i \in [0,1]$. Our choice of $f$ asserts that there exists $\xi \in \Samuel (G)$ with $\Phi_{\xi}f \equiv i$, whence \begin{displaymath}
	(g\xi)(f) \, = \, \xi(f \circ \lambda_{g}) \, = \, (\Phi_{\xi}f)(g) \, = \, i 
\end{displaymath} for all $g \in G$, thus $\xi \in T_{i}$ and so $T_{i}\ne \emptyset$. This completes the argument. \end{proof}

Quantitative improvements of Lemma~\ref{lemma:ambitable} are known for non-compact, locally compact groups~\cite[Theorem~4.1]{LauMilnesPym}, as well as for non-precompact, separable topological groups~\cite[Theorem~1.3]{FerriStrauss} (see also~\cite[Theorem~3.8]{HindmanStrauss}). It is an open problem whether the Samuel compactification of every non-precompact topological contains two disjoint closed, invariant, non-empty subsets~\cite[paragraph after Theorem~3.8]{HindmanStrauss}.

Regarding the notation used in the following two corollaries, we refer to the paragraph preceding Theorem~\ref{theorem:pestov}.

\begin{cor}\label{corollary:pestov.1} Let $G$ be a second-countable, non-precompact topological group. Then there exists a sequence $\nu_{n} \in \Mean (G^{n})$ $(n \in \N_{>0})$ such that, for every compatible right-invariant metric $d$ on $G$, there exists $f \in \Lip_{1}(G,d;[0,1])$ such that \begin{displaymath}
	\inf\nolimits_{n \in \N_{>0}} \inf\nolimits_{\mu \in \Mean (G^{n})} (\mu \nu_{n})\left( (f_{n,n} - (\mu\nu_{n})(f_{n,n}))^{2} \right) \, > \,  0 .
\end{displaymath} \end{cor}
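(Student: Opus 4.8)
The plan is to combine Theorem~\ref{theorem:pestov} with a suitable ``ambitability'' input. First I would observe that a second-countable topological group is separable, so by Remark~\ref{remark:ambitable} a non-precompact such group $G$ is ambitable. Then Lemma~\ref{lemma:ambitable} yields (in particular) two disjoint, closed, $G$-invariant, non-empty subsets of $\Samuel(G)$, so the hypothesis of Theorem~\ref{theorem:pestov} is met. Theorem~\ref{theorem:pestov} hands us a single function $f \in \RUC(G,[0,1])$ and, for each $n \in \N_{>0}$, a mean $\nu_{n} \in \Mean(G^{n})$ such that $(\mu\nu_{n})\bigl((f_{n,n}-(\mu\nu_{n})(f_{n,n}))^{2}\bigr) = \tfrac14$ for \emph{every} $\mu \in \Mean(G^{n})$. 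This sequence $(\nu_{n})_{n \in \N_{>0}}$ is the one we output; crucially it does not depend on the choice of metric.

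The remaining point is the dependence on $d$: the statement quantifies over all compatible right-invariant metrics $d$ on $G$ and asks for $f \in \Lip_{1}(G,d;[0,1])$. The function $f$ produced by Theorem~\ref{theorem:pestov} is right-uniformly continuous with values in $[0,1]$ but need not be $1$-Lipschitz for a prescribed $d$. However, $f \in \RUCB(G)$ means precisely that for every $\epsilon > 0$ there is a neighbourhood $U$ of $e$ with $\Vert f - f\circ\lambda_{g}\Vert_{\infty} \le \epsilon$ for $g \in U$; since $d$ generates the topology, there is $t \in \R_{>0}$ with $\B_{d}(e,1/t) \subseteq U$, and right-invariance of $d$ then gives $\vert f(x) - f(y)\vert \le \epsilon + \mathbf{1}$ whenever $d(x,y) < 1/t$, hence (rescaling) $\vert f(x)-f(y)\vert \le C\,d(x,y) + \epsilon$ for a suitable constant $C = C(d)$ and all $x,y \in G$. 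Actually it is cleaner to run the argument the other way: fix $d$, pick via $f \in \RUCB(G)$ a constant $C$ with $\vert f(x) - f(y)\vert \le C\,d(x,y)$ whenever $d(x,y) \ge \delta$ for some threshold, combined with a uniform bound on short distances; the upshot, exactly as in the proof of Theorem~\ref{theorem:extreme.amenability} via Remark~\ref{remark:lipschitz.approximation}, is that there is $g \in \Lip_{C}(G,d;[0,1])$ with $\Vert f - g\Vert_{\infty}$ as small as we like. Replacing $g$ by $C^{-1}$ times a translate lands us in $\Lip_{1}(G,d;[0,1])$ at the cost of an arbitrarily small sup-norm error and a harmless affine rescaling.

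So the final step is a perturbation estimate: if $g$ is within $\eta$ of $f$ in sup-norm (after affine normalization), then for every $n$ and every $\mu \in \Mean(G^{n})$ one has $\Vert g_{n,n} - f_{n,n}\Vert_{\infty} \le \eta$ (the coordinate-averaging is $1$-Lipschitz from $\ell^{\infty}(G)$ to $\ell^{\infty}(G^{n})$ in the relevant sense), and hence $(\mu\nu_{n})\bigl((g_{n,n} - (\mu\nu_{n})(g_{n,n}))^{2}\bigr)$ differs from $(\mu\nu_{n})\bigl((f_{n,n}-(\mu\nu_{n})(f_{n,n}))^{2}\bigr) = \tfrac14$ by at most something like $O(\eta)$ uniformly in $n$ and $\mu$ — a routine estimate using $\vert a^{2}-b^{2}\vert \le \vert a-b\vert(\vert a\vert + \vert b\vert)$, boundedness of $g_{n,n}$ and $f_{n,n}$ in $[0,1]$, and positivity/unitality of the means. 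Choosing $\eta$ small enough that this error is below $\tfrac18$, we conclude
\[
	\inf\nolimits_{n \in \N_{>0}} \inf\nolimits_{\mu \in \Mean(G^{n})} (\mu\nu_{n})\bigl((g_{n,n} - (\mu\nu_{n})(g_{n,n}))^{2}\bigr) \, \ge \, \tfrac18 \, > \, 0 ,
\]
which is the assertion with $f \defeq$ this $g$. I expect the main obstacle to be purely bookkeeping: carefully separating the $d$-independent data ($f$, the sequence $\nu_{n}$) from the $d$-dependent Lipschitz approximation, and tracking the rescaling constant $C(d)$ so that the final $f$ genuinely lies in $\Lip_{1}(G,d;[0,1])$ rather than merely in $\Lip_{C}$; none of the individual estimates is deep, but the order of quantifiers (one sequence $\nu_{n}$ must work simultaneously for all metrics $d$) has to be respected throughout.
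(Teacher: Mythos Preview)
Your strategy is correct and matches the paper's proof step for step: ambitability via Remark~\ref{remark:ambitable} and Lemma~\ref{lemma:ambitable}, then Theorem~\ref{theorem:pestov} for the metric-independent data $(f,(\nu_n))$, then Lipschitz approximation via Remark~\ref{remark:lipschitz.approximation}, a perturbation estimate, and a rescaling. One clarification on the rescaling step, where your exposition blurs two different things: you cannot arrange a $\Lip_1$ function uniformly close to $f$; rather, you first approximate $f$ by $h \in \Lip_C(G,d;[0,1])$ with a \emph{fixed} error (the paper takes $1/32$, yielding some $C=C(d)$), run the perturbation estimate to get the $\geq 1/8$ bound for $h_{n,n}$, and only then set $g \defeq C^{-1}h \in \Lip_1(G,d;[0,1])$, whose variance is $\geq 1/(8C^2) > 0$ --- the rescaling is not a small sup-norm perturbation but a quadratic scaling of the variance, so your displayed $\geq 1/8$ should read $\geq 1/(8C^2)$, which still suffices.
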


\begin{proof} Due to Remark~\ref{remark:ambitable} and Lemma~\ref{lemma:ambitable}, $\Samuel (G)$ contains a disjoint pair of closed, $G$-invariant, non-empty subsets. Thus, by Theorem~\ref{theorem:pestov}, there exist $f \in \RUC (G,[0,1])$ and a sequence $\nu_{n} \in \Mean (G^{n})$ $(n \in \N_{>0})$ such that \begin{equation}\tag{1}\label{pestov}
	\forall n \in \N_{>0} \, \forall \mu \in \Mean (G^{n}) \colon \qquad (\mu\nu_{n})\left( (f_{n,n} - (\mu\nu_{n})(f_{n,n}))^{2} \right) \, = \, \tfrac{1}{4}.
\end{equation} Now, let $d$ be a compatible right-invariant metric on $G$. Since $f$ is right-uniformly continuous and $d$ generates the topology of $G$, there is $\ell \in \R_{\geq 1}$ such that $\Vert f - (f\circ\lambda_{u}) \Vert_{\infty} \leq \tfrac{1}{32}$ for every $u \in \B_{d}(e,1/\ell)$, whence \begin{align*}
	\vert f(x) - f(y) \vert \, = \, \left\lvert f(x) - f\left(yx^{-1}x\right) \right\rvert \, &\leq \, \max \left\{ \tfrac{1}{32}, \, \ell d\left(e,yx^{-1}\right) \right\} \\
	& \leq \, \ell d\left(e,yx^{-1}\right) + \tfrac{1}{32} \, = \, \ell d(x,y) + \tfrac{1}{32}
\end{align*} for all $x,y \in G$. Due to Remark~\ref{remark:lipschitz.approximation}, we now find $h \in \Lip_{\ell}(G,d;[0,1])$ such that $\Vert f - h \Vert_{\infty} \leq \tfrac{1}{32}$, which readily entails that \begin{equation}\tag{2}\label{apple}
	\forall n \in \N_{>0} \colon \qquad \left\lVert f_{n,n} - h_{n,n} \right\rVert_{\infty} \leq \tfrac{1}{32} .
\end{equation} We claim that \begin{equation}\tag{3}\label{apple.too}
	\forall n \in \N_{>0} \, \forall \mu \in \Mean (G^{n}) \colon \qquad (\mu\nu_{n})\left( (h_{n,n} - (\mu\nu_{n})(h_{n,n}))^{2} \right) \, \geq \, \tfrac{1}{8}.
\end{equation} To prove this, let $n \in \N_{>0}$ and $\mu \in \Mean (G^{n})$. From $0\leq h_{n,n}+f_{n,n} \leq 2$, we infer that $0\leq (\mu\nu_{n})(h_{n,n}+f_{n,n}) \leq 2$ and thus $\Vert h_{n,n}+f_{n,n} - (\mu\nu_{n})(h_{n,n}+f_{n,n}) \Vert_{\infty} \leq 2$. Since \begin{align*}
	&(h_{n,n} - (\mu\nu_{n})(h_{n,n}))^{2} - (f_{n,n} - (\mu\nu_{n})(f_{n,n}))^{2} \\
	& \hspace{15mm} = \, (h_{n,n}+f_{n,n} - (\mu\nu_{n})(h_{n,n}+f_{n,n}))(h_{n,n}-f_{n,n} - (\mu\nu_{n})(h_{n,n}-f_{n,n})) ,
\end{align*} we conclude that \begin{align*}
	&\left\lVert (h_{n,n} - (\mu\nu_{n})(h_{n,n}))^{2} - (f_{n,n} - (\mu\nu_{n})(f_{n,n}))^{2} \right\rVert_{\infty} \\
	& \hspace{8mm} \leq \, \left\lVert h_{n,n}+f_{n,n} - (\mu\nu_{n})(h_{n,n}+f_{n,n}) \right\rVert_{\infty} \cdot \left\lVert h_{n,n}-f_{n,n} - (\mu\nu_{n})(h_{n,n}-f_{n,n}) \right\rVert_{\infty} \\
	& \hspace{8mm} \stackrel{\eqref{apple}}{\leq} \, 2\cdot \tfrac{1}{16} \, = \, \tfrac{1}{8} .
\end{align*} Consequently, \begin{displaymath}
	(\mu\nu_{n})\left( (h_{n,n} - (\mu\nu_{n})(h_{n,n}))^{2} \right) \, \geq \, (\mu\nu_{n})\left( (f_{n,n} - (\mu\nu_{n})(f_{n,n}))^{2} \right) - \tfrac{1}{8} \, \stackrel{\eqref{pestov}}{=} \, \tfrac{1}{8} .
\end{displaymath} This proves~\eqref{apple.too}. Finally, we observe that $g \defeq \ell^{-1}h \in \Lip_{1}(G,d;[0,1])$ and \begin{displaymath}
	\inf\nolimits_{n \in \N_{>0}} \inf\nolimits_{\mu \in \Mean (G^{n})} (\mu \nu_{n})\left( (g_{n,n} - (\mu\nu_{n})(g_{n,n}))^{2} \right) \, \stackrel{\eqref{apple.too}}{\geq} \, \tfrac{1}{8 \ell^{2}} \, > \,  0 . \qedhere
\end{displaymath} \end{proof}

We arrive at the announced negative solution to Problem~\ref{problem}, continuing to use the notation introduced in the paragraph just before Theorem~\ref{theorem:pestov}.

\begin{cor}\label{corollary:pestov.2} Let $G$ be a second-countable, non-precompact, amenable topological group. Then there exists a sequence of left-invariant means $\mu_{n} \in \Mean (G^{n})$ $(n \in \N_{>0})$ such that, for every compatible right-invariant metric $d$ on $G$, \begin{displaymath}
	\exists f \in \Lip_{1}(G,d;[0,1]) \colon \qquad \inf\nolimits_{n \in \N_{>0}} \mu_{n}\left( (f_{n,n} - \mu_{n}(f_{n,n}))^{2} \right) \, > \,  0 .
\end{displaymath} \end{cor}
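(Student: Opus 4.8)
The plan is to deduce the statement from Corollary~\ref{corollary:pestov.1} by using amenability of $G$ to replace the variable convolution factor $\mu$ appearing there with a single fixed invariant mean. First I would invoke Corollary~\ref{corollary:pestov.1} to fix, once and for all, a sequence $\nu_{n} \in \Mean(G^{n})$ $(n \in \N_{>0})$; this sequence is produced before any metric is chosen, so it does not depend on $d$. Since $G$ is amenable, each finite power $G^{n}$ is amenable as well by~\cite[Theorem~4.8]{rickert}, and hence $\Mean(G^{n})$ contains a $G^{n}$-left-invariant mean $\lambda_{n}$. I would then set
\begin{displaymath}
	\mu_{n} \, \defeq \, \lambda_{n}\nu_{n} \, \in \, \Mean(G^{n}) \qquad (n \in \N_{>0}) .
\end{displaymath}
By Remark~\ref{remark:ideal.of.invariant.means}, left-invariance of $\lambda_{n}$ is inherited by the convolution $\lambda_{n}\nu_{n}$, so each $\mu_{n}$ is a $G^{n}$-left-invariant mean; this is the required sequence, and it is independent of $d$.

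Now let $d$ be any compatible right-invariant metric on $G$. Corollary~\ref{corollary:pestov.1} supplies $f \in \Lip_{1}(G,d;[0,1])$ together with a strictly positive lower bound
\begin{displaymath}
	c \, \defeq \, \inf\nolimits_{n \in \N_{>0}} \inf\nolimits_{\mu \in \Mean(G^{n})} (\mu\nu_{n})\!\left( (f_{n,n} - (\mu\nu_{n})(f_{n,n}))^{2} \right) \, > \, 0 .
\end{displaymath}
Specializing the inner infimum to $\mu = \lambda_{n}$ yields $\mu_{n}\!\left( (f_{n,n} - \mu_{n}(f_{n,n}))^{2} \right) = (\lambda_{n}\nu_{n})\!\left( (f_{n,n} - (\lambda_{n}\nu_{n})(f_{n,n}))^{2} \right) \geq c$ for every $n \in \N_{>0}$, whence $\inf_{n} \mu_{n}\!\left( (f_{n,n} - \mu_{n}(f_{n,n}))^{2} \right) \geq c > 0$, as desired.

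There is no genuine obstacle at this stage: the substantive work is carried out in Theorem~\ref{theorem:pestov} and Corollary~\ref{corollary:pestov.1}. The only points warranting care are that $\Mean(G^{n})$ really does admit a $G^{n}$-left-invariant mean --- which is precisely amenability of $G^{n}$, obtained from~\cite{rickert} --- and that this invariance passes to the convolution $\lambda_{n}\nu_{n}$, which is exactly the content of Remark~\ref{remark:ideal.of.invariant.means}. Finally, to see that this resolves Problem~\ref{problem} in the negative, one observes that when all factors are taken to be a fixed second-countable, non-precompact, amenable group $G$ equipped with a fixed compatible right-invariant metric $d$ with $\diam(G,d) \leq 1$, the metric $d'_{n}$ of Problem~\ref{problem} is exactly the normalized Hamming metric $d_{n}$, and $f_{n,n} \in \Lip_{1}(G^{n},d_{n};[0,1])$ whenever $f \in \Lip_{1}(G,d;[0,1])$; the sequence $(f_{n,n})_{n \in \N_{>0}}$ then witnesses the failure of the convergence asked for by Pestov.
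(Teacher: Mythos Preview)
Your proof is correct and follows essentially the same route as the paper: invoke Corollary~\ref{corollary:pestov.1} to obtain the sequence $(\nu_{n})$, use amenability of $G$ (via~\cite[Theorem~4.8]{rickert}) to pick a left-invariant mean $\lambda_{n}$ on each $G^{n}$, set $\mu_{n} \defeq \lambda_{n}\nu_{n}$, and appeal to Remark~\ref{remark:ideal.of.invariant.means} for left-invariance of the product. The only cosmetic difference is that the paper chooses the invariant means before invoking Corollary~\ref{corollary:pestov.1}, but since neither choice depends on the other this is immaterial.
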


\begin{proof} Since $G$ is amenable, for every $n \in \N_{>0}$ the topological group $G^{n}$ is amenable as well, due to~\cite[Theorem~4.8]{rickert}. For each $n \in \N_{>0}$, let us pick any $G^{n}$-left-invariant mean $\mu_{n} \in \Mean (G^{n})$. By Corollary~\ref{corollary:pestov.1}, there exists a sequence $\nu_{n} \in \Mean (G^{n})$ $(n \in \N_{>0})$ such that, for every compatible right-invariant metric $d$ on $G$, there exists $f \in \Lip_{1}(G,d;[0,1])$ with \begin{displaymath}
	\inf\nolimits_{n \in \N_{>0}} \inf\nolimits_{\mu \in \Mean (G^{n})} (\mu \nu_{n})\left( (f_{n,n} - (\mu\nu_{n})(f_{n,n}))^{2} \right) \, > \,  0 ,
\end{displaymath} whence, in particular, \begin{displaymath}
	\inf\nolimits_{n \in \N_{>0}} (\mu_{n} \nu_{n})\left( (f_{n,n} - (\mu_{n}\nu_{n})(f_{n,n}))^{2} \right) \, > \,  0 .
\end{displaymath} It thus remains to observe that, for each $n \in \N_{>0}$, the mean $\mu_{n}\nu_{n} \in \Mean (G^{n})$ is $G^{n}$-left-invariant by Remark~\ref{remark:ideal.of.invariant.means}. \end{proof}

\section{Continuous geometries and their maximal chains}\label{section:continuous.geometries}

This section marks the beginning of our study of continuous geometries. Beyond providing some background concerning the groundbreaking work of von Neumann~\cite{VonNeumannBook} (see also Maeda's monograph~\cite{MaedaBook}), the main purpose of this section is to prove a characterization of the maximal chains in irreducible continuous geometries by means of their dimension functions (Theorem~\ref{theorem:maximal.flags}). We start off by clarifying some relevant terminology.

Let $P$ be a partially ordered set. Let $\Max (P)$ denote the set of all maximal elements of $P$. A \emph{chain} in $P$ is a subset $C \subseteq P$ such that the restriction of the partial order of $P$ to $C$, i.e., its intersection with $C \times C$, constitutes a linear order on $C$. A \emph{maximal chain} in $P$ is a maximal element of the partially ordered set $(\{ C \subseteq P \mid C \text{ chain in } P \},{\subseteq})$. By the Hausdorff maximal principle, a well-known equivalent of the axiom of choice (see, e.g.,~\cite[2.7, p.~61]{DevlinBook}), every chain in $P$ is contained in a maximal chain of $P$.

By a \emph{lattice} we mean a partially ordered set $L$ in which every pair of elements $x,y \in L$ admits both a (necessarily unique) supremum $x\vee y \in L$ and a (necessarily unique) infimum $x\wedge y \in L$. Equivalently, lattices may be characterized as algebraic structures with two commutative and associative binary operations satisfying the two absorption laws. A \emph{complete lattice} is a partially ordered set $L$ such that every subset $S \subseteq L$ has a (necessarily unique) supremum $\bigvee S \in L$. If $L$ is a complete lattice, then every $S \subseteq L$ admits a (necessarily unique) infimum $\bigwedge S \in L$, too.

\begin{remark}\label{remark:maximal.chains.complete} Let $L$ be a complete lattice. If $C \subseteq L$ is a maximal chain in $L$, then both $\bigvee S \in C$ and $\bigwedge S \in C$ for every $S \subseteq C$. \end{remark} 

As any equational class of algebraic structures, the collection of all lattices admits direct products. A lattice $L$ is said to be \emph{(directly) irreducible} if $\vert L \vert \geq 2$ and $L$ is not isomorphic to a direct product of two lattices of cardinality at least two. A lattice is said to be \emph{bounded} if it possesses both a (necessarily unique) greatest element and a (necessarily unique) least element. If $L$ is a bounded lattice, then $1 = 1_{L}$ denotes the greatest element and $0 = 0_{L}$ denotes the least element of $L$. Evidently, any complete lattice is bounded. A lattice $L$ is called \begin{itemize}
	\item[---] \emph{complemented} if $L$ is bounded and, for every $x \in L$, there exists $y \in L$ such that $x\wedge y = 0$ and $x \vee y = 1$,
	\item[---] \emph{relatively complemented} if, for all $a,b,x \in L$ with $a \leq x \leq b$, there exists $y \in L$ such that $x\wedge y = a$ and $x \vee y = b$,
	\item[---] \emph{modular} if, for all $x,y,z \in L$, \begin{displaymath}
					\qquad x \leq y \ \, \Longrightarrow \ \, x \vee (y \wedge z) = y \wedge (x \vee z) .
\end{displaymath} \end{itemize} Clearly, every relatively complemented, bounded lattice is complemented. For modular lattices, the converse is true as well:

\begin{lem}[\cite{VonNeumannBook}, \cite{BirkhoffBook}]\label{lemma:relatively.complemented} Every complemented, modular lattice is relatively complemented. \end{lem}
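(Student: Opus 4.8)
The plan is to prove that in a complemented modular lattice $L$, any interval $[a,b]$ is itself complemented, which is exactly the statement of relative complementation. The standard approach uses the fact that in a modular lattice the relative complements can be constructed explicitly from complements in $L$ by a "projection" formula.

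First I would fix $a \leq x \leq b$ in $L$ and let $c$ be a complement of $x$ in $L$, so $x \wedge c = 0$ and $x \vee c = 1$. The natural candidate for a relative complement of $x$ in $[a,b]$ is
\begin{displaymath}
	y \, \defeq \, a \vee (b \wedge c) .
\end{displaymath}
Clearly $a \leq y$, and since $b \wedge c \leq b$ and $a \leq b$ we get $y \leq b$, so $y \in [a,b]$. It remains to check $x \wedge y = a$ and $x \vee y = b$, and this is where modularity does all the work.

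For the meet: using modularity (with $a \leq x$) we get $x \wedge y = x \wedge (a \vee (b \wedge c)) = a \vee (x \wedge b \wedge c) = a \vee (x \wedge c) = a \vee 0 = a$, where $x \wedge b = x$ since $x \leq b$. For the join: using modularity (with $b \wedge c \leq b$) we get $x \vee y = x \vee a \vee (b \wedge c) = x \vee (b \wedge c)$; then applying modularity again (with $x \leq b$) gives $x \vee (b \wedge c) = b \wedge (x \vee c) = b \wedge 1 = b$. This completes the argument.

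The main obstacle — really the only subtlety — is getting the modular law applied in the correct direction each time, i.e., making sure that at each step the relevant inequality ($a \leq x$, or $x \leq b$, or $b \wedge c \leq b$) is genuinely available so that $x \leq y \Rightarrow x \vee (y \wedge z) = y \wedge (x \vee z)$ can be invoked. Since all four applications are immediate from $a \leq x \leq b$ and the definitions of meet and join, there is no real difficulty; the proof is a short direct computation. One should also remark that $L$ being merely bounded (not complete) suffices, since complements exist by the complementedness hypothesis.
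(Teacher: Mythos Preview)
Your argument is correct and is precisely the classical construction found in the references the paper cites (von Neumann's \emph{Continuous Geometry} and Birkhoff's \emph{Lattice Theory}); the paper itself does not spell out a proof but simply defers to those sources. One minor remark: in your join computation the step $x \vee a \vee (b \wedge c) = x \vee (b \wedge c)$ uses only $a \leq x$ (so $x \vee a = x$), not modularity, so the parenthetical ``(with $b \wedge c \leq b$)'' there is superfluous---but the subsequent application of modularity with $x \leq b$ is exactly right and completes the proof.
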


\begin{proof} See~\cite[I.I, Theorem~1.3, p.~5]{VonNeumannBook}, or~\cite[VIII.1, Theorem~1]{BirkhoffBook}. \end{proof}

Let $L$ be a bounded lattice. Two elements $x,y \in L$ are said to be \emph{perspective} in $L$ and we write $x \sim y$ if there exists $z \in L$ such that $x \wedge z = y \wedge z = 0$ and $x \vee z = y \vee z = 1$. For $x,y \in L$, let us define \begin{displaymath} 
	x \precsim y \quad :\Longleftrightarrow \quad \exists x' \in L \colon \ x \sim x' \leq y .
\end{displaymath} Let $n \in \N$. A tuple $(x_{1},\ldots,x_{n}) \in L^{n}$ is said to be \emph{independent} in $L$ if \begin{displaymath}
	\forall I,J \subseteq \{ 1,\ldots,n \} \colon \quad I \cap J = \emptyset \ \, \Longrightarrow \ \, \left( \bigvee\nolimits_{\! \! \! i \in I} x_{i} \right) \wedge \left( \bigvee\nolimits_{\! \! \! j \in J} x_{j} \right) = 0 .
\end{displaymath} Suppose that $L$ is complemented and modular. Then $L$ is said to \emph{have order} $n$ if there exists an independent tuple $(x_{1},\ldots,x_{n}) \in L^{n}$ of pairwise perspective elements of $L$ such that $\bigvee_{i=1}^{n} x_{i} = 1$. In general, a complemented, modular lattice may have multiple orders (see~\cite[II.III, Note after Definition~3.2, p.~93]{VonNeumannBook}, or~\cite[VIII.1, Definition~1.2, p.~170]{MaedaBook}).

A complete lattice $L$ is said to be \emph{continuous}\footnote{This notion admits a characterization in terms of directed subsets~\cite[Proposition~13.1]{GoodearlBook}.} if, for every chain $C \subseteq L$ and every element $x \in L$, \begin{displaymath}
	x \wedge \bigvee C \, = \, \bigvee \{ x \wedge y \mid y \in C\}, \qquad x \vee \bigwedge C \, = \, \bigwedge \{ x \vee y \mid y \in C\} .
\end{displaymath} A \emph{continuous geometry} is a continuous, complete, complemented, modular lattice. Subsequently, we recall some elements from von Neumann's comprehensive treatment~\cite{VonNeumannBook} of such objects.

We turn to certain real-valued functions on lattices. To this end, let $L$ be a lattice. A function $\delta \colon L \to [0,1]$ is called \emph{modular} if \begin{displaymath}
	\forall x,y \in L \colon \quad \delta (x\vee y) + \delta (x \wedge y) \, = \, \delta (x) + \delta (y) ,
\end{displaymath} $\delta$ is called \emph{positive} if \begin{displaymath}
	\forall x,y \in L \colon \quad x \leq y \ \, \Longrightarrow \ \, \delta (x) \leq \delta (y) ,
\end{displaymath} and $\delta$ is called \emph{strictly positive} if \begin{displaymath}
	\forall x,y \in L \colon \quad x < y \ \, \Longrightarrow \ \, \delta (x) < \delta (y) .
\end{displaymath} Suppose now that $L$ is a bounded lattice. A function $\delta \colon L \to [0,1]$ will be called a \emph{pseudo-dimension function} if $\delta$ is modular, positive, and moreover satisfies $\delta (0_{L}) = 0$ and $\delta (1_{L}) = 1$. A \emph{dimension function} on $L$ is a strictly positive pseudo-dimension function on $L$. For the sake of completeness, we recall that any lattice admitting a strictly positive, modular function is necessarily modular~\cite[I.6, Satz~6.1, p.~46]{MaedaBook}.

\begin{lem}[\cite{BirkhoffBook,MaedaBook}]\label{lemma:birkhoff} Let $L$ be a lattice and let $\delta \colon L \to [0,1]$ be modular and positive. Then \begin{displaymath}
		d_{\delta} \colon \, L \times L \, \longrightarrow \, [0,1], \quad (x,y) \, \longmapsto \, \delta(x\vee y) - \delta(x\wedge y)
	\end{displaymath} is a pseudo-metric on $L$. Furthermore, the following hold.\begin{itemize}
	\item[$(1)$] $\delta$ strictly positive $\ \Longleftrightarrow \ $ $d_{\delta}$ metric on $L$.
	\item[$(2)$] Let $a \in L$. Then, \begin{displaymath}
	\qquad \forall x,y \in L \colon \quad d_{\delta}(a \wedge x,a \wedge y) + d_{\delta}(a \vee x,a \vee y) \, \leq \, d_{\delta}(x,y) .
\end{displaymath} In particular, the maps \begin{displaymath}
	\qquad L \, \longrightarrow \, L, \quad x \, \longmapsto \, a \wedge x, \qquad L \, \longrightarrow \, L, \quad x \, \longmapsto \, a \vee x
\end{displaymath} are $1$-Lipschitz, hence continuous, with respect to $d_{\delta}$. 
	\item[$(3)$] For all $x,y \in L$, \begin{displaymath}
					\qquad d_{\delta}(x,y) \, = \, 2\delta (x\vee y) -\delta(x) -\delta(y) \, = \, \delta(x) +\delta(y)-2\delta(x\wedge y) .
				\end{displaymath}
\end{itemize} \end{lem}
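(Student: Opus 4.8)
The plan is to treat the distance formula $d_\delta(x,y)=\delta(x\vee y)-\delta(x\wedge y)$ head-on, extracting part~(3) first, since the identities there are the workhorse for everything else. From the modular law $\delta(x\vee y)+\delta(x\wedge y)=\delta(x)+\delta(y)$ one solves for $\delta(x\wedge y)$ (resp.\ for $\delta(x\vee y)$) and substitutes into the definition, immediately obtaining $d_\delta(x,y)=2\delta(x\vee y)-\delta(x)-\delta(y)=\delta(x)+\delta(y)-2\delta(x\wedge y)$. With part~(3) in hand, the pseudo-metric axioms are almost free: symmetry holds because $\wedge$ and $\vee$ are commutative, $d_\delta(x,x)=\delta(x)-\delta(x)=0$, and $d_\delta(x,y)\ge 0$ because $x\wedge y\le x\vee y$ and $\delta$ is positive.

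The one genuine computation is the triangle inequality, and this is where I expect the (mild) difficulty to lie. Rather than manipulating joins and meets directly, I would rewrite the target $d_\delta(x,z)\le d_\delta(x,y)+d_\delta(y,z)$ using the second identity of part~(3); after cancelling $\delta(x),\delta(y),\delta(z)$ this reduces to the purely numerical inequality $\delta(x\wedge y)+\delta(y\wedge z)\le\delta(y)+\delta(x\wedge z)$. That inequality follows by applying the modular identity for $\delta$ to the elements $x\wedge y$ and $y\wedge z$: their meet is $x\wedge y\wedge z\le x\wedge z$ and their join is $\le y$, so positivity of $\delta$ upgrades the equality $\delta\bigl((x\wedge y)\vee(y\wedge z)\bigr)+\delta(x\wedge y\wedge z)=\delta(x\wedge y)+\delta(y\wedge z)$ to exactly the desired bound. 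Note that only modularity and positivity of $\delta$ are used here — modularity of the lattice $L$ itself plays no role.

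For part~(1): if $\delta$ is strictly positive and $d_\delta(x,y)=0$, then $\delta(x\wedge y)=\delta(x\vee y)$ on the comparable pair $x\wedge y\le x\vee y$ forces $x\wedge y=x\vee y$, hence $x=y$; conversely, if $d_\delta$ is a metric and $x<y$, then $d_\delta(x,y)=\delta(y)-\delta(x)>0$, which is precisely strict positivity. For part~(2) I would use the elementary lattice inclusions $(a\wedge x)\vee(a\wedge y)\le a\wedge(x\vee y)$ and $a\vee(x\wedge y)\le(a\vee x)\wedge(a\vee y)$, valid in any lattice; together with positivity of $\delta$ these bound $d_\delta(a\wedge x,a\wedge y)$ above by $\delta\bigl(a\wedge(x\vee y)\bigr)-\delta\bigl(a\wedge(x\wedge y)\bigr)$ and $d_\delta(a\vee x,a\vee y)$ above by $\delta\bigl(a\vee(x\vee y)\bigr)-\delta\bigl(a\vee(x\wedge y)\bigr)$. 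Adding these and subtracting the two instances of the modular identity for $\delta$ applied to the pairs $\{a,\,x\vee y\}$ and $\{a,\,x\wedge y\}$, the bound collapses exactly to $\delta(x\vee y)-\delta(x\wedge y)=d_\delta(x,y)$, which is the claimed inequality. Discarding one of the two nonnegative summands then shows $x\mapsto a\wedge x$ and $x\mapsto a\vee x$ are $1$-Lipschitz, hence uniformly continuous and in particular continuous with respect to $d_\delta$.
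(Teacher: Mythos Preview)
Your argument is correct throughout: the derivation of part~(3) from modularity of~$\delta$, the reduction of the triangle inequality to $\delta(x\wedge y)+\delta(y\wedge z)\le\delta(y)+\delta(x\wedge z)$ and its verification via modularity of~$\delta$ applied to the pair $(x\wedge y,\,y\wedge z)$, the equivalence in part~(1), and the computation for part~(2) using the lattice inequalities $(a\wedge x)\vee(a\wedge y)\le a\wedge(x\vee y)$ and $a\vee(x\wedge y)\le(a\vee x)\wedge(a\vee y)$ together with two instances of the modular identity for~$\delta$ --- all of this checks out.

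Compared with the paper: the paper only proves part~(3) directly, and does so in exactly the same way you do (substituting the modular identity into the definition of~$d_\delta$); for the pseudo-metric property and parts~(1) and~(2) the paper simply cites Birkhoff and Maeda. Your write-up is therefore a self-contained expansion of what the paper leaves to the references, and your treatment of part~(3) coincides with the paper's.
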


\begin{proof} Except for~(3), these claims are proved in~\cite[V.7, Lemma on p.~76]{BirkhoffBook} and~\cite[I.6, Satz~6.2, p.~46]{MaedaBook}.

(3) For all $x,y \in L$, modularity of $\delta$ implies that \begin{align*}
	d_{\delta}(x,y) \, &= \, \delta(x\vee y) - \delta(x\wedge y) \, = \, \delta(x\vee y) - (\delta(x)+\delta(y)-\delta(x\vee y)) \\
	& = \, 2\delta(x\wedge y)-\delta(x) -\delta(y) , \\
	d_{\delta}(x,y) \, &= \, \delta(x\vee y) - \delta(x\wedge y) \, = \, (\delta(x)+\delta(y)-\delta(x\wedge y)) - \delta (x\wedge y) \\
	& = \, \delta(x) +\delta(y)-2\delta(x\wedge y) . \qedhere
\end{align*} \end{proof}

The following is one of the central results of~\cite{VonNeumannBook}.

\begin{thm}[\cite{VonNeumannBook}]\label{theorem:dimension} Every irreducible continuous geometry possesses a unique dimension function. \end{thm}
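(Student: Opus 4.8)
The plan is to prove existence and uniqueness separately, with uniqueness being the more tractable half. For \emph{uniqueness}, suppose $\delta_{1},\delta_{2}$ are both dimension functions on an irreducible continuous geometry $L$. The key classical input is the theory of perspectivity in continuous geometries: by von Neumann's results (the transitivity of perspectivity in irreducible continuous geometries, \cite[Part~II]{VonNeumannBook}), two elements $x,y\in L$ are perspective, $x\sim y$, if and only if $\delta(x)=\delta(y)$ for \emph{some}, equivalently \emph{every}, dimension function $\delta$; more precisely $x\precsim y$ iff $\delta(x)\le\delta(y)$. I would first record that any dimension function is \emph{perspectivity-invariant} (a modular positive function is constant on perspectivity classes by the parallelogram/transposition identity, since $x\sim z$ forces $\delta(x)=\delta(z)$) and, crucially, that the range of a dimension function on an irreducible continuous geometry is all of $[0,1]$ (this is the continuity of the geometry: one can bisect any element up to perspectivity, so the image is a dense additive sub-structure, and completeness plus continuity of $\delta$ with respect to the order forces the image to be closed). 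Then I would argue that $\delta_{2}\circ\delta_{1}^{-1}$, suitably interpreted, is a monotone additive self-map of $[0,1]$ fixing $0$ and $1$: using the half-element trick, $\delta_{1}(x)=\tfrac12$ forces $\delta_{2}(x)=\tfrac12$, hence by induction $\delta_{1}(x)=k/2^{n}\Rightarrow\delta_{2}(x)=k/2^{n}$, and density of dyadics together with monotonicity gives $\delta_{1}=\delta_{2}$.

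For \emph{existence}, the standard route is von Neumann's construction via the reduction theory of independent perspective decompositions. One shows that in an irreducible continuous geometry of order $n$ (for each $n$, using the coordinatization / frame machinery) one can define $\delta(x)$ as a limit of normalized ``counting'' measures: partition $1$ into $2^{k}$ independent pairwise-perspective pieces, and let $\delta(x)$ be the limit as $k\to\infty$ of the maximal number of such pieces that fit perspectively below $x$, divided by $2^{k}$. Continuity of the lattice guarantees this limit exists and is independent of the chosen frame; modularity and positivity follow from the independence calculus; strict positivity follows because $x<y$ in a relatively complemented lattice (Lemma~\ref{lemma:relatively.complemented}) yields a nonzero element in the ``gap'' $[0, z]$ with $x\vee z=y$, $x\wedge z = 0$, which contains some nonzero perspective copy of a small frame piece. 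I would invoke \cite{VonNeumannBook} (and the exposition in \cite{MaedaBook}) freely here rather than reconstruct the frame-theoretic details, since the theorem as stated in the excerpt is explicitly attributed to von Neumann and the surrounding text signals it is being quoted.

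The main obstacle is the existence half: the honest construction genuinely requires von Neumann's coordinatization theorem and the theory of \emph{normalized frames} $(a_{1},\ldots,a_{n})$ in a complemented modular lattice of order $n$, including the delicate verification that the candidate $\delta$ is well-defined (frame-independent) and additive — this is the technical heart of \cite{VonNeumannBook} and cannot be shortcut. If the intent is a self-contained proof, I would instead organize it as: (i) irreducibility $\Rightarrow$ perspectivity is a transitive relation and $\precsim$ is a total preorder (cite \cite{VonNeumannBook}); (ii) the perspectivity-class poset is order-isomorphic to an initial segment of $[0,1]$ via \emph{any} finitely-additive dimension-like invariant; (iii) continuity of $L$ upgrades this to all of $[0,1]$ and forces the invariant to be unique and genuinely modular. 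In the write-up I would present uniqueness in full and cite \cite[Part~II, Chapters~V--VII]{VonNeumannBook} for existence, mirroring how Lemma~\ref{lemma:relatively.complemented} and Lemma~\ref{lemma:birkhoff} are handled in the excerpt.
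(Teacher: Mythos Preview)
The paper's own proof is a bare citation: existence is attributed to \cite[I.VI, Theorem~6.9]{VonNeumannBook} and uniqueness to \cite[I.VII, Corollary~1]{VonNeumannBook} (with parallel references to \cite{MaedaBook}). Your closing instinct---to simply cite von Neumann for both halves, in the spirit of how Lemma~\ref{lemma:relatively.complemented} and Lemma~\ref{lemma:birkhoff} are handled---is therefore exactly what the paper does, and is the right call. Note, incidentally, that the relevant chapters are in Part~I of \cite{VonNeumannBook}, not Part~II.

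Your uniqueness sketch, however, contains a genuine error that you should be aware of. You assert that the range of a dimension function on an irreducible continuous geometry is all of $[0,1]$, and then run a dyadic bisection-plus-density argument. But the range need \emph{not} be $[0,1]$: as recorded in Remark~\ref{remark:range}, the range is either $[0,1]$ or the finite set $\{k/n \mid k \in \{0,\ldots,n\}\}$ for some $n \in \N_{>0}$. Finite-dimensional projective geometries over division rings are irreducible continuous geometries of the latter type. In that discrete case your ``half-element trick'' may be vacuous (e.g.\ when $n=3$ there is no $x$ with $\delta_{1}(x)=1/2$), and the density-of-dyadics step has nothing to bite on. Uniqueness in the discrete case is of course easy---all atoms are perspective, so any dimension function assigns each the value $1/n$---but your argument as written does not cover it. If you wanted a self-contained uniqueness proof you would need to split into the two cases of Remark~\ref{remark:range}, or (better) argue directly from the fact that $\precsim$ is a total preorder determined by the lattice alone, so that any two dimension functions induce the same order-embedding of the perspectivity quotient into $[0,1]$ and hence coincide.
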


\begin{proof} Existence is due to~\cite[I.VI, Theorem~6.9, p.~52]{VonNeumannBook}, while uniqueness is due to~\cite[I.VII, Corollary~1 on p.~60]{VonNeumannBook}. (Alternatively, see~\cite[V.2, Satz~2.1]{MaedaBook} for the former assertion, and~\cite[V.2, Satz~2.3, p.~120]{MaedaBook} for the latter.) \end{proof}

\begin{definition}\label{definition:dimension.function} Let $L$ be an irreducible continuous geometry. Let us define $\delta_{L} \colon L \to [0,1]$ to be the unique dimension function on $L$ and let \begin{displaymath}
	d_{L} \defeq d_{\delta_{L}} \colon \, L \times L \, \longrightarrow \, [0,1], \quad (x,y) \, \longmapsto \, \delta_{L}(x\vee y) - \delta_{L}(x\wedge y)
\end{displaymath} denote the metric associated to $\delta_{L}$ via Lemma~\ref{lemma:birkhoff}. We call $L$ \emph{discrete} if the topology on $L$ generated by $d_{L}$ is discrete. \end{definition}

\begin{remark}[\cite{VonNeumannBook}, I.VII, Theorem~7.3, p.~58]\label{remark:range}
Let $L$ be any irreducible continuous geometry and let us consider $D \defeq \delta_{L}(L)$. As established by von Neumann~\cite[I.VII, Lemma~7.3, p.~56]{VonNeumannBook},
the following hold: \begin{itemize}
	\item[$(1)$] $0,1 \in D$.
	\item[$(2)$] If $s,t \in D$ and $s \leq t$, then $t-s \in D$.
	\item[$(3)$] $\sup C \in D$ for every countable subset $C \subseteq D$. 
\end{itemize} Due to an abstract argument~\cite[I.VII, Lemma~7.4, p.~57]{VonNeumannBook}, these properties imply that either $D = [0,1]$, or there exists $n \in \N_{>0}$ with \begin{displaymath}
	D \! \left. \, = \, \left\{ \tfrac{k}{n} \, \right\vert k \in \{ 0,\ldots,n \} \right\} .
\end{displaymath} It follows that $L$ is non-discrete if and only if $\delta_{L}(L) = [0,1]$. \end{remark}

\begin{remark}\label{remark:perspective} Let $L$ be a bounded lattice. \begin{itemize}
	\item[$(1)$] Let $\delta \colon L \to [0,1]$ be modular. Then \begin{displaymath}
		\qquad \forall x,y \in L \colon \quad x \sim y \ \, \Longrightarrow \ \, \delta (x) = \delta (y) .
	\end{displaymath} Indeed, if $x,y \in L$ are perspective, then there exists $z \in L$ such that $x \wedge z = y \wedge z = 0$ and $x \vee z = y \vee z = 1$, whence \begin{displaymath}
		\qquad \delta(x) \, = \, \delta(x\vee z) + \delta(x\wedge z) - \delta(z) \, = \, \delta(y\vee z) + \delta(y\wedge z) - \delta(z) \, = \, \delta (y) .
\end{displaymath} 
	\item[$(2)$] If $\delta \colon L \to [0,1]$ is modular and positive, then~(1) entails that \begin{displaymath}
		\qquad \forall x,y \in L \colon \quad x \precsim y \ \, \Longrightarrow \ \, \delta (x) \leq \delta (y) .
	\end{displaymath}
	\item[$(3)$] If $L$ is an irreducible continuous geometry, then \begin{displaymath}
		\qquad \forall x,y \in L \colon \quad x \precsim y \ \, \Longleftrightarrow \ \, \delta_{L} (x) \leq \delta_{L} (y) ,
	\end{displaymath} by a result of von Neumann~\cite[I.VI, Theorem~6.9(iii)'', p.~52]{VonNeumannBook} (see also~\cite[V.2, Satz~2.1(2°), p.~118]{MaedaBook}).
\end{itemize} \end{remark}

The following continuity property of dimension functions of irreducible continuous geometries will be relevant for the proof of Theorem~\ref{theorem:maximal.flags}.

\begin{prop}[\cite{MaedaBook}]\label{proposition:dimension.continuity} Let $L$ be an irreducible continuous geometry. If $C$ is a chain in $L$, then \begin{displaymath}
	\delta_{L}\left(\bigvee C\right) \, = \, \sup \delta_{L}(C), \qquad \delta_{L}\left(\bigwedge C\right) \, = \, \inf \delta_{L}(C) .
\end{displaymath} \end{prop}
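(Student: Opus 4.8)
The plan is to reduce the statement to the continuity axiom for the lattice $L$ combined with the two already-established facts about the dimension function: that $\delta_L$ is monotone and modular (Theorem~\ref{theorem:dimension}, Remark~\ref{remark:perspective}), and the order-continuity of the lattice operations recorded in Lemma~\ref{lemma:birkhoff}(2) (the maps $x\mapsto a\wedge x$ and $x\mapsto a\vee x$ are $1$-Lipschitz for $d_L$, hence $d_L$-continuous). By symmetry it suffices to prove $\delta_L\bigl(\bigvee C\bigr)=\sup\delta_L(C)$; the statement for $\bigwedge C$ follows by applying this to the dual lattice (which is again an irreducible continuous geometry, with dimension function $1-\delta_L$), or by a parallel argument. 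Set $x_0\defeq\bigvee C$ and $s\defeq\sup\delta_L(C)$. Since every $y\in C$ satisfies $y\leq x_0$, positivity of $\delta_L$ gives $\delta_L(y)\leq\delta_L(x_0)$, hence $s\leq\delta_L(x_0)$; the content is the reverse inequality $\delta_L(x_0)\leq s$.

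First I would handle the easy case where $C$ has a maximum $y_0$: then $x_0=y_0\in C$ and there is nothing to prove. So assume $C$ has no greatest element. The idea is to extract from $C$ a countable cofinal chain whose supremum is still $x_0$ and on which $\delta_L$ tends to $s$. Concretely, choose $y_n\in C$ with $\delta_L(y_n)\nearrow s$; replacing $y_n$ by $y_1\vee\cdots\vee y_n$ (which lies in $C$ since $C$ is a chain) we may assume $y_1\leq y_2\leq\cdots$ with $\delta_L(y_n)\to s$. Put $z\defeq\bigvee_n y_n$. I claim $z=x_0$: indeed for any $y\in C$, either $y\leq y_n$ for some $n$, in which case $y\leq z$, or $y_n\leq y$ for all $n$; but in the latter case $y\in C$ with $\delta_L(y)\geq\delta_L(y_n)$ for all $n$, so $\delta_L(y)\geq s\geq\delta_L(y)$, forcing $\delta_L(y)=s$, and then since $C$ is linearly ordered and has no maximum one finds $y'\in C$ with $y<y'$, giving $\delta_L(y')>\delta_L(y)=s$ by... — wait, this needs care, since $\delta_L$ need only be positive, not strictly positive on $C$. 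The cleaner route: by linearity of the order on $C$, the set $\{y\in C: y\not\leq z\}$ is an up-set in $C$; if it is nonempty, pick $y$ in it, and then $y_n\leq y$ for all $n$ (as $y$ and $y_n$ are comparable and $y\not\leq z\geq y_n$), whence $y\vee y_n=y$ and $\delta_L(y)\geq\delta_L(y_n)\to s$, so $\delta_L(y)=s$; applying the same to any $y'\in C$ above such a $y$ shows $\delta_L$ is constantly $s$ on a cofinal sub-chain $C'$ of $C$, and then $\bigvee C=\bigvee C'$, so it suffices to treat $C'$ — i.e. we may assume $\delta_L$ is constant on a countable cofinal chain, making the conclusion immediate. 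In the complementary case the up-set is empty, i.e. $C\subseteq\{y:y\leq z\}$, so $x_0=\bigvee C\leq z\leq x_0$ and $z=x_0$.

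With $x_0=z=\bigvee_n y_n$ for an increasing sequence with $\delta_L(y_n)\to s$, the endgame uses the continuity axiom of $L$: for the (countable) chain $\{y_n\}$ and the element $x_0$ itself, continuity of $L$ gives $x_0\wedge\bigvee_n y_n=\bigvee_n(x_0\wedge y_n)$, which is the trivial identity $x_0=x_0$. The genuinely useful input is instead the metric continuity from Lemma~\ref{lemma:birkhoff}: $d_L(y_n,x_0)=\delta_L(y_n\vee x_0)-\delta_L(y_n\wedge x_0)=\delta_L(x_0)-\delta_L(y_n)=\delta_L(x_0)-\delta_L(y_n)$, so it is exactly the quantity $\delta_L(x_0)-s$ in the limit; to show this is $0$ I would show $y_n\to x_0$ in $(L,d_L)$ directly from the continuity of the lattice. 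For this, note $d_L(y_n,x_0)=\delta_L(x_0)-\delta_L(y_n)$ is nonincreasing with limit $\delta_L(x_0)-s\geq0$; if this limit is a positive number $\varepsilon$, then by the continuity axiom applied to the decreasing chain $\{x_0': x_0'\text{ a complement-type element}\}$... — more simply, von Neumann's continuity axiom yields $\bigwedge_n (y_n\vee w)=(\bigvee_n y_n)\vee\ldots$ type identities that, combined with modularity of $\delta_L$ and Proposition-level continuity of $\delta_L$ on the specific chains built from complements, pin down $\delta_L(x_0)=\lim\delta_L(y_n)=s$. I expect the cleanest execution to simply invoke that $d_L$ is a complete metric and that $\bigvee$ of an increasing sequence is its $d_L$-limit (a standard consequence of the continuity axiom, as in \cite[I.VII]{VonNeumannBook} or \cite[V.7]{BirkhoffBook}), whence $\delta_L$, being $d_L$-continuous (it is $1$-Lipschitz by Lemma~\ref{lemma:birkhoff}(3): $|\delta_L(x)-\delta_L(y)|\leq d_L(x,y)$), satisfies $\delta_L(x_0)=\lim_n\delta_L(y_n)=s$, completing the proof.

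The main obstacle is precisely the middle step: justifying that $\bigvee C$ is genuinely approximated by elements of $C$ in the metric $d_L$, i.e. that the chain's supremum is its metric limit along a cofinal sequence. Reducing a general chain $C$ to a countable cofinal sub-chain on which $\delta_L$ converges to $\sup\delta_L(C)$ requires the monotone–but–not–strictly–monotone behaviour of $\delta_L$ on $C$ to be handled carefully (the "constant tail" dichotomy above), and the passage from countable approximation to the actual supremum is where von Neumann's continuity axiom for $L$ does the real work. Everything else — monotonicity, the modular identity $d_L(x,y)=\delta_L(x\vee y)-\delta_L(x\wedge y)$, and $1$-Lipschitz continuity of $\delta_L$ — is already available in the excerpt.
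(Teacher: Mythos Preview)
The paper does not supply its own proof; it simply cites Maeda's monograph, where this continuity property is established as part of the foundational construction of the dimension function on a continuous geometry. Your proposal attempts a self-contained argument from the facts already recorded in the excerpt, and this is where it breaks down.

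The central step is circular. After reducing to an increasing sequence $y_1\leq y_2\leq\cdots$ with $x_0=\bigvee_n y_n$ and $\delta_L(y_n)\to s$, you need $\delta_L(y_n)\to\delta_L(x_0)$. But since $y_n\leq x_0$, one has $d_L(y_n,x_0)=\delta_L(x_0)-\delta_L(y_n)$, so the assertion ``$y_n\to x_0$ in $d_L$'' is \emph{literally identical} to ``$\delta_L(y_n)\to\delta_L(x_0)$''. Invoking ``$\bigvee$ of an increasing sequence is its $d_L$-limit'' as a known fact is thus invoking the countable case of the very proposition at hand. The escape via completeness of $(L,d_L)$ does not break the circle either: the standard proofs that $(L,d_L)$ is complete (in von Neumann and in Maeda) themselves rely on the continuity of $\delta_L$ along chains, so you would be importing the result through the back door. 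The lattice continuity axiom $x\wedge\bigvee C=\bigvee\{x\wedge y:y\in C\}$ by itself does not yield metric convergence; to get that, one must go back into the perspectivity-based construction of $\delta_L$, which is precisely what Maeda's Satz~1.8 and Satz~2.1 do and what the paper is citing. A minor side remark: your worry that $\delta_L$ might fail to be strictly positive on $C$ is unfounded --- by definition a dimension function is strictly positive, and Theorem~\ref{theorem:dimension} asserts $\delta_L$ is one --- so the ``constant tail'' dichotomy you wrestle with never arises.
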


\begin{proof} This fact is stated in~\cite[V.2, Satz~2.1, p.~118]{MaedaBook} and deduced from a more general result~\cite[V.1, Satz~1.8, p.~117]{MaedaBook}. \end{proof}

We arrive at the central observation of this section.

\begin{thm}\label{theorem:maximal.flags} Let $L$ be an irreducible continuous geometry. A chain $C \subseteq L$ is maximal if and only if $\delta_{L}(C) = \delta_{L}(L)$. \end{thm}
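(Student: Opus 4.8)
The plan is to prove both implications, with the forward direction being the straightforward one and the reverse direction requiring a maximality/transfinite argument.

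\medskip

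\textbf{Necessity.} First I would show that if $C$ is a maximal chain then $\delta_L(C) = \delta_L(L)$. Since $\delta_L(C) \subseteq \delta_L(L)$ is automatic, I only need surjectivity. By Remark~\ref{remark:maximal.chains.complete}, $C$ contains $0_L$ and $1_L$ (as $\bigvee \emptyset$ and $\bigwedge \emptyset$, or simply by maximality), so $0,1 \in \delta_L(C)$. Now fix $t \in \delta_L(L) = [0,1]$ (using Remark~\ref{remark:range}, the non-discrete case; in the discrete case one argues identically with the finite value set, or notes the statement is still to be proved for all irreducible continuous geometries so I should keep $\delta_L(L)$ abstract). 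Consider $a \defeq \bigvee\{x \in C \mid \delta_L(x) \le t\}$ and $b \defeq \bigwedge\{x \in C \mid \delta_L(x) \ge t\}$. Both belong to $C$ by Remark~\ref{remark:maximal.chains.complete}, and by Proposition~\ref{proposition:dimension.continuity} we get $\delta_L(a) \le t \le \delta_L(b)$ and in fact $\delta_L(a) = \sup\{\delta_L(x) : x \in C, \delta_L(x) \le t\}$, similarly for $b$. Since $C$ is a chain we have $a \le b$. If $\delta_L(a) < t$, then $a < b$, and by relative complementation (Lemma~\ref{lemma:relatively.complemented}) together with the intermediate-value property of $\delta_L$ on the interval $[a,b]$ — here I would invoke that any complemented modular lattice interval is itself such, and use Remark~\ref{remark:perspective}(3) or a direct bisection using continuity — one produces $c$ with $a \le c \le b$ and $t \le \delta_L(c) < \delta_L(b)$ strictly between, contradicting that $b$ was the infimum; more carefully, one finds $c$ with $a < c < b$ and $\delta_L(a) < \delta_L(c)$, and then $C \cup \{c\}$ is a strictly larger chain unless $c \in C$, contradicting maximality. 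Hence $\delta_L(a) = t$ and $t \in \delta_L(C)$.

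\medskip

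\textbf{Sufficiency.} Conversely, suppose $C$ is a chain with $\delta_L(C) = \delta_L(L)$; I want $C$ maximal. By the Hausdorff maximal principle, $C$ extends to a maximal chain $C' \supseteq C$. It suffices to show $C' = C$. Take any $z \in C'$. Since $\delta_L(C) = \delta_L(L) \ni \delta_L(z)$, there is $x \in C \subseteq C'$ with $\delta_L(x) = \delta_L(z)$. But $C'$ is a chain, so $x$ and $z$ are comparable, say $x \le z$ (the other case is symmetric); then $d_L(x,z) = \delta_L(x \vee z) - \delta_L(x \wedge z) = \delta_L(z) - \delta_L(x) = 0$, and since $d_L$ is a genuine metric (Lemma~\ref{lemma:birkhoff}(1), as $\delta_L$ is strictly positive by Theorem~\ref{theorem:dimension}), we conclude $x = z$. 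Thus $z \in C$, giving $C' = C$, so $C$ is maximal.

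\medskip

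\textbf{Main obstacle.} The delicate point is the intermediate-value step in the necessity direction: producing an element $c$ strictly between $a$ and $b$ with $\delta_L(a) < \delta_L(c) < \delta_L(b)$ when $a < b$. The clean way is to note that the interval $[a,b] = \{x \in L : a \le x \le b\}$ in an irreducible continuous geometry is again a complemented modular lattice which is moreover complete and continuous (so an interval is again a continuous geometry, or at least carries a strictly positive modular function), and then use that its dimension-type function $\delta_L|_{[a,b]}$ (suitably renormalized) takes a whole interval of values, again by Remark~\ref{remark:range} applied to $[a,b]$; in particular it takes values strictly between $\delta_L(a)$ and $\delta_L(b)$. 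One should be slightly careful about whether $[a,b]$ is irreducible — if it decomposes, one passes to an irreducible direct factor in which the corresponding coordinate of $\delta_L$ still varies. Alternatively, and perhaps more economically, one bypasses intervals entirely: pick any $w \in L$ with $\delta_L(w) = t$ (possible since $t \in \delta_L(L)$), set $c \defeq a \vee (w \wedge b)$; then $a \le c \le b$, and a modularity computation together with Remark~\ref{remark:perspective} shows $\delta_L(a) \le \delta_L(c)$ with the gap controlled, after which a short argument forces $\delta_L(c) = t$ exactly, or else one iterates. I would present whichever of these is shortest once the precise lemmas about intervals of continuous geometries from the cited literature are pinned down.
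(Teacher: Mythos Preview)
Your sufficiency direction is correct and is exactly the paper's argument (the paper phrases it as ``let $C'\supseteq C$ be any chain'' rather than invoking Hausdorff, but the content is identical: comparability plus strict positivity of $\delta_L$ forces any element of $C'$ with a matching dimension value to coincide with the witness in $C$).

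For necessity, your outline---form $a=\bigvee\{x\in C:\delta_L(x)\le t\}$ and $b=\bigwedge\{x\in C:\delta_L(x)\ge t\}$, then fill the gap---is also the paper's, and you correctly identify the crux as producing an element in the interval $[a,b]$ with a prescribed intermediate dimension. Where you go astray is in the ``main obstacle'' paragraph. Your approach~(2), setting $c=a\vee(w\wedge b)$ for an arbitrary $w$ with $\delta_L(w)=t$, does not work: nothing prevents $w\wedge b\le a$ (or even $w\wedge b=0$), in which case $c=a$ and no modularity computation or iteration will rescue this without importing exactly the perspectivity input you are trying to avoid. Your approach~(1), passing to the interval $[a,b]$ as a continuous geometry, is viable but unnecessarily heavy and, as you note, requires handling irreducibility of intervals.

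The paper executes precisely the idea you gesture at when you mention Remark~\ref{remark:perspective}(3), and it is worth committing to: take a relative complement $y$ of $a$ in $[0,b]$, so $a\vee y=b$ and $a\wedge y=0$, hence $\delta_L(y)=\delta_L(b)-\delta_L(a)$. By Remark~\ref{remark:range}(2) there is $z\in L$ with $\delta_L(z)=t-\delta_L(a)\le\delta_L(y)$, and then Remark~\ref{remark:perspective}(3) gives $z'\le y$ with $z'\sim z$. Setting $c=a\vee z'$ yields $a\le c\le b$ and, since $a\wedge z'\le a\wedge y=0$, one gets $\delta_L(c)=\delta_L(a)+\delta_L(z')=t$ on the nose. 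Maximality of $C$ then forces $c\in C$, so $t\in\delta_L(C)$. This direct construction also spares you the minor case split your contradiction framing needs (you must separately note that if $\delta_L(a)<t$ but $\delta_L(b)=t$ then $b\in C$ already witnesses $t$).
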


\begin{proof} ($\Longleftarrow$) Let $C$ be a chain in $L$ with $\delta_{L}(C) = \delta_{L}(L)$. Suppose that $C'$ is a chain in $L$ with $C \subseteq C'$. By assumption, for every $x \in C'$ there exists $y \in C$ with $\delta_{L}(y) = \delta_{L}(x)$, whence $\delta_{L}$ being strictly positive and $C'$ being a chain in $L$ imply that $x=y$. Thus, $C = C'$. This shows maximality of $C$.

($\Longrightarrow$) Suppose that $C$ is a maximal chain in $L$. To prove that $\delta_{L}(C) = \delta_{L}(L)$, let $a \in L$. Define \begin{displaymath}
	C_{0} \, \defeq \, \{ x \in C \mid \delta_{L}(x) \leq \delta_{L}(a) \}, \qquad C_{1} \, \defeq \, \{ x \in C \mid \delta_{L}(x) \geq \delta_{L}(a) \} .
\end{displaymath} Since $C$ is a maximal chain in $L$, Remark~\ref{remark:maximal.chains.complete} asserts that both $x_{0} \defeq \bigvee C_{0}$ and $x_{1} \defeq \bigwedge C_{1}$ belong to $C$. By $C_{0}$ and $C_{1}$ being chains in $L$, \begin{displaymath}
	\delta_{L}(x_{0}) \, \stackrel{\ref{proposition:dimension.continuity}}{=} \, \sup\nolimits_{x \in C_{0}} \delta_{L}(x) \, \leq \, \delta_{L}(a) \, \leq \, \inf\nolimits_{x \in C_{1}} \delta_{L}(x) \, \stackrel{\ref{proposition:dimension.continuity}}{=} \, \delta_{L}(x_{1}) .
\end{displaymath} Evidently, $x_{0} \leq x_{1}$. As $L$ is relatively complemented by Lemma~\ref{lemma:relatively.complemented}, there exists $y \in L$ such that $x_{0} \vee y = x_{1}$ and $x_{0} \wedge y = 0$. Thanks to Remark~\ref{remark:range}(2), we find $z \in L$ with $\delta_{L}(z) = \delta_{L}(a)-\delta_{L}(x_{0})$. Since \begin{displaymath}
	\delta_{L}(y) \, = \, \delta_{L}(x_{1}) - \delta_{L}(x_{0}) \, \geq \, \delta_{L}(a) - \delta_{L}(x_{0}) \, = \, \delta_{L}(z) ,
\end{displaymath} Remark~\ref{remark:perspective}(3) asserts that $z \precsim y$, i.e., there exists $z' \in L$ with $z \sim z' \leq y$. Define $x' \defeq x_{0} \vee z'$. Then $x_{0} \leq x_{0} \vee z' = x' =  x_{0} \vee z' \leq x_{0} \vee y = x_{1}$, which entails that $C \cup \{ x' \}$ is a chain in $L$. Hence, $C=C\cup \{ x'\}$ by maximality of the chain $C$, that is, $x' \in C$. Since $x_{0} \wedge z' \leq x_{0} \wedge y = 0$ and therefore $x_{0} \wedge z' = 0$, we conclude that \begin{displaymath}
	\delta_{L}(x') \, = \, \delta_{L}(x_{0}) + \delta_{L}(z') \, \stackrel{\ref{remark:perspective}(1)}{=} \, \delta_{L}(x_{0}) + \delta_{L}(z) \, = \, \delta_{L}(a) .
\end{displaymath} Consequently, $\delta_{L}(a)=\delta_{L}(x') \in \delta_{L}(C)$ as desired. \end{proof}

\begin{cor}\label{corollary:maximal.flags} Let $L$ be a non-discrete irreducible continuous geometry. A chain $C \subseteq L$ is maximal if and only if $\delta_{L}(C) = [0,1]$. \end{cor}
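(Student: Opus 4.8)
The plan is to derive this statement directly from Theorem~\ref{theorem:maximal.flags} together with the description of the range of the dimension function recorded in Remark~\ref{remark:range}. There is essentially nothing to do beyond combining the two: Theorem~\ref{theorem:maximal.flags} characterizes maximal chains $C \subseteq L$ by the equation $\delta_{L}(C) = \delta_{L}(L)$, valid for every irreducible continuous geometry, while the last sentence of Remark~\ref{remark:range} asserts that $L$ is non-discrete precisely when $\delta_{L}(L) = [0,1]$.

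So the argument I would write is a single line of substitution. First I would invoke Remark~\ref{remark:range} to replace $\delta_{L}(L)$ by the interval $[0,1]$, using the standing hypothesis that $L$ is non-discrete. Then I would apply Theorem~\ref{theorem:maximal.flags} to conclude that, under this hypothesis, a chain $C \subseteq L$ is maximal if and only if $\delta_{L}(C) = \delta_{L}(L) = [0,1]$, which is exactly the claimed equivalence. I expect no obstacle here at all; the only thing to be careful about is that this corollary genuinely requires the non-discreteness assumption (without it, maximality of $C$ only forces $\delta_{L}(C)$ to equal the finite set $\{k/n \mid 0 \le k \le n\}$ from Remark~\ref{remark:range}), and this is precisely the reason the hypothesis is stated.
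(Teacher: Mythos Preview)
Your proposal is correct and matches the paper's own proof essentially verbatim: the paper simply writes ``This follows by Theorem~\ref{theorem:maximal.flags} and Remark~\ref{remark:range}.'' Your additional remark about why non-discreteness is needed is accurate and appropriate commentary, but the core argument is identical.
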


\begin{proof} This follows by Theorem~\ref{theorem:maximal.flags} and Remark~\ref{remark:range}. \end{proof}

\section{Continuous rings and their maximal nests}\label{section:continuous.rings}

This section revolves around regular and continuous rings, rank functions, and direct finiteness. In addition to recollecting some fundamental results by von Neumann~\cite{VonNeumannBook} (see also~\cite{MaedaBook,GoodearlBook}), our main objective is to provide a description of maximal chains in an irreducible continuous geometry in terms of maximal nests in the underlying ring (Theorem~\ref{theorem:maximal.nests}).

Before proceeding to regular rings, we briefly recall some general facts about idempotent elements in semigroups. To this end, let $S$ be a semigroup and let $\Ed(S) \defeq \{ e \in S \mid ee = e \}$. Then the relation given by \begin{displaymath}
	e \leq f \quad : \Longleftrightarrow \quad ef = fe = e \qquad (e,f \in \Ed(S))
\end{displaymath} constitutes a partial order on $\Ed(S)$.

\begin{lem}\label{lemma:topological.semigroups} Let $S$ be a Hausdorff topological semigroup. The following hold. \begin{itemize}
	\item[$(1)$] $\Ed(S)$ is closed in $S$.
	\item[$(2)$] If $E$ is a chain in $(\Ed(S),\leq)$, then so is $\overline{E}$.
\end{itemize} \end{lem}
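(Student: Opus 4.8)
The plan is to obtain both parts from the single elementary fact that the equalizer $\{s \in S \mid \varphi(s) = \psi(s)\}$ of two continuous maps $\varphi,\psi\colon S \to S$ is closed whenever $S$ is Hausdorff. For part~(1): the squaring map $\sigma\colon S \to S$, $s \mapsto ss$, is continuous, being the composition of the diagonal $S \to S \times S$ with the jointly continuous multiplication $S \times S \to S$; since $\Ed(S)$ is precisely the equalizer of $\sigma$ and $\id_{S}$ and $S$ is Hausdorff, $\Ed(S)$ is closed. For part~(2), the first observation is that by part~(1) we have $\overline{E} \subseteq \overline{\Ed(S)} = \Ed(S)$, so every member of $\overline{E}$ is idempotent and the partial order $\leq$ is defined on $\overline{E}$; it remains to prove that any two elements of $\overline{E}$ are comparable. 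Here the relevant closed sets will be, for a fixed idempotent $a$, the sets $\{t \in S \mid ta = at = t\}$ and $\{t \in S \mid ta = at = a\}$, which are finite intersections of equalizers of the continuous translations $t \mapsto ta$, $t \mapsto at$ with $t \mapsto t$ and $t \mapsto a$.

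I would then run a two-step bootstrap. Step~1: fix $a \in E$. Because $E$ is a chain, $E = E_{\leq a} \cup E_{\geq a}$ with $E_{\leq a} = \{s \in E \mid s \leq a\}$ and $E_{\geq a} = \{s \in E \mid a \leq s\}$. Every $s \in E_{\leq a}$ satisfies $sa = as = s$, so $\overline{E_{\leq a}} \subseteq \{t \mid ta = at = t\}$; likewise $\overline{E_{\geq a}} \subseteq \{t \mid ta = at = a\}$. Since closure commutes with finite unions, $\overline{E} = \overline{E_{\leq a}} \cup \overline{E_{\geq a}}$, so every $t \in \overline{E}$ satisfies either $ta = at = t$ (i.e.\ $t \leq a$) or $ta = at = a$ (i.e.\ $a \leq t$); in particular every element of $\overline{E}$ is comparable with every element of $E$.

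Step~2: now fix $e \in \overline{E}$. By Step~1, $E = A \cup B$ with $A = \{a \in E \mid a \leq e\}$ and $B = \{a \in E \mid e \leq a\}$. Every $a \in A$ satisfies $ae = ea = a$, so $\overline{A} \subseteq \{f \mid fe = ef = f\}$; every $a \in B$ satisfies $ae = ea = e$, so $\overline{B} \subseteq \{f \mid fe = ef = e\}$. Since $\overline{E} = \overline{A} \cup \overline{B}$, every $f \in \overline{E}$ satisfies $f \leq e$ or $e \leq f$. As $e$ was an arbitrary element of $\overline{E}$, the set $\overline{E}$ is a chain. (Note that these comparability relations already include the identities $ef = fe$, so no separate argument for commutativity of $\overline{E}$ is needed.)

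The one genuine subtlety — and the reason a naive limit argument fails — is that for chain elements the product $e_{\alpha}f_{\beta}$ equals whichever of $e_{\alpha}, f_{\beta}$ is the smaller, and that choice depends on the pair, so it cannot be tracked through an arbitrary net limit. The device above sidesteps this by splitting the chain relative to one fixed element first, thereby fixing the branch before any closure is taken. Beyond that, the write-up is routine; the only points needing a line of care are that closure distributes over the finite unions $E = E_{\leq a} \cup E_{\geq a}$ and $E = A \cup B$, and that the sets $\{t \mid ta = at = t\}$ and $\{t \mid ta = at = a\}$ are closed, which uses both the Hausdorff hypothesis and joint continuity of the multiplication.
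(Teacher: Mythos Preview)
Your proof is correct. For part~(1) it is essentially identical to the paper's. For part~(2), however, the paper takes a more direct route: it observes that the partial order $\leq$ itself is a closed subset of $S \times S$, namely $(\Ed(S)\times\Ed(S))\cap\psi^{-1}(R_{1})$ where $\psi\colon S\times S\to S\times S\times S$, $(s,t)\mapsto(s,st,ts)$ and $R_{1}$ is the triple diagonal; hence $\leq\cup\geq$ is closed, and then $\overline{E}\times\overline{E}=\overline{E\times E}\subseteq\overline{\leq\cup\geq}=\leq\cup\geq$ finishes in one line. Your two-step bootstrap reaches the same conclusion but works entirely inside $S$ rather than $S\times S$, using only equalizers in $S$ and the fact that closure distributes over finite unions. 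The paper's argument is shorter and incidentally proves the stronger statement that $\leq$ is a closed relation on $\Ed(S)$; your argument is more hands-on and avoids appealing to the product identity $\overline{E\times E}=\overline{E}\times\overline{E}$.
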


\begin{proof} Since $S$ is a Hausdorff space, $R_{0} \defeq \{ (s,s) \mid s \in S\}$ is closed in $S \times S$ and $R_{1} \defeq \{ (s,s,s) \mid s \in S\}$ is closed in $S \times S \times S$. As $\phi \colon S \to S \times S, \, t \mapsto (t,tt)$ and \begin{displaymath}
	\psi \colon \, S\times S \, \longrightarrow \, S \times S \times S, \quad (s,t) \, \longmapsto \, (s,st,ts) 
\end{displaymath} are continuous maps, thus $\Ed(S) = \phi^{-1}(R_{0})$ is closed in $S$ and, moreover, \begin{displaymath}
	{\leq} \, = \, (\Ed(S) \times \Ed(S)) \cap \psi^{-1}(R_{1})
\end{displaymath} is closed in $S \times S$. Since $S \times S \to S \times S, \, (s,t) \mapsto (t,s)$ is a homeomorphism, the latter entails that $\geq$ is closed in $S \times S$ as well. Consequently, if $E$ is a chain in $(\Ed(S),\leq)$, then \begin{displaymath}
	\overline{E} \times \overline{E} \, = \, \overline{E \times E} \, \subseteq \, \overline{{\leq} \cup {\geq}} \, = \, {\leq} \cup {\geq} ,
\end{displaymath} so that $\overline{E}$ is a chain in $(\Ed(S),\leq)$, too. \end{proof}

Let $R$ be a ring. Adopting the notation above with reference to the multiplicative semigroup of $R$, let us consider the partially ordered set $(\Ed(R),{\leq})$. Furthermore, two elements $e,f \in \Ed(R)$ will be called \emph{orthogonal} and we will write $e \perp f$ if $ef = fe = 0$.

\begin{remark}\label{remark:idempotent.difference} Let $R$ be a ring. If $e,f \in \Ed(R)$ and $f \leq e$, then $e-f \in \Ed(R)$ and $f \perp (e-f) \leq e$. \end{remark}

Given a unital ring $R$, we consider the set $\lat(R) \defeq \{ aR \mid a \in R \}$, partially ordered by inclusion, the \emph{center} \begin{displaymath}
	\cent(R) \, \defeq \, \{ a \in R \mid \forall b \in R \colon \, ab = ba \} ,
\end{displaymath} which constitutes a commutative unital subring of $R$, as well as \begin{displaymath}
	\GL(R) \, \defeq \, \{ a \in R \mid \exists b \in R \colon \, ab = ba = 1 \} ,
\end{displaymath} the (multiplicative) group of \emph{units} of $R$. A unital ring $R$ is called \emph{(von Neumann) regular} if \begin{displaymath}
	\forall a \in R \ \exists b \in R \colon \qquad aba \, = \, a .
\end{displaymath} 

\begin{remark}[\cite{VonNeumannBook}, II.II, Theorem~2.2, p.~70]\label{remark:regular} A unital ring $R$ is regular if and only if, for every $a \in R$, there exists $e \in \Ed(R)$ such that $aR = eR$. \end{remark} 

According to~\cite[II.II, Theorem~2.4, p.~72]{VonNeumannBook}, if $R$ is a regular ring, then $(\lat(R),{\subseteq})$ is a complemented, modular lattice, in which \begin{displaymath}
	I\vee J \, = \, I+J, \quad I\wedge J \, = \, I\cap J \qquad (I,J \in \lat(R)) .
\end{displaymath} Conversely, another fundamental theorem due to von Neumann~\cite[II.XIV, Theorem~14.1, p.~208]{VonNeumannBook} asserts that, if a complemented, modular lattice $L$ has an order greater than or equal to four, then there exists an (up to isomorphism unique) regular ring $R$ with $L \cong \lat(R)$.

Recall that a ring is said to be \emph{(directly) irreducible} if it is non-zero and not isomorphic to a direct product of two non-zero rings. A unital ring $R$ is irreducible if and only if $R$ is non-zero and $\Ed(R) \cap \cent(R) = \{ 0,1 \}$ (see, e.g.,~\cite[VI.1, Satz~1.10, p.~139]{MaedaBook}). Furthermore, irreducibility of a regular ring and the corresponding lattice are linked in the following natural way.

\begin{thm}[\cite{VonNeumannBook}]\label{theorem:irreducibility} Let $R$ be a regular ring. The following are equivalent. \begin{itemize}
	\item[$(1)$] $R$ is irreducible.
	\item[$(2)$] $\cent(R)$ is a field.
	\item[$(3)$] $\lat(R)$ is irreducible.
\end{itemize} \end{thm}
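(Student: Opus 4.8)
The plan is to split the cycle into a ring-theoretic half, (1)$\Leftrightarrow$(2), and a lattice-theoretic half, (1)$\Leftrightarrow$(3), relying throughout on the characterization recalled just above the theorem: a unital ring is irreducible exactly when it is non-zero and $\Ed(R)\cap\cent(R)=\{0,1\}$. With this in hand, (2)$\Rightarrow$(1) is essentially free: a field has no idempotents other than $0$ and $1$ (from $e^{2}=e$ one gets $e(e-1)=0$), so if $\cent(R)$ is a field, then $R\neq 0$ and every central idempotent of $R$, lying in the field $\cent(R)$, equals $0$ or $1$; hence $R$ is irreducible.

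For (1)$\Rightarrow$(2) I would fix $a\in\cent(R)$ with $a\neq 0$ and use regularity to pick $b\in R$ with $aba=a$. Since $a$ is central, $ab=ba$, and $e\defeq ab$ is idempotent because $abab=(aba)b=ab$. The heart of the argument is that $e$ is central: from $e=ab\in aR$ and $a=ea\in eR$ one gets $aR=eR$, and symmetrically $Ra=Re$; but $a$ central forces $aR=Ra$, hence $eR=Re$, and from $eR=Re$ it follows that $er=ere=re$ for every $r\in R$ (write $er=se$ and $re=et$, then $ere=se\cdot e=er$ and $ere=e\cdot et=re$). Thus $e\in\Ed(R)\cap\cent(R)$, so $e\in\{0,1\}$ by irreducibility. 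Now $e=0$ would give $a=ea=0$, a contradiction, so $e=1$, that is $ab=ba=1$, whence $b=a^{-1}$ in $R$; and $a^{-1}$ is again central (multiply $ar=ra$ on the left and on the right by $a^{-1}$). Therefore every non-zero element of the non-zero commutative ring $\cent(R)$ is invertible, i.e.\ $\cent(R)$ is a field.

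For (1)$\Leftrightarrow$(3) I would show that $R$ is directly reducible as a ring iff $\lat(R)$ is directly reducible as a lattice. If $R\cong R_{1}\times R_{2}$ with $R_{1},R_{2}\neq 0$, then every right ideal of $R$ is a product $I_{1}\times I_{2}$ of right ideals, and it is principal precisely when both $I_{i}$ are (if $I_{i}=a_{i}R_{i}$ then $I_{1}\times I_{2}=(a_{1},a_{2})R$, and conversely), so the ring isomorphism restricts to a lattice isomorphism $\lat(R)\cong\lat(R_{1})\times\lat(R_{2})$ with both factors of cardinality $\geq 2$; hence $\lat(R)$ is reducible. Conversely, given $\lat(R)\cong L_{1}\times L_{2}$ with $\vert L_{i}\vert\geq 2$, the element $c\in\lat(R)$ corresponding to $(1_{L_{1}},0_{L_{2}})$ is a neutral, complemented element of $\lat(R)$ different from $0_{\lat(R)}$ and $1_{\lat(R)}$; invoking von Neumann's analysis of the centre of a complemented modular lattice (see~\cite{VonNeumannBook,MaedaBook})---by which the neutral complemented elements of $\lat(R)$ are exactly the principal right ideals $eR$ with $e\in\Ed(R)\cap\cent(R)$ (Remark~\ref{remark:regular})---we obtain such an $e$, necessarily $\neq 0$ and $\neq 1$, and then $R\cong eR\times(1-e)R$ is a non-trivial ring decomposition.

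The step I expect to be the main obstacle is the centrality of $e=ab$ inside (1)$\Rightarrow$(2): it hinges precisely on the fact that, $a$ being central, the principal right ideal $aR$ is two-sided, which pins down $eR=Re$ and hence makes $e$ central---after that, irreducibility does the rest. In the lattice half, the only genuinely non-elementary input is the identification of the centre of the complemented modular lattice $\lat(R)$ with $\Ed(R)\cap\cent(R)$; rather than rederive this from the lattice axioms I would cite von Neumann's structure theory. (Alternatively, one can bypass (3) almost entirely: the idempotents of $\cent(R)$ are exactly the central idempotents of $R$, so ``$\cent(R)$ is a field'' is equivalent to ``$R$ has no non-trivial central idempotents'', which by the same citation is equivalent to ``$\lat(R)$ has trivial centre'', i.e.\ to irreducibility of $\lat(R)$.)
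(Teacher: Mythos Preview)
Your argument is correct. The paper itself does not give a proof: it simply cites von Neumann's book (Theorems~2.7 and~2.9 in~\cite{VonNeumannBook}) for both equivalences, so your proposal is strictly more informative than what appears in the paper.

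A few remarks on the content. Your (1)$\Rightarrow$(2) is the standard argument and the centrality step is handled cleanly: the chain $aR=eR$, $Ra=Re$, hence $eR=Re$, hence $er=ere=re$, is exactly right. For (1)$\Leftrightarrow$(3), you are honest about where the real work lies---the identification of the centre of the complemented modular lattice $\lat(R)$ with $\{eR : e\in\Ed(R)\cap\cent(R)\}$---and you appropriately defer that to von Neumann rather than reprove it. This is in the same spirit as the paper's own treatment, which also just cites \cite{VonNeumannBook}. Your alternative route at the end (central idempotents of $R$ $=$ idempotents of $\cent(R)$, and matching these with the lattice centre) is a nice way to see (2)$\Leftrightarrow$(3) directly and would work equally well.
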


\begin{proof} The equivalence of~(1) and~(2) is due to~\cite[II.II, Theorem~2.7, p.~75]{VonNeumannBook}, the equivalence of~(1) and~(3) is due to~\cite[II.II, Theorem~2.9, p.~76]{VonNeumannBook}.  \end{proof}

Before proceeding to pseudo-rank functions, let us note a general order-theoretic property of regular rings.

\begin{lem}\label{lemma:intermediate.idempotent} Let $R$ be a regular ring, let $e_{0},e_{1} \in \Ed(R)$ with $e_{0} \leq e_{1}$. If $I \in \lat(R)$ and $e_{0}R \subseteq I \subseteq e_{1}R$, then there exists $f \in \Ed(R)$ with $e_{0} \leq f \leq e_{1}$ and $I = fR$. \end{lem}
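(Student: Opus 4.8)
The plan is to split $I$ as an internal direct sum $e_{0}R\oplus J$ inside $e_{1}R$, to realize the complementary summand $J$ by an idempotent orthogonal to $e_{0}$, and then to add the two idempotents.

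First I would isolate an elementary \emph{one-sided correction} observation: if $K\in\lat(R)$ is written as $K=hR$ with $h\in\Ed(R)$ and $K\subseteq pR$ for some $p\in\Ed(R)$, then $h':=hp$ satisfies $h'\in\Ed(R)$, $h'R=K$, and $h'\leq p$. Indeed $h\in K\subseteq pR$ forces $ph=h$, whence $h'^{2}=hphp=hhp=h'$, $ph'=h'=h'p$, $h'R\subseteq hR=K$, and $h=h'h\in h'R$. This lets one replace any idempotent generator of a principal right ideal contained in $pR$ by one lying below $p$.

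Next, using $e_{0}\leq e_{1}$ and Remark~\ref{remark:idempotent.difference}, set $e_{1}':=e_{1}-e_{0}\in\Ed(R)$, so that $e_{0}\perp e_{1}'$, $e_{1}'\leq e_{1}$, and $e_{0}R+e_{1}'R=e_{1}R$ with $e_{0}R\cap e_{1}'R=0$. Since $\lat(R)$ is a complemented modular lattice (with meet given by intersection) and $e_{0}R\subseteq I\subseteq e_{1}R$, the modular law yields $I=e_{0}R\vee(e_{1}'R\wedge I)=e_{0}R+J$ where $J:=I\cap e_{1}'R\in\lat(R)$. By regularity (Remark~\ref{remark:regular}) write $J=hR$ with $h\in\Ed(R)$, and apply the correction observation with $p=e_{1}'$ to obtain $h'\in\Ed(R)$ with $h'R=J$ and $h'\leq e_{1}'$; in particular $e_{0}\perp h'$ because $e_{0}e_{1}'=e_{1}'e_{0}=0$.

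Finally I would put $f:=e_{0}+h'$. Orthogonality of $e_{0}$ and $h'$ makes $f$ idempotent; moreover $e_{0}f=fe_{0}=e_{0}$, so $e_{0}\leq f$; and $e_{1}f=fe_{1}=f$ (using $e_{1}e_{0}=e_{0}e_{1}=e_{0}$ and $e_{1}h'=h'e_{1}=h'$, the latter since $h'\leq e_{1}'\leq e_{1}$), so $f\leq e_{1}$. Also $e_{0}=e_{0}f\in fR$ and $h'=h'f\in fR$, hence $fR=e_{0}R+h'R=e_{0}R+J=I$. The only point requiring care is the distinction between one-sided and two-sided idempotent relations: the naive generator $h$ of $J$ need not satisfy $h\leq e_{1}'$, which is exactly why the correction step and the passage to $e_{1}'=e_{1}-e_{0}$ are needed; once the orthogonality $e_{0}\perp h'$ has been secured, the verification is routine.
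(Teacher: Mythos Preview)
Your proof is correct and follows essentially the same approach as the paper's. Both arguments isolate the complementary piece $J=I\cap(e_{1}-e_{0})R=(e_{1}-e_{0})I$, pick an idempotent generator of it, correct that generator by right-multiplying with $e_{1}-e_{0}$ to force it below $e_{1}-e_{0}$ (hence orthogonal to $e_{0}$), and then add $e_{0}$; your presentation is a bit more lattice-theoretic (invoking the modular law explicitly to obtain $I=e_{0}R+J$), while the paper computes $(e_{1}-e_{0})I$ directly and verifies the decomposition by hand, but the content is the same.
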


\begin{proof} Let $I \in \lat(R)$ with $e_{0}R \subseteq I \subseteq e_{1}R$. Then $(e_{1}-e_{0})x = e_{1}x-e_{0}x = x-e_{0}x \in I$ for every $x \in I$, that is, $(e_{1}-e_{0})I \subseteq I$. As $(e_{1}-e_{0})I \in \lat(R)$ and $R$ is regular, Remark~\ref{remark:regular} asserts the existence of some $f_{0} \in \Ed(R)$ such that $(e_{1}-e_{0})I = f_{0}R$. Consider $f \defeq e_{0}+f_{0}(e_{1}-e_{0}) \in R$. Since $f_{0} \in (e_{1}-e_{0})I$ and $e_{1}-e_{0} \in \Ed(R)$ by Remark~\ref{remark:idempotent.difference}, we infer that \begin{equation}\tag{1}\label{idem0}
	(e_{1}-e_{0})f_{0} \, = \, f_{0} ,
\end{equation} whence \begin{equation}\tag{2}\label{idem1}
	e_{0}f_{0} \, \stackrel{\eqref{idem0}}{=} \, e_{0}(e_{1}-e_{0})f_{0} \, \stackrel{\ref{remark:idempotent.difference}}{=} \, 0
\end{equation} and \begin{equation}\tag{3}\label{idem2}
	e_{1}f_{0} \, \stackrel{\eqref{idem0}}{=} \, e_{1}(e_{1}-e_{0})f_{0} \, \stackrel{\ref{remark:idempotent.difference}}{=} \, (e_{1}-e_{0})f_{0} \, \stackrel{\eqref{idem0}}{=} \, f_{0} .
\end{equation} We conclude that \begin{align*}
	ff \, &= \, (e_{0}+f_{0}(e_{1}-e_{0}))(e_{0}+f_{0}(e_{1}-e_{0})) \, \stackrel{\eqref{idem1}+\ref{remark:idempotent.difference}}{=} \, e_{0} + f_{0}(e_{1}-e_{0})f_{0}(e_{1}-e_{0}) \\
	& \stackrel{\eqref{idem0}}{=} \, e_{0} + f_{0}(e_{1}-e_{0}) \, = \, f , \\
	e_{0}f \, &= \, e_{0} + e_{0}f_{0}(e_{1}-e_{0}) \, \stackrel{\eqref{idem1}}{=} \, e_{0} \, \stackrel{\ref{remark:idempotent.difference}}{=} \,  e_{0} + f_{0}(e_{1}-e_{0})e_{0} \, = \, fe_{0}, \\
	e_{1}f \, &= \, e_{1}e_{0} + e_{1}f_{0}(e_{1}-e_{0}) \, \stackrel{\eqref{idem2}}{=} \, e_{0} + f_{0}(e_{1}-e_{0}) \, = \, f \, = \, e_{0} + f_{0}(e_{1}-e_{0}) \\
	& \stackrel{\ref{remark:idempotent.difference}}{=} \,  e_{0}e_{1} + f_{0}(e_{1}-e_{0})e_{1} \, = \, fe_{1},
\end{align*} which means that $f \in \Ed(R)$ and $e_{0} \leq f \leq e_{1}$. Furthermore, \begin{displaymath}
	fR \, \subseteq \, e_{0}R + f_{0}R \, = \, e_{0}R + (e_{1}-e_{0})I \, \subseteq \, I .
\end{displaymath} Conversely, if $x \in I$, then $(e_{1}-e_{0})x \in (e_{1}-e_{0})I = f_{0}R$ and therefore \begin{displaymath}
	x \, = \, e_{1}x \, = \, e_{0}x + (e_{1}-e_{0})x \, = \, e_{0}x + f_{0}(e_{1}-e_{0})x \, = \, fx \, \in \, fR .
\end{displaymath} This shows that $fR = I$ and thus completes the proof. \end{proof}

We recall some background material concerning pseudo-rank functions on regular rings from~\cite[Chapter~16]{GoodearlBook}. To this end, let $R$ be a regular ring. A map $\rho \colon R \to [0,1]$ is said to be a \emph{pseudo-rank function} on $R$ if \begin{itemize}
	\item[---] $\rho(1)=1$,
	\item[---] $\rho(ab) \leq \min \{ \rho(a),\rho(b)\}$ for all $a,b \in R$, and 
	\item[---] $\rho(e+f) = \rho(e) + \rho(f)$ for any two orthogonal $e,f \in \Ed(R)$.
\end{itemize} As a consequence of third condition, $\rho(0) = 0$ for any pseudo-rank function $\rho \colon R \to [0,1]$. A \emph{rank function} on $R$ is a pseudo-rank function $\rho \colon R \to [0,1]$ such that $\rho(a) > 0$ for every $a \in R\setminus \{ 0 \}$.

\begin{lem}[\cite{VonNeumannBook}]\label{lemma:rank.estimates} Let $R$ be a regular ring and let $\rho \colon R \to [0,1]$ be a pseudo-rank function. Then the following hold. \begin{itemize}
	\item[$(1)$] $\rho(a+b) \leq \rho(a) + \rho(b)$ for all $a,b \in R$.
	\item[$(2)$] If $e,f \in \Ed(R)$ and $f \leq e$, then $\rho(e-f) = \rho(e) -\rho(f)$.
\end{itemize} \end{lem}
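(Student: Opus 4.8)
The plan is to derive both parts directly from the defining axioms of a pseudo-rank function together with Remark~\ref{remark:regular}, which lets us replace arbitrary principal ideals by idempotents, and the order-theoretic machinery of Lemma~\ref{lemma:intermediate.idempotent} and Remark~\ref{remark:idempotent.difference}. The conceptual point is that the three axioms already encode a lattice-theoretic modularity once we pass to idempotents generating the relevant right ideals.

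\emph{Part (2).} This is the easy half and I would do it first, since (1) will use it. Given $e,f \in \Ed(R)$ with $f \leq e$, Remark~\ref{remark:idempotent.difference} tells us that $g \defeq e-f \in \Ed(R)$ and $f \perp g$. Hence the additivity axiom for orthogonal idempotents applied to $f$ and $g$ gives $\rho(e) = \rho(f+g) = \rho(f) + \rho(g)$, i.e.\ $\rho(e-f) = \rho(g) = \rho(e) - \rho(f)$.

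\emph{Part (1).} Let $a,b \in R$. The goal is $\rho(a+b) \leq \rho(a)+\rho(b)$. By regularity and Remark~\ref{remark:regular}, choose $e \in \Ed(R)$ with $aR = eR$; since $\rho$ should be an invariant of the right ideal generated by an element, the first step is to check $\rho(a) = \rho(e)$ (from $aR=eR$ one gets $a = ex$, $e = ay$ for suitable $x,y$, so $\rho(a)\le\rho(e)$ and $\rho(e)\le\rho(a)$ by the submultiplicativity axiom). Now $(a+b)R \subseteq aR + bR$, and one wants a right ideal–additivity statement of the form $\rho$ evaluated on a generator of $aR+bR$ is at most $\rho(a)+\rho(b)$. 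Concretely: pick idempotents $e,f$ with $aR=eR$, $bR=fR$; the right ideal $eR+fR$ is again principal, say $eR+fR = hR$ with $h \in \Ed(R)$, and $hR \supseteq eR$ so by Lemma~\ref{lemma:intermediate.idempotent} (applied inside $0 \le \cdot \le h$) one can arrange an idempotent decomposition. The cleanest route is: write $eR + fR$ and $eR \cap fR$ as principal ideals generated by idempotents, and exploit the standard fact (provable from the axioms via an orthogonal refinement of idempotents, using Lemma~\ref{lemma:intermediate.idempotent} to split $h$ as $e' \perp$ something) that $\rho(eR+fR) + \rho(eR\cap fR) = \rho(e)+\rho(f)$; then $\rho(a+b) \le \rho(eR+fR) \le \rho(e)+\rho(f) = \rho(a)+\rho(b)$.

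\emph{Main obstacle.} The genuine content is establishing the modularity-type identity $\rho$ of a join plus $\rho$ of a meet equals the sum, or at least the one-sided inequality $\rho$(join)$\le\rho(e)+\rho(f)$, purely from the three axioms. The trick is a pairwise orthogonalization of idempotents: given $e,f$, produce $e' \le e$ and $f'$ with $e' \perp f'$, $e'R = eR$ modulo the overlap, and $e'R + f'R = eR+fR$; each such splitting step is exactly an instance of Lemma~\ref{lemma:intermediate.idempotent} and Remark~\ref{remark:idempotent.difference}, and then additivity over orthogonal idempotents finishes it. I would expect this bookkeeping with idempotents to be the only place requiring care; everything else is a direct unwinding of the axioms. (This is von Neumann's argument; it is worth citing the precise location in \cite{VonNeumannBook} rather than reproducing the idempotent calculus in full.)
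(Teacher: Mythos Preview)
Your proof of part~(2) is exactly the paper's: apply Remark~\ref{remark:idempotent.difference} to get $f \perp (e-f) \in \Ed(R)$, then invoke additivity on orthogonal idempotents. For part~(1), the paper gives no argument at all and simply cites \cite[II.XVIII, Corollary on p.~231]{VonNeumannBook}, \cite[VI.5, Hilfssatz~5.1(3°)]{MaedaBook}, and \cite[Proposition~16.1(d)]{GoodearlBook}; your sketch of the idempotent-orthogonalization argument is the standard one found in those references, and your closing recommendation to cite rather than reproduce it is precisely what the paper does. One minor point: your sketch appeals to the modularity identity $\rho(\text{join})+\rho(\text{meet})=\rho(e)+\rho(f)$, but for the subadditivity inequality you only need the one-sided bound $\rho(\text{join})\le\rho(e)+\rho(f)$, which follows more directly by writing $eR+fR=eR+(1-e)fR$, picking an idempotent $g$ with $gR=(1-e)fR$, orthogonalizing $e$ and $g$ via Lemma~\ref{lemma:intermediate.idempotent}, and using $\rho(g)\le\rho(f)$ from submultiplicativity---no need to control the meet.
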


\begin{proof} (1) Proofs of this are to be found in~\cite[II.XVIII, Corollary on p.~231]{VonNeumannBook}, \cite[VI.5, Hilfssatz~5.1(3°), p.~153]{MaedaBook}, as well as~\cite[Proposition~16.1(d)]{GoodearlBook}.

(2) As $e-f \in \Ed(R)$ and $f \perp (e-f)$ by Remark~\ref{remark:idempotent.difference}, \begin{displaymath}
	\rho (e) \, = \, \rho(f+(e-f)) \, = \, \rho(f)+\rho(e-f) . \qedhere
\end{displaymath} \end{proof}

\begin{lem}[\cite{VonNeumannBook}]\label{lemma:rank.metric} Let $R$ be a regular ring and let $\rho \colon R \to [0,1]$ be a pseudo-rank function. Then \begin{displaymath}
	d_{\rho} \colon \, R \times R \, \longrightarrow \, [0,1], \quad (a,b) \, \longmapsto \, \rho(a-b)
\end{displaymath} is a pseudo-metric on $R$. Moreover, the following hold. \begin{itemize}
	\item[$(1)$] $\rho$ rank function on $R$ $\ \Longleftrightarrow \ $ $d_{\rho}$ metric on $R$.
	\item[$(2)$] For all $a,b,c,d \in R$, \begin{align*}
					\qquad d_{\rho} (a+b,c+d) \, &\leq \, d_{\rho} (a,c) + d_{\rho} (b,d) , \\
					\qquad d_{\rho} (ab,cd) \, &\leq \, d_{\rho} (a,c) + d_{\rho} (b,d) .
				\end{align*}
	\item[$(3)$] If $a,b \in R$, then $\vert \rho(a) - \rho(b) \vert \leq d_{\rho}(a,b)$.
\end{itemize} \end{lem}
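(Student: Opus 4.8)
The plan is to deduce all four assertions directly from the defining axioms of a pseudo-rank function, together with the subadditivity estimate of Lemma~\ref{lemma:rank.estimates}(1). Two preliminary observations will be used throughout: first, $\rho(0) = 0$, which is immediate upon applying the orthogonal-additivity axiom to the pair $0,0 \in \Ed(R)$; second, $\rho(-x) = \rho(x)$ for every $x \in R$, which follows from $\rho(ab) \leq \min\{\rho(a),\rho(b)\}$ applied twice, once to $(-1)\cdot x$ and once to $(-1)\cdot(-x)$.

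First I would check that $d_{\rho}$ is a $[0,1]$-valued pseudo-metric on $R$. The bounds $0 \leq d_{\rho} \leq 1$ are inherited from $\rho$. Reflexivity is $d_{\rho}(a,a) = \rho(0) = 0$, and symmetry is the evenness of $\rho$ noted above. For the triangle inequality, writing $a - c = (a-b) + (b-c)$ and invoking Lemma~\ref{lemma:rank.estimates}(1) gives $d_{\rho}(a,c) = \rho((a-b)+(b-c)) \leq \rho(a-b) + \rho(b-c) = d_{\rho}(a,b) + d_{\rho}(b,c)$.

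Assertion (1) is then essentially tautological: $d_{\rho}$ is a metric if and only if $\rho(a-b) = 0$ implies $a = b$, and since $\rho$ is nonnegative this is precisely the condition that $\rho(x) = 0$ forces $x = 0$, i.e., that $\rho$ is a rank function. For (2), the additive inequality is again just $\rho((a-c)+(b-d)) \leq \rho(a-c) + \rho(b-d)$ via Lemma~\ref{lemma:rank.estimates}(1); the multiplicative inequality uses the algebraic identity $ab - cd = (a-c)b + c(b-d)$, so that $\rho(ab - cd) \leq \rho((a-c)b) + \rho(c(b-d)) \leq \rho(a-c) + \rho(b-d)$, where the first step is subadditivity and the second applies $\rho(xy) \leq \min\{\rho(x),\rho(y)\}$ to each summand. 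Finally, for (3), the estimate $\rho(a) = \rho((a-b) + b) \leq \rho(a-b) + \rho(b)$ yields $\rho(a) - \rho(b) \leq d_{\rho}(a,b)$; exchanging the roles of $a$ and $b$ (and using evenness of $\rho$) gives the reverse bound, hence $\vert \rho(a) - \rho(b) \vert \leq d_{\rho}(a,b)$.

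There is no real obstacle here: every step is a direct consequence of the pseudo-rank axioms and of Lemma~\ref{lemma:rank.estimates}(1). The only point requiring the slightest care is the evenness $\rho(-x) = \rho(x)$, which is needed both for symmetry of $d_{\rho}$ and for the two-sided bound in~(3).
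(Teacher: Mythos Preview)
Your proof is correct and complete; every step follows cleanly from the pseudo-rank axioms together with Lemma~\ref{lemma:rank.estimates}(1), and the evenness argument $\rho(-x)=\rho((-1)x)\leq\rho(x)$ is exactly the right way to handle symmetry and the two-sided bound in~(3). The paper itself does not give a proof at all here---it simply refers to~\cite[II.XVIII, Lemma~18.1]{VonNeumannBook} and~\cite[VI.5, Satz~5.1]{MaedaBook}---so your write-up is in fact more self-contained than the paper, and is essentially the standard argument one would find in those references.
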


\begin{proof} Proofs are to be found in~\cite[II.XVIII, Lemma~18.1, pp.~231--232]{VonNeumannBook}, as well as in~\cite[VI.5, Satz~5.1, p.~154]{MaedaBook}. \end{proof}

\begin{remark}\label{remark:topological.ring} Let $R$ be a regular ring and let $\rho \colon R \to [0,1]$ be a pseudo-rank function. It follows from Lemma~\ref{lemma:rank.metric}(2) that $R$ constitutes a topological ring with respect to the \emph{$\rho$-topology}, i.e., the topology on $R$ generated by the pseudo-metric $d_{\rho}$. Furthermore, Lemma~\ref{lemma:rank.metric}(2) entails that $d_{\rho}\vert_{\GL(R) \times \GL(R)}$ is a bi-invariant pseudo-metric on $\GL(R)$, whence $\GL(R)$ is a topological group with respect to the relative $\rho$-topology. \end{remark}

Let $R$ be a regular ring. A rank function $\rho \colon R \to [0,1]$ is called \emph{complete} if the metric space $(R,d_{\rho})$ is complete. Furthermore, $R$ is said to be a \emph{complete rank ring} if $R$ admits a complete rank function.

\begin{lem}\label{lemma:from.rank.to.dimension} Let $R$ be a regular ring and let $\rho \colon R \to [0,1]$ be a pseudo-rank function. Then \begin{displaymath}
	\delta_{\rho} \colon \, \lat(R) \, \longrightarrow \, [0,1], \quad aR \, \longmapsto \, \rho(a)
\end{displaymath} is a well-defined pseudo-dimension function on $\lat(R)$. Also, the following hold. \begin{itemize}
	\item[$(1)$] $\rho$ rank function on $R$ $\ \Longleftrightarrow \ $ $\delta_{\rho}$ dimension function on $\lat(R)$.
	\item[$(2)$] If $I \in \lat(R)$ and $a \in R$, then $\delta_{\rho}(aI) \leq \min \{ \rho(a), \delta_{\rho}(I) \}$.
	\item[$(3)$] If $I \in \lat(R)$ and $a,b \in R$, then $d_{\delta_{\rho}}(aI,bI) \leq 2 \min\{ \rho(a-b), \delta_{\rho}(I) \}$.
\end{itemize} \end{lem}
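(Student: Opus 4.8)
The plan is to read off the structural properties of $\delta_{\rho}$ directly from the pseudo-rank axioms, isolate the modular law as the one substantial point, and then derive (1)--(3) from modularity together with Lemmas~\ref{lemma:rank.estimates} and~\ref{lemma:birkhoff}.

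Well-definedness and the boundary conditions are immediate. If $aR=bR$, then $a=bx$ and $b=ay$ for suitable $x,y\in R$, so $\rho(a)\le\rho(b)$ and $\rho(b)\le\rho(a)$ by submultiplicativity of $\rho$; hence $\rho(a)$ depends only on $aR$. The same inequality gives $aR\subseteq bR\Rightarrow\rho(a)\le\rho(b)$, i.e.\ positivity of $\delta_{\rho}$, while $\delta_{\rho}(0R)=\rho(0)=0$ and $\delta_{\rho}(R)=\rho(1)=1$. Part~(2) is equally quick: writing $I=bR$ we have $aI=(ab)R\in\lat(R)$, so $\delta_{\rho}(aI)=\rho(ab)\le\min\{\rho(a),\rho(b)\}=\min\{\rho(a),\delta_{\rho}(I)\}$.

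The core of the argument is the modular identity $\delta_{\rho}(I\vee J)+\delta_{\rho}(I\wedge J)=\delta_{\rho}(I)+\delta_{\rho}(J)$, bearing in mind that $I\vee J=I+J$ and $I\wedge J=I\cap J$ in $\lat(R)$. I would first treat the independent case $I\cap J=\{0\}$. Choose (Remark~\ref{remark:regular}) an idempotent $e$ with $eR=I$; the right ideal $(1-e)J$ equals $f_{0}R$ for some $f_{0}\in\Ed(R)$, and putting $f\defeq f_{0}(1-e)$ one checks that $f\in\Ed(R)$, that $ef=fe=0$, and that $fR=(1-e)J$. Then $e+f\in\Ed(R)$, $I+J=eR+fR=(e+f)R$, so the orthogonality axiom of $\rho$ yields $\delta_{\rho}(I+J)=\rho(e+f)=\rho(e)+\rho(f)=\delta_{\rho}(I)+\delta_{\rho}((1-e)J)$. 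Finally $y\mapsto(1-e)y$ is an isomorphism of right $R$-modules $J\to(1-e)J$, since its kernel is $J\cap eR=I\cap J=\{0\}$; as $\rho$ is invariant under module isomorphism of principal right ideals (apply submultiplicativity of $\rho$ to a Murray--von Neumann equivalence between the corresponding idempotents), $\delta_{\rho}((1-e)J)=\delta_{\rho}(J)$, settling the independent case. For general $I,J$, set $A\defeq I\wedge J$ and use relative complementation (Lemma~\ref{lemma:relatively.complemented}, valid since $\lat(R)$ is complemented modular) to pick $I_{0}$ with $A\vee I_{0}=I$ and $A\wedge I_{0}=\{0\}$; then $I_{0}\cap J\subseteq(I\cap J)\cap I_{0}=\{0\}$ and $I+J=I_{0}+J$, so two applications of the independent case give $\delta_{\rho}(I)=\delta_{\rho}(A)+\delta_{\rho}(I_{0})$ and $\delta_{\rho}(I+J)=\delta_{\rho}(I_{0})+\delta_{\rho}(J)$, and subtracting these yields the modular law. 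This idempotent-and-module computation is where the real work lies; alternatively one could simply quote the classical correspondence between pseudo-rank functions on $R$ and dimension functions on $\lat(R)$ (see~\cite{VonNeumannBook} or~\cite[Chapter~16]{GoodearlBook}).

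Granting modularity, (1) and (3) are formal. For~(1): if $\delta_{\rho}$ is strictly positive and $a\neq0$, then $0R=\{0\}\subsetneq aR$ (as $a\in aR$), whence $0=\delta_{\rho}(0R)<\delta_{\rho}(aR)=\rho(a)$, so $\rho$ is a rank function; conversely, if $\rho$ is a rank function and $I\subsetneq J$, choose $K$ with $I\vee K=J$ and $I\wedge K=\{0\}$, note $K\neq\{0\}$ and hence $K=cR$ with $c\neq0$ and $\delta_{\rho}(K)=\rho(c)>0$, so by modularity $\delta_{\rho}(J)=\delta_{\rho}(I)+\delta_{\rho}(K)>\delta_{\rho}(I)$. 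For~(3): from $a=(a-b)+b$ one gets the inclusions $aI\subseteq(a-b)I+bI$, $bI\subseteq(a-b)I+aI$ and $(a-b)I\subseteq aI+bI$, hence $aI\vee bI=aI\vee(a-b)I=bI\vee(a-b)I$; combining Lemma~\ref{lemma:birkhoff}(3), the consequence $\delta_{\rho}(X\vee Y)\le\delta_{\rho}(X)+\delta_{\rho}(Y)$ of modularity, and part~(2), we obtain $d_{\delta_{\rho}}(aI,bI)=2\delta_{\rho}(aI\vee bI)-\delta_{\rho}(aI)-\delta_{\rho}(bI)\le2\delta_{\rho}((a-b)I)\le2\min\{\rho(a-b),\delta_{\rho}(I)\}$, as required. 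The main obstacle, as indicated, is the modularity of $\delta_{\rho}$; everything else is bookkeeping with the pseudo-rank axioms.
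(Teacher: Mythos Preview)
Your proof is correct and aligns with the paper's approach. The paper is considerably more terse: it simply cites \cite[VI.5, Satz~5.2]{MaedaBook} for well-definedness, the pseudo-dimension property, and item~(1), whereas you supply a direct argument for modularity via orthogonal idempotents and relative complementation (your Murray--von~Neumann step is fine: from $gR\cong fR$ one gets $g=ba$, $f=ab$ with $a=ag=fa$, $b=gb=bf$, hence $g=bfa$ and $f=agb$, so submultiplicativity forces $\rho(g)=\rho(f)$). Your proof of~(2) is identical to the paper's, and your proof of~(3) is a minor symmetric variant---the paper assumes without loss of generality that $\delta_\rho(bI)\le\delta_\rho(aI)$ and uses only $aI+bI=bI+(a-b)I$, while you use both identities $aI\vee bI=aI\vee(a-b)I=bI\vee(a-b)I$ and add, avoiding the case distinction; the estimates are otherwise the same.
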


\begin{proof} The statement about $\delta_{\rho}$ being a pseudo-dimension function, as well as item~(1), follow by the argument in~\cite[VI.5, Satz~5.2, p.~154]{MaedaBook}.

(2) For all $a,b \in R$, \begin{displaymath}
	\delta_{\rho}(a(bR)) \, = \, \delta_{\rho}((ab)R) \, = \, \rho(ab) \, \leq \, \min \{ \rho(a) , \rho(b) \} \, = \, \min \{ \rho(a), \delta_{\rho}(bR) \} .
\end{displaymath}

(3) Let $I \in \lat(R)$ and $a,b \in R$. Without loss of generality, we may assume that $\delta_{\rho}(bI) \leq \delta_{\rho}(aI)$. It is straightforward to verify that $aI+bI = bI + (a-b)I$, which by modularity of $\delta_{\rho}$ entails that \begin{equation}\tag{$\ast$}\label{dimension}
	\delta_{\rho}(aI+bI) \, \leq \, \delta_{\rho}(bI) + \delta_{\rho}((a-b)I) .
\end{equation} Hence, \begin{align*}
	d_{\delta_{\rho}}(aI,bI) \, & \stackrel{\ref{lemma:birkhoff}(3)}{=} \, 2\delta_{\rho}(aI+bI) - \delta_{\rho}(aI) -\delta_{\rho}(bI) \, \leq \, 2(\delta_{\rho}(aI+bI) - \delta_{\rho}(bI)) \\
	&\qquad \stackrel{\eqref{dimension}}{\leq} \, 2 \delta_{\rho}((a-b)I) \, \stackrel{(2)}{\leq} \, 2\min\{ \rho(a-b), \delta_{\rho}(I) \} . \qedhere
\end{align*}\end{proof}

A \emph{continuous ring} is a regular ring $R$ such that $(\lat(R),{\subseteq})$ is a continuous geometry. Another fundamental result of von Neumann's work~\cite{VonNeumannBook} is the following characterization of irreducible, continuous rings.

\begin{thm}[\cite{VonNeumannBook}]\label{theorem:continuous.rank.rings} Let $R$ be a ring. \begin{itemize}
	\item[$(1)$] If $R$ is a complete rank ring, then $R$ is a continuous ring.	
	\item[$(2)$] If $R$ is an irreducible, continuous ring, then\footnote{Definition~\ref{definition:dimension.function} applies, since the continuous geometry $\lat(R)$ is irreducible by Theorem~\ref{theorem:irreducibility}.} \begin{displaymath}
					\qquad \rho_{R} \colon \, R \, \longrightarrow \, [0,1], \quad a \, \longmapsto \, \delta_{\lat(R)}(aR)
				\end{displaymath} is the unique rank function on $R$ and, moreover, $\rho_{R}$ is complete.
	\item[$(3)$] Suppose that $R$ is irreducible. Then $R$ is a continuous ring if and only if $R$ is a complete rank ring.
\end{itemize} \end{thm}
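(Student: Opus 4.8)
The theorem has three parts, and part~(3) is a formal consequence of the other two: for irreducible $R$ one implication of~(3) is exactly~(1), and the other follows from~(2) since~(2) furnishes a complete rank function on $R$. So the plan is to prove~(1) and~(2). For~(1), let $\rho$ be a complete rank function on the regular ring $R$ and set $\delta := \delta_{\rho}$, which by Lemma~\ref{lemma:from.rank.to.dimension}(1) is a dimension function on $\lat(R)$; as $\lat(R)$ is already known to be complemented and modular, it remains to prove it is a complete lattice satisfying the continuity axioms. I would argue through idempotents, using $\lat(R) = \{\, eR \mid e \in \Ed(R)\,\}$ (Remark~\ref{remark:regular}). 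First, every increasing sequence $I_{1} \subseteq I_{2} \subseteq \cdots$ in $\lat(R)$ has a supremum: iterating Lemma~\ref{lemma:intermediate.idempotent} (with upper idempotent $1$) one picks idempotents $e_{1} \leq e_{2} \leq \cdots$ with $e_{n}R = I_{n}$; then $\rho(e_{m}-e_{n}) = \rho(e_{m}) - \rho(e_{n})$ for $n \leq m$ (Lemma~\ref{lemma:rank.estimates}(2)), and since $\rho(e_{n}) = \delta(I_{n})$ increases in $[0,1]$ the sequence $(e_{n})$ is $d_{\rho}$-Cauchy, hence converges to some $e \in \Ed(R)$ because $\Ed(R)$ is closed (Lemma~\ref{lemma:topological.semigroups}(1)); passing to the limit in $e_{n}e_{m} = e_{m}e_{n} = e_{n}$ via continuity of multiplication (Lemma~\ref{lemma:rank.metric}(2)) gives $e_{n} \leq e$, and a short computation with idempotent generators identifies $eR$ with $\bigvee_{n} I_{n}$. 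Dually, decreasing sequences have infima.

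From this I would extract full lattice-completeness of $\lat(R)$: an upward-directed $\mathscr{F} \subseteq \lat(R)$ has a supremum (choose $I_{n} \in \mathscr{F}$ with $\delta(I_{n}) \to \sup_{I \in \mathscr{F}} \delta(I)$, make them increasing by directedness, and use strict positivity of $\delta$ to check $\bigvee_{n} I_{n} = \bigvee\mathscr{F}$), and applying this to the directed family of finite joins of an arbitrary subset $S$ gives $\bigvee S$. The same limit-of-idempotents computation, with $d_{\rho}$-continuity of $\rho$ (Lemma~\ref{lemma:rank.metric}(3)), shows $\delta(\bigvee C) = \sup \delta(C)$ for every chain $C$, so $\bigvee C$ is the $d_{\delta}$-limit of the increasing net $C$; since $y \mapsto x \wedge y$ is $1$-Lipschitz for $d_{\delta}$ (Lemma~\ref{lemma:birkhoff}(2)), $x \wedge \bigvee C = \bigvee_{y \in C}(x \wedge y)$, the monotone net $(x \wedge y)_{y \in C}$ converging to its supremum, and dually for $\vee$ and $\bigwedge$. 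Hence $\lat(R)$ is a continuous geometry and $R$ is a continuous ring.

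For~(2), let $R$ be an irreducible continuous ring, so $\lat(R)$ is a continuous geometry, irreducible by Theorem~\ref{theorem:irreducibility}, hence carrying a unique dimension function $\delta := \delta_{\lat(R)}$ (Theorem~\ref{theorem:dimension}). Then $\rho_{R}(a) = \delta(aR)$ is well defined, and I would check it is a rank function directly: $\rho_{R}(1) = \delta(R) = 1$; $\rho_{R}(ab) \leq \rho_{R}(a)$ since $abR \subseteq aR$ and $\delta$ is positive, while $\rho_{R}(ab) \leq \rho_{R}(b)$ because regularity of $R$ exhibits $abR$ as isomorphic to a direct summand of $bR$, whence $abR \precsim bR$ and $\delta(abR) \leq \delta(bR)$ by Remark~\ref{remark:perspective}; for orthogonal idempotents $e \perp f$ one has $(e+f)R = eR \vee fR$ with $eR \wedge fR = 0$, so modularity of $\delta$ gives $\rho_{R}(e+f) = \rho_{R}(e) + \rho_{R}(f)$; and $\rho_{R}(a) > 0$ for $a \neq 0$ since $aR \neq 0$ and $\delta$ is strictly positive. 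Uniqueness is formal: a second rank function $\rho'$ would induce via Lemma~\ref{lemma:from.rank.to.dimension}(1) a dimension function $\delta_{\rho'}$ on $\lat(R)$, equal to $\delta$ by Theorem~\ref{theorem:dimension}, so $\rho'(a) = \delta_{\rho'}(aR) = \delta(aR) = \rho_{R}(a)$.

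The remaining and decisive point in~(2) is completeness of $\rho_{R}$, and this is the main obstacle. I would first observe that $(\lat(R), d_{\lat(R)})$ is itself a complete metric space: for a $d_{\lat(R)}$-Cauchy sequence $(I_{n})$ with $d_{\lat(R)}(I_{n},I_{n+1}) < 4^{-n}$, the ideals $J_{m} := \bigwedge_{k \geq m} I_{k}$ exist because $\lat(R)$ is a complete lattice, satisfy $d_{\lat(R)}(I_{m}, J_{m}) \to 0$ by Lemma~\ref{lemma:birkhoff}(2) together with Proposition~\ref{proposition:dimension.continuity}, increase with $m$, and obey $d_{\lat(R)}(J_{m}, \bigvee_{k} J_{k}) = \delta(\bigvee_{k} J_{k}) - \delta(J_{m}) \to 0$ by Proposition~\ref{proposition:dimension.continuity}, so $I_{n} \to \bigvee_{k} J_{k}$. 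Transferring this completeness down to $R$ is the substantial step: for $(c_{n})$ in $R$ with $\sum_{n} \rho_{R}(c_{n}) < \infty$ one must realise $\sum_{n} c_{n}$ inside $R$, and although regularity readily pins the hypothetical limit down uniquely (its differences from the partial sums lie in principal right ideals of dimension tending to $0$), its existence is exactly the completeness at issue. The natural route is to pass to the metric completion $\widehat{R}$ of $(R, d_{\rho_{R}})$ — addition and multiplication extend by Lipschitz continuity (Lemma~\ref{lemma:rank.metric}(2)) — and to show $\widehat{R}$ is again a regular ring with $\lat(\widehat{R})$ canonically identified with the already complete geometry $\lat(R)$, so that uniqueness in von Neumann's coordinatization theorem forces $\widehat{R} = R$. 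Establishing regularity of $\widehat{R}$ and this identification is the technical heart; here I would invoke von Neumann's construction of the coordinatizing ring of a continuous geometry \cite{VonNeumannBook} (see also \cite{MaedaBook}) rather than reproduce it, and everything else — part~(1), the rank-function checks and uniqueness in~(2), and part~(3) — is then routine.
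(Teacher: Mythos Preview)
The paper does not prove this theorem at all: it is stated as a result of von~Neumann and the proof consists entirely of precise citations to \cite{VonNeumannBook} and \cite{MaedaBook} for each of the three parts. So there is no ``paper's own proof'' to compare against beyond those references; your proposal is already far more detailed than what the paper offers.

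That said, your sketch for~(1) and for the rank-function axioms and uniqueness in~(2) follows the standard line of argument found in \cite{MaedaBook} and is sound. The one place to be careful is your route to completeness of $\rho_{R}$ in~(2): you propose passing to the metric completion $\widehat{R}$ and then invoking uniqueness in von~Neumann's coordinatization theorem to force $\widehat{R}=R$. The coordinatization theorem, however, requires the lattice to have order at least four, which is not assumed here. In the non-discrete case this is harmless (an irreducible non-discrete continuous geometry has every order), and in the discrete case $R$ is a matrix ring over a division ring so completeness is trivial; but you should make that case distinction explicit. Von~Neumann's original argument for completeness \cite[II.XVII, Theorem~17.4]{VonNeumannBook} is more direct and avoids the coordinatization machinery altogether, which is why the paper simply cites it.
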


\begin{proof} (1) This is implicit in~\cite[II.XVIII, Proof of Theorem~18.1, p.~237]{VonNeumannBook}, but also stated and proved explicitly in~\cite[VI.5, Satz~5.3, p.~156]{MaedaBook}.
	
(2) If $R$ is an irreducible, continuous ring, then $\rho_{R}$ is a rank function on $R$ by~\cite[II.XVII, Theorem~17.1, p.~224]{VonNeumannBook}, unique as such by~\cite[II.XVII, Theorem~17.2, p.~226]{VonNeumannBook}, and complete by~\cite[II.XVII, Theorem~17.4, p.~230]{VonNeumannBook}. (An alternative reference to the proofs is~\cite[VII.2, pp.~162--165]{MaedaBook}.)

(3) This is due to~\cite[II.XVIII, Theorem~18.1, p.~237]{VonNeumannBook} (see also~\cite[VII.2, Satz~2.2, p.~165]{MaedaBook}), but it also follows from~(1) and~(2), of course. \end{proof}

\begin{definition}\label{definition:intrinsic.metric} Let $R$ be an irreducible, continuous ring. Then we let \begin{displaymath}
	d_{R} \defeq d_{\rho_{R}} \colon \, R \times R \, \longrightarrow \, [0,1], \quad (a,b) \, \longmapsto \, \rho_{R}(a-b)
\end{displaymath} denote the metric associated to $\rho_{R}$ via Lemma~\ref{lemma:rank.metric}. We call $R$ \emph{discrete} if the topology on $R$ generated by $d_{R}$ is discrete. \end{definition}

\begin{remark}\label{remark:range.2} Let $R$ be an irreducible, continuous ring. By Theorem~\ref{theorem:continuous.rank.rings}(2) and Remark~\ref{remark:range}, it follows that $R$ is non-discrete if and only if $\rho_{R}(R) = [0,1]$. \end{remark}

With the following lemma, we recollect some basic results concerning rank functions from the literature: for instance, see~\cite[Lemma~16.2(c)]{GoodearlBook} for the first, and~\cite[Corollary~6(1)]{Handelman76} or~\cite[Proposition~16.11(b)]{GoodearlBook} for the second item of Lemma~\ref{lemma:rank.functions.vs.units}. For the sake of convenience, let us recall that unital ring $R$ is said to be \emph{directly finite} (or \emph{Dedekind-finite}) if \begin{displaymath}
\forall a,b \in R \colon \quad ab = 1 \ \Longrightarrow \ ba=1 .
\end{displaymath}

\begin{lem}[\cite{Handelman76,GoodearlBook}]\label{lemma:rank.functions.vs.units} Let $R$ be a regular ring admitting a rank function $\rho \colon R \to [0,1]$. Then the following hold. \begin{itemize}
	\item[$(1)$] If $e \in \Ed(R)$ and $\rho(e) = 1$, then $e=1$.
	\item[$(2)$] $R$ is directly finite.
	\item[$(3)$] $\GL(R) = \{ a \in R \mid \rho(a) = 1 \}$.
	\item[$(4)$] $\GL(R)$ is closed in $R$ with respect to $d_{\rho}$.
\end{itemize}\end{lem}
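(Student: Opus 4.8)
The plan is to prove the four items in the stated order, since each builds on its predecessors, and all of them follow quickly from the axioms of a (pseudo-)rank function together with regularity. For $(1)$, given $e \in \Ed(R)$ with $\rho(e) = 1$, I would observe that $e \leq 1$ in $(\Ed(R),\leq)$, so Lemma~\ref{lemma:rank.estimates}(2) applied to the pair $e \leq 1$ gives $\rho(1-e) = \rho(1) - \rho(e) = 0$; since $\rho$ is a genuine rank function (not merely a pseudo-rank function), it vanishes only at $0$, whence $1 - e = 0$, i.e.\ $e = 1$.

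For $(2)$, suppose $a,b \in R$ with $ab = 1$. The crux is that right multiplication by $a$ preserves $\rho$: for every $x \in R$, submultiplicativity yields $\rho(xa) \leq \rho(x)$, while $\rho(x) = \rho(x(ab)) = \rho((xa)b) \leq \rho(xa)$, so $\rho(xa) = \rho(x)$. Taking $x = 1$, together with $\rho(ab) = \rho(1) = 1 \leq \rho(b)$, gives $\rho(b) = 1$; taking $x = b$ then gives $\rho(ba) = \rho(b) = 1$. Since $ba \in \Ed(R)$ because $(ba)(ba) = b(ab)a = ba$, item $(1)$ forces $ba = 1$, which is direct finiteness.

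For $(3)$, the inclusion $\GL(R) \subseteq \{a \in R \mid \rho(a) = 1\}$ is immediate from $1 = \rho(ab) \leq \rho(a)$ whenever $ab = ba = 1$. Conversely, if $\rho(a) = 1$, then regularity of $R$ provides $b$ with $aba = a$, so $ab, ba \in \Ed(R)$, and from $a = (ab)a$ and $a = a(ba)$ one gets $\rho(a) \leq \rho(ab)$ and $\rho(a) \leq \rho(ba)$; thus both idempotents have rank $1$, and item $(1)$ applied twice gives $ab = ba = 1$, i.e.\ $a \in \GL(R)$. Finally, $(4)$ follows formally: by $(3)$ we have $\GL(R) = \rho^{-1}(\{1\})$, and Lemma~\ref{lemma:rank.metric}(3) shows $\rho \colon (R,d_\rho) \to [0,1]$ is $1$-Lipschitz, hence continuous, so $\GL(R)$ is the preimage of the closed set $\{1\}$ and therefore closed in $R$ with respect to $d_\rho$.

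The only step that goes beyond routine checking against the definitions is the rank-preservation identity $\rho(xa) = \rho(x)$ used for $(2)$; once that is noticed, I expect no genuine obstacle in any part of the argument.
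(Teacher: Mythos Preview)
Your proof is correct and follows essentially the same approach as the paper's. The only notable difference is in~(2): where you establish the auxiliary identity $\rho(xa)=\rho(x)$ and then specialize to $x=b$, the paper observes more directly that $1=\rho(abab)\leq\rho(ba)$; similarly, in~(3) the paper shows only $ab=1$ via~(1) and then invokes~(2), whereas you apply~(1) to both $ab$ and $ba$ and bypass~(2).
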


\begin{proof} (1) If $e \in \Ed(R)$ and $\rho(e) = 1$, then $e \perp (1-e) \in \Ed(R)$ by Remark~\ref{remark:idempotent.difference}, hence $1 = \rho(1) = \rho(e+(1-e)) = \rho(e)+\rho(1-e) = 1 + \rho(1-e)$ and thus $\rho(1-e) = 0$, wherefore $1-e=0$, i.e., $e=1$.

(2) Let $a,b \in R$ with $ab=1$. Clearly, $ba \in \Ed(R)$. Furthermore, \begin{displaymath}
	1 \, = \, \rho(1) \, = \, \rho(abab) \, \leq \, \rho(ba)
\end{displaymath} and hence $\rho(ba) = 1$, which by~(1) implies that $ba=1$.

(3) Evidently, if $a \in \GL(R)$, then there exists $b \in R$ with $ab=1$, wherefore $1=\rho(1) = \rho(ab) \leq \rho(a)$ and thus $\rho(a) =1$. In order to verify the converse, let $a \in R$ such that $\rho(a)=1$. As $R$ is regular, there exists $b \in R$ such that $aba = a$. Since therefore both $ab \in \Ed(R)$ and $1=\rho(a) =\rho(aba) \leq \rho(ab)$, thus $\rho(ab) = 1$, item~(1) asserts that $ab=1$. Hence, $a \in \GL(R)$ by~(2). 

(4) As Lemma~\ref{lemma:rank.metric}(3) asserts that $\rho \colon R \to [0,1]$ is $1$-Lipschitz, hence continuous, with respect to $d_{\rho}$, this follows from~(3). \end{proof}

Given a unital ring $R$, let us now turn to the relation between (maximal) chains in $(\lat(R),{\subseteq})$ and those in $(\Ed(R),{\leq})$.

\begin{definition} Let $R$ be a unital ring. A \emph{nest}\footnote{This terminology has been coined by Ringrose~\cite{ringrose} in the realm of operator algebras.} in $R$ is a chain in $(\Ed(R),{\leq})$. Let $\Nest (R)$ denote the set of all nests in $R$, and let $\Nestmax (R) \defeq \Max (\Nest (R),{\subseteq})$. By a \emph{flag} over $R$, we mean a chain in $(\lat(R),{\subseteq})$. The set of all flags over $R$ will be denoted by $\Flag (R)$. Finally, we let $\Flagmax (R) \defeq \Max (\Flag (R),{\subseteq})$. \end{definition}

\begin{remark}\label{remark:lambda.map} Let $R$ be a unital ring and let $E \in \Nest (R)$. \begin{itemize}
	\item[$(1)$] The map $\Lambda \colon (\Ed(R),{\leq}) \to (\lat(R),{\subseteq}), \, e \mapsto eR$ is easily seen to be monotone. In particular, $\Lambda (E) = \{ eR \mid e \in E \}$ is a flag over $R$.
	\item[$(2)$] For any two elements $e,f \in \Ed(R)$ with $e\leq f$ and $eR = fR$, it follows that $e = ef = f$. Hence, the restriction $\Lambda\vert_{E}$ is injective.
\end{itemize} \end{remark}

\begin{lem}\label{lemma:maximal.nests} Let $R$ be a complete rank ring and let $F \in \Flag(R)$. If $E$ is a maximal chain in $(\Lambda^{-1}(F),{\leq})$, then $\Lambda (E) = F$. \end{lem}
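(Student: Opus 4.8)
The plan is to argue by contradiction. Assuming $\Lambda(E) \neq F$, I pick $I \in F \setminus \Lambda(E)$ and partition $E$ as $E_{0} \defeq \{ e \in E \mid eR \subsetneq I \}$ and $E_{1} \defeq \{ e \in E \mid eR \supsetneq I \}$: for $e \in E$ the right ideal $eR$ lies in the chain $F$ and differs from $I$ (else $I \in \Lambda(E)$), so exactly one of the two cases occurs, and $eR \subsetneq I \subsetneq e'R$ whenever $e \in E_{0}$ and $e' \in E_{1}$. The goal is to produce $f \in \Ed(R)$ with $fR = I$ such that $E \cup \{ f \}$ is again a chain in $(\Ed(R),{\leq})$; since $f \notin E$ (its ideal is not in $\Lambda(E)$) while $f \in \Lambda^{-1}(F)$, this contradicts the maximality of $E$.

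To build $f$ I would first produce idempotents $e_{0},e_{1}$ that sandwich it \emph{in the idempotent order}: $e_{0},e_{1} \in \Ed(R)$ with $e_{0} \leq e_{1}$, $e_{0}R \subseteq I \subseteq e_{1}R$, $e \leq e_{0}$ for every $e \in E_{0}$, and $e_{1} \leq e$ for every $e \in E_{1}$. Given these, Lemma~\ref{lemma:intermediate.idempotent} supplies $f \in \Ed(R)$ with $e_{0} \leq f \leq e_{1}$ and $fR = I$, and then $e \leq e_{0} \leq f$ for $e \in E_{0}$ and $f \leq e_{1} \leq e$ for $e \in E_{1}$, so $E \cup \{ f \}$ is a chain as required. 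I fix a complete rank function $\rho$ on $R$; by Remark~\ref{remark:topological.ring} and Lemma~\ref{lemma:rank.metric} the metric $d_{\rho}$ makes $R$ a complete (Hausdorff) topological ring in which $\Ed(R)$ is closed (Lemma~\ref{lemma:topological.semigroups}). Along the chain $E_{0}$ the function $\rho$ is strictly monotone: if $e \leq e'$ in $E_{0}$ with $\rho(e) = \rho(e')$, then $e' - e \in \Ed(R)$ and $\rho(e'-e) = \rho(e') - \rho(e) = 0$ by Remark~\ref{remark:idempotent.difference} and Lemma~\ref{lemma:rank.estimates}(2), hence $e = e'$. So I choose an increasing sequence $(e_{n})$ in $E_{0}$ with $\rho(e_{n}) \to \sup\rho(E_{0})$ (replacing an approximating sequence by its running maxima, which stay in the chain), and note that for $m \leq n$ orthogonality of $e_{n}-e_{m}$ to $e_{m}$ gives $d_{\rho}(e_{m},e_{n}) = \rho(e_{n}-e_{m}) = \rho(e_{n}) - \rho(e_{m})$; hence $(e_{n})$ is Cauchy and converges to some $e_{0} \in \Ed(R)$. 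Symmetrically, a decreasing sequence $(g_{n})$ in $E_{1}$ with $\rho(g_{n}) \to \inf\rho(E_{1})$ is Cauchy with limit $e_{1} \in \Ed(R)$. If $E_{0} = \emptyset$ I take $e_{0} \defeq 0$, and if $E_{1} = \emptyset$ I take $e_{1} \defeq 1$.

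Verifying the four properties is where joint continuity of multiplication does the work. For $e \in E_{0}$: either $e \leq e_{n}$ for some $n$, and then passing to the limit in $e e_{m} = e_{m} e = e$ (for $m \geq n$) gives $e e_{0} = e_{0} e = e$, i.e., $e \leq e_{0}$; or $e_{n} \leq e$ for all $n$, which forces $\rho(e) = \sup\rho(E_{0})$ and $d_{\rho}(e_{n},e) \to 0$, so $e = e_{0}$. For $e \in E_{1}$ the comparable pair $e_{n},e$ satisfies $e_{n}R \subsetneq I \subsetneq eR$, which excludes $e \leq e_{n}$, so $e_{n} \leq e$ for all $n$; the limit of $e_{n} e = e e_{n} = e_{n}$ gives $e_{0} \leq e$, hence $e_{0} \leq g_{n}$ for all $n$ and, taking the limit once more, $e_{0} \leq e_{1}$. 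The mirror-image arguments on the decreasing side give $e_{1} \leq e$ for all $e \in E_{1}$. Finally, each $e_{n}$ lies in $I$, which is $d_{\rho}$-closed (it equals $\ker(x \mapsto (1-e)x)$ for an idempotent $e$ with $eR = I$, via Remark~\ref{remark:regular}), so $e_{0} \in I$, i.e., $e_{0}R \subseteq I$; and each $g_{n}$ fixes every $x \in I \subseteq g_{n}R$ from the left, so by continuity $e_{1}x = x$ for all $x \in I$, i.e., $I \subseteq e_{1}R$. The cases in which $E_{0}$ or $E_{1}$ is empty are immediate.

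I expect the crux to be the gap between the idempotent order $\leq$ and inclusion of right ideals: the relation $e_{0}R \subseteq I \subseteq e_{1}R$ by itself is useless here, since $eR \subseteq fR$ does not imply $e \leq f$. This is precisely why I construct $e_{0}$ and $e_{1}$ as genuine metric limits of monotone sequences \emph{inside} $E_{0}$ and $E_{1}$ — rather than as idempotent generators of $\bigvee\Lambda(E_{0})$ and $\bigwedge\Lambda(E_{1})$ — so as to retain the idempotent-order comparabilities with the members of $E$, and it is exactly at this point that completeness of the rank ring is indispensable.
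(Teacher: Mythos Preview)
Your proof is correct and follows essentially the same strategy as the paper: build $e_{0}$ and $e_{1}$ as $d_{\rho}$-limits of monotone Cauchy sequences chosen from $E_{0}$ and $E_{1}$, then invoke Lemma~\ref{lemma:intermediate.idempotent} to obtain the interpolating idempotent $f$. The only notable differences are in the verification details: the paper handles all comparability relations at once via Lemma~\ref{lemma:topological.semigroups}(2) (the closure of a chain of idempotents is again a chain), and it establishes $e_{0}R \subseteq I \subseteq e_{1}R$ using the metric $d_{\delta_{\rho}}$ on $\lat(R)$ together with the Lipschitz continuity of the lattice operations (Lemma~\ref{lemma:birkhoff}(2) and Lemma~\ref{lemma:from.rank.to.dimension}(3)), whereas you argue these points directly from closedness of $I$ and continuity of multiplication.
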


\begin{proof} Suppose that $\rho \colon R \to [0,1]$ is a complete rank function. Let $E$ be a maximal chain in $(\Lambda^{-1}(F),{\leq})$. To show that $\Lambda (E) = F$, let us consider any element $I \in F$. Define \begin{align*}
	E_{0} \, \defeq \, \{ e \in E \mid eR \subseteq I \}, \qquad  &t_{0} \, \defeq \, \sup \{ \rho(e) \mid e\in E_{0}\} , \\
	E_{1} \, \defeq \, \{ e \in E \mid I \subseteq eR \} , \qquad &t_{1} \, \defeq \, \inf \{ \rho(e) \mid e\in E_{1}\} .
\end{align*} Since $E$ is a nest in $R$, we find sequences $(e_{0,n})_{n \in \N} \in E^{\N}$ and $(e_{1,n})_{n \in \N} \in E^{\N}$ such that, for every $n \in \N$, \begin{itemize}
	\item[$(1)$] $e_{0,n} \in E_{0}$ and $e_{1,n} \in E_{1}$,
	\item[$(2)$] $e_{0,n} \leq e_{0,n+1}$ and $e_{1,n+1} \leq e_{1,n}$,
	\item[$(3)$] $\rho(e_{0,n}) \geq t_{0}-\tfrac{1}{n+1}$ and $\rho(e_{1,n}) \leq t_{1}+\tfrac{1}{n+1}$.
\end{itemize} For each $i \in \{ 0,1\}$ and for any $m,n \in \N$, we infer that \begin{displaymath}
	d_{\rho}(e_{i,m},e_{i,n}) \, = \, \rho(e_{i,m}-e_{i,n}) \, \stackrel{(2)+\ref{lemma:rank.estimates}(2)}{=} \, \vert \rho(e_{i,m}) - \rho(e_{i,n}) \vert \, \stackrel{(1)+(3)}{\leq} \, \max \left\{ \tfrac{1}{m+1}, \tfrac{1}{n+1} \right\} .
\end{displaymath} Therefore, both $(e_{0,n})_{n \in \N}$ and $(e_{1,n})_{n \in \N}$ are Cauchy sequences in $(R,d_{\rho})$. Completeness of the metric space $(R,d_{\rho})$ thus asserts the existence of the limits $e_{0} \defeq \lim\nolimits_{n \to \infty} e_{0,n} \in R$ and $e_{1} \defeq \lim\nolimits_{n \to \infty} e_{1,n} \in R$. By Lemma~\ref{lemma:rank.metric}(1)$+$(2) and Lemma~\ref{lemma:topological.semigroups}(1), it follows that $\{ e_{0},e_{1}\}\subseteq \Ed(R)$. For each $i \in \{ 0,1\}$, \begin{displaymath}
	\rho(e_{i}) \, \stackrel{\ref{lemma:rank.metric}(3)}{=} \, \lim\nolimits_{n\to \infty} \rho(e_{i,n}) \, \stackrel{(1)+(3)}{=} \, t_{i} .
\end{displaymath} Since $\rho\vert_{\Ed(R)} \colon (\Ed(R),\leq) \to ([0,1],\leq)$ is monotone due to Lemma~\ref{lemma:rank.estimates}(2) and we have $E \cup \{ e_{0},e_{1}\} \in \Nest (R)$ due to Lemma~\ref{lemma:rank.metric}(1)$+$(2) and Lemma~\ref{lemma:topological.semigroups}(2), we conclude that \begin{equation}\tag{4}\label{nest}
	(\forall f \in E_{0} \colon \, f \leq e_{0}), \qquad e_{0} \leq e_{1}, \qquad (\forall f \in E_{1} \colon \, e_{1} \leq f ) .
\end{equation} Furthermore, \begin{align*}
	e_{0}R \, \stackrel{\ref{lemma:from.rank.to.dimension}(3)}{=} \, \lim\nolimits_{n \to \infty} e_{0,n}R \, &= \, \lim\nolimits_{n \to \infty} I \cap e_{0,n}R \\
	& \stackrel{\ref{lemma:from.rank.to.dimension}+\ref{lemma:birkhoff}(2)}{=} \, I \cap \left( \lim\nolimits_{n \to \infty} e_{0,n}R\right) \, \stackrel{\ref{lemma:from.rank.to.dimension}(3)}{=} \, I \cap e_{0}R , \\
	e_{1}R \, \stackrel{\ref{lemma:from.rank.to.dimension}(3)}{=} \, \lim\nolimits_{n \to \infty} e_{1,n}R \, &= \, \lim\nolimits_{n \to \infty} I + e_{1,n}R \\
	& \stackrel{\ref{lemma:from.rank.to.dimension}+\ref{lemma:birkhoff}(2)}{=} \, I + \left( \lim\nolimits_{n \to \infty} e_{1,n}R\right) \, \stackrel{\ref{lemma:from.rank.to.dimension}(3)}{=} \, I + e_{1}R ,
\end{align*} i.e., $e_{0}R \subseteq I \subseteq e_{1}R$. By Lemma~\ref{lemma:intermediate.idempotent}, there exists $f \in \Ed(R)$ such that $e_{0} \leq f \leq e_{1}$ and $I = fR$. The former, combined with~\eqref{nest}, implies that $E \cup \{ f\} \in \Nest (R)$, while the latter then entails that $\Lambda (E \cup \{ f \}) \subseteq F$. Hence, maximality of $E$ asserts that $E = E \cup \{ f \}$, that is, $f \in E$. Thus, $I = fR \in \Lambda (E)$ as desired. \end{proof}

\begin{prop}\label{proposition:flags.vs.nests} Let $R$ be a complete rank ring. Then \begin{displaymath}
	\Nest(R) \, \longrightarrow \, \Flag(R), \quad E \, \longmapsto \, \Lambda (E)
\end{displaymath} is a well-defined surjection. \end{prop}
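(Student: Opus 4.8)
The plan is to split the single word ``surjection'' into its two component claims and dispatch them in turn: first that $E \mapsto \Lambda(E)$ is a well-defined map $\Nest(R) \to \Flag(R)$, then that it is onto. Well-definedness is immediate from Remark~\ref{remark:lambda.map}(1): the map $\Lambda \colon (\Ed(R),{\leq}) \to (\lat(R),{\subseteq}),\ e \mapsto eR$, is monotone, hence sends a chain to a chain, so $\Lambda(E) = \{ eR \mid e \in E \} \in \Flag(R)$ for every $E \in \Nest(R)$.

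For surjectivity, fix $F \in \Flag(R)$; the idea is that the ``largest'' nest lying over $F$ should already recover all of $F$. Concretely, I would consider the partially ordered set $(\Lambda^{-1}(F),{\leq})$, where $\Lambda^{-1}(F) = \{ e \in \Ed(R) \mid eR \in F \}$, and use the Hausdorff maximal principle to extract a maximal chain $E$ of it; such a chain is by definition an element of $\Nest(R)$. Then Lemma~\ref{lemma:maximal.nests} applies verbatim and yields $\Lambda(E) = F$, which completes the argument. (The degenerate case $F = \emptyset$ is handled by $E = \emptyset$ and is in any case subsumed by Lemma~\ref{lemma:maximal.nests}.)

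I do not anticipate a genuine obstacle here: essentially all the substance has been front-loaded into Lemma~\ref{lemma:maximal.nests}, whose proof is the place where completeness of the rank ring is really used—via Cauchy sequences of idempotents converging to idempotents (using Lemma~\ref{lemma:topological.semigroups} and Lemma~\ref{lemma:rank.metric}) together with the interpolation statement Lemma~\ref{lemma:intermediate.idempotent}. The only points that warrant a word of care are purely formal: that $\Lambda^{-1}(F)$ is non-empty whenever $F$ is, which follows from regularity of $R$ through Remark~\ref{remark:regular} (every $I \in F$ has the form $eR$ with $e \in \Ed(R)$), and the trivial handling of the empty flag.
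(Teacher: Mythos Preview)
Your proposal is correct and matches the paper's proof essentially verbatim: well-definedness via Remark~\ref{remark:lambda.map}(1), then surjectivity by applying the Hausdorff maximal principle to $(\Lambda^{-1}(F),{\leq})$ and invoking Lemma~\ref{lemma:maximal.nests}. Your additional remarks on the empty flag and on non-emptiness of $\Lambda^{-1}(F)$ are harmless but not needed, since the Hausdorff maximal principle already handles the empty poset.
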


\begin{proof} This is an immediate consequence of Remark~\ref{remark:lambda.map}(1), Lemma~\ref{lemma:maximal.nests}, and the Hausdorff maximal principle. \end{proof}

\begin{thm}\label{theorem:maximal.nests} Let $R$ be a complete rank ring. Then \begin{displaymath}
	\Nestmax(R) \, \longrightarrow \, \Flagmax(R), \quad E \, \longmapsto \, \Lambda (E)
\end{displaymath} is a well-defined surjection. \end{thm}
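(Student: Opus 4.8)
The plan is to verify two things: first, that the map in question is well-defined, i.e., that it sends a maximal nest to a maximal flag; and second, that it is surjective, i.e., every maximal flag arises as $\Lambda(E)$ for some maximal nest $E$. Throughout I will freely use Proposition~\ref{proposition:flags.vs.nests}, which already tells me that $\Lambda$ restricts to a surjection $\Nest(R) \to \Flag(R)$, together with Lemma~\ref{lemma:maximal.nests} and Remark~\ref{remark:lambda.map}.

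For well-definedness, let $E \in \Nestmax(R)$ and set $F \defeq \Lambda(E) \in \Flag(R)$ (a flag by Remark~\ref{remark:lambda.map}(1)). I claim $F \in \Flagmax(R)$. Suppose $F'$ is a flag over $R$ with $F \subseteq F'$. By Proposition~\ref{proposition:flags.vs.nests} there is a nest $E' \in \Nest(R)$ with $\Lambda(E') = F'$; but this $E'$ need not contain $E$, so I cannot conclude directly. Instead I argue as follows: since $F \subseteq F'$ and $E \subseteq \Lambda^{-1}(F) \subseteq \Lambda^{-1}(F')$, the Hausdorff maximal principle gives a maximal chain $\tilde E$ in $(\Lambda^{-1}(F'),{\leq})$ with $E \subseteq \tilde E$. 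By Lemma~\ref{lemma:maximal.nests} applied to the flag $F'$, we get $\Lambda(\tilde E) = F'$. Now $\tilde E \in \Nest(R)$ and $E \subseteq \tilde E$, so maximality of $E$ forces $E = \tilde E$, whence $F = \Lambda(E) = \Lambda(\tilde E) = F'$. This shows $F$ is a maximal flag.

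For surjectivity, let $F \in \Flagmax(R)$. Using the Hausdorff maximal principle, pick a maximal chain $E$ in $(\Lambda^{-1}(F),{\leq})$; this $E$ is in particular a nest in $R$, and by Lemma~\ref{lemma:maximal.nests} we have $\Lambda(E) = F$. It remains to upgrade $E$ to a \emph{maximal} nest without changing its image. Extend $E$ to a maximal nest $E^{\ast} \in \Nestmax(R)$ (Hausdorff maximal principle again). By Remark~\ref{remark:lambda.map}(1), $\Lambda(E^{\ast})$ is a flag containing $\Lambda(E) = F$; by maximality of the flag $F$ we get $\Lambda(E^{\ast}) = F$. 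Hence $E^{\ast} \in \Nestmax(R)$ maps to $F$, proving surjectivity.

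The main obstacle is the well-definedness half: a maximal nest $E$ is maximal among \emph{all} nests, but its image flag $\Lambda(E)$ could a priori be enlarged in a way not realized by enlarging $E$ itself. The key device that resolves this is Lemma~\ref{lemma:maximal.nests} — it says that for a fixed flag $F'$, \emph{any} maximal chain in the fibre $(\Lambda^{-1}(F'),{\leq})$ already surjects onto $F'$; this lets me re-express a hypothetical enlargement of the flag as an enlargement of the nest within the fibre, contradicting maximality of $E$. (Injectivity of $\Lambda\vert_E$ from Remark~\ref{remark:lambda.map}(2) is not needed for the present statement, only surjectivity onto the two max-lattices.)
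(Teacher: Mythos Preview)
Your proof is correct and, for well-definedness, essentially identical to the paper's argument. For surjectivity there is a small but genuine difference worth noting: the paper, after constructing the maximal chain $E$ in $(\Lambda^{-1}(F),{\leq})$ with $\Lambda(E)=F$, shows directly that this $E$ is already a maximal nest, by taking any nest $E'\supseteq E$, observing $\Lambda(E')\supseteq F$ is a flag, invoking maximality of $F$ to get $\Lambda(E')=F=\Lambda(E)$, and then using the injectivity of $\Lambda\vert_{E'}$ from Remark~\ref{remark:lambda.map}(2) to conclude $E=E'$. You instead extend $E$ to some $E^{\ast}\in\Nestmax(R)$ and check that $\Lambda(E^{\ast})=F$ via maximality of $F$. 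Both arguments are clean; yours avoids Remark~\ref{remark:lambda.map}(2) entirely (as you correctly observe), at the small cost of a second appeal to the Hausdorff maximal principle, while the paper's version pins down the specific maximal nest produced by the construction.
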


\begin{proof} To establish well-definedness, we need to show that $\Lambda (E) \in \Flagmax(R)$ for every $E \in \Nestmax(R)$. To this end, let $E \in \Nestmax(R)$. Clearly, $\Lambda (E) \in \Flag (R)$ by Remark~\ref{remark:lambda.map}(1). Suppose that $F \in \Flag (R)$ with $\Lambda (E) \subseteq F$, i.e., $E \subseteq \Lambda^{-1}(F)$. Due to the Hausdorff maximal principle, there exists a maximal chain $E'$ in $(\Lambda^{-1}(F),{\leq})$ with $E \subseteq E'$. By Lemma~\ref{lemma:maximal.nests}, thus $\Lambda (E') = F$. Moreover, as $E'$ is a nest in $R$, maximality of $E$ asserts that $E=E'$, hence $\Lambda (E) = \Lambda (E') = F$. This proves that $\Lambda (E) \in \Flagmax (R)$.

To prove surjectivity, consider any $F \in \Flagmax(R)$. By the Hausdorff maximal principle, there exists a maximal chain $E$ in $(\Lambda^{-1}(F),{\leq})$. Since $\Lambda (E) = F$ due to Lemma~\ref{lemma:maximal.nests}, it remains to verify that $E$ is a maximal nest in $R$. For this purpose, let $E' \in \Nest (R)$ with $E \subseteq E'$. Then $F = \Lambda (E) \subseteq \Lambda (E') \in \Flag (R)$, thus $\Lambda (E) = F = \Lambda (E')$ by maximality of $F$, whence $E=E'$ due to Remark~\ref{remark:lambda.map}(2). This entails that $E \in \Nestmax(R)$, as desired. \end{proof}

\begin{cor}\label{corollary:maximal.nests} Let $R$ be an irreducible, continuous ring. A nest $E$ in $R$ is maximal if and only if $\rho_{R}(E) = \rho_{R}(R)$. \end{cor}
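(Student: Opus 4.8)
The plan is to deduce everything from Theorem~\ref{theorem:maximal.flags} by transporting it along the nest-to-flag map $\Lambda$. First I would collect the structural facts needed. Since $R$ is an irreducible, continuous ring, it is a complete rank ring by Theorem~\ref{theorem:continuous.rank.rings}(2), so Theorem~\ref{theorem:maximal.nests} applies and sends maximal nests to maximal flags; moreover $\lat(R)$ is an irreducible continuous geometry by Theorem~\ref{theorem:irreducibility} together with the definition of a continuous ring, so Theorem~\ref{theorem:maximal.flags} applies to it. Finally, by Theorem~\ref{theorem:continuous.rank.rings}(2) one has $\rho_{R}(a) = \delta_{\lat(R)}(aR)$ for every $a \in R$; consequently $\delta_{\lat(R)}(\Lambda(E)) = \rho_{R}(E)$ for every nest $E$ (recall $\Lambda(e) = eR$), and likewise $\delta_{\lat(R)}(\lat(R)) = \rho_{R}(R)$.

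For the forward direction ($\Longrightarrow$), suppose $E$ is a maximal nest in $R$. By Theorem~\ref{theorem:maximal.nests}, $\Lambda(E)$ is a maximal flag over $R$, that is, a maximal chain in $\lat(R)$, so Theorem~\ref{theorem:maximal.flags} gives $\delta_{\lat(R)}(\Lambda(E)) = \delta_{\lat(R)}(\lat(R))$; by the identities above, this is exactly $\rho_{R}(E) = \rho_{R}(R)$.

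For the converse ($\Longleftarrow$), suppose $E \in \Nest(R)$ satisfies $\rho_{R}(E) = \rho_{R}(R)$. Then $\Lambda(E)$ is a chain in $\lat(R)$ with $\delta_{\lat(R)}(\Lambda(E)) = \rho_{R}(E) = \rho_{R}(R) = \delta_{\lat(R)}(\lat(R))$, hence a maximal chain in $\lat(R)$ by Theorem~\ref{theorem:maximal.flags}. It then remains to promote this to maximality of $E$ among nests: given $E' \in \Nest(R)$ with $E \subseteq E'$, one has $\Lambda(E) \subseteq \Lambda(E')$, and maximality of the flag $\Lambda(E)$ forces $\Lambda(E) = \Lambda(E')$; since $E'$ is a nest, the restriction $\Lambda\vert_{E'}$ is injective by Remark~\ref{remark:lambda.map}(2), and combining this with $E \subseteq E'$ yields $E = E'$. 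Thus $E$ is maximal.

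I expect no genuine difficulty here, as the whole argument is a transfer along $\Lambda$. The one spot that calls for care is the converse direction: knowing that the \emph{image} $\Lambda(E)$ is a maximal chain in $\lat(R)$ does not by itself say that $E$ is a maximal nest, because $\Lambda$ need not be injective on all of $\Ed(R)$. What rescues the argument is that $\Lambda$ \emph{is} injective on each individual nest (Remark~\ref{remark:lambda.map}(2)), which is precisely why two nests with the same image, one contained in the other, must coincide.
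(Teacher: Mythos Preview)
Your proof is correct and follows essentially the same route as the paper: both transport the characterization of maximal chains (Theorem~\ref{theorem:maximal.flags}) along $\Lambda$, using Theorem~\ref{theorem:continuous.rank.rings}(2) to identify $\rho_{R}$ with $\delta_{\lat(R)}\circ\Lambda$, Theorem~\ref{theorem:maximal.nests} for the forward direction, and the injectivity of $\Lambda$ on nests (Remark~\ref{remark:lambda.map}(2)) for the converse. The paper simply compresses these steps into a single chain of equivalences, whereas you spell out the converse argument explicitly.
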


\begin{proof} Recall that $R$ is a complete rank ring thanks to Theorem~\ref{theorem:continuous.rank.rings}(2). For every $E \in \Nest(R)$, we conclude that \begin{align*}
	E \in \Nestmax (R) \ \ &\stackrel{\ref{remark:lambda.map}(2)+\ref{theorem:maximal.nests}}{\Longleftrightarrow} \ \ \Lambda (E) \in \Flagmax (R) \\
	&\stackrel{\ref{theorem:irreducibility}+\ref{theorem:maximal.flags}}{\Longleftrightarrow} \ \ \delta_{\lat(R)}(\Lambda (E)) = \delta_{\lat(R)}(\lat(R)) \ \ \stackrel{\ref{theorem:continuous.rank.rings}(2)}{\Longleftrightarrow} \ \ \rho_{R}(E) = \rho_{R}(R) . \qedhere
\end{align*} \end{proof}

\begin{cor}\label{corollary:maximal.nests.2} Let $R$ be a non-discrete irreducible, continuous ring. \begin{itemize}
	\item[$(1)$] A nest $E$ in $R$ is maximal if and only if $\rho_{R}(E) = [0,1]$.
	\item[$(2)$] If $E \in \Nestmax (R)$, then $\rho_{R}\vert_{E} \colon (E,{\leq}) \to ([0,1],{\leq})$ is an isomorphism of linearly ordered sets.
\end{itemize} \end{cor}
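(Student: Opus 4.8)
The plan is to obtain both items almost immediately from the results already in place, so the work is bookkeeping rather than new argument. Recall first that, by Theorem~\ref{theorem:continuous.rank.rings}(2), the ring $R$ is a complete rank ring and $\rho_{R}$ is its unique rank function; in particular $\rho_{R}$ is a genuine rank function, i.e.\ $\rho_{R}(a)=0$ holds only for $a=0$. This last point is the only thing one must be careful to invoke.

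For~(1): Corollary~\ref{corollary:maximal.nests} already characterizes the maximal nests in $R$ as exactly those nests $E$ with $\rho_{R}(E)=\rho_{R}(R)$. Since $R$ is non-discrete, Remark~\ref{remark:range.2} identifies $\rho_{R}(R)$ with $[0,1]$. Combining the two gives that $E\in\Nestmax(R)$ if and only if $\rho_{R}(E)=[0,1]$, which is precisely assertion~(1). There is nothing delicate here.

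For~(2): Fix $E\in\Nestmax(R)$. I would verify three properties of the map $\rho_{R}|_{E}\colon(E,{\leq})\to([0,1],{\leq})$. First, it is order-preserving: for $e\leq f$ in $E$ one has $f-e\in\Ed(R)$ with $e\perp(f-e)$ by Remark~\ref{remark:idempotent.difference}, whence Lemma~\ref{lemma:rank.estimates}(2) gives $\rho_{R}(f)-\rho_{R}(e)=\rho_{R}(f-e)\geq 0$. Second, it is in fact strictly order-preserving, hence injective: if $e<f$ in $E$ then $f-e\neq 0$, so strict positivity of the rank function $\rho_{R}$ yields $\rho_{R}(f-e)>0$ and thus $\rho_{R}(e)<\rho_{R}(f)$; here we use that $E$, being a nest, is a chain, so any two distinct elements are comparable. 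Third, $\rho_{R}|_{E}$ is surjective onto $[0,1]$ by part~(1). Since a strictly order-preserving bijection between linearly ordered sets automatically has a strictly order-preserving inverse (a two-line check, using comparability and injectivity), $\rho_{R}|_{E}$ is an isomorphism of linearly ordered sets.

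As for the main obstacle: there is no substantive one, since the heavy lifting was already done in Corollary~\ref{corollary:maximal.nests} (hence ultimately in Theorem~\ref{theorem:maximal.flags} together with the coordinatization results). The single point not to overlook is that $\rho_{R}$ is a rank function and not merely a pseudo-rank function—this is exactly what Theorem~\ref{theorem:continuous.rank.rings}(2) provides and what makes the injectivity step in~(2) valid.
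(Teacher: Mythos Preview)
Your proof is correct and follows essentially the same route as the paper. The only cosmetic difference is in item~(2): the paper obtains monotonicity and injectivity of $\rho_{R}|_{E}$ by factoring it as $\delta_{\lat(R)}\circ\Lambda|_{E}$ and invoking Remark~\ref{remark:lambda.map} together with strict positivity of the dimension function, whereas you argue directly via Lemma~\ref{lemma:rank.estimates}(2) and strict positivity of $\rho_{R}$; these are equivalent one-line observations.
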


\begin{proof} (1) This follows by Corollary~\ref{corollary:maximal.nests} and Remark~\ref{remark:range.2}.

(2) Let $E \in \Nestmax (R)$. Since $\Lambda\vert_{E} \colon E \to \lat(R), \, e \mapsto eR$ is monotone and injective by Remark~\ref{remark:lambda.map} and $\delta_{\lat(R)}$ is strictly positive, it follows that \begin{displaymath}
	\rho_{R}\vert_{E} \, \stackrel{\ref{theorem:continuous.rank.rings}(2)}{=} \, {\delta_{\lat(R)}} \circ {\Lambda\vert_{E}}
\end{displaymath} is monotone and injective. Combined with~(1), this implies the claim. \end{proof}

\section{Algebraic elements in continuous rings}\label{section:algebraic.elements}

The purpose of this section is to recall another remarkable result by von Neumann~\cite{VonNeumann37,Halperin62} (see Theorem~\ref{theorem:neumann} and Corollary~\ref{corollary:neumann} below) and deduce a slight variation thereof (Proposition~\ref{proposition:algebraic.units}). For this purpose, let us establish some notation. If $R$ is any commutative unital ring, then we let \begin{displaymath}
	\Irr (R) \, \defeq \, \{ a \in R\setminus (\GL(R) \cup \{ 0\}) \mid \forall b,c \in R\colon \, a = bc \, \Longrightarrow \, \{ b,c\} \cap \GL(R) \ne \emptyset \} 
\end{displaymath} denote the set of all \emph{irreducible} elements of $R$.

Let now $K$ be a field and let $R$ be any unital $K$-algebra\footnote{In this article, by definition, any algebra over a field is required to be associative.}. Denote by $K[X]$ the polynomial ring over $K$ and by $\deg \colon K[X]\setminus \{ 0 \} \to \N$ the usual degree function. Moreover, recall that, for each $a \in R$, the induced evaluation map \begin{displaymath}
	K[X] \, \longrightarrow \, R, \quad p = \sum\nolimits_{i=0}^{n} k_{i}X^{i} \, \longmapsto \, p^{R}(a) \defeq \sum\nolimits_{i=0}^{n} k_{i}a^{i}
\end{displaymath} is a unital ring homomorphism. An element $a \in R$ is called \emph{algebraic over $K$} if there exists $p \in K[X]\setminus \{ 0 \}$ such that $p^{R}(a) = 0$. An element $a \in R$ is said to be \emph{purely transcendental over $K$} if $p^{R}(a) \in \GL(R)$ for every $p \in K[X]\setminus \{ 0 \}$. For every $a \in R$, we let \begin{displaymath}
	\Qalg_{R}(a) \, \defeq \, \left\{ p \in \Irr (K[X]) \left\vert \, p^{R}(a) \notin \GL(R) \right\} \right.
\end{displaymath} and \begin{displaymath}
	\Qalg_{R}^{\ast}(a) \, \defeq \, \{ p_{1}\cdots p_{n} \mid n \in \N_{>0}, \, p_{1},\ldots,p_{n} \in \Qalg_{R}(a) \} ,
\end{displaymath} i.e., $\Qalg_{R}^{\ast}(a)$ is the multiplicative subsemigroup of $K[X]$ generated by $\Qalg_{R}(a)$. Since $K[X]$ constitutes a unique factorization domain, an element $a \in R$ is purely transcendental if and only if $\Qalg_{R}(a) = \emptyset$.

\begin{remark} Let $R$ be an irreducible, continuous ring. Then $\cent(R)$ is a field due to Theorem~\ref{theorem:irreducibility}, whence $R$ naturally constitutes a unital $\cent(R)$-algebra. Thanks to Lemma~\ref{lemma:rank.functions.vs.units}(3), the following hold: \begin{itemize}
	\item[$(1)$] $\Qalg_{R}(a) = \left\{ p \in \Irr (\cent(R)[X]) \left\vert \, \rho_{R}(p^{R}(a)) < 1 \right\} \right.$ for every $a \in R$.
	\item[$(2)$] An element $a \in R$ is purely transcendental over $\cent(R)$ if and only if $\rho_{R}(p^{R}(a)) = 1$ for every (irreducible) $p \in \cent(R)[X]\setminus \{ 0 \}$.
\end{itemize} \end{remark}

While the following result due to von Neumann was announced in~\cite{VonNeumann37}, the details of von Neumann's unpublished proof were edited and made available to the public later by Halperin~\cite{Halperin62}.

\begin{thm}[\cite{VonNeumann37}; \cite{Halperin62}, Theorem~8.1]\label{theorem:neumann} Suppose that $R$ is a non-discrete irreducible, continuous ring. Let $a \in R$ and $\epsilon \in \R_{>0}$. \begin{itemize}
	\item[$(1)$] If $a$ is not purely transcendental over $\cent(R)$, then there exist $p \in \Qalg^{\ast}_{R}(a)$ and $b \in R$ such that \begin{displaymath}
			\qquad \rho_{R}(a-b) < \epsilon , \qquad p^{R}(b) = 0.
		\end{displaymath}
	\item[$(2)$] If $a$ is purely transcendental over $\cent(R)$, then for every $p \in \cent(R)[X]\setminus \{ 0 \}$ with $\deg p > 1/\epsilon$ there exists $b \in R$ such that \begin{displaymath}
			\qquad \rho_{R}(a-b) < \epsilon , \qquad p^{R}(b) = 0.
		\end{displaymath}
\end{itemize} \end{thm}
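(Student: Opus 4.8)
The plan is to reconstruct von Neumann's argument (as edited by Halperin): everything reduces to a single ``companion matrix'' construction carried out inside a suitable corner of $R$, together with a rank estimate, and the two parts differ only in how one locates the piece of $R$ on which to perform it. The structural facts I will use — all drawn from Sections~\ref{section:continuous.geometries}--\ref{section:algebraic.elements} — are: a non-discrete irreducible, continuous ring $R$ has order $n$ for every $n\in\N_{>0}$, hence carries a full $n\times n$ system of matrix units and $R\cong M_n(e_1Re_1)$ for an idempotent $e_1$ with $\rho_R(e_1)=1/n$; every corner $eRe$ with $e\ne 0$ is again a non-discrete irreducible, continuous ring whose rank function is $\rho_R|_{eRe}$ rescaled by $\rho_R(e)^{-1}$; the rank metric $d_R$ is complete; and left multiplication on $R$ together with the lattice operations on $\lat(R)$ are Lipschitz for $d_R$, so sups and infs of chains in $\lat(R)$ are continuous for $\rho_R$ (Proposition~\ref{proposition:dimension.continuity}, applied to the continuous geometry $\lat(R)$ via Theorems~\ref{theorem:irreducibility} and \ref{theorem:continuous.rank.rings}(2)).

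\emph{The common core.} Given a monic $p\in\cent(R)[X]$ of degree $d$ (monicity is free after dividing by the leading coefficient, a unit of the field $\cent(R)$; a scalar multiple of an irreducible polynomial is still irreducible, and $p^R(b)=0$ is unaffected), I want $b\in R$ with $p^R(b)=0$ that differs from $a$ only on a right submodule of rank $<\epsilon$. I look for an idempotent $e_1$ of rank $1/d$ such that $e_1R, ae_1R,\dots,a^{d-1}e_1R$ are independent with $\rho_R\!\left(\sum_{i=0}^{d-1}a^ie_1R\right)$ as close to $1$ as desired. Writing $N\defeq\sum_{i=0}^{d-1}a^ie_1R=fR$, these $d$ summands form a homogeneous basis of order $d$ for $fRf$ (for $0\le i\le d-2$ the left multiplication $a\colon a^ie_1R\to a^{i+1}e_1R$ is an isomorphism, equal ranks forcing trivial kernel since $\rho_R$ is faithful). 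I then let $b$ be the unique element of $R$ whose action on the module $R=\bigoplus_{i=0}^{d-1}a^ie_1R\oplus(1-f)R$ agrees with $a$ on $a^ie_1R$ for $i\le d-2$, sends $a^{d-1}e_1R$ into $\sum_j c_j a^je_1R$ according to the companion matrix of $p$, and acts on $(1-f)R$ as any root of $p$ in the corner $(1-f)R(1-f)$ — which exists, since that corner is again non-discrete irreducible continuous of order $d$ and hence contains a copy of the companion matrix of $p$. Then $b$ restricted to $fRf$ is the companion matrix of $p$ (whose coefficients are central, so $p$ annihilates it) and restricted to $(1-f)R(1-f)$ a root of $p$, whence $p^R(b)=0$; and $b-a$ vanishes on the first $d-1$ summands, so $\rho_R(a-b)\le\rho_R(a^{d-1}e_1R)+\rho_R((1-f)R)\le\tfrac1d\rho_R(fR)+(1-\rho_R(fR))$, which is $<\epsilon$ once $\rho_R(fR)$ is close enough to $1$ and $d>1/\epsilon$.

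\emph{Part (2).} Here $a$ is purely transcendental and $d=\deg p>1/\epsilon$ is prescribed, so it suffices to produce the ``cyclic idempotent'' $e_1$ above with $\rho_R\!\left(\sum_{i<d}a^ie_1R\right)$ arbitrarily close to $1$ (independence of the $a^ie_1R$ can then be restored by passing to sub-idempotents with negligible loss of rank, a standard refinement in the modular lattice $\lat(R)$). This is exactly where pure transcendence of $a$ over $\cent(R)$ is used: it rules out every nontrivial collapse among the iterates $a^ie_1R$, and one builds $e_1$ by a successive-approximation argument inside $\lat(R)$ — repeatedly enlarging a candidate so its span under $1,a,\dots,a^{d-1}$ grows, the point being that any proper such configuration leaves rank to be filled because $\rho_R$ is onto $[0,1]$ (Remark~\ref{remark:range.2}) and $a$ satisfies no algebraic relation. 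Proving this cyclic-idempotent lemma is the main obstacle of part (2).

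\emph{Part (1).} Now $a$ is not purely transcendental: fix an irreducible $p_0\in\cent(R)[X]$ with $x\defeq p_0^R(a)\notin\GL(R)$, so $\rho_R(x)<1$. Since $x$ commutes with $a$, the right annihilators $\{y: x^ky=0\}$ and the ideals $x^kR$ are $a$-invariant and monotone; taking sup resp.\ inf in $\lat(R)$ and using completeness of $d_R$ together with continuity of $\rho_R$ yields a Fitting-type splitting $R=N\oplus P$ into $a$-invariant submodules on which $x$ is ``locally nilpotent'' ($\rho_R(x^k|_N)\to 0$) resp.\ invertible, with $\rho_R(N)\ge 1-\rho_R(x)>0$. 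On the $a$-invariant submodule $\{y: x^ky=0\}$, whose rank tends to $\rho_R(N)$, $a$ is \emph{already} algebraic with annihilating polynomial $p_0^k\in\Qalg_R^{\ast}(a)$ (the inclusion into $\Qalg_R^{\ast}(a)$ holds because non-invertibility of the relevant evaluation on an invariant summand forces non-invertibility in $R$). One then recurses on $P$ (again non-discrete irreducible continuous after rescaling) and on the small leftover, peeling off $a$-invariant pieces on which $a$ is algebraic with annihilating polynomials in $\Qalg_R^{\ast}(a)$, and finishing any piece on which $a$ has become purely transcendental by invoking part (2) there with $p$ a high power of the product accumulated so far. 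Gluing, $b$ is defined to equal $a$ on the algebraic pieces and a root of that product $p\in\Qalg_R^{\ast}(a)$ on the remaining piece of rank $<\epsilon$, giving $p^R(b)=0$ and $\rho_R(a-b)<\epsilon$. The second main obstacle is precisely that this decomposition exhausts $R$ up to rank $<\epsilon$ — that the successively peeled (algebraic, or via part (2) controlled) pieces leave an arbitrarily small remainder — which, like part (2), rests on non-discreteness and the continuity of $\rho_R$.
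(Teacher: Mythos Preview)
The paper does not prove this theorem. It is stated as a result of von Neumann~\cite{VonNeumann37}, with the proof details published by Halperin~\cite{Halperin62}, and the paper simply cites it (see the sentence preceding the statement in Section~\ref{section:algebraic.elements}). There is therefore no proof in the paper to compare your proposal against.

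Your write-up is an honest outline of the von Neumann--Halperin strategy rather than a proof: you correctly identify the two genuine difficulties --- the existence of the cyclic idempotent in part~(2) and the exhaustion argument in part~(1) --- but you do not resolve either one, and the phrases ``a standard refinement'' and ``one then recurses \ldots\ and on the small leftover'' are doing a lot of unearned work. In particular, the construction of $e_1$ with $\rho_R\bigl(\sum_{i<d}a^ie_1R\bigr)$ close to $1$ and the $a^ie_1R$ independent is the heart of the matter and requires a careful inductive argument in $\lat(R)$ that you have not supplied; likewise the Fitting-type decomposition and the recursion in part~(1) need to be made precise (why does the remainder shrink geometrically?). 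If you want to actually prove the theorem rather than cite it, you will need to fill these in from Halperin's paper; as written, this is a plausible roadmap but not a proof.
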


\begin{cor}[\cite{VonNeumann37,Halperin62}]\label{corollary:neumann} If $R$ is a non-discrete irreducible, continuous ring, then $\{ a \in R \mid a \textit{ algebraic over } \cent(R) \}$ is dense in $R$ with respect to $d_{R}$. \end{cor}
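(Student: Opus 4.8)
The plan is to read off the statement directly from Theorem~\ref{theorem:neumann} by splitting into the two cases appearing there. Fix $a \in R$ and $\epsilon \in \R_{>0}$; it suffices to exhibit some $b \in R$ that is algebraic over $\cent(R)$ and satisfies $d_{R}(a,b) = \rho_{R}(a-b) < \epsilon$. Before doing so I would record the two standing facts that make the notion "algebraic over $\cent(R)$" and the hypotheses of Theorem~\ref{theorem:neumann} available: since $R$ is continuous it is regular, and since it is irreducible Theorem~\ref{theorem:irreducibility} tells us $\cent(R)$ is a field, so $R$ is canonically a unital $\cent(R)$-algebra; and $R$ is non-discrete by assumption, which is precisely what Theorem~\ref{theorem:neumann} requires.

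If $a$ is \emph{not} purely transcendental over $\cent(R)$, I would invoke Theorem~\ref{theorem:neumann}(1) to obtain $p \in \Qalg_{R}^{\ast}(a)$ and $b \in R$ with $\rho_{R}(a-b) < \epsilon$ and $p^{R}(b) = 0$. The only thing to check is that $p \neq 0$, so that $b$ is genuinely algebraic: by definition $\Qalg_{R}^{\ast}(a)$ is the multiplicative subsemigroup of $\cent(R)[X]$ generated by $\Qalg_{R}(a) \subseteq \Irr(\cent(R)[X])$, each generator being a nonzero element of the integral domain $\cent(R)[X]$, so $p$ is a finite product of nonzero elements and hence nonzero. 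Thus $b$ is algebraic over $\cent(R)$ and lies within $d_{R}$-distance $<\epsilon$ of $a$. If instead $a$ \emph{is} purely transcendental over $\cent(R)$, I would pick $n \in \N_{>0}$ with $n > 1/\epsilon$ and apply Theorem~\ref{theorem:neumann}(2) to the polynomial $X^{n} \in \cent(R)[X]\setminus\{0\}$, whose degree $n$ exceeds $1/\epsilon$; this yields $b \in R$ with $\rho_{R}(a-b) < \epsilon$ and $b^{n} = 0$, so $b$ satisfies the nonzero polynomial $X^{n}$ and is therefore algebraic over $\cent(R)$.

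In either case we have produced a $\cent(R)$-algebraic element arbitrarily close to $a$ in the metric $d_{R}$, which establishes density. I do not expect a real obstacle here: the content is entirely supplied by Theorem~\ref{theorem:neumann}, and the only subtleties are the routine verifications that the polynomial furnished in the first case is nonzero and that $\cent(R)$ is a field, both of which are immediate from results already in the paper.
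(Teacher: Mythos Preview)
Your proposal is correct and matches the paper's approach: the paper states Corollary~\ref{corollary:neumann} without proof, treating it as an immediate consequence of Theorem~\ref{theorem:neumann}, and your argument is exactly the natural two-case deduction (indeed, the paper carries out the same case analysis explicitly in the proof of Proposition~\ref{proposition:algebraic.units}). Your checks that $p\neq 0$ in the first case and that $X^{n}$ works in the second are routine and correct.
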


The chief objective of this section is to deduce Proposition~\ref{proposition:algebraic.units}. For this purpose, the following lemma will be useful.

\begin{lem}\label{lemma:neumann} Let $K$ be a field, let $R$ be a unital $K$-algebra, and let $a \in R$. \begin{itemize}
	\item[$(1)$] If $a \in \GL(R)$, then $\Qalg^{\ast}_{R}(a) \subseteq K[X]\cdot X + (K\setminus \{ 0 \})$.
	\item[$(2)$] If $p^{R}(a) = 0$ for some $p \in K[X]\cdot X + (K\setminus \{ 0 \})$, then $a \in \GL(R)$.
\end{itemize} \end{lem}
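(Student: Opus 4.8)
The plan is to prove both parts by leveraging the fact that $p \in K[X]\cdot X + (K\setminus\{0\})$ means exactly that $p$ has nonzero constant term; equivalently, writing $p = Xq + c$ for some $q \in K[X]$ and $c \in K\setminus\{0\}$, we have $p^R(a) = aq^R(a) + c\cdot 1 = q^R(a)a + c\cdot 1$ (the two products agree since $a$ commutes with every power of itself, hence with $q^R(a)$).

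For part~(2): suppose $p^R(a) = 0$ with $p = Xq + c$, $c \neq 0$. Then $aq^R(a) + c = 0$, so $a\cdot(-c^{-1}q^R(a)) = 1$ and likewise $(-c^{-1}q^R(a))\cdot a = 1$. Hence $a$ is invertible with two-sided inverse $-c^{-1}q^R(a)$, i.e.\ $a \in \GL(R)$. This is the easy direction and requires no hypotheses beyond $K$ being a field.

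For part~(1): assume $a \in \GL(R)$, and let $r \in \Qalg^\ast_R(a)$; we must show $r$ has nonzero constant term, i.e.\ $r \notin K[X]\cdot X$. Since $K[X]$ is a UFD (indeed a PID), $r \in K[X]\cdot X$ would force some irreducible factor of $r$ to lie in $K[X]\cdot X$, and the only such irreducibles are scalar multiples of $X$ itself. By definition $r = p_1\cdots p_n$ with each $p_i \in \Qalg_R(a) \subseteq \Irr(K[X])$ satisfying $p_i^R(a) \notin \GL(R)$. So it suffices to show that no $p_i$ is a scalar multiple of $X$; but if $p_i = kX$ with $k \in K\setminus\{0\}$, then $p_i^R(a) = ka \in \GL(R)$ (as $a \in \GL(R)$ and $k$ is a unit), contradicting $p_i^R(a) \notin \GL(R)$. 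Hence every $p_i$ has nonzero constant term, and therefore so does the product $r$ (the constant term of a product is the product of the constant terms in the commutative ring $K[X]$), giving $r \in K[X]\cdot X + (K\setminus\{0\})$.

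I do not anticipate a genuine obstacle here: the statement is essentially a bookkeeping lemma about constant terms of polynomials, packaged to feed into the density arguments around Theorem~\ref{theorem:neumann}. The only point requiring a moment's care is articulating why membership in $K[X]\cdot X$ for an irreducible polynomial means being an associate of $X$ — which is immediate from unique factorization — and noting that $\Qalg^\ast_R(a)$ consists precisely of products of elements of $\Qalg_R(a)$, each of which is irreducible by the definition of $\Qalg_R$. Everything else is a one-line computation.
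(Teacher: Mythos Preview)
Your proof is correct and follows essentially the same approach as the paper: for part~(1), both show each irreducible factor $p_i \in \Qalg_R(a)$ has nonzero constant term (since otherwise $p_i$ would be an associate of $X$, making $p_i^R(a)$ a unit) and then use that the set of polynomials with nonzero constant term is closed under multiplication; for part~(2), both write $p = qX + c$ with $c \neq 0$ and exhibit $-c^{-1}q^R(a)$ as a two-sided inverse of $a$.
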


\begin{proof} (1) Suppose that $a \in \GL(R)$. We first show that \begin{equation}\tag{$\ast$}\label{irreducible}
	\Qalg_{R}(a) \, \subseteq \, K[X]\cdot X + (K\setminus \{ 0 \}) .
\end{equation} To this end, let $p \in \Qalg_{R}(a)$. Since $K[X] = K[X] \cdot X + K$, there exist $q \in K[X]$ and $c \in K$ such that $p = q\cdot X + c$. We argue that $c \ne 0$: indeed, if $c=0$, then irreducibility of $p=q\cdot X$ would necessitate that $q \in K\setminus \{ 0 \}$, and hence $p^{R}(a) = qa \in \GL(R)$, a contradiction. Thus, \begin{displaymath}
	p \, = \, q \cdot X + q \, \in \, K[X]\cdot X + (K\setminus \{ 0 \}) .
\end{displaymath} This proves~\eqref{irreducible}. As $K[X]\cdot X + (K\setminus \{ 0 \})$ is a subsemigroup of the multiplicative semigroup of $K[X]$, assertion~\eqref{irreducible} entails that $\Qalg^{\ast}_{R}(a) \subseteq K[X]\cdot X + (K\setminus \{ 0 \})$. 
	
(2) Assume that $p^{R}(a) = 0$ for some $p \in K[X]\cdot X + (K\setminus \{ 0 \})$. Then there exist $q \in K[X]$ and $c \in K\setminus \{ 0 \}$ such that $p = q\cdot X + c$. In turn, $0=p^{R}(a) = q^{R}(a) a + c$. Since $c \in K\setminus \{ 0 \}$, we may define $b \defeq -c^{-1}q^{R}(a) \in R$. For every $t \in K[X]$, \begin{displaymath}
	a \cdot t^{R}(a) \, = \, (X \cdot t)^{R}(a) \, = \, (t \cdot X)^{R}(a) \, = \, t^{R}(a) \cdot a .
\end{displaymath} In particular, $ab = ba$. Furthermore, \begin{displaymath}
	ba \, = \, -c^{-1}q^{R}(a)a \, = \, 1
\end{displaymath} and therefore $a \in \GL(R)$, as desired. \end{proof}

\begin{prop}\label{proposition:algebraic.units} If $R$ is a non-discrete irreducible, continuous ring, then the set $\{ a \in \GL(R) \mid a \textit{ algebraic over } \cent(R) \}$ is dense in $\GL(R)$ with respect to $d_{R}$. \end{prop}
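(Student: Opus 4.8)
The plan is to deduce the statement directly from von Neumann's approximation theorem (Theorem~\ref{theorem:neumann}) together with the algebraic dichotomy recorded in Lemma~\ref{lemma:neumann}. Write $K \defeq \cent(R)$, which is a field by Theorem~\ref{theorem:irreducibility}, so that $R$ is naturally a unital $K$-algebra. Fix $a \in \GL(R)$ and $\epsilon \in \R_{>0}$; the goal is to produce $b \in \GL(R)$ algebraic over $K$ with $\rho_{R}(a-b) < \epsilon$, that is, $d_{R}(a,b) < \epsilon$. I would split the argument into two cases according to whether $a$ is purely transcendental over $K$.

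Suppose first that $a$ is \emph{not} purely transcendental over $K$. Then Theorem~\ref{theorem:neumann}(1) furnishes $p \in \Qalg_{R}^{\ast}(a)$ and $b \in R$ with $\rho_{R}(a-b) < \epsilon$ and $p^{R}(b) = 0$. Since $a \in \GL(R)$, Lemma~\ref{lemma:neumann}(1) gives $p \in K[X]\cdot X + (K\setminus\{0\})$; in particular $p$ has a nonzero constant term, hence $p \neq 0$, so $b$ is algebraic over $K$. Finally, $p^{R}(b) = 0$ together with $p \in K[X]\cdot X + (K\setminus\{0\})$ forces $b \in \GL(R)$ by Lemma~\ref{lemma:neumann}(2), so this $b$ is the desired approximant.

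Now suppose that $a$ \emph{is} purely transcendental over $K$. Here I would make the choice of polynomial explicit: pick $n \in \N_{>0}$ with $n > 1/\epsilon$ and set $p \defeq X^{n} + 1 \in K[X]$. Since $1 \neq 0$ in the field $K$, we have $p = X^{n-1}\cdot X + 1 \in K[X]\cdot X + (K\setminus\{0\})$, while $\deg p = n > 1/\epsilon$. Applying Theorem~\ref{theorem:neumann}(2) to this $p$ yields $b \in R$ with $\rho_{R}(a-b) < \epsilon$ and $p^{R}(b) = 0$; since $p \neq 0$, $b$ is algebraic over $K$, and Lemma~\ref{lemma:neumann}(2) again gives $b \in \GL(R)$. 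This completes the case analysis and hence the proof.

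There is no serious obstacle here: the substantive work---that one can find, arbitrarily close to any prescribed element, another element annihilated by a polynomial whose shape we partially control---is exactly the content of Theorem~\ref{theorem:neumann}, which we are permitted to cite. The only point demanding a little care is that the annihilating polynomial must be arranged to have a nonzero constant term, so that its vanishing at $b$ pins $b$ down as a \emph{unit} rather than merely as an algebraic element; this is precisely what Lemma~\ref{lemma:neumann} delivers---via the hypothesis $a \in \GL(R)$ in the non-transcendental case, and via the explicit choice $p = X^{n}+1$ in the transcendental case.
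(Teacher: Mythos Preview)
Your proof is correct and follows essentially the same approach as the paper: a two-case split according to whether $a$ is purely transcendental, invoking Theorem~\ref{theorem:neumann} in each case and using Lemma~\ref{lemma:neumann} to secure that the approximant is a unit. The only cosmetic difference is your choice of $p = X^{n}+1$ in the transcendental case, where the paper uses $p = X^{m}-1$; both have nonzero constant term, so either works.
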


\begin{proof} Suppose that $R$ is a non-discrete irreducible, continuous ring. Consider any $a \in \GL(R)$ and $\epsilon \in \R_{>0}$. We are going to prove the existence of some $\cent(R)$-algebraic element $b \in \GL(R)$ such that $\rho_{R}(a-b) < \epsilon$. The argument proceeds by case analysis.
	
\textit{Case 1: $a$ is not purely transcendental over $\cent(R)$}. By Theorem~\ref{theorem:neumann}(1), there exist $p \in \Qalg^{\ast}_{R}(a)$ and $b \in R$ such that $\rho_{R}(a-b) < \epsilon$ and $p^{R}(b) = 0$. According to Lemma~\ref{lemma:neumann}(1), as $a \in \GL(R)$, it follows that $p \in \cent(R)[X] \cdot X + (\cent(R)\setminus \{ 0 \})$. Since $p^{R}(b) = 0$, thus Lemma~\ref{lemma:neumann}(2) asserts that $b \in \GL(R)$, as desired.
	
\textit{Case 2: $a$ is purely transcendental over $\cent(R)$}. Consider $m \defeq \lceil 1/\epsilon\rceil + 1$ and \begin{displaymath}
	p \, \defeq \, X^{m} - 1 \, \in \, \cent(R)[X]\setminus \{ 0 \} .
\end{displaymath} By Theorem~\ref{theorem:neumann}(2), there exists $b \in R$ such that $\rho_{R}(a-b) < \epsilon$ and $p^{R}(b) = 0$. Since $p \in \cent(R)[X] \cdot X + (\cent(R)\setminus \{ 0 \})$, another application of Lemma~\ref{lemma:neumann}(2) yields that $b \in \GL(R)$, which completes our case analysis. \end{proof}

\section{Continuous triangularization}\label{section:triangularization}

The purpose of this section is to prove that, if $R$ is a non-discrete irreducible, continuous ring, then any subring of $R$ that includes and is finite-dimensional over the center $\cent(R)$ stabilizes a maximal flag over $R$, with respect to the natural action of $R$ on $\lat(R)$ by multiplication from the left. This result, Theorem~\ref{theorem:invariant.flags}, particularly applies to every single algebraic element of such ring and thus may be combined with the material of Section~\ref{section:algebraic.elements}, as substantiated by Corollary~\ref{corollary:algebraic.units}. Upon constructing a suitable family of continuous closure operators on the space of principal right ideals (Lemma~\ref{lemma:hull.2} and Lemma~\ref{lemma:hull.3}), the proof of Theorem~\ref{theorem:invariant.flags} ultimately relies on an application of the classical intermediate value theorem.

Before defining stabilizer rings (Definition~\ref{definition:stabilizer}), let us point out a general fact, noted in the context of operator algebras by Ringrose~\cite[Lemma~3.1]{ringrose}.

\begin{lem}[cf.~\cite{ringrose}, Lemma~3.1]\label{lemma:ringrose} Let $R$ be a unital ring, and let $e \in \Ed(R)$, $I \defeq eR$, $a \in R$. Then the following are equivalent. \begin{itemize}
	\item[$(1)$] $aI \subseteq I$.
	\item[$(2)$] $eae = ae$.
	\item[$(3)$] $(1-e)a(1-e) = (1-e)a$.
\end{itemize}\end{lem}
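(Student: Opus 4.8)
The plan is to prove the equivalence of the three conditions by a direct computation, using the fact that $e \in \Ed(R)$ means $e^2 = e$, and that $I = eR$ consists precisely of those $x \in R$ with $ex = x$.

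First I would establish the characterization of $I$: for $x \in R$, one has $x \in I = eR$ if and only if $ex = x$. Indeed, if $x = ey$ then $ex = e^2 y = ey = x$; conversely if $ex = x$ then $x = ex \in eR$. Using this, $aI \subseteq I$ is equivalent to the statement that $e(aey) = aey$ for all $y \in R$, i.e. $eae \cdot y = ae \cdot y$ for all $y$. Taking $y = 1$ gives $eae = ae$, and conversely $eae = ae$ clearly implies $eaey = aey$ for all $y$. This yields the equivalence $(1)\Longleftrightarrow(2)$.

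Next, for $(2)\Longleftrightarrow(3)$, I would expand $(1-e)a(1-e) = a - ea - ae + eae$ and $(1-e)a = a - ea$. Thus $(1-e)a(1-e) = (1-e)a$ is equivalent to $-ae + eae = 0$, i.e. $eae = ae$, which is exactly condition $(2)$. This is a routine one-line manipulation once the products are multiplied out, using only $e^2 = e$.

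I do not expect any real obstacle here: the statement is elementary ring theory and the only ingredient is the defining idempotent relation $e^2 = e$ together with the description of the principal right ideal $eR$. The mild care needed is just to keep the bookkeeping of the expansions straight (several terms cancel because $e^2 = e$), and to note the symmetry: $1-e$ is also idempotent, and the passage between $e$ and $1-e$ is what makes $(2)$ and $(3)$ mirror images of each other. I would present the argument as the short chain $(1)\Leftrightarrow(2)$ via the "$ex=x$" characterization, then $(2)\Leftrightarrow(3)$ via the expansion, and close.

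\begin{proof}
Write $\bar{e} \defeq 1-e$, and recall that $\bar{e} \in \Ed(R)$ with $e\bar{e} = \bar{e}e = 0$ by Remark~\ref{remark:idempotent.difference}. We first note that, for any $x \in R$,
\begin{displaymath}
	x \in I \quad \Longleftrightarrow \quad ex = x .
\end{displaymath}
Indeed, if $x = ey$ for some $y \in R$, then $ex = e^{2}y = ey = x$; conversely, if $ex = x$, then $x = ex \in eR = I$.

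$(1)\Longrightarrow(2)$. If $aI \subseteq I$, then $ae = a(e\cdot 1) \in aI \subseteq I$, so $e(ae) = ae$, that is, $eae = ae$.

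$(2)\Longrightarrow(1)$. Suppose $eae = ae$. If $x \in I$, then $ex = x$, whence $e(ax) = eaex = aex = ax$, so $ax \in I$. Thus $aI \subseteq I$.

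$(2)\Longleftrightarrow(3)$. Using $e\bar{e} = \bar{e}e = 0$ and $\bar{e} = 1-e$, we compute
\begin{displaymath}
	\bar{e}a\bar{e} \, = \, (1-e)a(1-e) \, = \, a - ea - ae + eae , \qquad \bar{e}a \, = \, a - ea .
\end{displaymath}
Hence $\bar{e}a\bar{e} = \bar{e}a$ if and only if $eae - ae = 0$, i.e.\ $eae = ae$. This is precisely the equivalence of~$(3)$ and~$(2)$.
\end{proof}
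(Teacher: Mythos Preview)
Your proof is correct and follows essentially the same approach as the paper: both establish $(2)\Leftrightarrow(3)$ by the straightforward expansion $(1-e)a(1-e)-(1-e)a = eae-ae$, and both handle $(1)\Leftrightarrow(2)$ via the characterization $x\in eR \Leftrightarrow ex=x$ (the paper uses this implicitly by writing $ab=aeb$ for $b\in I$). The only cosmetic difference is that you state this characterization explicitly at the outset.
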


\begin{proof} (2)$\Longleftrightarrow$(3). This equivalence follows immediately from the fact that \begin{displaymath}
	(1-e)a(1-e)-(1-e)a \, = \, a-ae-ea+eae-a+ea \, = \, eae-ae  .
\end{displaymath}
	
(1)$\Longleftrightarrow$(2). If $aI\subseteq I$, then $ae \in aI \subseteq I$, thus $eae = ae$. Conversely, if $eae = ae$, then $ab = aeb = eaeb \in eR = I$ for all $b \in I$, i.e., $aI \subseteq I$. \end{proof}

\begin{definition}\label{definition:stabilizer} Let $R$ be a unital ring. For every subset $F \subseteq \lat(R)$, we define \begin{displaymath}
	R_{F} \, \defeq \, \{ a \in R \mid \forall I \in F \colon \, aI \subseteq I \} .
\end{displaymath} For every subset $E \subseteq \Ed(R)$, we define \begin{displaymath}
	R_{E} \, \defeq \, \{ a \in R \mid \forall e \in E \colon \, eae = ae \} .
\end{displaymath} \end{definition}

\begin{prop}\label{proposition:nest.subring} Let $R$ be a unital ring. \begin{itemize}
	\item[$(1)$] If $F \subseteq \lat(R)$, then $R_{F}$ is a unital subring of $R$.
	\item[$(2)$] If $E \subseteq \Ed(R)$, then $R_{E} = R_{\{ eR \mid e \in E \}}$ is a unital subring of $R$.
	\item[$(3)$] If $E \in \Nest (R)$, then $E \subseteq R_{E}$.
	\item[$(4)$] Suppose that $R$ is regular and that $\rho \colon R \to [0,1]$ is a rank function. For any $E \subseteq \Ed(R)$ and any $F \subseteq \lat(R)$, the sets $R_{E}$ and $R_{F}$ are closed in $R$ with respect to the $\rho$-topology.
\end{itemize} \end{prop}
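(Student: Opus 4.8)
The plan is to treat the four assertions in order, with each later part leaning on the earlier ones and on Lemma~\ref{lemma:ringrose}.

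For $(1)$ I would verify the unital subring axioms for $R_F$ directly. Clearly $0,1 \in R_F$, since $0\cdot I = \{0\} \subseteq I$ and $1\cdot I = I \subseteq I$ for every $I \in F$. If $a,b \in R_F$ and $I \in F$, then $(a-b)I \subseteq aI + bI \subseteq I$, using that the right ideal $I$ is an additive subgroup of $R$, and $(ab)I = a(bI) \subseteq aI \subseteq I$. Hence $R_F$ is closed under subtraction and multiplication and contains $1$. For $(2)$, the point is Lemma~\ref{lemma:ringrose}: for $e \in \Ed(R)$ and $a \in R$, the condition $eae = ae$ is equivalent to $a(eR) \subseteq eR$. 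Applying this equivalence for each $e \in E$ yields $R_E = R_{\{eR \mid e \in E\}}$, which is a unital subring of $R$ by $(1)$.

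For $(3)$, let $e,f \in E$; I must show the defining condition $fef = ef$ of $R_E$ holds. Since $E$ is a chain in $(\Ed(R),{\leq})$, either $e \leq f$ or $f \leq e$. If $e \leq f$, then $ef = fe = e$, so $fef = f(ef) = fe = e = ef$. If $f \leq e$, then $ef = fe = f$, so $fef = (fe)f = ff = f = ef$. In both cases $fef = ef$, so $e \in R_E$, and therefore $E \subseteq R_E$.

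For $(4)$, fix $e \in \Ed(R)$. By Lemma~\ref{lemma:rank.metric}(2) addition and multiplication on $R$ are continuous for the $\rho$-topology, so the map $R \to R,\ a \mapsto eae - ae$ is continuous; hence $\{a \in R \mid eae = ae\}$, being the preimage of the closed set $\{0\}$, is closed. Consequently $R_E = \bigcap_{e \in E} \{a \in R \mid eae = ae\}$ is closed as an intersection of closed sets. For $R_F$ I would invoke regularity: by Remark~\ref{remark:regular}, for each $I \in F$ there is an idempotent $e_I \in \Ed(R)$ with $I = e_I R$, and then Lemma~\ref{lemma:ringrose} gives $R_F = R_{\{e_I \mid I \in F\}}$, which is closed by the case just handled. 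There is no real obstacle here; the only step deserving a moment's care is this translation, in $(2)$ and $(4)$, of conditions about right ideals into conditions about idempotents via Lemma~\ref{lemma:ringrose} and von Neumann regularity.
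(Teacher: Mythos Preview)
Your proof is correct and follows essentially the same approach as the paper's own proof: direct verification for~(1), reduction to~(1) via Lemma~\ref{lemma:ringrose} for~(2), a case split along the chain order for~(3), and closedness via continuity of ring operations plus the Hausdorff property of the $\rho$-topology for~(4). The only cosmetic differences are that the paper phrases the closedness argument as the preimage of the diagonal under $a \mapsto (eae,ae)$ rather than the preimage of $\{0\}$ under $a \mapsto eae-ae$, and for $R_F$ it passes to the set of \emph{all} idempotents generating members of $F$ rather than a choice of one per ideal; neither affects the substance.
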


\begin{proof} (1) This follows by a straightforward calculation.
	
(2) If $E \subseteq \Ed(R)$, then $R_{E} = R_{\{ eR \mid e \in E \}}$ due to Lemma~\ref{lemma:ringrose}, thus $R_{E}$ is a unital subring of $R$ by~(1).

(3) Let $E \in \Nest (R)$. In order to verify that $E \subseteq R_{E}$, let $f \in E$. If $e \in E$, then $E$ being a chain in $(\Ed(R),{\leq})$ implies that $e\leq f$ and therefore $efe = ee =  e = fe$, or $f\leq e$ and hence $efe=fe$. Thus, $f \in R_{E}$ as desired.

(4) First, let $E \subseteq \Ed(R)$. As $d_{\rho}$ is a metric by Lemma~\ref{lemma:rank.metric}(1), the $\rho$-topology is a Hausdorff topology on $R$, whence $D \defeq \{ (x,x) \mid x \in R\}$ is closed in $R \times R$. Moreover, for each $e \in E$, the map $\xi_{e} \colon R \to R \times R, \, a \mapsto (eae,ae)$ is continuous by Lemma~\ref{lemma:rank.metric}(2) (see also Remark~\ref{remark:topological.ring}). Thus, $R_{E} = \bigcap_{e \in E} \xi_{e}^{-1}(D)$ is closed in $R$. In conclusion, if $F \subseteq \lat(R)$, then \begin{displaymath}
	R_{F} \, \stackrel{\ref{lemma:ringrose}+\ref{remark:regular}}{=} \, R_{\{ e \in \Ed(R) \mid eR \in F\}}
\end{displaymath} is closed in $R$. \end{proof}

Preparing our later study of unit groups of stabilizer rings, we recollect a well-known persistence property of the class of directly finite rings. Let us recall that, if $R$ is a unital ring and $e \in \Ed(R)$, then $eRe = \{ eae \mid a \in R\}$ is a subring of $R$, with multiplicative unit $e$.

\begin{lem}\label{lemma:directly.finite} Let $R$ be a directly finite ring, $e \in \Ed(R)$. Then the following hold. \begin{itemize}
	\item[$\mathrm{(A)}$] $eRe$ is directly finite.
	\item[$\mathrm{(B)}$] Consider $I \defeq eR$ and let $a \in \GL(R)$. The following are equivalent. \begin{itemize}
		\item[$(1)$] $aI = I$.
		\item[$(2)$] $eae = ae$.
		\item[$(3)$] $ea^{-1}e = a^{-1}e$.
	\end{itemize}
\end{itemize} \end{lem}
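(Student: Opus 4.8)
The plan is to treat the two assertions separately, starting with~$\mathrm{(A)}$, since part of~$\mathrm{(B)}$ will rely on it. For~$\mathrm{(A)}$, suppose $x,y \in eRe$ with $xy = e$ (the multiplicative unit of $eRe$). I want to conclude $yx = e$. The standard trick is to pass to the ambient ring $R$: set $x' \defeq x + (1-e)$ and $y' \defeq y + (1-e)$. Since $x = exe$, $y = eye$ and $1-e$ is orthogonal to $e$, a direct computation gives $x'y' = xy + (1-e) = e + (1-e) = 1$. Hence $x', y' \in \GL(R)$, and direct finiteness of $R$ yields $y'x' = 1$. Multiplying out $y'x' = yx + (1-e)$, and comparing the $e$-corner (i.e.\ multiplying on both sides by $e$, or just reading off the component in $eRe$), gives $yx = e$. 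This shows $eRe$ is directly finite.

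For~$\mathrm{(B)}$, the equivalence $(2)\Longleftrightarrow(3)$ is the real content; the rest is essentially Lemma~\ref{lemma:ringrose} plus a symmetry argument. First, $(1)\Longrightarrow(2)$: if $aI = I$ then in particular $aI \subseteq I$, so $eae = ae$ by Lemma~\ref{lemma:ringrose}. Next, $(2)\Longrightarrow(3)$: assuming $eae = ae$, I will argue that the map $b \mapsto ab$ restricts to an injective additive endomorphism of $I = eR$ whose surjectivity then follows from direct finiteness, forcing $a^{-1}$ to also stabilise $I$, which by Lemma~\ref{lemma:ringrose} again is exactly $ea^{-1}e = a^{-1}e$. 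Concretely: $(2)$ says $aI \subseteq I$, equivalently (via Lemma~\ref{lemma:ringrose}) $(1-e)a(1-e) = (1-e)a$. Consider the element $u \defeq eae + (1-e)a(1-e) \in R$. Using $eae = ae$ and $(1-e)a(1-e)=(1-e)a$ one computes $u = ae + (1-e)a = a$ — wait, one must be careful: $ae \ne eae$ in general, but here $(2)$ gives $ae = eae$, so indeed $u = eae + (1-e)a(1-e) = ae + (1-e)a = a$. Thus $a = eae + (1-e)a(1-e)$ is "block upper-triangular with zero off-diagonal block", i.e.\ block diagonal with respect to the decomposition $1 = e + (1-e)$. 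Its $e$-corner $eae \in eRe$ and its $(1-e)$-corner $(1-e)a(1-e) \in (1-e)R(1-e)$; since $a \in \GL(R)$ and $a$ is block diagonal, each corner is a unit in the corresponding corner ring. Here is where $\mathrm{(A)}$ enters: $eRe$ is directly finite, so $(eae)$ has a two-sided inverse $c$ in $eRe$, and then $c + (1-e)a^{-1}(1-e)$ — or more cleanly, one checks $a^{-1} = c + (1-e)b(1-e)$ for the appropriate corner inverse $b$ — is again block diagonal, giving $ea^{-1}e = a^{-1}e$, which is $(3)$. By the symmetry $a \leftrightarrow a^{-1}$ (and $(a^{-1})^{-1} = a$), the implication $(3)\Longrightarrow(2)$ follows at once. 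Finally $(2)\Longrightarrow(1)$: $(2)$ gives $aI \subseteq I$ and, via the just-established $(3)$, also $a^{-1}I \subseteq I$, whence $I = a(a^{-1}I) \subseteq aI \subseteq I$, so $aI = I$.

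The step I expect to require the most care is pinning down the block-decomposition argument in $(2)\Longrightarrow(3)$ rigorously: one must verify that when $a \in \GL(R)$ decomposes as $a = eae + (1-e)a(1-e)$ then its inverse decomposes the same way, and that "$eae$ invertible in $eRe$" is exactly what direct finiteness of $eRe$ delivers from the one-sided inverse read off from $a^{-1}$. The cleanest route is probably: from $a = eae + (1-e)a(1-e)$ and $aa^{-1} = a^{-1}a = 1$, multiply by $e$ on both sides to get $(eae)(ea^{-1}e) = e$ once we know $ea^{-1}(1-e) = 0$; establishing that last identity is the crux, and it is where one invokes that $a$, being block diagonal and invertible, cannot have its inverse leak into the off-diagonal corner — which is precisely a direct-finiteness phenomenon and must be argued, not assumed. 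Everything else is routine idempotent bookkeeping.
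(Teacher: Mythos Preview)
Your argument for $\mathrm{(A)}$ is correct and coincides with the paper's. The overall architecture of $\mathrm{(B)}$ --- prove $(2)\Longrightarrow(3)$, get $(3)\Longrightarrow(2)$ by the symmetry $a\leftrightarrow a^{-1}$, then combine with Lemma~\ref{lemma:ringrose} for $(1)\Longleftrightarrow(2)$ --- also matches the paper. However, your execution of $(2)\Longrightarrow(3)$ contains a genuine algebraic error.

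You claim that $eae=ae$ together with $(1-e)a(1-e)=(1-e)a$ gives $a = eae + (1-e)a(1-e)$, i.e.\ that $a$ is block diagonal. This is false: those two conditions are \emph{equivalent} (both say $(1-e)ae=0$), and they only force $a$ to be block upper-triangular, namely $a = eae + ea(1-e) + (1-e)a(1-e)$ with the off-diagonal term $ea(1-e)$ possibly nonzero. Concretely, your computation $ae + (1-e)a = a$ is wrong: $ae + (1-e)a = a + (ae - ea)$, and nothing forces $ae = ea$. (What is always true is $ae + a(1-e) = a$, but $(1-e)a \ne a(1-e)$ in general.) So the subsequent ``each diagonal block is a unit in its corner ring, hence $a^{-1}$ is block diagonal'' reasoning does not go through as written.

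The paper's route for $(2)\Longrightarrow(3)$ avoids block decompositions entirely and is much shorter: from $eae = ae$ one computes directly
\[
(ea^{-1}e)(eae) \,=\, ea^{-1}(eae) \,=\, ea^{-1}ae \,=\, e ,
\]
whence $\mathrm{(A)}$ (direct finiteness of $eRe$) yields $(eae)(ea^{-1}e) = e$; using $eae = ae$ again this reads $aea^{-1}e = e$, i.e.\ $ea^{-1}e = a^{-1}e$. This is the clean one-line use of $\mathrm{(A)}$ you were groping for. Your plan can be repaired by arguing that a block upper-triangular unit over directly finite corner rings has block upper-triangular inverse, but that is more work than the paper's direct computation.
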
 

\begin{proof} $\mathrm{(A)}\,$ If $a,b \in eRe$ and $ab=e$, then \begin{displaymath}
	(a+1-e)(b+1-e) \, = \, ab + a(1-e) + (1-e)b + (1-e)^{2} \, = \, e + 1 - e \, = \, 1 ,
\end{displaymath} whence direct finiteness of $R$ implies that \begin{displaymath}
	1 \, = \, (b+1-e)(a+1-e) \, = \, ba + b(1-e) + (1-e)a + (1-e)^{2} \, = \, ba + 1-e ,
\end{displaymath} i.e., $ba = e$. This shows that $eRe$ is directly finite.

$(\mathrm{B})\,$ First of all, we claim that \begin{equation}\tag{$\ast$}\label{inverse}
	\forall a \in \GL(R) \colon \quad eae = ae \ \Longrightarrow \ ea^{-1}e = a^{-1}e .
\end{equation} Indeed, if $a \in \GL(R)$ and $eae = ae$, then \begin{displaymath}
	\left(ea^{-1}e\right)(eae) \, = \, ea^{-1}eae \, = \, ea^{-1}ae \, = \, ee \, = \, e,
\end{displaymath} which by $\mathrm{(A)}$ entails that \begin{displaymath}
	e \, = \, (eae)\left(ea^{-1}e\right) \, = \, eaea^{-1}e \, = \, aea^{-1}e
\end{displaymath} and hence $a^{-1}e = ea^{-1}e$. We proceed to verifying the desired equivalences. So, consider $I \defeq eR$ and let $a \in \GL(R)$. Of course, (2)$\Longleftrightarrow$(3) thanks to~\eqref{inverse}, and (1)$\Longrightarrow$(2) by Lemma~\ref{lemma:ringrose}. In order to prove that (2)$\Longrightarrow$(1), suppose that $eae = ae$. Then also $ea^{-1}e = a^{-1}e$ due to~\eqref{inverse}. Invoking Lemma~\ref{lemma:ringrose} twice, we see that $aI \subseteq I = aa^{-1}I \subseteq aI$, thus $aI = I$. \end{proof}

\begin{prop}\label{proposition:units.in.nest.stabilizers} Let $R$ be a directly finite ring. For every $E\in \Nest(R)$, \begin{displaymath}
	\GL(R_{E}) \, = \, R_{E} \cap \GL(R) .
\end{displaymath} \end{prop}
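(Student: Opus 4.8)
The plan is to prove the two inclusions separately. The inclusion $\GL(R_{E}) \subseteq R_{E} \cap \GL(R)$ is purely formal and uses only that $R_{E}$ is a unital subring of $R$ sharing the multiplicative unit $1 = 1_{R}$, which is part of Proposition~\ref{proposition:nest.subring}(2): if $a \in \GL(R_{E})$, then some $b \in R_{E} \subseteq R$ satisfies $ab = ba = 1$, so $a \in \GL(R)$, and since also $a \in R_{E}$ we obtain $a \in R_{E} \cap \GL(R)$.

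For the reverse inclusion I would start from $a \in R_{E} \cap \GL(R)$ and set $b \defeq a^{-1} \in R$, the two-sided inverse of $a$ in $R$; the goal is to show $b \in R_{E}$, for then $a$ is invertible inside $R_{E}$, i.e., $a \in \GL(R_{E})$. To this end, fix $e \in E$. Since $E \in \Nest(R)$, the element $e$ is idempotent, and the membership $a \in R_{E}$ means precisely that $eae = ae$ (Definition~\ref{definition:stabilizer}). Now I would invoke Lemma~\ref{lemma:directly.finite}(B) — which applies because $R$ is directly finite and $e \in \Ed(R)$ — with $I \defeq eR$: its equivalence $(2) \Leftrightarrow (3)$ turns $eae = ae$ into $ea^{-1}e = a^{-1}e$, that is, $ebe = be$. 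As $e \in E$ was arbitrary, $b \in R_{E}$, which completes the argument.

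I do not expect a genuine obstacle here: direct finiteness of $R$ is exactly the hypothesis that makes Lemma~\ref{lemma:directly.finite} available, and once that lemma is in hand the proof reduces to a one-line verification at each idempotent of the nest. The only subtlety worth stating explicitly is that ``$a$ invertible in $R_{E}$'' must be read as ``$a \in R_{E}$ together with $a^{-1} \in R_{E}$'', where $a^{-1}$ is the inverse computed in $R$; this reading is legitimate precisely because $R_{E}$ is a unital subring of $R$ with the same identity and inverses in a ring are unique.
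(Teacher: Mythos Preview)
Your proof is correct and follows exactly the route the paper takes: the paper's proof consists of the single sentence ``This is a direct consequence of Lemma~\ref{lemma:directly.finite}$\mathrm{(B)}$,'' and your argument is precisely the unpacking of that consequence, using the equivalence $(2)\Leftrightarrow(3)$ of that lemma at each $e\in E$ to pass from $a\in R_{E}$ to $a^{-1}\in R_{E}$.
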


\begin{proof} This is a direct consequence of Lemma~\ref{lemma:directly.finite}$\mathrm{(B)}$. \end{proof}

We now begin to work towards Theorem~\ref{theorem:invariant.flags}, whose proof will involve the intermediate value theorem. To accommodate this application of the intermediate value theorem, we introduce a certain family of closure operators (Lemma~\ref{lemma:hull.2}) and examine their continuity properties (Lemma~\ref{lemma:hull.3}). Let us start off with a basic fact.

\begin{lem}\label{lemma:hull.1} Let $R$ be a unital ring and let $S$ be a subring of $R$ containing $\cent(R)$. If $I$ is any right ideal of $R$ and $T$ is any generating subset of the $\cent(R)$-module $S$, then $\sum\nolimits_{s \in S} sI = \sum\nolimits_{t \in T} tI$. \end{lem}

\begin{proof} Suppose that $T \subseteq S$ generates the $\cent(R)$-module $S$ and consider any right ideal $I \subseteq R$. Evidently, $\sum_{t \in T} tI \subseteq \sum_{s \in S} sI$. Conversely, if $s \in S$, then there exist $n \in \N$, $a_{1},\ldots,a_{n} \in \cent(R)$ and $t_{1},\ldots,t_{n} \in T$ with $s = \sum_{i=1}^{n} a_{i}t_{i}$, which entails that \begin{displaymath}
	sI \, \subseteq \, \sum\nolimits_{i=1}^{n} a_{i}t_{i}I \, = \, \sum\nolimits_{i=1}^{n} t_{i}Ia_{i} \, \subseteq \, \sum\nolimits_{i=1}^{n} t_{i}I \, \subseteq \, \sum\nolimits_{t \in T} tI .
\end{displaymath} Hence, $\sum\nolimits_{s \in S} sI = \sum\nolimits_{t \in T} tI$ as desired. \end{proof}

\begin{definition} If $R$ is a unital ring and $M$ is a (left) $R$-module, then we let \begin{displaymath}
	\dim_{R}(M) \, \defeq \, \inf \{ \vert T \vert \mid T \text{ generating subset of the $R$-module } M\} \, \in \, {\N} \cup {\{ \infty \}} .
\end{displaymath} \end{definition}

\begin{remark}\label{remark:algebraicity} Let $K$ be a field, let $R$ be a unital $K$-algebra, let $a \in R$ and let $S$ denote the unital subalgebra of $R$ generated by $a$. Then $a$ is algebraic over $K$ if and only if $\dim_{K}(S) < \infty$. \end{remark}

\begin{lem}\label{lemma:hull.2} Let $R$ be a regular ring and let $S$ be a subring of $R$ with $\cent(R) \subseteq S$ and $\dim_{\cent(R)}(S) < \infty$. Then \begin{displaymath}
	\Gamma_{S} \colon \, \lat(R) \, \longrightarrow \, \lat(R) , \quad I \, \longmapsto \, \sum\nolimits_{s \in S} sI 
\end{displaymath} is well defined. Furthermore, the following hold.  \begin{itemize}
	\item[$(1)$] $s\Gamma_{S}(I) \subseteq \Gamma_{S}(I)$ for all $I \in \lat(R)$ and $s \in S$.
	\item[$(2)$] $\Gamma_{S}(I+J) = \Gamma_{S}(I)+\Gamma_{S}(J)$ for all $I,J \in \lat(R)$.
	\item[$(3)$] If $I,J \in \lat(R)$ and $I \subseteq J$, then $\Gamma_{S}(I) \subseteq \Gamma_{S}(J)$.
	\item[$(4)$] $I \subseteq \Gamma_{S}(I) = \Gamma_{S}(\Gamma_{S}(I))$ for every $I \in \lat(R)$.
\end{itemize} \end{lem}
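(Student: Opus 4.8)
The plan is to verify that $\Gamma_S$ is well-defined (lands in $\lat(R)$) and then establish the four listed properties, the last of which (idempotence) is the crux. First I would check well-definedness: fix a finite generating subset $T$ of the $\cent(R)$-module $S$ (which exists since $\dim_{\cent(R)}(S) < \infty$), and note that by Lemma~\ref{lemma:hull.1} we have $\Gamma_S(I) = \sum_{s \in S} sI = \sum_{t \in T} tI$ for every $I \in \lat(R)$. Since $R$ is regular, $\lat(R)$ is a complemented modular lattice in which the join of finitely many principal right ideals is again principal (finite sums of principal right ideals are principal in a regular ring — this follows from the lattice structure established after Remark~\ref{remark:regular}). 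Hence $\Gamma_S(I) \in \lat(R)$, so $\Gamma_S$ is well defined as a map $\lat(R) \to \lat(R)$.

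**Next I would dispatch (1)–(3), which are essentially formal.** For (1): if $s \in S$ and $I \in \lat(R)$, then $s\Gamma_S(I) = \sum_{t \in S} stI$; since $S$ is a subring, $st \in S$, so each summand $stI$ is among the summands defining $\Gamma_S(I)$, giving $s\Gamma_S(I) \subseteq \Gamma_S(I)$. For (2): distributivity of multiplication over the sum of ideals gives $s(I+J) = sI + sJ$, and summing over $s \in S$ (reindexing the sum of sums) yields $\Gamma_S(I+J) = \Gamma_S(I) + \Gamma_S(J)$; note $I + J = I \vee J$ is the lattice join in $\lat(R)$. Item (3) is then immediate from (2): if $I \subseteq J$ then $J = I + J$, so $\Gamma_S(J) = \Gamma_S(I) + \Gamma_S(J) \supseteq \Gamma_S(I)$. (Alternatively (3) follows directly from monotonicity of $I \mapsto sI$ and of finite sums.)

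**The main obstacle is item (4), idempotence.** The inclusion $I \subseteq \Gamma_S(I)$ holds because $1 \in S$ (as $\cent(R) \subseteq S$ and $S$ is unital), so $I = 1 \cdot I$ is one of the summands. For $\Gamma_S(\Gamma_S(I)) \subseteq \Gamma_S(I)$: unfolding, $\Gamma_S(\Gamma_S(I)) = \sum_{s \in S} s\Gamma_S(I)$, and by (1) each term $s\Gamma_S(I) \subseteq \Gamma_S(I)$, so the whole sum is contained in $\Gamma_S(I)$. The reverse inclusion $\Gamma_S(I) \subseteq \Gamma_S(\Gamma_S(I))$ is the case $s = 1$ of the sum, using $I \subseteq \Gamma_S(I)$ together with monotonicity. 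Combining gives $\Gamma_S(I) = \Gamma_S(\Gamma_S(I))$. The only genuinely non-formal point in the whole argument is the well-definedness step — that finite sums of principal right ideals in a regular ring remain principal — which is where the finite-dimensionality hypothesis on $S$ is used, via Lemma~\ref{lemma:hull.1} to reduce the a priori infinite sum $\sum_{s\in S} sI$ to the finite sum $\sum_{t\in T} tI$; everything else is a routine unwinding of definitions and distributivity.
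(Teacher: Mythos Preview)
Your proof is correct and follows essentially the same approach as the paper: well-definedness via Lemma~\ref{lemma:hull.1} and closure of $\lat(R)$ under finite sums, (1) from $S$ being multiplicatively closed, (2) from distributivity, (3) from (2), and (4) from $1_R \in \cent(R) \subseteq S$ together with (1). The paper's treatment of (4) is slightly more compressed (writing the chain $I \subseteq \Gamma_S(I) = \sum_{s\in S} s\Gamma_S(I) = \Gamma_S(\Gamma_S(I))$ in one line), but the content is identical.
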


\begin{proof} Since $\lat(R)$ is closed under finite sums~\cite[II.II, Theorem~2.3, p.~71]{VonNeumannBook} and $\dim_{\cent(R)}(S) < \infty$, Lemma~\ref{lemma:hull.1} asserts that $\Gamma_{S} \colon \lat(R) \to \lat(R)$ is well defined. 
	
(1) Since $S$ is closed under multiplication, if $s \in S$ and $I \in \lat(R)$, then \begin{displaymath}
	s\Gamma_{S}(I) \, = \, \sum\nolimits_{t \in S} stI \, \subseteq \, \sum\nolimits_{t \in S} tI \, = \, \Gamma_{S}(I) .
\end{displaymath}
	
(2) For any two $I,J \in \lat(R)$, \begin{align*}
	\Gamma_{S}(I+J) \, &= \, \sum\nolimits_{s \in S} s(I+J) \, = \, \sum\nolimits_{s \in S} sI+sJ \\
		& = \, \left( \sum\nolimits_{s \in S} sI \right) + \left( \sum\nolimits_{s \in S} sJ \right) \, = \, \Gamma_{S}(I) + \Gamma_{S}(J) .
\end{align*}
	
(3) If $I,J \in \lat(R)$ and $I \subseteq J$, then $J = I+J$ and therefore \begin{displaymath}
	\Gamma_{S}(J) \, = \, \Gamma_{S}(I+J) \, \stackrel{(2)}{=} \, \Gamma_{S}(I) + \Gamma_{S}(J) ,
\end{displaymath} hence $\Gamma_{S}(I) \subseteq \Gamma_{S}(J)$.

(4) For every $I \in \lat(R)$, since $1_{R} \in \cent(R) \subseteq S$, \begin{displaymath}
	I \, \subseteq \, \Gamma_{S}(I) \, \stackrel{(1)}{=} \, \sum\nolimits_{s \in S} s\Gamma_{S}(I) \, = \, \Gamma_{S}(\Gamma_{S}(I)) . \qedhere
\end{displaymath} \end{proof}

\begin{lem}\label{lemma:hull.3} Let $R$ be a regular ring and let $S$ be a subring of $R$ with $\cent(R) \subseteq S$ and $\dim_{\cent(R)}(S) < \infty$. Furthermore, let $\rho \colon R \to [0,1]$ be a pseudo-rank function, and consider\footnote{cf.~Lemma~\ref{lemma:from.rank.to.dimension}} $\delta \defeq \delta_{\rho} \colon \lat(R) \to [0,1]$. Then, for all $I,J \in \lat(R)$,  \begin{itemize}
	\item[$(1)$] $\delta(\Gamma_{S}(I)) \leq \dim_{\cent(R)}(S)\cdot \delta(I)$,
	\item[$(2)$] $d_{\delta}(\Gamma_{S}(I),\Gamma_{S}(J)) \leq \dim_{\cent(R)}(S)\cdot d_{\delta}(I,J)$.
\end{itemize} \end{lem}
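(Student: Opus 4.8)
The plan is to establish~(1) first and then bootstrap~(2) from it. For~(1), I would fix a generating subset $T = \{ t_{1},\ldots,t_{n} \}$ of the $\cent(R)$-module $S$ with $n = \dim_{\cent(R)}(S)$, so that $\Gamma_{S}(I) = \sum_{i=1}^{n} t_{i}I$ for every $I \in \lat(R)$ by Lemma~\ref{lemma:hull.1}. Since $\delta = \delta_{\rho}$ is a modular, positive function with $\delta(0_{\lat(R)}) = 0$ (Lemma~\ref{lemma:from.rank.to.dimension}), it is subadditive with respect to $\vee$: from $\delta(A\vee B) + \delta(A\wedge B) = \delta(A) + \delta(B)$ and $\delta(A\wedge B) \geq 0$ one gets $\delta(A \vee B) \leq \delta(A) + \delta(B)$, and since $\vee$ coincides with $+$ in $\lat(R)$, induction gives $\delta\bigl(\sum_{i=1}^{n} t_{i}I\bigr) \leq \sum_{i=1}^{n} \delta(t_{i}I)$. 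As $\delta(t_{i}I) \leq \delta(I)$ by Lemma~\ref{lemma:from.rank.to.dimension}(2), this yields $\delta(\Gamma_{S}(I)) \leq n\cdot\delta(I)$.

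For~(2) I would first treat the case $J \subseteq I$. Because $R$ is regular, $\lat(R)$ is a complemented modular, hence relatively complemented, lattice (Lemma~\ref{lemma:relatively.complemented}); so there is $K \in \lat(R)$ with $K \wedge J = 0_{\lat(R)}$ and $K \vee J = I$. Modularity together with $\delta(0_{\lat(R)}) = 0$ gives $\delta(K) = \delta(I) - \delta(J)$. From $I = K + J$ and Lemma~\ref{lemma:hull.2}(2) we obtain $\Gamma_{S}(I) = \Gamma_{S}(K) + \Gamma_{S}(J)$, and as $\Gamma_{S}(J) \subseteq \Gamma_{S}(I)$ by Lemma~\ref{lemma:hull.2}(3), subadditivity of $\delta$ and part~(1) give
\[
	d_{\delta}(\Gamma_{S}(I),\Gamma_{S}(J)) \, = \, \delta(\Gamma_{S}(I)) - \delta(\Gamma_{S}(J)) \, \leq \, \delta(\Gamma_{S}(K)) \, \leq \, n\cdot\delta(K) \, = \, n\cdot d_{\delta}(I,J) .
\]

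For arbitrary $I,J \in \lat(R)$, I would reduce to the nested case. Using that $\vee$ and $\wedge$ in $\lat(R)$ are $+$ and $\cap$, together with Lemma~\ref{lemma:hull.2}(2)--(3) and positivity of $\delta$,
\[
	d_{\delta}(\Gamma_{S}(I),\Gamma_{S}(J)) \, = \, \delta(\Gamma_{S}(I+J)) - \delta(\Gamma_{S}(I)\cap\Gamma_{S}(J)) \, \leq \, \delta(\Gamma_{S}(I+J)) - \delta(\Gamma_{S}(I\cap J)) \, = \, d_{\delta}(\Gamma_{S}(I+J),\Gamma_{S}(I\cap J)) ,
\]
the inequality because $\Gamma_{S}(I\cap J) \subseteq \Gamma_{S}(I)\cap\Gamma_{S}(J)$, and the final equality because $\Gamma_{S}(I\cap J) \subseteq \Gamma_{S}(I+J)$. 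Since $I\cap J \subseteq I+J$, the nested case applies with $I+J$ and $I\cap J$ in the roles of $I$ and $J$, so $d_{\delta}(\Gamma_{S}(I+J),\Gamma_{S}(I\cap J)) \leq n\cdot d_{\delta}(I+J,I\cap J) = n\bigl(\delta(I+J) - \delta(I\cap J)\bigr) = n\cdot d_{\delta}(I,J)$, which is~(2).

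The only point that needs care is that $\Gamma_{S}$ does not commute with intersections, so~(2) cannot be read off directly from the additivity of $\Gamma_{S}$ on sums (Lemma~\ref{lemma:hull.2}(2)); the device that circumvents this is the crude inequality $\delta(\Gamma_{S}(I)\cap\Gamma_{S}(J)) \geq \delta(\Gamma_{S}(I\cap J))$, which reduces the general bound to the comparable pair $I\cap J \subseteq I+J$, where the relative-complement splitting and part~(1) close the argument. Everything else is routine bookkeeping with the modular law and the cited estimates.
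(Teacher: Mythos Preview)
Your proof is correct and follows essentially the same strategy as the paper: part~(1) is identical, and for~(2) both you and the paper first handle the nested case via a relative complement and then reduce the general case to it. The only cosmetic difference is in that reduction: the paper routes through the triangle inequality via $\Gamma_{S}(I+J)$ and uses $d_{\delta}(I,I+J)+d_{\delta}(I+J,J)=d_{\delta}(I,J)$, whereas you bound $d_{\delta}(\Gamma_{S}(I),\Gamma_{S}(J))$ directly by $d_{\delta}(\Gamma_{S}(I+J),\Gamma_{S}(I\cap J))$ using $\Gamma_{S}(I)\vee\Gamma_{S}(J)=\Gamma_{S}(I+J)$ and $\Gamma_{S}(I\cap J)\subseteq\Gamma_{S}(I)\cap\Gamma_{S}(J)$---an equally clean shortcut.
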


\begin{proof} (1) Pick any finite subset $T \subseteq S$ generating the $\cent(R)$-module $S$ such that $\vert T \vert = \dim_{\cent(R)}(S)$. For every $I \in \lat(R)$, \begin{displaymath}
	\delta (\Gamma_{S}(I)) \, \stackrel{\ref{lemma:hull.1}}{=} \, \delta \left(\sum\nolimits_{t \in T} tI \right) \, \stackrel{\ref{lemma:from.rank.to.dimension}}{\leq} \, \sum\nolimits_{t\in T}\delta(tI) \, \stackrel{\ref{lemma:from.rank.to.dimension}(2)}{\leq} \, \vert T \vert \cdot \delta(I) \, = \, \dim_{\cent(R)}(S) \cdot \delta (I) .
\end{displaymath}
	
(2) We first prove that \begin{equation}\tag{$\ast$}\label{hulldistance}
	\forall I,J \in \lat(R) \colon \quad I \subseteq J \ \Longrightarrow \ d_{\delta}(\Gamma_{S}(I),\Gamma_{S}(J)) \, \leq \, \dim_{\cent(R)}(S)\cdot d_{\delta}(I,J) .
\end{equation} To this end, let $I,J \in \lat(R)$ with $I \subseteq J$. Since the lattice $\lat(R)$ is complemented and modular, thus relatively complemented by Lemma~\ref{lemma:relatively.complemented}, there exists $I' \in \lat(R)$ such that $J = I+I'$ and $I \cap I' = \{ 0 \}$. Hence, $\delta(J) = \delta(I) + \delta(I')$ by Lemma~\ref{lemma:from.rank.to.dimension} and therefore \begin{equation}\tag{$\ast \ast$}\label{hulldistance2}
	\delta(I') \, = \, \delta(J) - \delta(I) \, = \, d_{\delta}(I,J) . 
\end{equation} Since $\Gamma_{S}(I) \subseteq \Gamma_{S}(J)$ by Lemma~\ref{lemma:hull.2}(3), we conclude that \begin{align*}
	d_{\delta}(\Gamma_{S}(I), \Gamma_{S}(J)) \, & = \, \delta(\Gamma_{S}(J)) - \delta(\Gamma_{S}(I)) \, \stackrel{\ref{lemma:hull.2}(2)}{=} \, \delta(\Gamma_{S}(I) + \Gamma_{S}(I')) - \delta(\Gamma_{S}(I)) \\
		&\stackrel{\ref{lemma:from.rank.to.dimension}}{\leq} \, \delta(\Gamma_{S}(I')) \, \stackrel{(1)}{\leq} \, \dim_{\cent(R)}(S)\cdot \delta(I') \, \stackrel{\eqref{hulldistance2}}{=} \, \dim_{\cent(R)} (S)\cdot d_{\delta}(I,J) .
\end{align*} This proves~\eqref{hulldistance}. Consequently, for all $I,J \in \lat(R)$, \begin{align*}
	d_{\delta}(\Gamma_{S}(I),\Gamma_{S}(J)) \, & \stackrel{\ref{lemma:from.rank.to.dimension}+\ref{lemma:birkhoff}}{\leq} \, d_{\delta}(\Gamma_{S}(I),\Gamma_{S}(I+J)) + d_{\delta}(\Gamma_{S}(I+J),\Gamma_{S}(J)) \\
		&\stackrel{\eqref{hulldistance}}{\leq} \, \dim_{\cent(R)}(S) \cdot d_{\delta}(I,I+J) + \dim_{\cent(R)}(S) \cdot d_{\delta}(I+J,J) \\
		&= \, \dim_{\cent(R)}(S) \cdot \left(d_{\delta}(I,I+J) + d_{\delta}(I+J,J)\right) \\
		&= \, \dim_{\cent(R)}(S) \cdot \left(2\delta(I+J) - \delta(I) - \delta(J)\right) \\
		& \stackrel{\ref{lemma:from.rank.to.dimension}+\ref{lemma:birkhoff}(3)}{=} \, \dim_{\cent(R)}(S)\cdot d_{\delta}(I,J) . \qedhere
\end{align*} \end{proof}

We arrive at the desired continuous triangularization theorem.

\begin{thm}[continuous triangularization]\label{theorem:invariant.flags} Let $R$ be a non-discrete irreducible, continuous ring, and let $S$ be a subring of $R$ containing the center $\cent(R)$. If $\dim_{\cent(R)}(S) < \infty$, then \begin{displaymath}
	\exists F \in \Flagmax (R) \colon \quad S \, \subseteq \, R_{F} .
\end{displaymath} \end{thm}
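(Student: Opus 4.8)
The plan is to manufacture the required maximal flag by applying the closure operator $\Gamma_{S}$ of Lemma~\ref{lemma:hull.2} pointwise to a fixed maximal flag over $R$ and then re-indexing the result by dimension, the surjectivity of the re-indexing being supplied by the intermediate value theorem. Set $n \defeq \dim_{\cent(R)}(S) < \infty$. Since $R$ is irreducible and continuous, $\lat(R)$ is an irreducible continuous geometry by Theorem~\ref{theorem:irreducibility}, and it is non-discrete with $\delta_{\lat(R)}(\lat(R)) = [0,1]$ by Remark~\ref{remark:range.2} together with Theorem~\ref{theorem:continuous.rank.rings}(2); moreover, applying Lemma~\ref{lemma:hull.3}(2) with the rank function $\rho_{R}$ of Theorem~\ref{theorem:continuous.rank.rings}(2) (for which $\delta_{\rho_{R}} = \delta_{\lat(R)}$, since both send $aR$ to $\rho_{R}(a)$) shows that $\Gamma_{S} \colon \lat(R) \to \lat(R)$ is $n$-Lipschitz, hence continuous, with respect to $d_{\lat(R)} = d_{\delta_{\lat(R)}}$.

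Fix a maximal chain $D$ in $\lat(R)$. By Corollary~\ref{corollary:maximal.flags}, $\delta_{\lat(R)}(D) = [0,1]$, and since $\delta_{\lat(R)}$ is strictly positive and $D$ is a chain, $\delta_{\lat(R)}\vert_{D}$ is a bijection from $D$ onto $[0,1]$; for $t \in [0,1]$ let $d_{t}$ be the unique element of $D$ with $\delta_{\lat(R)}(d_{t}) = t$. Then $d_{0} = \{0\}$, $d_{1} = R$, and for $s \leq t$ one has $d_{s} \subseteq d_{t}$ and hence $d_{\lat(R)}(d_{s},d_{t}) = t-s$, so $t \mapsto d_{t}$ is (isometric, hence) continuous. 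Therefore
\[
	g \colon \, [0,1] \, \longrightarrow \, [0,1], \quad t \, \longmapsto \, \delta_{\lat(R)}(\Gamma_{S}(d_{t}))
\]
is continuous, being a composite of $t \mapsto d_{t}$, the Lipschitz map $\Gamma_{S}$, and the $1$-Lipschitz map $\delta_{\lat(R)}$. As $1_{R} \in \cent(R) \subseteq S$, we have $\Gamma_{S}(\{0\}) = \{0\}$ and $\Gamma_{S}(R) = R$, so $g(0) = 0$ and $g(1) = 1$; by the intermediate value theorem $g$ is surjective. Choose, for each $t \in [0,1]$, some $\sigma(t) \in [0,1]$ with $g(\sigma(t)) = t$, and put $I_{t} \defeq \Gamma_{S}(d_{\sigma(t)}) \in \lat(R)$.

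I claim $F \defeq \{ I_{t} \mid t \in [0,1] \}$ is as desired. First, $F$ is a chain: for $t,t' \in [0,1]$ the elements $d_{\sigma(t)},d_{\sigma(t')}$ both lie in the chain $D$, hence are comparable, and monotonicity of $\Gamma_{S}$ (Lemma~\ref{lemma:hull.2}(3)) forces $I_{t}$ and $I_{t'}$ to be comparable. Second, $\delta_{\lat(R)}(F) = \{ g(\sigma(t)) \mid t \in [0,1] \} = [0,1]$, so $F$ is a maximal chain in $\lat(R)$ by Corollary~\ref{corollary:maximal.flags}, i.e.\ $F \in \Flagmax(R)$. Finally, for $a \in S$ and $t \in [0,1]$, Lemma~\ref{lemma:hull.2}(1) yields $aI_{t} = a\Gamma_{S}(d_{\sigma(t)}) \subseteq \Gamma_{S}(d_{\sigma(t)}) = I_{t}$, whence $a \in R_{F}$; thus $S \subseteq R_{F}$, completing the proof.

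The crux — the only point that uses anything beyond the already-established formal properties of $\Gamma_{S}$ — is the observation that $\Gamma_{S}$ carries comparable principal right ideals to comparable ones, so that applying it to a maximal chain and re-parametrising the image by its dimension (via the intermediate value theorem applied to $g$) neither breaks comparability nor loses the property $\delta_{\lat(R)}(F) = [0,1]$ that characterises maximal chains. The remaining steps are routine bookkeeping with the dimension function $\delta_{\lat(R)}$ and the Lipschitz estimates of Lemmas~\ref{lemma:hull.2} and~\ref{lemma:hull.3}.
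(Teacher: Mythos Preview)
Your proof is correct and follows essentially the same approach as the paper: apply $\Gamma_{S}$ to a fixed maximal flag, use the intermediate value theorem on the induced dimension map $[0,1]\to[0,1]$ to see that the image hits every dimension value, and invoke Corollary~\ref{corollary:maximal.flags} to conclude maximality. The only cosmetic difference is that the paper works directly with the full image $F' \defeq \Gamma_{S}(D)$ and reads off $\delta_{\lat(R)}(F') = [0,1]$ from surjectivity of $g$, whereas you pass through a choice function $\sigma$ to build $F \subseteq \Gamma_{S}(D)$; since your $F$ is a maximal chain contained in the chain $\Gamma_{S}(D)$, the two coincide anyway.
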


\begin{proof} Note that $\delta \defeq \delta_{\lat(R)} = \delta_{\rho_{R}}$ by Theorem~\ref{theorem:continuous.rank.rings}(2). Moreover, we observe that $\Flagmax (R) \ne \emptyset$ by the Hausdorff maximal principle. Pick any $F \in \Flagmax (R)$. Due to Corollary~\ref{corollary:maximal.flags} and strict positivity of $\delta$, the map $\delta\vert_{F} \colon F \to [0,1]$ is bijective. Now, suppose that $\dim_{\cent(R)}(S) < \infty$ and consider \begin{displaymath}
	F' \, \defeq \, \Gamma_{S}(F) \, = \, \{ \Gamma_{S}(I) \mid I \in F \} .
\end{displaymath} Due to Lemma~\ref{lemma:hull.2}(3) and $F$ being a flag over $R$, it follows that $F'$ constitutes a flag over $R$, too. Furthermore, we observe that the function \begin{displaymath}
	f \colon \, [0,1] \, \longrightarrow \, [0,1], \quad t \, \longmapsto \, \delta\left(\Gamma_{S}\left((\delta\vert_{F})^{-1}(t)\right)\right)
\end{displaymath} is continuous: indeed, for any $s,t \in [0,1]$, \begin{align*}
	\vert f(s) - f(t) \vert \, & = \, \left\lvert \delta\left(\Gamma_{S}\left((\delta\vert_{F})^{-1}(s)\right)\right) - \delta\left(\Gamma_{S}\left((\delta\vert_{F})^{-1}(t)\right)\right)\right\rvert \\
		& \stackrel{F' \in \Flag (R)}{=} \, d_{\delta}\left(\Gamma_{S}\left((\delta\vert_{F})^{-1}(s)\right),\Gamma_{S}\left((\delta\vert_{F})^{-1}(t)\right)\right) \\
		& \stackrel{\ref{lemma:hull.3}(2)}{\leq} \, \dim_{\cent(R)}(S) \cdot d_{\delta}\left((\delta\vert_{F})^{-1}(s),(\delta\vert_{F})^{-1}(t)\right) \\
		& \stackrel{F \in \Flag (R)}{=} \, \dim_{\cent(R)}(S) \cdot \left\lvert \delta \left((\delta\vert_{F})^{-1}(s)\right) - \delta\left((\delta\vert_{F})^{-1}(t)\right) \right\rvert \\
		& = \, \dim_{\cent(R)}(S) \cdot \vert s-t \vert .
\end{align*} Since moreover \begin{align*}
	f(0) \, &= \, \delta\left(\Gamma_{S}\left((\delta\vert_{F})^{-1}(0)\right)\right) \, = \, \delta(\Gamma_{S}(\{ 0_{R} \})) \, = \, \delta (\{ 0_{R} \}) \, = \, 0 , \\
		f(1) \, &= \, \delta\left(\Gamma_{S}\left((\delta\vert_{F})^{-1}(1)\right)\right) \, = \, \delta(\Gamma_{S}(R)) \, = \, \delta (R) \, = \, 1 ,
\end{align*} the intermediate value theorem asserts that $f$ is surjective. Hence, for every $t \in [0,1]$, there exists $s \in [0,1]$ with $f(s) = t$, that is, \begin{displaymath}
	t \, = \, \delta\left(\Gamma_{S}\left((\delta\vert_{F})^{-1}(s)\right)\right) \, \in \, \delta (F') .
\end{displaymath} Thus, $\delta\vert_{F'} \colon F' \to [0,1]$ is surjective, wherefore $F' \in \Flagmax (R)$ according to Corollary~\ref{corollary:maximal.flags}. Finally, $S \subseteq R_{F'}$ by Lemma~\ref{lemma:hull.2}(1). \end{proof}

Theorem~\ref{theorem:invariant.flags} is to be compared with the classical triangularization theorem of linear algebra, which asserts that an endomorphism of a finite-dimensional vector space is triangularizable if and only if its characteristic polynomial splits into linear factors. Since the endomorphism ring of any finite-dimensional vector space (equivalently, any finite-dimensional matrix ring over a field) constitutes a discrete irreducible, continuous ring (see~\cite[IX.2, Satz~2.2, p.~185]{MaedaBook}), the non-discreteness assumption in Theorem~\ref{theorem:invariant.flags} is clearly essential: while any square matrix over a field $K$ is algebraic over $K$ by the Cayley--Hamilton theorem, every field that is not algebraically closed admits a non-triangularizable square matrix (such as the companion matrix of any irreducible, non-linear, monic polynomial).

\begin{cor}\label{corollary:invariant.nests} Let $R$ be a non-discrete irreducible, continuous ring. \begin{itemize}
	\item[$(1)$] Let $S$ be a subring of $R$ containing $\cent(R)$. If $\dim_{\cent(R)}(S) < \infty$, then \begin{displaymath}
			\qquad \exists E \in \Nestmax (R) \colon \quad S \, \subseteq \, R_{E} .
		\end{displaymath}
	\item[$(2)$] Let $G \subseteq \GL(R)$ and let $S$ be the subring of $R$ generated by $G \cup \cent(R)$. If $\dim_{\cent(R)}(S) < \infty$, then \begin{displaymath}
			\qquad \exists E \in \Nestmax (R) \colon \quad G \, \subseteq \, \GL(R_{E}) .
		\end{displaymath}
\end{itemize} \end{cor}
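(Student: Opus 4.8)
The plan is to derive Corollary~\ref{corollary:invariant.nests} from Theorem~\ref{theorem:invariant.flags} together with the ring-theoretic machinery already established, namely Theorem~\ref{theorem:maximal.nests}, Proposition~\ref{proposition:nest.subring}, and Proposition~\ref{proposition:units.in.nest.stabilizers}. For part~(1), start from a subring $S$ with $\cent(R) \subseteq S$ and $\dim_{\cent(R)}(S) < \infty$. Theorem~\ref{theorem:invariant.flags} supplies $F \in \Flagmax(R)$ with $S \subseteq R_{F}$. Since $R$ is an irreducible, continuous ring, it is a complete rank ring by Theorem~\ref{theorem:continuous.rank.rings}(2), so Theorem~\ref{theorem:maximal.nests} yields a maximal nest $E \in \Nestmax(R)$ with $\Lambda(E) = F$, i.e.\ $F = \{ eR \mid e \in E\}$. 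Then Proposition~\ref{proposition:nest.subring}(2) gives $R_{E} = R_{\{ eR \mid e \in E\}} = R_{F}$, whence $S \subseteq R_{E}$, as desired.

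For part~(2), let $G \subseteq \GL(R)$ and let $S$ be the subring of $R$ generated by $G \cup \cent(R)$. Assuming $\dim_{\cent(R)}(S) < \infty$, part~(1) furnishes $E \in \Nestmax(R)$ with $S \subseteq R_{E}$; in particular $G \subseteq R_{E}$. Now every element of $G$ lies in $\GL(R)$ as well, so $G \subseteq R_{E} \cap \GL(R)$. Since $R$ is directly finite (being a rank ring, by Lemma~\ref{lemma:rank.functions.vs.units}(2)), Proposition~\ref{proposition:units.in.nest.stabilizers} identifies $\GL(R_{E}) = R_{E} \cap \GL(R)$, and therefore $G \subseteq \GL(R_{E})$.

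I do not anticipate a genuine obstacle here: the corollary is a bookkeeping assembly of results that have all been proved earlier in the excerpt, and the only points requiring a word of justification are (i) invoking that $R$ is a complete rank ring so that Theorem~\ref{theorem:maximal.nests} applies, and (ii) invoking direct finiteness so that Proposition~\ref{proposition:units.in.nest.stabilizers} applies — both of which are immediate from Theorem~\ref{theorem:continuous.rank.rings}(2) and Lemma~\ref{lemma:rank.functions.vs.units}. If anything is mildly delicate, it is making sure the translation $F = \Lambda(E)$ is used in the right direction when feeding it to Proposition~\ref{proposition:nest.subring}(2); but that is purely notational. The whole argument should take only a few lines.
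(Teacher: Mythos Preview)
Your proposal is correct and matches the paper's own proof essentially line for line: both parts invoke Theorem~\ref{theorem:invariant.flags} to obtain a maximal flag, pass to a maximal nest via Theorem~\ref{theorem:continuous.rank.rings}(2) and Theorem~\ref{theorem:maximal.nests}, identify $R_{E}=R_{F}$ through Proposition~\ref{proposition:nest.subring}(2), and for part~(2) use direct finiteness (Lemma~\ref{lemma:rank.functions.vs.units}(2)) together with Proposition~\ref{proposition:units.in.nest.stabilizers}.
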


\begin{proof} (1) Suppose that $\dim_{\cent(R)}(S) < \infty$. According to Theorem~\ref{theorem:invariant.flags}, there exists $F \in \Flagmax (R)$ such that $S \subseteq R_{F}$. By Theorem~\ref{theorem:continuous.rank.rings}(2) and Theorem~\ref{theorem:maximal.nests}, we find $E \in \Nestmax (R)$ with $F = \{ eR \mid e \in E\}$, whence \begin{displaymath}
	S \, \subseteq \, R_{F} \, \stackrel{\ref{proposition:nest.subring}(2)}{=} \, R_{E} .
\end{displaymath}

(2) Note that $R$ is directly finite by Theorem~\ref{theorem:continuous.rank.rings}(2) and Lemma~\ref{lemma:rank.functions.vs.units}(2). Now, if $\dim_{\cent(R)}(S) < \infty$, then by~(2) there is $E \in \Nestmax (R)$ with $S \subseteq R_{E}$, thus \begin{displaymath}
	G \, \subseteq \, R_{E} \cap \GL(R) \, \stackrel{\ref{proposition:units.in.nest.stabilizers}}{=} \, \GL(R_{E}) . \qedhere
\end{displaymath} \end{proof}

Combining the above with Proposition~\ref{proposition:algebraic.units}, we arrive at the following.

\begin{cor}\label{corollary:algebraic.units} If $R$ is a non-discrete irreducible, continuous ring, then \begin{displaymath}
	\overline{\bigcup\nolimits_{E \in \Nestmax (R)} R_{E}} \, = \, R , \qquad \overline{\bigcup\nolimits_{E \in \Nestmax (R)} \GL(R_{E})} \, = \, \GL(R)
\end{displaymath} with respect to the metric $d_{R}$. \end{cor}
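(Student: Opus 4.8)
The plan is to obtain both equalities by combining the two density theorems of Section~\ref{section:algebraic.elements} with the continuous triangularization corollary. The key observation to establish is: \emph{every element of $R$ algebraic over $\cent(R)$ lies in $\bigcup_{E \in \Nestmax(R)} R_{E}$, and every such element that is in addition a unit lies in $\bigcup_{E \in \Nestmax(R)} \GL(R_{E})$.} Granting this, the first identity will follow because the algebraic elements are $d_{R}$-dense in $R$ by Corollary~\ref{corollary:neumann}, and the second because the algebraic units are $d_{R}$-dense in $\GL(R)$ by Proposition~\ref{proposition:algebraic.units}.

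To prove the key observation, I would first recall that $\cent(R)$ is a field by Theorem~\ref{theorem:irreducibility}, so that $R$ is a unital $\cent(R)$-algebra. Given $a \in R$ algebraic over $\cent(R)$, let $S$ be the unital $\cent(R)$-subalgebra of $R$ generated by $a$; then $\cent(R) \subseteq S$ (as $S$ is unital) and $\dim_{\cent(R)}(S) < \infty$ by Remark~\ref{remark:algebraicity}. Corollary~\ref{corollary:invariant.nests}(1) then supplies $E \in \Nestmax(R)$ with $a \in S \subseteq R_{E}$. If moreover $a \in \GL(R)$, I would instead let $S$ be the subring generated by $\{ a \} \cup \cent(R)$ --- which coincides with the unital subalgebra above, hence is again finite-dimensional over $\cent(R)$ --- and invoke Corollary~\ref{corollary:invariant.nests}(2) with $G \defeq \{ a \}$ to obtain $E \in \Nestmax(R)$ with $a \in \GL(R_{E})$.

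It then remains to assemble the closures. For the first equality, the chain of inclusions $\{ a \in R \mid a \text{ algebraic over } \cent(R) \} \subseteq \bigcup_{E \in \Nestmax(R)} R_{E} \subseteq R$ together with $d_{R}$-density of the left-hand set (Corollary~\ref{corollary:neumann}) immediately forces $\overline{\bigcup_{E \in \Nestmax(R)} R_{E}} = R$. For the second, note that each $R_{E}$ is a unital subring of $R$ with the same identity $1_{R}$ (Proposition~\ref{proposition:nest.subring}(2)), so $\GL(R_{E}) \subseteq \GL(R)$; hence $\{ a \in \GL(R) \mid a \text{ algebraic over } \cent(R) \} \subseteq \bigcup_{E \in \Nestmax(R)} \GL(R_{E}) \subseteq \GL(R)$, and since $\GL(R)$ is $d_{R}$-closed in $R$ by Lemma~\ref{lemma:rank.functions.vs.units}(4) while the left-hand set is $d_{R}$-dense in $\GL(R)$ by Proposition~\ref{proposition:algebraic.units}, passing to closures yields $\overline{\bigcup_{E \in \Nestmax(R)} \GL(R_{E})} = \GL(R)$.

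I do not anticipate a genuine obstacle: all the heavy lifting resides in Theorem~\ref{theorem:invariant.flags} (continuous triangularization) and in von Neumann's density theorem for algebraic elements, both already available. The only minor points needing attention are the identification of the subalgebra $\cent(R)[a]$ as finite-dimensional over $\cent(R)$ precisely when $a$ is algebraic (Remark~\ref{remark:algebraicity}), and the fact that $\bigcup_{E \in \Nestmax(R)} \GL(R_{E})$ is contained in the $d_{R}$-closed set $\GL(R)$, so that density inside $\GL(R)$ already pins down the closure.
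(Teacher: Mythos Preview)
Your proposal is correct and matches the paper's proof essentially line for line: the paper also derives the first equality from Corollary~\ref{corollary:neumann}, Remark~\ref{remark:algebraicity}, and Corollary~\ref{corollary:invariant.nests}(1), and the second from Proposition~\ref{proposition:algebraic.units}, Remark~\ref{remark:algebraicity}, Corollary~\ref{corollary:invariant.nests}(2), together with Lemma~\ref{lemma:rank.functions.vs.units}(4) to guarantee that $\GL(R)$ is $d_{R}$-closed.
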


\begin{proof} The first assertion follows by Corollary~\ref{corollary:neumann}, Remark~\ref{remark:algebraicity}, and Corollary~\ref{corollary:invariant.nests}(1). The second is a consequence of Proposition~\ref{proposition:algebraic.units}, Remark~\ref{remark:algebraicity}, and Corollary~\ref{corollary:invariant.nests}(2) on one hand, and Lemma~\ref{lemma:rank.functions.vs.units}(4) on the other. \end{proof}

\section{Nest envelopes}\label{section:nest.envelopes}

In view of the abundance of maximal nests in non-discrete irreducible, continuous rings illustrated by Corollary~\ref{corollary:algebraic.units}, it seems natural to study the subgroup structure of the unit groups of the corresponding stabilizer rings. As will be shown in the course of this section, if $E$ is any nest in a unital ring $R$, then the subgroup lattice of $\GL(R_{E})$ admits a natural closure operator (Definition~\ref{definition:jordan}), which is inspired by the classical (multiplicative) Jordan--Chevalley decomposition. We will examine the algebraic characteristics of this construction relevant to the dynamical considerations of the subsequent Section~\ref{section:stability}. We start off with a few preparatory notes.

\begin{remark}\label{remark:ring.homs} Let $R$ be a unital ring. \begin{itemize}
	\item[$(1)$] Let $n \in \N$. If $e_{1},\ldots,e_{n} \in \Ed(R)$ are pairwise orthogonal idempotents with $\sum_{i=1}^{n} e_{i} = 1$, then \begin{displaymath}
		\qquad \prod\nolimits_{i=1}^{n} e_{i}Re_{i} \, \longrightarrow \, R, \quad (a_{1},\ldots,a_{n}) \, \longmapsto \, a_{1}+\ldots +a_{n}
	\end{displaymath} is a unital ring homomorphism.
	\item[$(2)$] Let $S$ be a unital ring and let $\phi \colon R \to S$ be a unital ring homomorphism. Then $\tilde{\phi} \colon \GL(R) \to \GL(S), \, a \mapsto \phi(a)$ is a well-defined group homomorphism and $\Ker \tilde{\phi} = \GL(R) \cap (1+\Ker\phi)$.
\end{itemize} \end{remark}

\begin{lem}\label{lemma:induced.units} Let $R$ be a unital ring and let $e \in \Ed(R)$. Then \begin{displaymath}
	\GL(eRe) \, \longrightarrow \, \GL(R), \quad a \, \longmapsto \, a +1-e
\end{displaymath} is a group homomorphism. \end{lem}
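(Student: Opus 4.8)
The plan is to verify directly that the map $\psi\colon \GL(eRe) \to \GL(R),\, a \mapsto a+1-e$ is well defined (lands in $\GL(R)$) and then that it is multiplicative. The key observation is that $e$ is the multiplicative unit of the subring $eRe$, so for $a \in eRe$ we have $ae = ea = a$, and likewise $(1-e)$ is orthogonal to every element of $eRe$ in the sense that $a(1-e) = (1-e)a = 0$ for all $a \in eRe$. These two facts together with $e \perp (1-e)$ (Remark~\ref{remark:idempotent.difference}) make the computation essentially mechanical.

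First I would show that $\psi(a) \in \GL(R)$ for every $a \in \GL(eRe)$. If $a \in \GL(eRe)$, then there is $b \in eRe$ with $ab = ba = e$. I claim $\psi(a)\psi(b) = (a+1-e)(b+1-e) = 1$: expanding gives $ab + a(1-e) + (1-e)b + (1-e)^2$, and since $a(1-e) = 0 = (1-e)b$ and $(1-e)^2 = 1-e$ and $ab = e$, this collapses to $e + (1-e) = 1$. Symmetrically $\psi(b)\psi(a) = 1$, so $\psi(a) \in \GL(R)$ with inverse $\psi(b) = b+1-e$. Next I would check multiplicativity: for $a,a' \in \GL(eRe)$,
\begin{displaymath}
	\psi(a)\psi(a') \, = \, (a+1-e)(a'+1-e) \, = \, aa' + a(1-e) + (1-e)a' + (1-e)^2 \, = \, aa' + 1 - e \, = \, \psi(aa') ,
\end{displaymath}
again using $a(1-e) = 0 = (1-e)a'$ and $(1-e)^2 = 1-e$ (and noting $aa' \in eRe$ so that $\psi(aa')$ makes sense). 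Finally $\psi(e) = e + 1 - e = 1$, so $\psi$ preserves the identity; combined with multiplicativity this already forces $\psi(a)^{-1} = \psi(a^{-1})$, consistent with the computation above.

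There is no real obstacle here — this is a routine verification, and the only thing to be mildly careful about is keeping straight that the identity of $eRe$ is $e$ rather than $1$, so that statements like "$a(1-e) = 0$" and "$(1-e)a = 0$" are used in place of any cancellation against $1$. One could alternatively phrase the argument via Remark~\ref{remark:ring.homs}: the map $R \to eRe \times (1-e)R(1-e)$ has a partial inverse $eRe \times (1-e)R(1-e) \to R,\,(x,y)\mapsto x+y$ which is a unital ring homomorphism, and $\psi$ is the composition of $\GL(eRe) \to \GL(eRe \times (1-e)R(1-e)),\, a \mapsto (a,1-e)$ with the induced group homomorphism on unit groups from Remark~\ref{remark:ring.homs}(2); but the direct computation above is shorter and self-contained, so that is the route I would take.
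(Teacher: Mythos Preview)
Your direct computation is correct and complete. Amusingly, the paper takes precisely the alternative route you sketch in your final paragraph: it invokes Remark~\ref{remark:ring.homs} to observe that $\prod_{i} e_i R e_i \to R$, $(a_1,\ldots,a_n) \mapsto \sum a_i$ is a unital ring homomorphism for pairwise orthogonal idempotents summing to $1$, applies this with the pair $e, 1-e$, and then composes the induced map on unit groups with the embedding $\GL(eRe) \to \GL(eRe) \times \GL((1-e)R(1-e))$, $a \mapsto (a,1-e)$. Your direct expansion is arguably cleaner here since the structural machinery of Remark~\ref{remark:ring.homs} is overkill for a two-idempotent decomposition; on the other hand, the paper's route makes the later generalization in Lemma~\ref{lemma:idempotent.difference.1}(2) to $n$-fold decompositions $\pi_{E,e}$ and $\iota_{E,e}$ more transparent, since that is exactly the same argument with $n$ pairwise orthogonal idempotents.
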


\begin{proof} Note that $e\perp (1-e) \in \Ed(R)$ by Remark~\ref{remark:idempotent.difference}. Now, let us consider the ring $S \defeq eRe \times (1-e)R(1-e)$. According to Remark~\ref{remark:ring.homs}, \begin{displaymath}
	\phi \colon \, \GL(S) \, \longrightarrow \, \GL(R), \quad (a,b) \, \longmapsto \, a + b
\end{displaymath} is a well-defined group homomorphism. Furthermore, since \begin{displaymath}
	\GL(S) \, = \, \GL(eRe) \times \GL((1-e)R(1-e)) ,
\end{displaymath} we see that $\psi \colon \GL(eRe) \to \GL(S), \, a \mapsto (a,1-e)$ is a well-defined group homomorphism, too. Thus, $\phi \circ \psi \colon \GL(eRe) \to \GL(R)$ is a homomorphism. \end{proof}

Given any nest $E$ in a unital ring $R$, the closure operator to be devised in Definition~\ref{definition:jordan} will involve projections along interval partitions of $E$. We record the details in the following definition.

\begin{definition} Let $R$ be a unital ring. \begin{itemize}
	\item[$(1)$] If $n \in \N$ and $e = (e_{0},\ldots,e_{n}) \in \Ed(R)^{n+1}$ with $0 = e_{0} \leq \ldots \leq e_{n} = 1$, then \begin{displaymath}
			\qquad \bar{e}_{i} \, \defeq \, e_{i}-e_{i-1} \, \stackrel{\ref{remark:idempotent.difference}}{\in} \, \Ed(R) \qquad (i \in \{ 1,\ldots,n\}) .
		\end{displaymath}
	\item[$(2)$] Let $E \in \Nest (R)$. Then we define \begin{displaymath}
			\qquad \Diff(E) \, \defeq \, \{ e-f \mid e,f \in E\cup\{ 0,1\}, \, f \leq e \} .
		\end{displaymath} Furthermore, we let \begin{displaymath}
			\qquad \I_{n}(E) \, \defeq \, \left. \! \left\{ (e_{0},\ldots,e_{n}) \in (E \cup\{ 0,1 \})^{n+1} \, \right\vert 0 = e_{0} \leq \ldots \leq e_{n} = 1 \right\} 
		\end{displaymath} for every $n \in \N$, as well as $\I(E) \defeq \bigcup\nolimits_{n \in \N } \I_{n}(E)$.
\end{itemize} \end{definition}

\begin{remark}\label{remark:nest.rings} Let $R$ be a unital ring. \begin{itemize}
	\item[$(1)$] Let $n \in \N$ and $e = (e_{0},\ldots,e_{n}) \in \Ed(R)^{n+1}$ with $0 = e_{0} \leq \ldots \leq e_{n} = 1$. Then $\bar{e}_{1},\ldots,\bar{e}_{n}$ are pairwise orthogonal and $\sum\nolimits_{i=1}^{n} \bar{e}_{i} = 1$. Indeed, \begin{align*}
			\qquad \bar{e}_{i}\bar{e}_{j} \, &= \ (e_{i} -e_{i-1})(e_{j} -e_{j-1}) \, = \, e_{i}-e_{i} - e_{i-1}+e_{i-1} \, = \, 0 \\
			& = \, e_{i}-e_{i-1} - e_{i}+e_{i-1} \, = \, (e_{j} -e_{j-1})(e_{i} -e_{i-1}) \, = \, \bar{e}_{j}\bar{e}_{i}
	\end{align*} for any two elements $i,j \in \{ 1,\ldots,n \}$ with $i<j$, and \begin{displaymath}
		\qquad \sum\nolimits_{i=1}^{n} \bar{e}_{i} \, = \, \sum\nolimits_{i=1}^{n} e_{i} -e_{i-1} \, = \, e_{n}-e_{0} \, = \, 1-0 \, = \, 1 .
	\end{displaymath}
	\item[$(2)$] Let $E \in \Nest (R)$. Then $\Diff(E) \subseteq \Ed(R_{E})$ due to Proposition~\ref{proposition:nest.subring}(2)+(3) and Remark~\ref{remark:idempotent.difference}. Hence, if $n \in \N$ and $e \in \I_{n}(E)$, then $\bar{e}_{1},\ldots,\bar{e}_{n} \in \Ed(R_{E})$.
\end{itemize} \end{remark}

We now get to the Jordan--Chevalley-type decomposition along interval partitions.

\begin{lem}\label{lemma:idempotent.difference.1} Let $R$ be a unital ring and let $E$ be a nest in $R$. \begin{itemize}
	\item[$(1)$] Let $e \in \Diff(E)$. Then $R_{E} \to eR_{E}e, \, a \mapsto eae$ is a unital ring homomorphism. Furthermore, $\GL(eR_{E}e) = e\GL(R_{E})e$.
	\item[$(2)$] Let $n \in \N$ and $e \in \I_{n}(E)$. Then \begin{displaymath}
					\qquad \pi_{E,e} \colon \, R_{E} \, \longrightarrow \, \prod\nolimits_{i=1}^{n} \bar{e}_{i}R_{E}\bar{e}_{i}, \quad a \, \longmapsto \, (\bar{e}_{1}a\bar{e}_{1},\ldots,\bar{e}_{n}a\bar{e}_{n})
				\end{displaymath} and \begin{displaymath}
					\qquad \iota_{E,e} \colon \, \prod\nolimits_{i=1}^{n} \bar{e}_{i}R_{E}\bar{e}_{i} \, \longrightarrow \, R_{E}, \quad (a_{1},\ldots,a_{n}) \, \longmapsto \, a_{1}+\ldots +a_{n}
			\end{displaymath} are unital ring homomorphisms with $\pi_{E,e} \circ \iota_{E,e} = \id_{\prod\nolimits_{i=1}^{n} \bar{e}_{i}R_{E}\bar{e}_{i}}$.
\end{itemize} \end{lem}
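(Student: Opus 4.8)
The plan is to verify each assertion by direct computation, relying on the structural facts already assembled: that $\Diff(E) \subseteq \Ed(R_E)$ and, for $e \in \I_n(E)$, that $\bar e_1, \ldots, \bar e_n$ are pairwise orthogonal idempotents of $R_E$ summing to $1$ (Remark~\ref{remark:nest.rings}), together with Proposition~\ref{proposition:nest.subring} identifying $R_E$ as a unital subring of $R$.

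For part~(1), first note that if $e \in \Diff(E)$ then $e \in \Ed(R_E)$, so $eR_Ee$ is a unital ring with multiplicative unit $e$, and I would check directly that $a \mapsto eae$ is additive, sends $1_{R_E} = 1_R$ to $e$, and satisfies $(eae)(ebe) = e(a e b)e = e(ab)e$ using $ee = e$ — more precisely, $eae \cdot ebe = ea(ebe) = e a e b e$, and since $e \in \Ed(R_E)$ one has $aeb$ lies in $R_E$ and I want $e(ab)e$; this works because $eae\cdot ebe = e\,a\,(e\,b\,e) = e\,a\,e\,b\,e$ and left-multiplying $ebe$ already absorbed one $e$, so I should instead observe $(eae)(ebe)=e(a e b)e$ and that multiplication by the central-on-$eR_Ee$ idempotent gives the ring structure of a corner ring — this is the standard corner-ring fact, so I would simply cite that $eR_Ee$ is a ring and the map is the canonical projection. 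For $\GL(eR_Ee) = e\GL(R_E)e$: the inclusion $\supseteq$ uses Lemma~\ref{lemma:directly.finite}(B) (or directly, if $a \in \GL(R_E)$ with $eae = ae$ then $eae$ is invertible in $eR_Ee$ with inverse $ea^{-1}e$ — but this requires knowing $R_E$ is directly finite, which follows since $R_E$ is closed in $R$ and $R$ is directly finite for continuous $R$; in the generality of an arbitrary unital ring, I would instead prove $\supseteq$ and $\subseteq$ symmetrically using that elements of $\Diff(E)$ commute appropriately). For the reverse inclusion $\subseteq$, given $b \in \GL(eR_Ee)$ with inverse $b'$, the element $b + 1 - e$ lies in $\GL(R_E)$ by Lemma~\ref{lemma:induced.units} applied inside $R_E$ (noting $e \in \Ed(R_E)$), with inverse $b' + 1 - e$, and $e(b+1-e)e = b$, giving $b \in e\GL(R_E)e$.

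For part~(2), the map $\iota_{E,e}$ is a unital ring homomorphism by Remark~\ref{remark:ring.homs}(1) applied to the pairwise orthogonal idempotents $\bar e_1, \ldots, \bar e_n \in \Ed(R_E)$ with $\sum_i \bar e_i = 1$. For $\pi_{E,e}$: each coordinate $a \mapsto \bar e_i a \bar e_i$ is a unital ring homomorphism $R_E \to \bar e_i R_E \bar e_i$ by part~(1) (since $\bar e_i \in \Diff(E)$), and a product of ring homomorphisms into a product ring is a ring homomorphism, so $\pi_{E,e}$ is one too, sending $1$ to $(\bar e_1, \ldots, \bar e_n)$. Finally $\pi_{E,e} \circ \iota_{E,e} = \id$ is the computation $\bar e_i(a_1 + \cdots + a_n)\bar e_i = \sum_j \bar e_i a_j \bar e_i = \bar e_i a_i \bar e_i = a_i$, where the middle step uses orthogonality $\bar e_i \bar e_j = 0$ for $i \neq j$ and the last uses $a_i \in \bar e_i R_E \bar e_i$ so $\bar e_i a_i \bar e_i = a_i$.

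I do not expect a serious obstacle here — everything reduces to corner-ring bookkeeping and the orthogonality relations already recorded. The one point requiring mild care is the identity $\GL(eR_Ee) = e\GL(R_E)e$ in part~(1): the nontrivial containment is $\supseteq$, and the cleanest route is to avoid invoking direct finiteness and instead argue that if $a \in \GL(R_E)$ then $e$ need not commute with $a$, so $eae$ is a priori only an element of $eR_Ee$, not obviously a unit; the resolution is that the statement as written presumably intends $e \in \Diff(E)$ to be an idempotent of the commutative-enough subring generated by $E$, and for $a \in \GL(R_E)$ with the relevant commutation $eae = ae$ (which holds when $a$ lies in the corner) the element $eae$ has inverse $ea^{-1}e$. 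Since the applications in the sequel only use $e\GL(R_E)e \subseteq \GL(eR_Ee)$ together with the surjection-type statement, I would prove both inclusions carefully in the write-up, using Lemma~\ref{lemma:induced.units} for one direction and a direct inverse computation for the other.
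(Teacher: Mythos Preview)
Your handling of part~(2) and of the inclusion $\GL(eR_{E}e)\subseteq e\GL(R_{E})e$ in part~(1) is fine and matches the paper. The genuine gap is in the multiplicativity of the compression map $a\mapsto eae$ in part~(1). You call this ``the standard corner-ring fact'' and propose to ``simply cite that $eR_Ee$ is a ring and the map is the canonical projection,'' but for an arbitrary idempotent $e$ in a ring $S$ the map $S\to eSe$, $a\mapsto eae$, is \emph{not} a ring homomorphism: one has $(eae)(ebe)=eaebe$, which need not equal $eabe$. (Take $S=M_{2}(K)$, $e=E_{11}$, $a=E_{12}$, $b=E_{21}$: then $eabe=E_{11}$ but $eae=ebe=0$.) The claim is only true here because of the special form of $e$ and the defining property of $R_{E}$.

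The paper's argument is this: write $e=e_{1}-e_{0}=e_{1}(1-e_{0})=(1-e_{0})e_{1}$ with $e_{0}\le e_{1}$ in $E\cup\{0,1\}$. For $a,b\in R_{E}$ one has $(1-e_{0})a(1-e_{0})=(1-e_{0})a$ and $e_{1}be_{1}=be_{1}$ by Lemma~\ref{lemma:ringrose}, so
\[
eabe \;=\; e_{1}(1-e_{0})\,ab\,e_{1}(1-e_{0}) \;=\; e_{1}(1-e_{0})a(1-e_{0})\,e_{1}b\,e_{1}(1-e_{0}) \;=\; e\,a\,e\,b\,e \;=\; (eae)(ebe).
\]
This is exactly the step your write-up skips. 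Once it is established, the inclusion $e\GL(R_{E})e\subseteq\GL(eR_{E}e)$ that you worry about is immediate from Remark~\ref{remark:ring.homs}(2) (unital ring homomorphisms send units to units), with no need for direct finiteness or extra commutation hypotheses; your concern about the ``nontrivial containment $\supseteq$'' is therefore misplaced --- it is the trivial direction, once multiplicativity is in hand.
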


\begin{proof} (1) According to Remark~\ref{remark:nest.rings}(2), we have $e \in \Ed(R_{E})$. Clearly, $e1e = e$ as $e \in \Ed(R)$, and $e(a+b)e = eae + ebe$ for all $a,b \in R_{E}$. Since $e \in \Diff(E)$, there exist $e_{0},e_{1} \in E \cup\{ 0,1\}$ such that $e_{0} \leq e_{1}$ and $e = e_{1}-e_{0}$, whence \begin{align*}
	eabe \, &= \, (e_{1}-e_{0})ab(e_{1}-e_{0}) \, = \, e_{1}(1-e_{0})abe_{1}(1-e_{0}) \\
		& \stackrel{\ref{lemma:ringrose}}{=} \, e_{1}(1-e_{0})a(1-e_{0})e_{1}be_{1}(1-e_{0}) \, = \, eaebe \, \stackrel{e \in \Ed(R)}{=} \, eaeebe 
\end{align*} for all $a,b \in R_{E \cup\{ 0,1\}} = R_{E}$. Therefore, $R_{E} \to eR_{E}e, \, a \mapsto eae$ is a unital ring homomorphism, which by Remark~\ref{remark:ring.homs}(2) entails that $e\GL(R_{E})e \subseteq \GL(eR_{E}e)$. Conversely, if $a \in \GL(eR_{E}e)$, then Lemma~\ref{lemma:induced.units} asserts that \begin{displaymath}
	a \, = \, e(a+1-e)e \, \in \, e\GL(R_{E})e .
\end{displaymath} Thus, $\GL(eR_{E}e) = e\GL(R_{E})e$ as desired.

(2) We note that $\bar{e}_{1},\ldots,\bar{e}_{n} \in \Ed(R_{E})$ by Remark~\ref{remark:nest.rings}(2). Let $S \defeq \prod\nolimits_{i=1}^{n} \bar{e}_{i}R_{E}\bar{e}_{i}$. It follows from (1) that $\pi_{E,e} \colon R_{E} \to S$ is a unital ring homomorphism. Combining Remark~\ref{remark:nest.rings}(1) with Remark~\ref{remark:ring.homs}(1), we conclude that $\iota_{E,e} \colon S \to R$ is a unital ring homomorphism, too. Finally, thanks to $\bar{e}_{1},\ldots,\bar{e}_{n}$ being pairwise orthogonal by Remark~\ref{remark:nest.rings}(1), if $(a_{1},\ldots,a_{n}) \in S$, then \begin{displaymath}
	\bar{e}_{i}(a_{1}+\ldots +a_{n})\bar{e}_{i} \, = \, \bar{e}_{i}a_{1}\bar{e}_{i}+\ldots +\bar{e}_{i}a_{n}\bar{e}_{i} \, = \, a_{i}
\end{displaymath} for each $i \in \{1,\ldots,n\}$, thus \begin{align*}
	\pi_{E,e}(\iota_{E,e}(a_{1},\ldots,a_{n})) \, &= \, \pi_{E,e}(a_{1}+\ldots +a_{n}) \\
	& = \, (\bar{e}_{1}(a_{1}+\ldots +a_{n})\bar{e}_{1},\ldots,\bar{e}_{1}(a_{1}+\ldots +a_{n})\bar{e}_{n}) \, = \, (a_{1},\ldots,a_{n}) .
\end{align*} This shows that $\pi_{E,e} \circ \iota_{E,e} = \id_{S}$. \end{proof}

\begin{lem}\label{lemma:nilpotent.nest.ideal} Let $R$ be a unital ring, let $E$ be a nest in $R$, let $n \in \N$ and $e \in \I_{n}(E)$. Then \begin{equation}\tag{$\ast$}\label{NilpotentNestIdeal}
	\Ker \pi_{E,e} \, = \, \{ a \in R \mid \forall i \in \{ 1,\ldots,n \} \colon \, e_{i-1}ae_{i} = ae_{i} \} .
\end{equation} Furthermore, $(\Ker \pi_{E,e})^{n} = \{ 0 \}$. \end{lem}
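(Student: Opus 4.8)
The plan is to work in the ``block decomposition'' of $R$ provided by the pairwise orthogonal idempotents $\bar{e}_{1},\ldots,\bar{e}_{n}$, which satisfy $\sum_{i=1}^{n}\bar{e}_{i}=1$ by Remark~\ref{remark:nest.rings}(1). For any $a\in R$ one then has $a=\sum_{p,q=1}^{n}\bar{e}_{p}a\bar{e}_{q}$, and since $e_{i}=\bar{e}_{1}+\cdots+\bar{e}_{i}$ one computes $(1-e_{i-1})ae_{i}=\sum_{p\geq i,\,q\leq i}\bar{e}_{p}a\bar{e}_{q}$. Multiplying this identity on the left by $\bar{e}_{p}$ and on the right by $\bar{e}_{q}$ shows that $(1-e_{i-1})ae_{i}=0$ is equivalent to $\bar{e}_{p}a\bar{e}_{q}=0$ for all $p\geq i\geq q$; letting $i$ range over $\{1,\ldots,n\}$, the condition $e_{i-1}ae_{i}=ae_{i}$ for all $i$ becomes $\bar{e}_{p}a\bar{e}_{q}=0$ for all $p\geq q$. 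In other words, the right-hand side of \eqref{NilpotentNestIdeal} is exactly the set of ``strictly block-upper-triangular'' elements $a=\sum_{p<q}\bar{e}_{p}a\bar{e}_{q}$.

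For the inclusion $\Ker\pi_{E,e}\subseteq$ (right-hand side): given $a\in\Ker\pi_{E,e}$, note $a\in R_{E}$ since $\pi_{E,e}$ is defined on $R_{E}$, whence $e_{k}ae_{k}=ae_{k}$ for every component $e_{k}$ of the tuple (trivially if $e_{k}\in\{0,1\}$, by definition of $R_{E}$ if $e_{k}\in E$); expanding as above this yields $\bar{e}_{p}a\bar{e}_{q}=0$ for all $p>q$, and the relations $\bar{e}_{i}a\bar{e}_{i}=0$ defining membership in $\Ker\pi_{E,e}$ add the diagonal, so $\bar{e}_{p}a\bar{e}_{q}=0$ for all $p\geq q$. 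For the reverse inclusion, take $a$ in the right-hand set; I would first check $a\in R_{E}$ so that $\pi_{E,e}(a)$ makes sense. Fix $e\in E$ and let $e_{j}$ be the largest component of the tuple with $e_{j}\leq e$; if $j=n$ then $e=1$ and the claim is trivial, and otherwise $e_{j+1}\not\leq e$ forces $e<e_{j+1}$ because $E\cup\{0,1\}$ is a chain, so $e=e_{j+1}e$. Applying the hypothesis with $i=j+1$ gives $ae=(ae_{j+1})e=(e_{j}ae_{j+1})e=e_{j}ae$, and hence $eae=(ee_{j})ae=e_{j}ae=ae$; thus $a$ stabilizes every $eR$ ($e\in E$), i.e.\ $a\in R_{E}$. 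Taking $p=q=i$ in the block condition gives $\bar{e}_{i}a\bar{e}_{i}=0$ for all $i$, i.e.\ $\pi_{E,e}(a)=0$.

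For the nilpotency assertion, every $a\in\Ker\pi_{E,e}$ is strictly block-upper-triangular, $a=\sum_{p<q}\bar{e}_{p}a\bar{e}_{q}$. Expanding a product $a_{1}\cdots a_{n}$ of $n$ such elements into a sum of terms $(\bar{e}_{p_{1}}a_{1}\bar{e}_{q_{1}})\cdots(\bar{e}_{p_{n}}a_{n}\bar{e}_{q_{n}})$ and using pairwise orthogonality of the $\bar{e}_{i}$, only the terms with $q_{k}=p_{k+1}$ for $k=1,\ldots,n-1$ survive; together with $p_{k}<q_{k}$ this would yield $p_{1}<q_{1}=p_{2}<q_{2}=\cdots=p_{n}<q_{n}$, i.e.\ $n+1$ distinct indices in $\{1,\ldots,n\}$, which is impossible. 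Hence $a_{1}\cdots a_{n}=0$, so $(\Ker\pi_{E,e})^{n}=\{0\}$. The one point that needs genuine care is establishing $a\in R_{E}$ from the right-hand-side condition --- here the nest hypothesis on $E$ (every $e\in E$ is straddled by two consecutive components of the tuple) is essential; the rest is routine block bookkeeping resting on Remark~\ref{remark:nest.rings}(1) and Lemma~\ref{lemma:ringrose}.
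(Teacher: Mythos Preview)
Your proof is correct. Both your argument and the paper's hinge on the same observation---that any $e\in E$ is sandwiched between two consecutive components $e_{j}\leq e\leq e_{j+1}$, which is exactly what you identify as the one nontrivial step---but the packaging differs. You pass through an explicit intermediate characterization of both sides of~\eqref{NilpotentNestIdeal} as the ``strictly block-upper-triangular'' elements $\bar{e}_{p}a\bar{e}_{q}=0$ for $p\geq q$, and then deduce nilpotency by the index-chain counting argument. The paper instead verifies~\eqref{NilpotentNestIdeal} by direct algebraic manipulation (without isolating the block picture) and derives nilpotency by the telescoping computation
\[
a_{1}\cdots a_{n} \,=\, a_{1}\cdots a_{n}e_{n} \,=\, a_{1}\cdots a_{n-1}e_{n-1}a_{n}e_{n} \,=\, \cdots \,=\, e_{0}a_{1}e_{1}\cdots a_{n}e_{n} \,=\, 0,
\]
using the relation $a_{k}e_{k}=e_{k-1}a_{k}e_{k}$ from~\eqref{NilpotentNestIdeal} at each step. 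Your route makes the upper-triangular intuition explicit and is perhaps more transparent; the paper's telescoping is slightly slicker and avoids the full block expansion.
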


\begin{proof} We start off by verifying~\eqref{NilpotentNestIdeal}.
	
($\supseteq $) Let $a \in R$ such that $e_{i-1}ae_{i} = ae_{i}$ for each $i \in \{ 1,\ldots, n\}$. If $f \in E$, then $\{ e_{0},\ldots,e_{n}, f\}$ is a nest in $R$, thus there exists $i \in \{ 1,\ldots,n \}$ with $e_{i-1} \leq f \leq e_{i}$, whence \begin{displaymath}
	faf \, = \, fae_{i}f \, = \, fe_{i-1}ae_{i}f \, = \, e_{i-1}ae_{i}f \, = \, ae_{i}f \, = \, af .
\end{displaymath} Therefore, $a \in R_{E}$. Moreover, for every $i \in \{ 1,\ldots,n \}$, \begin{displaymath}
	\bar{e}_{i}a\bar{e}_{i} \, = \, e_{i}(1-e_{i-1})ae_{i}(1-e_{i-1}) \, = \, e_{i}(1-e_{i-1})e_{i-1}ae_{i}(1-e_{i-1}) \, = \, 0 .
\end{displaymath}

($\subseteq$) If $a \in \Ker \pi_{E,e}$, then \begin{align*}
	ae_{i} \, &= \, a(e_{i}-e_{i-1}) + ae_{i-1} \, = \, ae_{i}(1-e_{i-1}) + ae_{i-1} \, = \, e_{i}ae_{i}(1-e_{i-1}) + ae_{i-1} \\
	&= \, e_{i}a(e_{i}-e_{i-1}) + ae_{i-1} \, = \, (e_{i}-e_{i-1})a(e_{i}-e_{i-1}) + e_{i-1}a(e_{i}-e_{i-1}) + ae_{i-1} \\
	&= \, e_{i-1}a(e_{i}-e_{i-1}) + ae_{i-1} \, = \, e_{i-1}a(e_{i}-e_{i-1}) + e_{i-1}ae_{i-1} \, = \, e_{i-1}ae_{i}
\end{align*} for every $i \in \{ 1,\ldots, n\}$, as desired.
	
Finally, if $a_{1},\ldots,a_{n} \in \Ker \pi_{E,e}$, then~\eqref{NilpotentNestIdeal} entails that \begin{align*}
	a_{1}\cdots a_{n} \, &\stackrel{e_{n}=1}{=} \, a_{1} \cdots a_{n-1}a_{n}e_{n} \, = \, a_{1}\cdots a_{n-2}a_{n-1}e_{n-1}a_{n}e_{n} \\
	& = \, \ldots \, = \, e_{0}a_{1}e_{1} \cdots a_{n-2}e_{n-2}a_{n-1}e_{n-1}a_{n}e_{n} \, \stackrel{e_{0}=0}{=} \, 0 .
\end{align*} This shows that $(\Ker \pi_{E,e})^{n} = \{ 0 \}$. \end{proof}

We arrive at the definition of the announced closure operators.

\begin{definition}\label{definition:jordan} Let $R$ be a unital ring and let $E \in \Nest (R)$. If $G$ is a subgroup of $\GL(R_{E})$, then we define \begin{displaymath}
	[G]_{E,e} \, \defeq \, \pi_{E,e}^{-1}\left(\prod\nolimits_{i=1}^{n} \bar{e}_{i}G\bar{e}_{i}\right) \qquad (e \in \I_{n}(E), \, n \in \N),
\end{displaymath} and we refer to $[G]_{E} \defeq \bigcup\nolimits_{e \in \I(E)} [G]_{E,e}$ as the \emph{$E$-envelope} of $G$. \end{definition}

Preparing the proof of Lemma~\ref{lemma:envelope.basic}, we recall the following basic facts about semi-direct products of topological groups.

\begin{remark}\label{remark:envelope.basic} Let $G$ and $H$ be topological groups. \begin{itemize}
	\item[$(1)$] If $G \times H \to H, \, (g,h) \mapsto {}_{g}h$ is a continuous action of $G$ by automorphisms on $H$, then the respective semi-direct product $H \rtimes G$ constitutes a topological group, as equipped with the product topology and the usual multiplication given by \begin{displaymath}
		\qquad (h,g)\cdot (h',g') \, \defeq \, (h \cdot {}_{g}h',g\cdot g') \qquad (h,h' \in H, \, g,g' \in G) .
	\end{displaymath}
	\item[$(2)$] Let $\pi \colon G \to H$ and $\iota \colon H \to G$ be continuous homomorphisms such that $\pi \circ \iota = \id_{H}$. Then $H \times \Ker \pi \to \Ker \pi, \, (h,g) \mapsto \iota(h)g\iota(h)^{-1}$ is a continuous action of $H$ by automorphisms on $\Ker \pi$ and, with respect to the induced semi-direct product, \begin{align*}
	&\qquad (\Ker \pi) \rtimes H \, \longrightarrow \, G , \quad (g,h) \, \longmapsto \, g\iota(h), \\
	&\qquad G \, \longrightarrow \, (\Ker \pi) \rtimes H , \quad g \, \longmapsto \, \left(g\iota(\pi(g))^{-1},\pi(g)\right)
\end{align*} are mutually inverse continuous homomorphisms, hence isomorphisms of topological groups.
\end{itemize} \end{remark} 

Starting from the proof of the following lemma, we will make use of some basic facts concerning nilpotency in rings. The relevant background material is compiled in the Appendix~\ref{section:levitzki}.

\begin{lem}\label{lemma:envelope.basic} Let $R$ be a unital ring, let $E \in \Nest (R)$ and $G \leq \GL(R_{E})$. Moreover, let $n \in \N$ and $e \in \I_{n}(E)$. Then $[G]_{E,e}$ is a subgroup of $\GL(R_{E})$ and  \begin{align*}
	\sigma \colon \, (1+\Ker \pi_{E,e}) \rtimes \prod\nolimits_{i=1}^{n} \bar{e}_{i}G\bar{e}_{i} \, &\longrightarrow \, [G]_{E,e}, \\
	(h,g_{1},\ldots,g_{n}) \, &\longmapsto \, h\cdot (g_{1}+\ldots +g_{n})
\end{align*} is an isomorphism. If $R$ is regular and  $\rho \colon R \to [0,1]$ is a pseudo-rank function, then $\sigma$ is a homeomorphism with respect to the relative $\rho$-topologies. \end{lem}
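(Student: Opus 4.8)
The plan is to identify $[G]_{E,e}$ as the image of the semi-direct product under $\sigma$ and invoke Remark~\ref{remark:envelope.basic}(2). First I would observe that $\Ker\pi_{E,e}$ is a two-sided ideal of $R_E$ (being the kernel of the unital ring homomorphism $\pi_{E,e}$), and that it is a nil ideal: indeed by Lemma~\ref{lemma:nilpotent.nest.ideal} we have $(\Ker\pi_{E,e})^n = \{0\}$, so $1 + \Ker\pi_{E,e} \subseteq \GL(R_E)$ by Lemma~\ref{lemma:nilpotent.inverse} (from the Appendix), and in fact $1+\Ker\pi_{E,e}$ is a subgroup of $\GL(R_E)$ because it equals $\GL(R_E)\cap(1+\Ker\pi_{E,e}) = \Ker\widetilde{\pi_{E,e}}$ via Remark~\ref{remark:ring.homs}(2), where $\widetilde{\pi_{E,e}}\colon \GL(R_E)\to\GL\!\left(\prod_{i=1}^n \bar e_i R_E \bar e_i\right)$ is the group homomorphism induced by $\pi_{E,e}$.

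Next I would set up the two continuous homomorphisms needed for Remark~\ref{remark:envelope.basic}(2). By Lemma~\ref{lemma:idempotent.difference.1}(2), $\pi_{E,e}\colon R_E \to S$ and $\iota_{E,e}\colon S\to R_E$ (where $S \defeq \prod_{i=1}^n \bar e_i R_E\bar e_i$) are unital ring homomorphisms with $\pi_{E,e}\circ\iota_{E,e} = \id_S$; passing to unit groups via Remark~\ref{remark:ring.homs}(2) gives group homomorphisms $\widetilde{\pi_{E,e}}\colon \GL(R_E)\to\GL(S)$ and $\widetilde{\iota_{E,e}}\colon\GL(S)\to\GL(R_E)$ with $\widetilde{\pi_{E,e}}\circ\widetilde{\iota_{E,e}} = \id_{\GL(S)}$. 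Here $\GL(S) = \prod_{i=1}^n \GL(\bar e_i R_E\bar e_i)$, and since $G\leq\GL(R_E)$, Lemma~\ref{lemma:idempotent.difference.1}(1) gives $\bar e_i G\bar e_i \leq \bar e_i\GL(R_E)\bar e_i = \GL(\bar e_i R_E\bar e_i)$, so $\prod_{i=1}^n \bar e_i G\bar e_i$ is a genuine subgroup of $\GL(S)$. Consequently $[G]_{E,e} = \widetilde{\pi_{E,e}}^{-1}\!\left(\prod_{i=1}^n \bar e_i G\bar e_i\right)$ is the preimage of a subgroup under a group homomorphism, hence a subgroup of $\GL(R_E)$. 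Remark~\ref{remark:envelope.basic}(2), applied to $\pi \defeq \widetilde{\pi_{E,e}}$ and $\iota \defeq \widetilde{\iota_{E,e}}$ restricted to the subgroup $\prod_{i=1}^n\bar e_i G\bar e_i$ of $\GL(S)$ and its preimage $[G]_{E,e}$, then furnishes an isomorphism $(1+\Ker\pi_{E,e})\rtimes \prod_{i=1}^n\bar e_i G\bar e_i \to [G]_{E,e}$, $(h,g_1,\ldots,g_n)\mapsto h\cdot\widetilde{\iota_{E,e}}(g_1,\ldots,g_n) = h\cdot(g_1+\ldots+g_n)$; this is exactly $\sigma$, after checking that $\Ker\widetilde{\pi_{E,e}} = 1 + \Ker\pi_{E,e}$, which we noted above, and that the conjugation action of $\prod_i\bar e_i G\bar e_i$ on $1+\Ker\pi_{E,e}$ by $\iota_{E,e}$ agrees with the one stated.

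For the topological statement, assume $R$ is regular with pseudo-rank function $\rho\colon R\to[0,1]$. By Lemma~\ref{lemma:rank.metric}(2) and Remark~\ref{remark:topological.ring}, addition and multiplication on $R$ are continuous in the $\rho$-topology, and $R_E$ (closed in $R$ by Proposition~\ref{proposition:nest.subring}(4)) carries the relative $\rho$-topology; likewise $S = \prod_i\bar e_i R_E\bar e_i$ carries the product of relative $\rho$-topologies, with respect to which $\pi_{E,e}$ and $\iota_{E,e}$ are continuous (each coordinate $a\mapsto\bar e_i a\bar e_i$ and the sum $a_1+\ldots+a_n$ are $\rho$-continuous). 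Then $\widetilde{\pi_{E,e}}$ and $\widetilde{\iota_{E,e}}$ are continuous on the relevant unit groups (inversion is $\rho$-continuous on $\GL$ of a regular ring with a pseudo-rank function, cf.\ Remark~\ref{remark:topological.ring}), so Remark~\ref{remark:envelope.basic}(2) yields that $\sigma$ and its inverse $g\mapsto\bigl(g\cdot\widetilde{\iota_{E,e}}(\widetilde{\pi_{E,e}}(g))^{-1},\widetilde{\pi_{E,e}}(g)\bigr)$ are both continuous, i.e.\ $\sigma$ is a homeomorphism for the relative $\rho$-topologies. The main obstacle I anticipate is purely bookkeeping: correctly matching the abstract data of Remark~\ref{remark:envelope.basic}(2) — in particular verifying that the conjugation action there coincides with the one implicitly used in the target $(1+\Ker\pi_{E,e})\rtimes\prod_i\bar e_i G\bar e_i$, and that $\Ker\widetilde{\pi_{E,e}}$ is precisely $1+\Ker\pi_{E,e}$ as a \emph{group} (for which nilpotency of $\Ker\pi_{E,e}$ via Lemma~\ref{lemma:nilpotent.nest.ideal} and the Appendix is essential) — rather than any genuinely hard estimate.
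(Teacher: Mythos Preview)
Your approach is essentially the paper's: identify $\Ker\widetilde{\pi_{E,e}}=1+\Ker\pi_{E,e}$ via nilpotency (Lemma~\ref{lemma:nilpotent.nest.ideal} and Remark~\ref{remark:nilpotent.element}(2)), then invoke Remark~\ref{remark:envelope.basic}(2) with the section $\iota_{E,e}$, and pass to the subgroup $H_{0}=\prod_{i}\bar e_{i}G\bar e_{i}$; the topological addendum is handled the same way through Lemma~\ref{lemma:rank.metric}(2).

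One genuine step you skip: you assert $[G]_{E,e}=\widetilde{\pi_{E,e}}^{-1}(H_{0})$, but by definition $[G]_{E,e}=\pi_{E,e}^{-1}(H_{0})$ is a preimage in $R_{E}$, not in $\GL(R_{E})$, so you must first show $\pi_{E,e}^{-1}(H_{0})\subseteq\GL(R_{E})$. The paper does this explicitly via
\[
\pi_{E,e}^{-1}(H_{0}) \;=\; \iota_{E,e}(H_{0})+\Ker\pi_{E,e} \;=\; (1+\Ker\pi_{E,e})\cdot\iota_{E,e}(H_{0}) \;\subseteq\; \GL(R_{E}),
\]
using that $\Ker\pi_{E,e}$ is a (right) ideal and $\iota_{E,e}(H_{0})\subseteq\GL(R_{E})$. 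Your ingredients already cover this (nilpotency plus the section), so the fix is routine, but without it the claim that $[G]_{E,e}$ is a subgroup of $\GL(R_{E})$ is not yet justified. Also note the appendix item you cite is Remark~\ref{remark:nilpotent.element}, not a separately labeled lemma.
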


\begin{proof} First of all, we note that \begin{displaymath}
	\GL\left( \prod\nolimits_{i=1}^{n} \bar{e}_{i}R_{E}\bar{e}_{i} \right) \, = \, \prod\nolimits_{i=1}^{n} \GL(\bar{e}_{i}R_{E}\bar{e}_{i}) \, \stackrel{\ref{lemma:idempotent.difference.1}(1)}{=} \, \prod\nolimits_{i=1}^{n} \bar{e}_{i}\GL(R_{E})\bar{e}_{i} \, \eqdef \, H .
\end{displaymath} Due to Lemma~\ref{lemma:idempotent.difference.1}(2) and Remark~\ref{remark:ring.homs}(2), the maps \begin{align*}
	\pi \colon \, &\GL(R_{E}) \, \longrightarrow \, H, \quad g \, \longmapsto \, \pi_{E,e}(g), \\
	\iota \colon \, &H \, \longrightarrow \, \GL(R_{E}), \quad g \, \longmapsto \, \iota_{E,e}(g)
\end{align*} are well-defined group homomorphisms satisfying $\pi \circ \iota = \id_{H}$. Since \begin{displaymath}
	\Ker \pi \, \stackrel{\ref{lemma:idempotent.difference.1}(2)+\ref{remark:ring.homs}(2)}{=} \, \GL(R_{E}) \cap (1 + \Ker \pi_{E,e}) \, \stackrel{\ref{lemma:nilpotent.nest.ideal}+\ref{remark:nilpotent.element}(2)}{=} \, 1 + \Ker \pi_{E,e} ,
\end{displaymath} Remark~\ref{remark:envelope.basic}(2) asserts that \begin{align*}
	\tau \colon \, (1+\Ker \pi_{E,e}) \rtimes H \, &\longrightarrow \, \GL(R_{E}), \\
	(h,g_{1},\ldots,g_{n}) \, &\longmapsto \, h\cdot (g_{1}+\ldots +g_{n})
\end{align*} is a well-defined group isomorphism. Now, $H_{0} \defeq \prod\nolimits_{i=1}^{n} \bar{e}_{i}G\bar{e}_{i}$ is a subgroup of $H$ by Lemma~\ref{lemma:idempotent.difference.1}(1) and Remark~\ref{remark:ring.homs}(2), and thus $\iota_{E,e}(H_{0}) = \iota(H_{0})$ is a subgroup of $\GL(R_{E})$. From these observations and the fact that $\Ker \pi_{E,e}$ is a (right) ideal of~$R_{E}$, we infer that \begin{align*}
	[G]_{E,e} \, &= \, \pi_{E,e}^{-1}(H_{0}) \, \stackrel{\ref{lemma:idempotent.difference.1}(2)}{=} \, \iota_{E,e}(H_{0}) + \Ker \pi_{E,e} \, = \, (1 + \Ker \pi_{E,e}) \cdot \iota_{E,e}(H_{0}) \\
		& = \, \tau( (1 + \Ker \pi_{E,e}) \rtimes H_{0}) \, \leq \, \GL(R_{E}) .
\end{align*} In turn, the restriction $\sigma \colon (1+\Ker \pi_{E,e}) \rtimes H_{0} \to [G]_{E,e}, \, z \mapsto \tau(z)$ constitutes a well-defined group isomorphism. Finally, if $R$ is regular and $\rho \colon R \to [0,1]$ is a pseudo-rank function, then $\pi$ and $\iota$ are continuous with respect to the relative $\rho$-topologies by Lemma~\ref{lemma:rank.metric}(2) (see also Remark~\ref{remark:topological.ring}), hence $\tau$ is a homeomorphism by Remark~\ref{remark:envelope.basic}(2), and so is $\sigma$.  \end{proof}

\begin{remark}\label{remark:directed.envelope} Let $R$ be a unital ring and let $E \in \Nest (R)$. For any $e \in \I_{m}(E)$ and $f \in \I_{n}(E)$ with $m,n \in \N$, let \begin{displaymath}
	e \leq f \quad :\Longleftrightarrow \quad \{ e_{i} \mid i \in \{ 0,\ldots,m \} \} \subseteq \{ f_{j} \mid j \in \{ 0,\ldots,n \} \} .
\end{displaymath} Then $(\I(E),{\leq})$ is a directed set \end{remark}

\begin{lem}\label{lemma:envelope} Let $R$ be a unital ring, let $E \in \Nest (R)$, and let $G \leq \GL(R_{E})$. \begin{itemize}
	\item[$(1)$] If $e,f \in \I(E)$ and $e \leq f$, then $[G]_{E,e} \subseteq [G]_{E,f}$.
	\item[$(2)$] $\{ [G]_{E,e} \mid e \in \I(E) \}$ is directed with respect to inclusion.
	\item[$(3)$] $[G]_{E}$ is a subgroup of $\GL(R_{E})$.
\end{itemize} \end{lem}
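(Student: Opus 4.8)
The plan is to prove part $(1)$ first, as it is the crux; parts $(2)$ and $(3)$ will then follow formally from $(1)$ together with Remark~\ref{remark:directed.envelope} and Lemma~\ref{lemma:envelope.basic}. The key structural observation behind $(1)$ is that when $e\leq f$, each block idempotent $\bar{e}_{i}$ associated with the coarser partition $e$ is a sum of consecutive block idempotents $\bar{f}_{j}$ of the finer partition $f$, and these finer blocks are pairwise orthogonal sub-idempotents of $\bar{e}_{i}$.

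Concretely, for $(1)$ I would fix $e=(e_{0},\ldots,e_{m})\in\I_{m}(E)$ and $f=(f_{0},\ldots,f_{n})\in\I_{n}(E)$ with $e\leq f$. Since both tuples are increasing with first entry $0$ and last entry $1$, the hypothesis $\{e_{i}\}\subseteq\{f_{j}\}$ yields a strictly increasing map $\kappa\colon\{0,\ldots,m\}\to\{0,\ldots,n\}$ with $e_{i}=f_{\kappa(i)}$, $\kappa(0)=0$, $\kappa(m)=n$. Setting $J_{i}\defeq\{\kappa(i-1)+1,\ldots,\kappa(i)\}$, the sets $J_{1},\ldots,J_{m}$ partition $\{1,\ldots,n\}$ and $\bar{e}_{i}=e_{i}-e_{i-1}=f_{\kappa(i)}-f_{\kappa(i-1)}=\sum_{j\in J_{i}}\bar{f}_{j}$. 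Because the $\bar{f}_{j}$ are pairwise orthogonal idempotents by Remark~\ref{remark:nest.rings}(1), for every $j\in J_{i}$ one gets $\bar{f}_{j}\bar{e}_{i}=\bar{e}_{i}\bar{f}_{j}=\bar{f}_{j}$. Now take $a\in[G]_{E,e}$. Then $a\in R_{E}$ and $\bar{e}_{i}a\bar{e}_{i}\in\bar{e}_{i}G\bar{e}_{i}$ for each $i$, so write $\bar{e}_{i}a\bar{e}_{i}=\bar{e}_{i}g\bar{e}_{i}$ with $g\in G$; for $j\in J_{i}$ this gives $\bar{f}_{j}a\bar{f}_{j}=\bar{f}_{j}(\bar{e}_{i}a\bar{e}_{i})\bar{f}_{j}=\bar{f}_{j}g\bar{f}_{j}\in\bar{f}_{j}G\bar{f}_{j}$. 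Since every $j\in\{1,\ldots,n\}$ lies in exactly one $J_{i}$, we conclude $\pi_{E,f}(a)\in\prod_{j=1}^{n}\bar{f}_{j}G\bar{f}_{j}$, i.e.\ $a\in[G]_{E,f}$, which proves $(1)$.

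For $(2)$, I would invoke Remark~\ref{remark:directed.envelope}: $(\I(E),\leq)$ is directed, so given $e,f\in\I(E)$ there is $g\in\I(E)$ with $e\leq g$ and $f\leq g$, whence $[G]_{E,e}\cup[G]_{E,f}\subseteq[G]_{E,g}$ by $(1)$. For $(3)$, recall from Lemma~\ref{lemma:envelope.basic} that each $[G]_{E,e}$ is a subgroup of $\GL(R_{E})$; since $[G]_{E}=\bigcup_{e\in\I(E)}[G]_{E,e}$ is the union of this directed family of subgroups, it is itself a subgroup: it contains the identity (being a nonempty union of subgroups), and for $a,b\in[G]_{E}$ we may, by $(2)$, choose $g\in\I(E)$ with $a,b\in[G]_{E,g}$, so that $ab,a^{-1}\in[G]_{E,g}\subseteq[G]_{E}$. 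There is no real obstacle here; the only point requiring care is the bookkeeping with the refinement map $\kappa$ and the induced partition $J_{1},\ldots,J_{m}$ in the proof of $(1)$, everything else being a formal consequence.
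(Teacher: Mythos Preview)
Your proof is correct and follows essentially the same approach as the paper's: the paper indexes the refinement via a map $\tau\colon\{1,\ldots,n\}\to\{1,\ldots,m\}$ with $\tau(j)=\min\{i:f_{j}\leq e_{i}\}$, which is just the inverse bookkeeping to your $\kappa$ and partition $J_{1},\ldots,J_{m}$, and then derives the same identity $\bar{f}_{j}\bar{e}_{\tau(j)}=\bar{f}_{j}=\bar{e}_{\tau(j)}\bar{f}_{j}$ to conclude $\bar{f}_{j}a\bar{f}_{j}\in\bar{f}_{j}G\bar{f}_{j}$. Parts~(2) and~(3) are handled identically in both proofs via Remark~\ref{remark:directed.envelope} and Lemma~\ref{lemma:envelope.basic}.
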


\begin{proof} (1) Consider any $m,n \in \N$, $e \in \I_{m}(E)$ and $f \in \I_{n}(E)$ with $e\leq f$. Let \begin{displaymath}
	\tau \colon \, \{ 1,\ldots,n\} \, \longrightarrow \, \{1,\ldots,m\}, \quad j \, \longmapsto \, \min \{ i \in \{ 1,\ldots,m\} \mid f_{j} \leq e_{i} \} .
\end{displaymath} Since $e \leq f$, for each $j \in \{ 1,\ldots,n\}$ it follows that $e_{\tau(j)-1} \leq f_{j-1} \leq f_{j} \leq e_{\tau(j)}$ and therefore \begin{equation}\tag{$\ast$}\label{index}
	\begin{split}
		\bar{e}_{\tau(j)}\bar{f}_{j} \, &= \, (e_{\tau(j)}-e_{\tau(j)-1})(f_{j}-f_{j-1}) \, = \, f_{j}-f_{j-1} - e_{\tau(j)-1}+e_{\tau(j)-1} \, = \, \bar{f}_{j} \\
		& = \, f_{j}-f_{j-1} - e_{\tau(j)-1}+e_{\tau(j)-1} \, = \, (f_{j}-f_{j-1})(e_{\tau(j)}-e_{\tau(j)-1}) \, = \, \bar{f}_{j}\bar{e}_{\tau(j)} .
	\end{split}
\end{equation} Hence, if $a \in [G]_{E,e}$, then \begin{displaymath}
	\bar{f}_{j}a\bar{f}_{j} \, \stackrel{\eqref{index}}{=} \, \bar{f}_{j}\bar{e}_{\tau(j)}a\bar{e}_{\tau(j)}\bar{f}_{j} \, \in \, \bar{f}_{j}\bar{e}_{\tau(j)}G\bar{e}_{\tau(j)}\bar{f}_{j} \, \stackrel{\eqref{index}}{=} \, \bar{f}_{j}G\bar{f}_{j} 
\end{displaymath} for all $j \in \{ 1,\ldots,n \}$, i.e., $a \in \pi_{E,f}^{-1}\left(\prod\nolimits_{j=1}^{n} \bar{f}_{j}G\bar{f}_{j}\right) = [G]_{E,f}$. Thus, $[G]_{E,e} \subseteq [G]_{E,f}$.

(2) This follows from Remark~\ref{remark:directed.envelope} and (1).
	
(3) By Lemma~\ref{lemma:envelope.basic}, $\mathscr{G} \defeq \{ [G]_{E,e} \mid e \in \I(E) \}$ is a set of subgroups of $\GL(R_{E})$. As $(\mathscr{G},{\subseteq})$ is directed by~(2), this implies that $[G]_{E} = \bigcup \mathscr{G} \leq \GL(R_{E})$, too. \end{proof}

We proceed to some topological aspects of nest envelopes.

\begin{cor}\label{corollary:amenable.envelope} Let $R$ be a regular ring, let $E \in \Nest (R)$, and let $\rho \colon R \to [0,1]$ be a pseudo-rank function. If a subgroup $G \leq \GL(R_{E})$ is amenable with respect to the relative $\rho$-topology, then so are $[G]_{E,e}$ $(e \in \I(E))$ and $[G]_{E}$. \end{cor}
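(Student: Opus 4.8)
The plan is to treat the two assertions separately, reducing both to the structural facts already established: the semidirect-product decomposition of $[G]_{E,e}$ from Lemma~\ref{lemma:envelope.basic}, the nilpotency of the relevant kernel from Lemma~\ref{lemma:nilpotent.nest.ideal}, and the standard closure properties of the class of amenable topological groups recorded in~\cite{rickert}. First I would fix $n \in \N$ and $e \in \I_{n}(E)$ and prove that $[G]_{E,e}$ is amenable; then I would obtain $[G]_{E} = \bigcup_{e \in \I(E)}[G]_{E,e}$ as a directed union (Remark~\ref{remark:directed.envelope} and Lemma~\ref{lemma:envelope}(1)+(2)) and deduce its amenability by a compactness argument on means.

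For the fixed index $e \in \I_{n}(E)$, Lemma~\ref{lemma:envelope.basic} provides an isomorphism of topological groups
\[
	[G]_{E,e} \;\cong\; (1+\Ker\pi_{E,e}) \rtimes \prod\nolimits_{i=1}^{n} \bar{e}_{i}G\bar{e}_{i}
\]
which, as $R$ is regular and $\rho$ is a pseudo-rank function, is moreover a homeomorphism for the relative $\rho$-topologies (the semidirect product carrying the product of the relative $\rho$-topologies of its factors). So it suffices to see that both constituents are amenable topological groups and then appeal to closure under extensions~\cite[Theorem~4.7]{rickert}. For the quotient part: each $\bar{e}_{i}$ lies in $\Diff(E)$, so Lemma~\ref{lemma:idempotent.difference.1}(1) makes $a \mapsto \bar{e}_{i}a\bar{e}_{i}$ a unital ring homomorphism $R_{E} \to \bar{e}_{i}R_{E}\bar{e}_{i}$ mapping $\GL(R_{E})$ onto $\bar{e}_{i}\GL(R_{E})\bar{e}_{i} = \GL(\bar{e}_{i}R_{E}\bar{e}_{i})$; by Remark~\ref{remark:ring.homs}(2) its restriction to unit groups is a group homomorphism, continuous for the relative $\rho$-topologies by Lemma~\ref{lemma:rank.metric}(2). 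Hence $\bar{e}_{i}G\bar{e}_{i}$ is a continuous homomorphic image of the amenable group $G$, so amenable by~\cite[Theorem~4.6]{rickert}, and the finite product $\prod_{i=1}^{n}\bar{e}_{i}G\bar{e}_{i}$ is amenable by~\cite[Theorem~4.8]{rickert}.

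For the kernel part, Lemma~\ref{lemma:nilpotent.nest.ideal} shows that the two-sided ideal $N \defeq \Ker\pi_{E,e}$ of $R_{E}$ satisfies $N^{n} = \{0\}$. Then the chain of subgroups $1+N \supseteq 1+N^{2} \supseteq \cdots \supseteq 1+N^{n} = \{1\}$ is a subnormal series with abelian successive quotients $(1+N^{k})/(1+N^{k+1}) \cong (N^{k}/N^{k+1},+)$ (cf.\ Appendix~\ref{section:levitzki}), so $1+N$ is solvable, hence amenable as a discrete group; since $\RUCB$ of a topological group is a subalgebra of $\ell^{\infty}$ of its underlying discrete group, a left-invariant mean on the latter restricts to one on the former, so $1+\Ker\pi_{E,e}$ is amenable for the relative $\rho$-topology. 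Combining, $[G]_{E,e}$ is an extension of an amenable topological group by an amenable topological group, hence amenable.

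Finally, for $[G]_{E}$: using the previous step, for each $e \in \I(E)$ choose a $[G]_{E,e}$-left-invariant mean $\mu_{e} \in \Mean([G]_{E,e})$ and view it in $\Mean([G]_{E})$ via Remark~\ref{remark:convention}; by the definition of left-invariance together with the fact that left translation by $g \in [G]_{E,e}$ maps $[G]_{E,e}$ onto itself (cf.\ Lemma~\ref{lemma:invariance}), one has $g\mu_{e} = \mu_{e}$ for all $g \in [G]_{E,e}$. Weak-$\ast$ compactness of $\Mean([G]_{E})$ yields an accumulation point $\mu$ of the net $(\mu_{e})_{e \in \I(E)}$; for any $g \in [G]_{E}$, pick $e_{0}$ with $g \in [G]_{E,e_{0}}$, so that $g \in [G]_{E,e}$ for every $e \geq e_{0}$ by Lemma~\ref{lemma:envelope}(1) and hence the tail $\{\mu_{e} \mid e \geq e_{0}\}$ lies in the weak-$\ast$ closed set $\{\nu \mid g\nu = \nu\}$ (closed by continuity of the action of $[G]_{E}$ on $\Mean([G]_{E})$), whence $\mu$ does too. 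Thus $\mu$ is $[G]_{E}$-fixed and $[G]_{E}$ is amenable. I do not anticipate a genuine obstacle: the argument is essentially bookkeeping over the structure theory plus Rickert's closure properties, and the only points demanding minor care are that discrete amenability of $1+\Ker\pi_{E,e}$ transfers to the $\rho$-topology and that the topology on the semidirect product in Lemma~\ref{lemma:envelope.basic} is exactly the one for which extension-closure is invoked.
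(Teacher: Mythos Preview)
Your proof is correct and follows essentially the same route as the paper: the semidirect decomposition from Lemma~\ref{lemma:envelope.basic}, amenability of each factor $\bar{e}_{i}G\bar{e}_{i}$ as a continuous homomorphic image of $G$, amenability of $1+\Ker\pi_{E,e}$ via nilpotency of the ideal (the paper invokes Lemma~\ref{lemma:nilpotent.3} to get nilpotency of the group rather than your solvability argument, but either suffices), and finally the directed-union step for $[G]_{E}$. The only cosmetic difference is that the paper cites~\cite[Theorem~4.7]{rickert} directly for the directed union, whereas you unfold the weak-$\ast$ compactness argument by hand.
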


\begin{proof} Let $G \leq \GL(R_{E})$ be amenable with respect to the relative $\rho$-topology.
	
First, let $n \in \N$ and $e \in \I_{n}(E)$. For each $i \in \{ 1,\ldots,n \}$, the surjection \begin{displaymath}
	G \, \longrightarrow \, \bar{e}_{i}G\bar{e}_{i}, \quad g \, \longmapsto \, \bar{e}_{i}g\bar{e}_{i}
\end{displaymath} is a homomorphism by Lemma~\ref{lemma:idempotent.difference.1}(1) and Remark~\ref{remark:ring.homs}(2), which is continuous with respect to the relative $\rho$-topology according to Remark~\ref{remark:topological.ring}. Hence, by~\cite[Theorem~4.6]{rickert}, amenability of the topological group $G$ implies amenability of the topological groups $\bar{e}_{1}G\bar{e}_{1},\ldots ,\bar{e}_{n}G\bar{e}_{n}$. Moreover, by Lemma~\ref{lemma:nilpotent.nest.ideal} and Lemma~\ref{lemma:nilpotent.3}, the group $1+\Ker \pi_{E,e}$ is nilpotent, thus amenable with respect to the discrete topology (see~\cite[Theorem~12.4(b,\,e)]{Wagon}), therefore amenable with regard to the relative $\rho$-topology. As $[G]_{E,e}$ is isomorphic to the topological group $(1+\Ker \pi_{E,e}) \rtimes \prod\nolimits_{i=1}^{n} \bar{e}_{i}G\bar{e}_{i}$ by Lemma~\ref{lemma:envelope.basic} and the latter is amenable by~\cite[Theorem~4.8]{rickert}, it follows that $[G]_{E,e}$ is amenable with respect to the relative $\rho$-topology, too.
	
By the preceding paragraph, Lemma~\ref{lemma:envelope}(2), and~\cite[Theorem~4.7]{rickert}, the topological group $[G]_{E} = \bigcup_{e \in \I(E)} [G]_{E,e}$ is amenable. \end{proof}

Similarly, nest envelopes preserve certain algebraic properties. Anticipating our next remark, we recall the following well-known fact: if a group $G$ admits a solvable normal subgroup $N \unlhd G$ such that $G/N$ is (locally\footnote{A group G is called \emph{locally solvable} if every finitely generated subgroup of G is solvable.}) solvable, then $G$ is (locally) solvable, too.

\begin{remark}\label{remark:solvable} Let $R$ be a unital ring, let $E \in \Nest (R)$, and let $G \leq \GL(R_{E})$. \begin{itemize}
	\item[$(1)$] Let $n \in \N$ and $e \in \I_{n}(E)$. Due to Lemma~\ref{lemma:nilpotent.nest.ideal} and Lemma~\ref{lemma:nilpotent.3}, the group $1+\Ker \pi_{E,e}$ is nilpotent, thus solvable. Hence, if $G$ is (locally) solvable, then \begin{displaymath}
					 \qquad [G]_{E,e} \, \stackrel{\ref{lemma:envelope.basic}}{\cong} \, (1+\Ker \pi_{E,e}) \rtimes \prod\nolimits_{i=1}^{n} \bar{e}_{i}G\bar{e}_{i}
				\end{displaymath} is (locally) solvable, too.
	\item[$(2)$] If $G$ is locally solvable, then $[G]_{E} = \bigcup_{e \in \I(E)} [G]_{E,e}$ is locally solvable by~(1) and Lemma~\ref{lemma:envelope}(2).
\end{itemize} \end{remark}

We conclude this section by noting that, if $E$ is a nest in a unital ring $R$, then the map $\Sub (\GL(R_{E})) \to \Sub (\GL(R_{E})) , \, G \mapsto [G]_{E}$ is equivariant with respect to the action of $\GL(R_{E})$ by conjugation on the set $\Sub (\GL(R_{E}))$ of all subgroups of $\GL(R_{E})$.

\begin{prop}\label{proposition:conjugation} Suppose that $R$ is a unital ring. Furthermore, let $E \in \Nest (R)$, $G \leq \GL(R_{E})$, and $a \in \GL(R_{E})$. Then $a[G]_{E,e}a^{-1} = [aGa^{-1}]_{E,e}$ for every $e \in \I(E)$. Consequently, $a[G]_{E}a^{-1} = [aGa^{-1}]_{E}$. \end{prop}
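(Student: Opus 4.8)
The statement is that conjugation by $a \in \GL(R_{E})$ commutes with the envelope operator: $a[G]_{E,e}a^{-1} = [aGa^{-1}]_{E,e}$ for every $e \in \I(E)$, and consequently $a[G]_{E}a^{-1} = [aGa^{-1}]_{E}$. The plan is to fix $n \in \N$ and $e \in \I_{n}(E)$, unravel the definition $[G]_{E,e} = \pi_{E,e}^{-1}\bigl(\prod_{i=1}^{n}\bar{e}_{i}G\bar{e}_{i}\bigr)$, and compare both sides element by element. The second claim then follows immediately by taking the union over $e \in \I(E)$, since $a(\cdot)a^{-1}$ commutes with unions.

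\textbf{Key steps.} First I would record the trivial observation that, since $\bar{e}_{i} \in \Ed(R_{E})$ for each $i$ (Remark~\ref{remark:nest.rings}(2)) and $a \in \GL(R_{E})$, the set $\bar{e}_{i}(aGa^{-1})\bar{e}_{i}$ is meaningful; more importantly, I want to understand how $\pi_{E,e}$ interacts with conjugation by $a$. The obstacle here is that $\pi_{E,e}$ is only a ring homomorphism, not compatible with inversion, and $a$ need not commute with the $\bar{e}_{i}$, so $\pi_{E,e}(aga^{-1})$ is \emph{not} in general $\pi_{E,e}(a)\,\pi_{E,e}(g)\,\pi_{E,e}(a)^{-1}$ as one might naively hope. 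To get around this I would work directly with the kernel-semidirect-product description from Lemma~\ref{lemma:envelope.basic}: $[G]_{E,e} = (1+\Ker\pi_{E,e})\cdot\iota_{E,e}(H_{0})$ where $H_{0} = \prod_{i=1}^{n}\bar{e}_{i}G\bar{e}_{i}$, and the point is that $\Ker\pi_{E,e}$ is a two-sided ideal of $R_{E}$ (it is the set of $a$ with $e_{i-1}ae_{i}=ae_{i}$ for all $i$ by Lemma~\ref{lemma:nilpotent.nest.ideal}; since it is also $\pi_{E,e}^{-1}(0)$ for a ring homomorphism it is in fact a two-sided ideal), so $a(1+\Ker\pi_{E,e})a^{-1} = 1+\Ker\pi_{E,e}$. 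Thus conjugation fixes the ``unipotent part'' pointwise-as-a-set, and I only need to track what happens to $\iota_{E,e}(H_{0})$.

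\textbf{The technical core.} Alternatively, and perhaps more cleanly, I would argue purely through the kernel: conjugation by $a$ induces an automorphism of $R_{E}$, hence of $\Ker\pi_{E,e}$, hence descends to an automorphism $\bar{a}$ of the quotient ring $R_{E}/\Ker\pi_{E,e} \cong \prod_{i=1}^{n}\bar{e}_{i}R_{E}\bar{e}_{i}$ (the isomorphism coming from Lemma~\ref{lemma:idempotent.difference.1}(2)). Then $g \in [G]_{E,e}$ iff the image of $g$ in $\GL$ of the quotient lies in $\prod_{i=1}^{n}\bar{e}_{i}G\bar{e}_{i}$; and $aga^{-1} \in [aGa^{-1}]_{E,e}$ iff the image of $aga^{-1}$ — which is $\bar{a}$ applied to the image of $g$ — lies in $\prod_{i=1}^{n}\bar{e}_{i}(aGa^{-1})\bar{e}_{i}$. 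So the whole statement reduces to the identity $\bar{a}\bigl(\prod_{i=1}^{n}\bar{e}_{i}G\bar{e}_{i}\bigr) = \prod_{i=1}^{n}\bar{e}_{i}(aGa^{-1})\bar{e}_{i}$ inside $\GL(R_{E}/\Ker\pi_{E,e})$. The main obstacle is exactly verifying this last identity, since $a$ does not preserve the idempotents $\bar{e}_{i}$ on the nose; but modulo $\Ker\pi_{E,e}$ it does — more precisely, I would check that $\pi_{E,e}(a)\,\pi_{E,e}(\bar{e}_{i}g\bar{e}_{i})\,\pi_{E,e}(a)^{-1} = \pi_{E,e}(\bar{e}_{i}(aga^{-1})\bar{e}_{i})$ using that $\pi_{E,e}(\bar{e}_{i}) $ is the $i$-th standard idempotent of the product ring (which is central in each factor's sense and fixed under the automorphism $\bar a$, since $\bar a$ must permute-fix the unique decomposition of $1$ into the central idempotents $e_i - e_{i-1}$ once we pass to the product ring where the $i$-th component idempotent is genuinely central). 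Once this componentwise identity is in hand, both inclusions of $a[G]_{E,e}a^{-1} = [aGa^{-1}]_{E,e}$ follow by applying $\pi_{E,e}$ and its section $\iota_{E,e}$, and the statement about $[G]_{E}$ is then immediate since $a[G]_{E}a^{-1} = a\bigl(\bigcup_{e\in\I(E)}[G]_{E,e}\bigr)a^{-1} = \bigcup_{e\in\I(E)}a[G]_{E,e}a^{-1} = \bigcup_{e\in\I(E)}[aGa^{-1}]_{E,e} = [aGa^{-1}]_{E}$.
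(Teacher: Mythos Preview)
Your approach is correct but considerably more elaborate than the paper's. You route the argument through the quotient $R_{E}/\Ker\pi_{E,e}$, the semidirect-product description of Lemma~\ref{lemma:envelope.basic}, and the observation that the induced automorphism $\bar a$ fixes the central idempotents of the product ring. This last point is where your justification wobbles: the phrase ``unique decomposition of $1$ into central idempotents'' is not quite the right reason (a general ring automorphism of a product can permute factors). The correct reason is simply that $\bar a$ is \emph{inner} --- it is conjugation by $\pi_{E,e}(a)$ --- and inner automorphisms fix the center pointwise. With that correction your argument goes through.

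The paper, by contrast, bypasses all of this machinery. It proves the one-sided inclusion $b[H]_{E,e}b^{-1} \subseteq [bHb^{-1}]_{E,e}$ for arbitrary $b \in \GL(R_{E})$ and $H \leq \GL(R_{E})$ in a single line: for $h \in [H]_{E,e}$ one has
\[
\bar{e}_{i}\,bhb^{-1}\,\bar{e}_{i} \,=\, (\bar{e}_{i}b\bar{e}_{i})(\bar{e}_{i}h\bar{e}_{i})(\bar{e}_{i}b^{-1}\bar{e}_{i}) \,\in\, \bar{e}_{i}\,bHb^{-1}\,\bar{e}_{i},
\]
using only that $R_{E} \to \bar{e}_{i}R_{E}\bar{e}_{i}$, $x \mapsto \bar{e}_{i}x\bar{e}_{i}$, is a unital ring homomorphism (Lemma~\ref{lemma:idempotent.difference.1}(1)). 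Equality then follows by applying this inclusion once with $(b,H)=(a,G)$ and once with $(b,H)=(a^{-1},aGa^{-1})$. This is exactly the same underlying computational fact you use --- multiplicativity of the corner projection --- but exploited directly, without passing through kernels, quotients, or central-idempotent arguments. Your route is more conceptual (it explains \emph{why} the result should hold, via the induced automorphism on the semisimple quotient); the paper's route is shorter and needs less infrastructure.
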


\begin{proof} We first show that \begin{equation}\tag{$\ast$}\label{conjugation}
	\forall e \in \I(E) \, \forall H \leq \GL(R_{E}) \, \forall b \in \GL(R_{E}) \colon \qquad b[H]_{E,e}b^{-1}\! \, \subseteq \, [bHb^{-1}]_{E,e} .
\end{equation} So, let $n \in \N$, $e \in \I_{n}(E)$, $H \leq \GL(R_{E})$ and $b \in \GL(R_{E})$. If $h \in [H]_{E,e}$, then \begin{displaymath}
	\bar{e}_{i}bhb^{-1}\bar{e}_{i} \, \stackrel{\ref{lemma:idempotent.difference.1}(1)+\ref{remark:idempotent.difference}}{=} \, \bar{e}_{i}b\bar{e}_{i}h\bar{e}_{i}b^{-1}\bar{e}_{i} \, \in \, \bar{e}_{i}b\bar{e}_{i}H\bar{e}_{i}b^{-1}\bar{e}_{i} \, \stackrel{\ref{lemma:idempotent.difference.1}(1)+\ref{remark:idempotent.difference}}{=} \, \bar{e}_{i}bHb^{-1}\bar{e}_{i}
\end{displaymath} for every $i \in \{1,\ldots,n\}$, that is, $bhb^{-1} \in \pi_{E,e}^{-1}\left(\prod\nolimits_{i=1}^{n} \bar{e}_{i}bHb^{-1}\bar{e}_{i}\right) = [bHb^{-1}]_{E,e}$. Therefore, $b[H]_{E,e}b^{-1} \subseteq [bHb^{-1}]_{E,e}$. This proves~\eqref{conjugation}.

For every $e \in \I(E)$, we now conclude that \begin{align*}
	a[G]_{E,e}a^{-1} \! \, &\stackrel{\eqref{conjugation}}{\subseteq} \, [aGa^{-1}]_{E,e} \, = \, aa^{-1}[aGa^{-1}]_{E,e}aa^{-1} \\
	& \stackrel{\eqref{conjugation}}{\subseteq} \, a[a^{-1}aGa^{-1}a]_{E,e}a^{-1}\! \, = \, a[G]_{E,e}a^{-1}
\end{align*} and hence $a[G]_{E,e}a^{-1} = [aGa^{-1}]_{E,e}$. In turn, \begin{displaymath}
	a[G]_{E}a^{-1} \! \, = \, \bigcup\nolimits_{e \in \I(E)} a[G]_{E,e}a^{-1}\! \, = \, \bigcup\nolimits_{e \in \I(E)} [aGa^{-1}]_{E,e} \, = \, [aGa^{-1}]_{E} . \qedhere
\end{displaymath} \end{proof}

\section{Dynamics of stable subgroups}\label{section:stability}

Combining our results about maximal nests in continuous rings with the concentration techniques provided in Section~\ref{section:concentration.of.invariant.means}, we now exhibit new examples of extremely amenable topological groups (Theorem~\ref{theorem:stable.groups} and Corollaries~\ref{corollary:prefinal}, \ref{corollary:final}, \ref{corollary:levitzki.extremely.amenable}). Such examples will be found as certain topological subgroups of the unit group $\GL(R)$ of an arbitrary non-discrete irreducible, continuous ring $R$, endowed with the topology generated by the metric $d_{R}$ (see Definition~\ref{definition:intrinsic.metric} and Remark~\ref{remark:topological.ring}).

For convenience, we introduce a concept of \emph{stability} (Definition~\ref{definition:stability}), which is based on the following family of homomorphisms.

\begin{lem}\label{lemma:idempotent.difference.2} Let $R$ be a unital ring, $E \in \Nest (R)$, $e \in \Diff(E)$. The following hold. \begin{itemize}
	\item[$(1)$] The map \begin{displaymath}
			\qquad \psi_{e} \colon \, \GL(R_{E}) \, \longrightarrow \, \GL(R_{E}) , \quad a \, \longmapsto \, eae + 1-e
		\end{displaymath} is a group endomorphism.
	\item[$(2)$] If $f \in \Diff(E)$ and $f \leq e$, then $\psi_{e} \circ \psi_{f} = \psi_{f} = \psi_{f} \circ \psi_{e}$. In particular, $\psi_{e}$ is idempotent.
	\item[$(3)$] If $R$ is regular and $\rho \colon R \to [0,1]$ is a pseudo-rank function, then \begin{displaymath}
			\qquad \psi_{e} \colon \, (\GL(R_{E}),d_{\rho}) \, \longrightarrow \, (\GL(R_{E}),d_{\rho})
		\end{displaymath} is $1$-Lipschitz, hence continuous.
\end{itemize} \end{lem}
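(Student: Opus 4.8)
The plan is to realise $\psi_e$ as a composite of two homomorphisms already supplied by the excerpt, and then to dispatch parts (2) and (3) by direct computation. The key structural observation throughout is that $e \in \Diff(E) \subseteq \Ed(R_E)$ by Remark~\ref{remark:nest.rings}(2), so $e$ is an idempotent element of the unital ring $R_E$ (Proposition~\ref{proposition:nest.subring}), and likewise $f \in \Ed(R_E)$ in part (2). For (1), I would apply Lemma~\ref{lemma:idempotent.difference.1}(1) with $R_E$ in place of $R$ (legitimate since $e \in \Diff(E)$): the map $R_E \to eR_Ee$, $a \mapsto eae$, is a unital ring homomorphism, so by Remark~\ref{remark:ring.homs}(2) it induces a group homomorphism $\GL(R_E) \to \GL(eR_Ee)$, $a \mapsto eae$. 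Composing this with the group homomorphism $\GL(eR_Ee) \to \GL(R_E)$, $b \mapsto b + 1 - e$, provided by Lemma~\ref{lemma:induced.units} (applied to the ring $R_E$ and the idempotent $e \in \Ed(R_E)$) produces exactly $a \mapsto eae + 1 - e = \psi_e(a)$; in particular $\psi_e$ takes values in $\GL(R_E)$ and is a group endomorphism.

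For (2), I would first unwind $f \leq e$ in $(\Ed(R),{\leq})$ to $ef = fe = f$, and then compute directly. For $a \in \GL(R_E)$, using $fe = ef = f$ and $ff = f$ one gets $\psi_f(\psi_e(a)) = f(eae+1-e)f + 1 - f = (fe)a(ef) + ff - (fe)f + 1 - f = faf + 1 - f = \psi_f(a)$, and similarly, using $ef = fe = f$ and $ee = e$, $\psi_e(\psi_f(a)) = e(faf+1-f)e + 1 - e = (ef)a(fe) + ee - (ef)e + 1 - e = faf + 1 - f = \psi_f(a)$. Hence $\psi_e \circ \psi_f = \psi_f = \psi_f \circ \psi_e$, and the special case $f = e$ (admissible since $e \in \Diff(E)$ and $e \leq e$) gives $\psi_e \circ \psi_e = \psi_e$, i.e.\ $\psi_e$ is idempotent.

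For (3), I would observe that $\psi_e(a) - \psi_e(b) = eae - ebe = e(a-b)e$ for all $a,b \in \GL(R_E)$, so $d_\rho(\psi_e(a),\psi_e(b)) = \rho(e(a-b)e) \leq \rho(a-b) = d_\rho(a,b)$, applying the submultiplicativity $\rho(xy) \leq \min\{\rho(x),\rho(y)\}$ of a pseudo-rank function twice; this gives the $1$-Lipschitz bound and hence continuity (alternatively one could cite Lemma~\ref{lemma:rank.metric}(2)). None of the three parts poses a genuine obstacle; the only point requiring mild care is the bookkeeping: verifying that Lemma~\ref{lemma:idempotent.difference.1}(1) and Lemma~\ref{lemma:induced.units} apply verbatim with $R$ replaced by $R_E$ and $e \in \Ed(R_E)$, and passing correctly between the relation "$f \leq e$" and the identities "$ef = fe = f$".
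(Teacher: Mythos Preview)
Your proof is correct and follows essentially the same approach as the paper: part~(1) via the composite of Lemma~\ref{lemma:idempotent.difference.1}(1), Remark~\ref{remark:ring.homs}(2), and Lemma~\ref{lemma:induced.units}, and parts~(2) and~(3) by the same direct computations. One minor remark: Lemma~\ref{lemma:idempotent.difference.1}(1) is already stated for the map $R_E \to eR_Ee$, so no substitution ``$R_E$ in place of $R$'' is needed there (only Lemma~\ref{lemma:induced.units} requires applying a general-$R$ statement to $R_E$).
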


\begin{proof} (1) This follows from Lemma~\ref{lemma:idempotent.difference.1}(1), Remark~\ref{remark:ring.homs}(2), Lemma~\ref{lemma:induced.units}.  
	
(2) If $f \in \Diff(E)$ and $f \leq e$, then indeed, for every $a \in \GL(R_{E})$, \begin{align*}
	(\psi_{e} \circ \psi_{f})(a) \, &= \, e(faf+1-f)e+1-e \, = \, efafe + e-efe+1-e \\
		& = \, faf+1-f \, = \, \psi_{f}(a) \, = \, faf+1-f \\
		& = \, feaef+f-fef+1-f \, = \, f(eae+1-e)f+1-f \\
		& = \, (\psi_{f} \circ \psi_{e})(a) .
\end{align*}

(3) Suppose that $R$ is regular and $\rho \colon R \to [0,1]$ is a pseudo-rank function. For all $a,b \in \GL(R_{E})$, \begin{displaymath}
	\rho(\psi_{e}(a)-\psi_{e}(b)) \, = \, \rho((eae + 1-e)-(ebe + 1-e)) \, = \, \rho(e(a-b)e) \, \leq \, \rho(a-b) .
\end{displaymath} Thus, $\psi_{e} \colon (\GL(R_{E}),d_{\rho}) \to (\GL(R_{E}),d_{\rho})$ is $1$-Lipschitz, hence continuous. \end{proof}

\begin{definition}\label{definition:stability} Let $R$ be a unital ring, $E \in \Nest (R)$. A subgroup $G \leq \GL(R_{E})$ will be called \emph{$E$-stable} if $\psi_{e}(G) \subseteq G$ for every $e \in E$. \end{definition}

Nest envelopes, as introduced in Definition~\ref{definition:jordan}, provide natural examples of stable subgroups.

\begin{lem}\label{lemma:stable.envelopes} Let $R$ be a unital ring and let $E \in \Nest (R)$. The $E$-envelope of every subgroup of $\GL(R_{E})$ is $E$-stable. \end{lem}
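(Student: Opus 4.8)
The plan is to unfold the definitions of $E$-stability (Definition~\ref{definition:stability}) and of the $E$-envelope (Definition~\ref{definition:jordan}), and reduce the claim to a single computation about how the endomorphisms $\psi_{e}$ interact with the projection maps $\pi_{E,f}$. Fix a subgroup $G \leq \GL(R_{E})$, fix $e \in E$, and set $d \defeq \psi_{e}$; I must show $d([G]_{E}) \subseteq [G]_{E}$. Since $[G]_{E} = \bigcup_{f \in \I(E)} [G]_{E,f}$, and by Lemma~\ref{lemma:envelope}(1),(2) this family is directed under inclusion, it suffices to show that for each $n \in \N$ and each $f = (f_{0},\ldots,f_{n}) \in \I_{n}(E)$ there is some $f' \in \I(E)$ with $\psi_{e}([G]_{E,f}) \subseteq [G]_{E,f'}$. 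The natural candidate is to refine $f$ by inserting $e$: let $f'$ be the tuple obtained from $(f_{0},\ldots,f_{n})$ by adding $e \in E \cup \{0,1\}$ to the set $\{f_{0},\ldots,f_{n}\}$ (reordered), so $f' \in \I(E)$ and $f \leq f'$ in the sense of Remark~\ref{remark:directed.envelope}; by Lemma~\ref{lemma:envelope}(1), $[G]_{E,f} \subseteq [G]_{E,f'}$, so it is enough to prove $\psi_{e}([G]_{E,f'}) \subseteq [G]_{E,f'}$. Thus I may assume from the outset that $e$ is one of the idempotents $f_{k}$ appearing in the tuple $f$.

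So assume $f = (f_{0},\ldots,f_{n}) \in \I_{n}(E)$ with $e = f_{k}$ for some $k \in \{0,\ldots,n\}$, and let $a \in [G]_{E,f} = \pi_{E,f}^{-1}\bigl(\prod_{i=1}^{n} \bar{f}_{i}G\bar{f}_{i}\bigr)$. I must check that $\psi_{e}(a) = eae + 1 - e$ lies in $[G]_{E,f}$, i.e. that $\bar{f}_{i}\psi_{e}(a)\bar{f}_{i} \in \bar{f}_{i}G\bar{f}_{i}$ for every $i$. The key point is the orthogonality relations from Remark~\ref{remark:nest.rings}(1): since $e = f_{k}$, we have $e = f_{k} = \sum_{i=1}^{k} \bar{f}_{i}$, hence $\bar{f}_{i} e = e \bar{f}_{i} = \bar{f}_{i}$ for $i \leq k$ and $\bar{f}_{i} e = e \bar{f}_{i} = 0$ for $i > k$ (using $\bar f_i \bar f_j = 0$ for $i \neq j$). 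Therefore, for $i \leq k$:
\[
\bar{f}_{i}\psi_{e}(a)\bar{f}_{i} \, = \, \bar{f}_{i}(eae + 1 - e)\bar{f}_{i} \, = \, \bar{f}_{i}eae\bar{f}_{i} + \bar{f}_{i}\bar{f}_{i} - \bar{f}_{i}e\bar{f}_{i} \, = \, \bar{f}_{i}a\bar{f}_{i} + \bar{f}_{i} - \bar{f}_{i} \, = \, \bar{f}_{i}a\bar{f}_{i} \, \in \, \bar{f}_{i}G\bar{f}_{i},
\]
while for $i > k$, since $\bar f_i e = e \bar f_i = 0$,
\[
\bar{f}_{i}\psi_{e}(a)\bar{f}_{i} \, = \, \bar{f}_{i}eae\bar{f}_{i} + \bar{f}_{i} - \bar{f}_{i}e\bar{f}_{i} \, = \, 0 + \bar{f}_{i} - 0 \, = \, \bar{f}_{i} \, = \, 1_{\bar f_i R_E \bar f_i} \, \in \, \bar{f}_{i}G\bar{f}_{i}.
\]
In both cases $\bar{f}_{i}\psi_{e}(a)\bar{f}_{i} \in \bar f_i G \bar f_i$, so $\pi_{E,f}(\psi_{e}(a)) \in \prod_{i=1}^{n}\bar{f}_{i}G\bar{f}_{i}$, i.e. $\psi_{e}(a) \in [G]_{E,f}$. (I should also note $\psi_e(a) \in \GL(R_E)$ by Lemma~\ref{lemma:idempotent.difference.2}(1), so it genuinely lies in the subgroup $[G]_{E,f}$.)

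Putting the pieces together: given an arbitrary $f \in \I(E)$ and $e \in E$, refine to $f'$ containing $e$, use $[G]_{E,f} \subseteq [G]_{E,f'}$ (Lemma~\ref{lemma:envelope}(1)) and the displayed computation to conclude $\psi_{e}([G]_{E,f}) \subseteq \psi_{e}([G]_{E,f'}) \subseteq [G]_{E,f'} \subseteq [G]_{E}$; taking the union over $f$ gives $\psi_{e}([G]_{E}) \subseteq [G]_{E}$. Since $e \in E$ was arbitrary, $[G]_{E}$ is $E$-stable by Definition~\ref{definition:stability}. I do not anticipate any real obstacle here — the only mild subtlety is the bookkeeping of the refinement step, ensuring that inserting $e$ into a tuple from $\I_n(E)$ again yields a legitimate element of $\I(E)$ (which is immediate since $e \in E \subseteq E \cup \{0,1\}$ and the chain property is preserved) and correctly invoking the directedness/monotonicity from Lemma~\ref{lemma:envelope}; the algebraic core is the one-line orthogonality computation above.
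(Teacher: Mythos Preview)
Your proof is correct and follows essentially the same strategy as the paper: reduce to a tuple in $\I(E)$ that is compatible with the chosen $e \in E$, then verify the projection conditions coordinate by coordinate. The organizational difference is minor but worth noting: you first invoke Lemma~\ref{lemma:envelope}(1) to pass to a refinement $f'$ that already contains $e$ as one of its entries, after which the orthogonality relations make the computation very clean and in fact show that $\psi_{e}$ maps $[G]_{E,f'}$ into \emph{itself}. The paper instead works directly with the original tuple $e=(e_0,\ldots,e_n)$, locates the interval $e_m \leq f \leq e_{m+1}$ containing the given $f \in E$, and shows $\psi_f([G]_{E,e}) \subseteq [G]_{E,(e_0,\ldots,e_m,f,1)}$ via a slightly longer computation involving the intermediate idempotent $f-e_m$. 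Your route is a touch more economical because the preliminary refinement absorbs the case analysis into the existing monotonicity lemma; the paper's route avoids that appeal but pays for it with an explicit three-block computation. Both land on the same orthogonality bookkeeping.
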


\begin{proof} Let $G \leq \GL(R_{E})$ and $f \in E$. By definition of $[G]_{E}$, it suffices to verify that $\psi_{f}([G]_{E,e}) \subseteq [G]_{E}$ for every $e \in \I(E)$. To this end, let $n \in \N$ and $e \in \I_{n}(E)$. Since $E \cup \{ 0,1 \} \in \Nest (R)$, there exists $m \in \{ 0,\ldots,n-1\}$ such that $e_{m} \leq f \leq e_{m+1}$. It follows that $(e_{0},\ldots,e_{m},f,1) \in \I(E)$, \begin{equation}\tag{$\ast$}\label{increment2}
	\bar{e}_{i}f \, = \, e_{i}f-e_{i-1}f \, = \, e_{i}-e_{i-1} \, = \, \bar{e}_{i} \, = \, e_{i}-e_{i-1} \, = \, fe_{i}-fe_{i-1} \, = \, f\bar{e}_{i} 
\end{equation} for each $i \in \{ 1,\ldots,m\}$, and \begin{equation}\tag{$\ast\ast$}\label{increment}
	\bar{e}_{m+1}f \, = \, e_{m+1}f-e_{m}f \, = \, f-e_{m} \, = \, fe_{m+1}-fe_{m} \, = \, f\bar{e}_{m+1} .
\end{equation} Now, if $a \in \GL(R_{E})$, then \begin{displaymath}
	\bar{e}_{i}\psi_{f}(a)\bar{e}_{i} \, \stackrel{\eqref{increment2}}{=} \, \bar{e}_{i}f(faf+1-f)f\bar{e}_{i} \, = \, \bar{e}_{i}faf\bar{e}_{i} \, \stackrel{\eqref{increment2}}{=} \, \bar{e}_{i}a\bar{e}_{i} 
\end{displaymath} for each $i \in \{ 1,\ldots,m\}$, \begin{align*}
	(f-e_{m})\psi_{f}(a)(f-e_{m}) \, & \stackrel{\eqref{increment}}{=} \, \bar{e}_{m+1}f(faf+1-f)f\bar{e}_{m+1} \\
		& = \, \bar{e}_{m+1}faf\bar{e}_{m+1} \, \stackrel{\eqref{increment}}{=} \, f\bar{e}_{m+1}a\bar{e}_{m+1}f ,
\end{align*} and finally \begin{displaymath}
	(1-f)\psi_{f}(a)(1-f) \, = \, (1-f)(faf+1-f)(1-f) \, \stackrel{\ref{remark:idempotent.difference}}{=} \, 1-f .
\end{displaymath} Hence, for every $a \in [G]_{E,e}$, \begin{align*}
	\pi_{E,(e_{0},\ldots,e_{m},f,1)}(\psi_{f}(a)) \, &= \, (\bar{e}_{1}a\bar{e}_{1}, \ldots, \bar{e}_{m}a\bar{e}_{m},f\bar{e}_{m+1}a\bar{e}_{m+1}f,1-f) \\
		&\in \, \left( \prod\nolimits_{i=1}^{m} \bar{e}_{i}G\bar{e}_{i} \right) \times f\bar{e}_{m+1}G\bar{e}_{m+1}f \times (1-f)G(1-f) \\
		&\stackrel{\eqref{increment}}{=} \, \left( \prod\nolimits_{i=1}^{m} \bar{e}_{i}G\bar{e}_{i} \right) \times (f-e_{m})G(f-e_{m}) \times (1-f)G(1-f)
\end{align*} and therefore $\psi_{f}(a) \in [G]_{E,(e_{0},\ldots,e_{m},f,1)} \subseteq [G]_{E}$. Thus, $\psi_{f}([G]_{E,e}) \subseteq [G]_{E}$. \end{proof}

The main reason for our interest in stability is Proposition~\ref{proposition:stable.groups}, whose proof proceeds by combining the abstract Proposition~\ref{proposition:amenable.folding} with the following concrete observations.

\begin{lem}\label{lemma:folding} Suppose that $R$ is a non-discrete irreducible, continuous ring. Let $E \in \Nestmax (R)$ and define \begin{displaymath}
	e_{t} \, \defeq \, (\rho_{R}\vert_{E})^{-1}(t) \qquad (t \in [0,1]) .
\end{displaymath} Then the following hold.  \begin{itemize}
	\item[$(1)$] $\psi_{e_{0}} \equiv 1_{R}$ and $\psi_{e_{1}} = \id_{\GL(R_{E})}$.
	\item[$(2)$] For any two $s,t \in [0,1]$, \begin{displaymath}
			\qquad \psi_{e_{s}} \circ \psi_{e_{t}} \, = \, \psi_{e_{s \wedge t}} .
		\end{displaymath}
	\item[$(3)$] For all $s,t \in [0,1]$ and $a \in \GL(R_{E})$, \begin{displaymath}
			\qquad d_{R}\left(\psi_{e_{s}}(a), \psi_{e_{t}}(a)\right) \, \leq \, \vert s-t \vert .
		\end{displaymath}
\end{itemize} \end{lem}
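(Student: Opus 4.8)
The plan is to treat the three items as short direct computations, once the family $(e_{t})_{t \in [0,1]}$ has been pinned down.

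First I would record the basic facts about $(e_{t})_{t \in [0,1]}$. Since $R$ is a complete rank ring by Theorem~\ref{theorem:continuous.rank.rings}(2), Corollary~\ref{corollary:maximal.nests.2}(2) tells us that $\rho_{R}\vert_{E} \colon (E,{\leq}) \to ([0,1],{\leq})$ is an isomorphism of linearly ordered sets; hence $e_{t}$ is well defined, $e_{s} \leq e_{t}$ if and only if $s \leq t$, and $\min\{ e_{s},e_{t}\} = e_{s\wedge t}$. Moreover $e_{0} = 0_{R}$, because $\rho_{R}$ is a rank function, so $d_{\rho_{R}}$ is a metric by Lemma~\ref{lemma:rank.metric}(1) and $\rho_{R}(e_{0}) = 0$ forces $e_{0} = 0_{R}$; likewise $e_{1} = 1_{R}$ by Lemma~\ref{lemma:rank.functions.vs.units}(1). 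In particular $e_{t} = e_{t} - 0_{R} \in \Diff(E)$ for every $t \in [0,1]$, so that $\psi_{e_{t}} \colon \GL(R_{E}) \to \GL(R_{E})$ is defined and enjoys the properties listed in Lemma~\ref{lemma:idempotent.difference.2}.

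Item (1) is then immediate from the formula $\psi_{e}(a) = eae + 1 - e$: substituting $e = e_{0} = 0_{R}$ gives $\psi_{e_{0}}(a) = 1_{R}$ for every $a$, and $e = e_{1} = 1_{R}$ gives $\psi_{e_{1}}(a) = a$. For item (2), $E$ is a chain, so one of $e_{s},e_{t}$ lies below the other; as this smaller idempotent is precisely $e_{s\wedge t} \in \Diff(E)$, Lemma~\ref{lemma:idempotent.difference.2}(2) applied with $f = e_{s\wedge t}$ (and $e$ the larger one) yields $\psi_{e_{s}} \circ \psi_{e_{t}} = \psi_{e_{s\wedge t}}$, since it asserts that post- and pre-composition with $\psi_{e}$ both fix $\psi_{f}$.

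For item (3) I would use that $\GL(R_{E}) \subseteq R_{E}$, so for any $a \in \GL(R_{E})$ and any $e_{t} \in E$ the defining relation of $R_{E}$ (Definition~\ref{definition:stabilizer}) gives $e_{t}ae_{t} = ae_{t}$, whence $\psi_{e_{t}}(a) = ae_{t} + 1_{R} - e_{t}$. Subtracting,
\[
	\psi_{e_{s}}(a) - \psi_{e_{t}}(a) \, = \, (a - 1_{R})(e_{s} - e_{t}) ,
\]
so submultiplicativity of the pseudo-rank function $\rho_{R}$ gives $\rho_{R}\bigl(\psi_{e_{s}}(a) - \psi_{e_{t}}(a)\bigr) \leq \rho_{R}(e_{s} - e_{t})$. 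Assuming $s \leq t$ without loss of generality and using $\rho_{R}(-x) = \rho_{R}(x)$ together with Lemma~\ref{lemma:rank.estimates}(2), one obtains $\rho_{R}(e_{s} - e_{t}) = \rho_{R}(e_{t} - e_{s}) = \rho_{R}(e_{t}) - \rho_{R}(e_{s}) = t - s = \vert s - t \vert$. Since $d_{R} = d_{\rho_{R}}$, this is exactly the asserted inequality. There is no genuine obstacle here: the content is entirely the order isomorphism of Corollary~\ref{corollary:maximal.nests.2}(2) together with elementary manipulations of the rank function, and the only point that needs a little care is the boundary identification $e_{0} = 0_{R}$, $e_{1} = 1_{R}$, which is what makes $e_{t} \in \Diff(E)$ and hence legitimizes the use of Lemma~\ref{lemma:idempotent.difference.2}.
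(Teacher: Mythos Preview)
Your proof is correct and follows essentially the same approach as the paper: both rely on Corollary~\ref{corollary:maximal.nests.2}(2) to set up the family $(e_t)$, invoke Lemma~\ref{lemma:idempotent.difference.2}(2) for item~(2), and for item~(3) reduce $\psi_{e_s}(a)-\psi_{e_t}(a)$ to a product with $e_s-e_t$ (you via $e_tae_t=ae_t$, the paper via the equivalent rearrangement $(1-a)(e_t-e_s)$) before applying Lemma~\ref{lemma:rank.estimates}(2). Your treatment is slightly more explicit about the boundary identifications $e_0=0_R$, $e_1=1_R$ and the membership $e_t\in\Diff(E)$, which the paper leaves implicit.
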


\begin{proof} First of all, by Corollary~\ref{corollary:maximal.nests.2}(2), the family $(e_{t})_{t \in [0,1]}$ is a well-defined element of $E^{[0,1]}$.
	
(1) Evidently, $\psi_{e_{0}} = \psi_{0_{R}} \equiv 1_{R}$ and $\psi_{e_{1}} = \psi_{1_{R}} = \id_{\GL(R_{E})}$.
	
(2) This follows from Corollary~\ref{corollary:maximal.nests.2}(2) and Lemma~\ref{lemma:idempotent.difference.2}(2).
	
(3) Let $s,t \in [0,1]$. Without loss of generality, we may and will assume that $s \leq t$. Thus, $e_{s} \leq e_{t}$ by Corollary~\ref{corollary:maximal.nests.2}(2). Hence, for every $a \in \GL(R_{E})$, \begin{align*}
	d_{R}\left(\psi_{e_{s}}(a), \psi_{e_{t}}(a)\right) \, & = \, \rho_{R}((e_{s}ae_{s}+1-e_{s})-(e_{t}ae_{t}+1-e_{t})) \\
		& = \, \rho_{R}(e_{s}ae_{s}-e_{t}ae_{t}+e_{t}-e_{s}) \\
		& \stackrel{a\in R_{E}}{=} \, \rho_{R}(ae_{s}-ae_{t}+e_{t}-e_{s}) \, = \, \rho_{R}((1-a)(e_{t}-e_{s})) \\
		& \leq \, \rho_{R}(e_{t}-e_{s}) \, \stackrel{\ref{lemma:rank.estimates}(2)}{=} \, \rho_{R}(e_{t})-\rho(e_{s}) \, = \, t-s \, = \, \vert s-t \vert .\qedhere
\end{align*} \end{proof}

\begin{prop}\label{proposition:stable.groups} Let $R$ be a non-discrete irreducible, continuous ring and let $E \in \Nestmax (R)$. If a topological subgroup $G \leq \GL(R_{E})$ is amenable and $E$-stable, then $\ell (G,d_{R}) = 0$, thus $G$ is extremely amenable. \end{prop}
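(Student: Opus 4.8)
The plan is to derive this from the amplification criterion Proposition~\ref{proposition:amenable.folding}, applied with the ambient dense set $D = [0,1]$ and the one-parameter family of endomorphisms $\psi_{e_t}$ furnished by Lemma~\ref{lemma:folding}. First I would record the setup: $G$ carries the relative $\rho_R$-topology, and $d \defeq d_{R}\vert_{G \times G}$ is a bounded (by $1$) right-invariant pseudo-metric generating the topology of $G$, so $d \in \Delta(G)$; moreover $d$ is in fact left-invariant on $\GL(R)$ (hence on $G$), since for $g \in \GL(R)$ one has $\rho_{R}(g(x-y)) \leq \rho_{R}(x-y)$ and, applying the same to $g^{-1}$, the reverse inequality, so $d_{R}$ is bi-invariant on $\GL(R)$. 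Since $E \in \Nestmax(R)$ and $R$ is non-discrete, Corollary~\ref{corollary:maximal.nests.2}(2) lets me define $e_{t} \defeq (\rho_{R}\vert_{E})^{-1}(t) \in E$ for each $t \in [0,1]$.

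Next I would set $\phi_{t} \defeq \psi_{e_{t}}\vert_{G} \colon G \to G$ for $t \in [0,1]$, where $\psi_{e_{t}}(a) = e_{t}ae_{t} + 1 - e_{t}$ as in Lemma~\ref{lemma:idempotent.difference.2}. This is well defined because $e_{t} \in E$ and $G$ is $E$-stable, so $\psi_{e_{t}}(G) \subseteq G$; it is a group endomorphism by Lemma~\ref{lemma:idempotent.difference.2}(1) and continuous by Lemma~\ref{lemma:idempotent.difference.2}(3) (it is $1$-Lipschitz for $d_{\rho_{R}} = d_{R}$, and continuity passes to the subspace $G$). Then I would verify the three hypotheses of Proposition~\ref{proposition:amenable.folding}: condition~$(1)$ is Lemma~\ref{lemma:folding}(1), giving $\phi_{0}(G) = \{1_{R}\} = \{e_{G}\}$ and $\phi_{1} = \id_{G}$, so $\phi_{1}(G) = G$; condition~$(2)$ follows from Lemma~\ref{lemma:folding}(2), since for $s \leq t$ one has $\psi_{e_{t}} \circ \psi_{e_{s}} = \psi_{e_{s \wedge t}} = \psi_{e_{s}}$, whence $\phi_{s}(G) = \psi_{e_{t}}(\psi_{e_{s}}(G)) \subseteq \psi_{e_{t}}(G) = \phi_{t}(G)$; and condition~$(3)$ holds with $C = 1$, because Lemma~\ref{lemma:folding}(3) gives $d_{R}(\phi_{s}(h),\phi_{t}(h)) \leq \vert s - t\vert$ for all $h \in G$, and left-invariance of $d_{R}$ on $\GL(R)$ then yields $d_{R}(g\phi_{s}(h),g\phi_{t}(h)) = d_{R}(\phi_{s}(h),\phi_{t}(h)) \leq \vert s - t\vert$ for all $g,h \in G$.

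With all hypotheses in place, amenability of $G$ and Proposition~\ref{proposition:amenable.folding} give $\ell(G,d) = 0$, and since $d$ generates the topology of $G$, Corollary~\ref{corollary:extreme.amenability.2} yields $\ell(G) = 0$ and hence extreme amenability of $G$, completing the proof. There is essentially no hard computational core here: the real content has already been packaged into Lemma~\ref{lemma:folding} and Lemma~\ref{lemma:idempotent.difference.2}, so the main point requiring care is simply confirming that $\phi_{t}$ genuinely lands in $G$ — which is exactly where the $E$-stability hypothesis on $G$ is used — and that the metric $d_{R}\vert_{G\times G}$ meets all the structural requirements ($\Delta(G)$-membership, generating the topology, and the left-invariance needed for~$(3)$).
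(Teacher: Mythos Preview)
Your proposal is correct and follows essentially the same approach as the paper's proof: both set $e_t = (\rho_R\vert_E)^{-1}(t)$ via Corollary~\ref{corollary:maximal.nests.2}(2), use $E$-stability to restrict $\psi_{e_t}$ to endomorphisms of $G$, verify the three hypotheses of Proposition~\ref{proposition:amenable.folding} by invoking Lemma~\ref{lemma:idempotent.difference.2} and Lemma~\ref{lemma:folding} (with bi-invariance of $d_R$ handling condition~(3)), and then conclude via Corollary~\ref{corollary:extreme.amenability.2}. Your write-up is slightly more explicit about the structural requirements on $d_R\vert_{G\times G}$, but the argument is the same.
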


\begin{proof} Define $e_{t} \defeq (\rho_{R}\vert_{E})^{-1}(t)$ for each $t \in [0,1]$. If $s,t \in [0,1]$ and $s\leq t$, then $E$-stability of $G$ entails that \begin{displaymath}
	\psi_{e_{s}}(G) \, \stackrel{\ref{lemma:folding}(2)}{=} \, \psi_{e_{t}}\left(\psi_{e_{s}}(G)\right) \, \subseteq \, \psi_{e_{t}}(G) .
\end{displaymath} Note that $d_{R}$ is bi-invariant by Lemma~\ref{lemma:rank.metric}(2) (see also Remark~\ref{remark:topological.ring}). Invoking Lemma~\ref{lemma:idempotent.difference.2}(1)+(3) and Lemma~\ref{lemma:folding}(1)+(3), we conclude that \begin{displaymath}
	G \, \longrightarrow \, G, \quad g \, \longmapsto \, \psi_{e_{t}}(g) \qquad (t \in [0,1])
\end{displaymath} is a family of continuous endomorphisms satisfying the hypotheses of Proposition~\ref{proposition:amenable.folding} with respect to $d_{R}$. Thus, $\ell (G,d_{R}) = 0$ by Proposition~\ref{proposition:amenable.folding} and amenability of $G$. Hence, $G$ is extremely amenable by Corollary~\ref{corollary:extreme.amenability.2}. \end{proof}

Returning to nest envelopes, we deduce the following.

\begin{thm}\label{theorem:stable.groups} Let $R$ be a non-discrete irreducible, continuous ring and let $E \in \Nestmax (R)$. If $G$ is an amenable topological subgroup of $\GL(R_{E})$, then \begin{displaymath}
	\ell ([G]_{E},d_{R}) \, = \, 0 ,
\end{displaymath} hence $[G]_{E}$ is extremely amenable. \end{thm}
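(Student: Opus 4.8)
The plan is to assemble the three pieces developed above into a one-line deduction: Corollary~\ref{corollary:amenable.envelope} (amenability passes to nest envelopes), Lemma~\ref{lemma:stable.envelopes} ($E$-envelopes are $E$-stable), and Proposition~\ref{proposition:stable.groups} (an amenable, $E$-stable topological subgroup of $\GL(R_{E})$ has vanishing $d_{R}$-amenable length and hence is extremely amenable).

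First I would fix the setup. Since $R$ is a non-discrete irreducible, continuous ring, Theorem~\ref{theorem:continuous.rank.rings}(2) furnishes its unique complete rank function $\rho_{R}$, and the topology on $\GL(R_{E})$ under consideration is precisely the relative $\rho_{R}$-topology, i.e., the one generated by $d_{R} = d_{\rho_{R}}$; by Remark~\ref{remark:topological.ring} this turns $\GL(R_{E})$ into a topological group carrying the bi-invariant metric $d_{R}$. Given an amenable topological subgroup $G \leq \GL(R_{E})$, I would apply Corollary~\ref{corollary:amenable.envelope} with $\rho = \rho_{R}$ to conclude that $[G]_{E}$ is again amenable with respect to the relative $\rho_{R}$-topology; that $[G]_{E}$ is indeed a subgroup of $\GL(R_{E})$ is Lemma~\ref{lemma:envelope}(3).

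Next I would invoke Lemma~\ref{lemma:stable.envelopes}, which states exactly that $[G]_{E}$ is $E$-stable. Thus $[G]_{E}$ is an amenable, $E$-stable topological subgroup of $\GL(R_{E})$, so Proposition~\ref{proposition:stable.groups} applies to it verbatim and yields $\ell([G]_{E}, d_{R}) = 0$. Since $d_{R}$ generates the topology of $[G]_{E}$, Corollary~\ref{corollary:extreme.amenability.2} then upgrades vanishing amenable length to extreme amenability of $[G]_{E}$, which is the second assertion of the theorem.

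There is essentially no obstacle here: all the substantive work has been front-loaded into Corollary~\ref{corollary:amenable.envelope}, Lemma~\ref{lemma:stable.envelopes}, and Proposition~\ref{proposition:stable.groups}, and the theorem is simply their conjunction. The only point requiring a moment's care is the bookkeeping that the ambient topology on $\GL(R_{E})$ for which amenability is hypothesized, the one for which the endomorphisms $\psi_{e}$ are continuous, and the one generated by $d_{R}$ all coincide — but this is guaranteed by Remark~\ref{remark:topological.ring} together with Lemma~\ref{lemma:idempotent.difference.2}(3).
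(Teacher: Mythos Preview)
Your proposal is correct and follows exactly the paper's own proof, which reads in full: ``This is a direct consequence of Corollary~\ref{corollary:amenable.envelope}, Lemma~\ref{lemma:stable.envelopes}, and Proposition~\ref{proposition:stable.groups}.'' Your additional invocation of Corollary~\ref{corollary:extreme.amenability.2} is harmless but redundant, since Proposition~\ref{proposition:stable.groups} already concludes extreme amenability.
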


\begin{proof} This is a direct consequence of Corollary~\ref{corollary:amenable.envelope}, Lemma~\ref{lemma:stable.envelopes}, and Proposition~\ref{proposition:stable.groups}. \end{proof}

We proceed to some consequences of Theorem~\ref{theorem:stable.groups}.

\begin{cor}\label{corollary:prefinal} Suppose that $R$ is a non-discrete irreducible, continuous ring, and let $E \in \Nestmax(R)$. Every element of $\GL(R_{E})$ is contained in a locally solvable, extremely amenable topological subgroup of $\GL(R_{E})$. \end{cor}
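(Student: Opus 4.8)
The plan is to reduce the statement to a direct application of Theorem~\ref{theorem:stable.groups}. Given $a \in \GL(R_{E})$, the natural candidate for the desired subgroup is the $E$-envelope $[G]_{E}$ of the cyclic group $G \defeq \langle a \rangle \leq \GL(R_{E})$, taken with the relative $d_{R}$-topology. First I would record that $G$, being abelian, is solvable (hence locally solvable) and amenable as a topological group: a commuting family of affine homeomorphisms of the compact convex set $\Mean(G)$ has a common fixed point by the Markov--Kakutani theorem, so $\Mean(G)$ carries a $G$-fixed point. Then Theorem~\ref{theorem:stable.groups} yields $\ell([G]_{E},d_{R}) = 0$ and thus extreme amenability of $[G]_{E}$, while Remark~\ref{remark:solvable}(2) gives that $[G]_{E}$ is locally solvable; by Lemma~\ref{lemma:envelope}(3) it is a topological subgroup of $\GL(R_{E})$.

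The one point that still needs verifying is that $a$ itself lies in $[G]_{E}$. This follows from the fact that the envelope always recovers the original group through the trivial interval partition. Indeed, $\I_{1}(E)$ consists of the single tuple $e = (0_{R},1_{R})$, for which $\bar{e}_{1} = 1_{R} - 0_{R} = 1_{R}$, so that $\pi_{E,e} \colon R_{E} \to \bar{e}_{1}R_{E}\bar{e}_{1} = R_{E}$ is the identity map; consequently $[G]_{E,e} = \pi_{E,e}^{-1}(\bar{e}_{1}G\bar{e}_{1}) = G$, and hence $a \in G = [G]_{E,e} \subseteq [G]_{E}$. This completes the argument.

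I do not anticipate a genuine obstacle at this stage: all the substantive work has been front-loaded into Theorem~\ref{theorem:stable.groups} (which rests on Corollary~\ref{corollary:amenable.envelope}, Lemma~\ref{lemma:stable.envelopes}, and Proposition~\ref{proposition:stable.groups}) and into Remark~\ref{remark:solvable}, with the delicate inputs---nilpotent groups are amenable and locally solvable, the semidirect-product decomposition of Lemma~\ref{lemma:envelope.basic} preserves these properties, and a directed union retains them---already handled. What remains is merely to assemble these facts together with the observation $G \subseteq [G]_{E}$.
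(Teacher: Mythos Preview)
Your proof is correct and follows essentially the same approach as the paper: take $G = \langle a \rangle$, form the $E$-envelope $[G]_{E}$, and invoke Theorem~\ref{theorem:stable.groups} for extreme amenability together with Remark~\ref{remark:solvable}(2) for local solvability. The paper's version is terser, leaving the containment $a \in G \subseteq [G]_{E}$ and the amenability of the abelian group $G$ implicit, but the argument is the same.
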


\begin{proof} Let $a \in \GL(R_{E})$. Since the subgroup $G \leq \GL(R_{E})$ generated by $a$ is abelian and hence solvable, the group $[G]_{E}$ is locally solvable according to Remark~\ref{remark:solvable}(2). Furthermore, Theorem~\ref{theorem:stable.groups} asserts that $[G]_{E}$ is extremely amenable with respect to the relative topology inherited from $\GL(R_{E})$. \end{proof}

\begin{cor}\label{corollary:final} Let $R$ be a non-discrete irreducible, continuous ring. Every element of $\GL(R)$ algebraic over $\cent(R)$ is contained in a locally solvable, extremely amenable topological subgroup of $\GL(R)$. \end{cor}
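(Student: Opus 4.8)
The plan is to reduce the statement to Corollary~\ref{corollary:prefinal} by first trapping the given algebraic element inside the unit group of a stabilizer ring attached to some maximal nest. Concretely, I would fix $a \in \GL(R)$ algebraic over $\cent(R)$ and let $S$ denote the subring of $R$ generated by $\{ a \} \cup \cent(R)$, i.e.\ the unital $\cent(R)$-subalgebra of $R$ generated by $a$ (recall $\cent(R)$ is a field by Theorem~\ref{theorem:irreducibility}, so this makes sense). By Remark~\ref{remark:algebraicity}, algebraicity of $a$ over $\cent(R)$ means precisely that $\dim_{\cent(R)}(S) < \infty$.

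Next I would invoke the continuous triangularization theorem in the form of Corollary~\ref{corollary:invariant.nests}(2), applied to $G \defeq \{ a \} \subseteq \GL(R)$: since the subring generated by $\{ a \} \cup \cent(R)$ is exactly $S$ and $\dim_{\cent(R)}(S) < \infty$, there exists a maximal nest $E \in \Nestmax(R)$ such that $a \in \GL(R_{E})$. Corollary~\ref{corollary:prefinal} then applies verbatim to the unit group $\GL(R_{E})$, yielding a locally solvable, extremely amenable topological subgroup $H \leq \GL(R_{E})$ with $a \in H$ (one may take $H = [\langle a \rangle]_{E}$, the $E$-envelope of the abelian, hence solvable, cyclic subgroup generated by $a$).

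It remains only to transfer the conclusion from "topological subgroup of $\GL(R_{E})$" to "topological subgroup of $\GL(R)$", and this is the one point that deserves a line of care, though no genuine difficulty arises: $R_{E}$ carries the relative $\rho_{R}$-topology inherited from $R$ (Proposition~\ref{proposition:nest.subring}), and $\GL(R_{E}) = R_{E} \cap \GL(R)$ by Proposition~\ref{proposition:units.in.nest.stabilizers} (using that $R$ is directly finite by Theorem~\ref{theorem:continuous.rank.rings}(2) and Lemma~\ref{lemma:rank.functions.vs.units}(2)), so that $\GL(R_{E})$ is a topological subgroup of $\GL(R)$ in the sense of Remark~\ref{remark:topological.ring}. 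By transitivity of the subspace topology, $H$ is then a topological subgroup of $\GL(R)$ with unchanged topological-group structure, hence still locally solvable and extremely amenable; and it contains $a$. This completes the argument. I do not expect a real obstacle: the corollary is essentially a packaging of Corollary~\ref{corollary:invariant.nests}(2) and Corollary~\ref{corollary:prefinal}, the only subtlety being the compatibility of the nested subspace topologies $H \subseteq \GL(R_{E}) \subseteq \GL(R)$.
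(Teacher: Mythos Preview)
Your proposal is correct and follows essentially the same route as the paper: invoke Remark~\ref{remark:algebraicity} and Corollary~\ref{corollary:invariant.nests}(2) to place $a$ in $\GL(R_{E})$ for some $E \in \Nestmax(R)$, then apply Corollary~\ref{corollary:prefinal}. Your additional discussion of the compatibility of the subspace topologies is more explicit than the paper's two-line proof, but not a different argument.
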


\begin{proof} Let $a \in \GL(R)$ be algebraic over $\cent(R)$. Due to Remark~\ref{remark:algebraicity} and Corollary~\ref{corollary:invariant.nests}(2), there exists $E \in \Nestmax(R)$ such that $a \in \GL(R_{E})$. Thus, the desired conclusion follows by Corollary~\ref{corollary:prefinal}. \end{proof}

Another family of extremely amenable topological groups resulting from Theorem~\ref{theorem:stable.groups} is induced by Levitzki radicals (see Appendix~\ref{section:levitzki}).

\begin{lem}\label{lemma:levitzki.envelope} Let $R$ be a unital ring and let $E \in \Nest (R)$. Then \begin{displaymath}
	[1+\Lev R_{E}]_{E} \, = \, 1+\Lev R_{E} .
\end{displaymath} In particular, $1+\Lev R_{E}$ is $E$-stable. \end{lem}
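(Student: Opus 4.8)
The plan is to establish the two inclusions of $[1+\Lev R_{E}]_{E} = 1+\Lev R_{E}$ separately, writing $G \defeq 1+\Lev R_{E}$. First I would record that $G$ really is a subgroup of $\GL(R_{E})$, so that $[G]_{E}$ is defined at all: since $\Lev R_{E}$ is a locally nilpotent two-sided ideal of $R_{E}$, every element of $\Lev R_{E}$ is nilpotent, whence $1+a \in \GL(R_{E})$ for each $a \in \Lev R_{E}$ with inverse again of the form $1+a'$, $a' \in \Lev R_{E}$, and $(1+a)(1+b) = 1+(a+b+ab) \in G$. The inclusion $G \subseteq [G]_{E}$ is then trivial: for $e = (0,1) \in \I_{1}(E)$ one has $n=1$, $\bar{e}_{1} = 1$ and $\pi_{E,e} = \id_{R_{E}}$, so $[G]_{E,(0,1)} = \pi_{E,(0,1)}^{-1}(\bar{e}_{1}G\bar{e}_{1}) = G$, and hence $G \subseteq [G]_{E}$ by Definition~\ref{definition:jordan}.

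For the reverse inclusion it suffices to check that $[G]_{E,e} \subseteq G$ for every $n \in \N$ and every $e \in \I_{n}(E)$; then $[G]_{E} = \bigcup_{e \in \I(E)} [G]_{E,e} \subseteq G$. By Lemma~\ref{lemma:envelope.basic} (which rests on Lemma~\ref{lemma:idempotent.difference.1} and the orthogonality relations of Remark~\ref{remark:nest.rings}), every element of $[G]_{E,e}$ has the form $h\cdot(g_{1}+\ldots+g_{n})$ with $h \in 1+\Ker\pi_{E,e}$ and $g_{i} \in \bar{e}_{i}G\bar{e}_{i}$, so since $G$ is a subgroup it is enough to show that both factors lie in $G$. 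For the first: $\Ker\pi_{E,e}$ is a two-sided ideal of $R_{E}$ which is nilpotent by Lemma~\ref{lemma:nilpotent.nest.ideal}, hence locally nilpotent, hence contained in the Levitzki radical $\Lev R_{E}$ (a standard property of the Levitzki radical, recalled in the Appendix~\ref{section:levitzki}); thus $1+\Ker\pi_{E,e} \subseteq 1+\Lev R_{E} = G$. For the second: writing $g_{i} = \bar{e}_{i}(1+a_{i})\bar{e}_{i} = \bar{e}_{i}+\bar{e}_{i}a_{i}\bar{e}_{i}$ with $a_{i} \in \Lev R_{E}$ and using $\bar{e}_{i} \in \Ed(R_{E})$ together with $\sum_{i=1}^{n}\bar{e}_{i} = 1$ (Remark~\ref{remark:nest.rings}), one gets $g_{1}+\ldots+g_{n} = 1+\sum_{i=1}^{n}\bar{e}_{i}a_{i}\bar{e}_{i}$, and $\sum_{i=1}^{n}\bar{e}_{i}a_{i}\bar{e}_{i} \in \Lev R_{E}$ because $\Lev R_{E}$ is an ideal of $R_{E}$ and each $\bar{e}_{i} \in R_{E}$; hence $g_{1}+\ldots+g_{n} \in G$. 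Therefore $h\cdot(g_{1}+\ldots+g_{n}) \in G\cdot G = G$, as required.

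Finally, the ``in particular'' clause will follow at once from Lemma~\ref{lemma:stable.envelopes}: the $E$-envelope of any subgroup of $\GL(R_{E})$ is $E$-stable, and we will have shown $1+\Lev R_{E} = [1+\Lev R_{E}]_{E}$. I do not expect a genuine obstacle: the only point needing care is passing from an abstract element of $[G]_{E,e}$ to the concrete normal form $h\cdot(g_{1}+\ldots+g_{n})$, but this is precisely what Lemma~\ref{lemma:envelope.basic} supplies, so the whole argument reduces to the ideal bookkeeping above.
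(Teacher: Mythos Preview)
Your proof is correct and follows essentially the same approach as the paper's own proof: both use the trivial inclusion via $e=(0,1)$, then for the reverse inclusion invoke Lemma~\ref{lemma:envelope.basic} to write an element of $[G]_{E,e}$ as $h\cdot(g_{1}+\ldots+g_{n})$, observe $\Ker\pi_{E,e}\subseteq\Lev R_{E}$ via Lemma~\ref{lemma:nilpotent.nest.ideal}, and expand $g_{1}+\ldots+g_{n}=1+\sum_{i}\bar{e}_{i}a_{i}\bar{e}_{i}\in 1+\Lev R_{E}$ using that $\Lev R_{E}$ is an ideal; the ``in particular'' is deduced from Lemma~\ref{lemma:stable.envelopes} in both cases.
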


\begin{proof} Of course, $1+ \Lev R_{E} = [1+ \Lev R_{E}]_{E,(0,1)} \subseteq [1+\Lev R_{E}]_{E}$. In order to prove the converse inclusion, let $g \in [1+\Lev R_{E}]_{E}$. Then there exist $n \in \N$ and $e \in \I_{n}(E)$ such that $g \in [1+ \Lev R_{E}]_{E,e}$. Hence, by Lemma~\ref{lemma:envelope.basic}, there exist $h \in 1+\Ker \pi_{E,e}$ and $g_{1},\ldots,g_{n} \in 1+ \Lev R_{E}$ such that \begin{displaymath}
	g \, = \, h (\bar{e}_{1}g_{1}\bar{e}_{1}+\ldots +\bar{e}_{n}g_{n}\bar{e}_{n}) .
\end{displaymath} Note that $\Ker \pi_{E,e} \subseteq \Lev R_{E}$, since $\Ker \pi_{E,e}$ is a nilpotent, two-sided ideal of $R_{E}$ due to Lemma~\ref{lemma:idempotent.difference.1}(2) and Lemma~\ref{lemma:nilpotent.nest.ideal}. In particular, $h \in 1+ \Lev R_{E}$. Furthermore, for each $i \in \{ 1,\ldots,n \}$, we find $a_{i} \in \Lev R_{E}$ such that $g_{i} = 1 + a_{i}$. As $\bar{e}_{1},\ldots,\bar{e}_{n} \in R_{E}$ by Remark~\ref{remark:nest.rings}(2) and $\Lev R_{E}$ is a two-sided ideal of $R_{E}$, we conclude that $\bar{e}_{1}a_{1}\bar{e}_{1},\ldots ,\bar{e}_{n}a_{n}\bar{e}_{n} \in \Lev R_{E}$. Thus, \begin{align*}
	g \, &= \, h (\bar{e}_{1}g_{1}\bar{e}_{1}+\ldots +\bar{e}_{n}g_{n}\bar{e}_{n}) \, \stackrel{\ref{remark:idempotent.difference}}{=} \, h (\bar{e}_{1}+ \ldots +\bar{e}_{n} + \bar{e}_{1}a_{1}\bar{e}_{1} + \ldots + \bar{e}_{n}a_{n}\bar{e}_{n}) \\
	&\stackrel{\ref{remark:nest.rings}(1)}{=} \, h (1 + \bar{e}_{1}a_{1}\bar{e}_{1} + \ldots + \bar{e}_{n}a_{n}\bar{e}_{n}) \, \in \, (1+ \Lev R_{E})(1+ \Lev R_{E}) \, \subseteq \, 1+ \Lev R_{E} .
\end{align*} This shows that $[1+\Lev R_{E}]_{E} = 1+\Lev R_{E}$, which readily entails $E$-stability of $1+\Lev R_{E}$ by Lemma~\ref{lemma:stable.envelopes}. \end{proof}

\begin{remark}\label{remark:levitzki.stable} Let $R$ be a unital ring and let $E \in \Nest (R)$. The $E$-stability of $1+\Lev R_{E}$ entailed by Lemma~\ref{lemma:levitzki.envelope} may be deduced more directly: if $e \in E$, then $e \in R_{E}$ due to Proposition~\ref{proposition:nest.subring}(3), thus $\Lev R_{E}$ being a two-sided ideal of $R_{E}$ implies that \begin{displaymath}
	\psi_{e}(1+a) \, = \, e(1+a)e +1-e \, = \, e+eae+1-e \, = \, 1+eae \, \in \, 1+\Lev R_{E} 
\end{displaymath} for every $a \in \Lev R_{E}$, that is, $\psi_{e}(1+\Lev R_{E}) \subseteq 1+\Lev R_{E}$. \end{remark}

\begin{cor}\label{corollary:levitzki.extremely.amenable} Suppose that $R$ is a non-discrete irreducible, continuous ring, and let $E \in \Nestmax(R)$. Then \begin{displaymath}
	\ell (1+\Lev R_{E},d_{R}) \, = \, 0 .
\end{displaymath} In particular, $1+\Lev R_{E}$ is extremely amenable. \end{cor}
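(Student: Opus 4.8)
The plan is to deduce this from Proposition~\ref{proposition:stable.groups} applied to $G = 1+\Lev R_E$. By Lemma~\ref{lemma:levitzki.envelope} (see also Remark~\ref{remark:levitzki.stable}) this subgroup is $E$-stable, so it remains only to verify that $1+\Lev R_E$ really is a topological subgroup of $\GL(R_E)$ and that it is amenable in the relative $d_R$-topology; everything else is then a citation. (Equivalently, since $1+\Lev R_E = [1+\Lev R_E]_E$ by Lemma~\ref{lemma:levitzki.envelope}, one could instead feed the amenability of $1+\Lev R_E$ into Theorem~\ref{theorem:stable.groups} directly.)

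For the subgroup point I would note that $R$ carries the complete rank function $\rho_R$ by Theorem~\ref{theorem:continuous.rank.rings}(2), hence is directly finite by Lemma~\ref{lemma:rank.functions.vs.units}(2), so $\GL(R_E) = R_E \cap \GL(R)$ by Proposition~\ref{proposition:units.in.nest.stabilizers} and $\GL(R_E)$ is a topological group with respect to $d_R$ (Remark~\ref{remark:topological.ring}). The Levitzki radical $\Lev R_E$ is a locally nilpotent two-sided ideal of the unital ring $R_E$ (Appendix~\ref{section:levitzki}); in particular it is nil, so for $a \in \Lev R_E$ with $a^m = 0$ the element $1+a$ is a unit of $R_E$ with inverse $\sum_{i=0}^{m-1}(-a)^i \in 1+\Lev R_E$, and $(1+a)(1+b) = 1+(a+b+ab) \in 1+\Lev R_E$. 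Hence $1+\Lev R_E$ is a subgroup of $\GL(R_E)$.

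For amenability I would show that $1+\Lev R_E$ is a locally nilpotent abstract group: given $1+a_1,\dots,1+a_k \in 1+\Lev R_E$, local nilpotency of $\Lev R_E$ forces the (possibly non-unital) subring $T \leq R_E$ generated by $a_1,\dots,a_k$ to be nilpotent, and then the subgroup generated by $1+a_1,\dots,1+a_k$ lies in $1+T$, which is a nilpotent group by Lemma~\ref{lemma:nilpotent.3}. Since nilpotent groups are amenable \cite[Theorem~12.4(b,\,e)]{Wagon} and amenability is preserved under directed unions \cite[Theorem~4.7]{rickert}, $1+\Lev R_E$ is amenable as a discrete group, and therefore amenable in the coarser relative $d_R$-topology as well. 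Plugging this into Proposition~\ref{proposition:stable.groups} gives $\ell(1+\Lev R_E,d_R)=0$, and the final clause of that proposition (equivalently Corollary~\ref{corollary:extreme.amenability.2}) yields extreme amenability.

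The only genuinely non-routine point is this amenability step, where the delicate issue is confirming that the Levitzki radical is indeed locally nilpotent (so that finitely generated subgroups of $1+\Lev R_E$ are nilpotent) and that Lemma~\ref{lemma:nilpotent.3} applies to a nilpotent subring rather than merely a nilpotent ideal. Should Lemma~\ref{lemma:nilpotent.3} be stated only for ideals, the fix is to rerun the descending-central-series argument behind Lemma~\ref{lemma:nilpotent.nest.ideal} for the filtration $1+T \geq 1+T^2 \geq \cdots \geq 1+T^m = \{1\}$, which still exhibits $1+T$ as nilpotent. With that in hand the corollary is immediate from the machinery already assembled.
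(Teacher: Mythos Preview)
Your proof is correct and follows essentially the same route as the paper, which cites Proposition~\ref{proposition:nilpotent} (that $1+\Lev R_{E}$ is locally nilpotent, hence amenable), together with Lemma~\ref{lemma:levitzki.envelope}/Remark~\ref{remark:levitzki.stable} for $E$-stability, and then either Theorem~\ref{theorem:stable.groups} or Proposition~\ref{proposition:stable.groups}. Your amenability argument is precisely the content of Proposition~\ref{proposition:nilpotent}, and your worry about Lemma~\ref{lemma:nilpotent.3} is unfounded: it is stated for nilpotent \emph{subrings}, not merely ideals, so no additional fix is needed.
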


\begin{proof} This follows from Proposition~\ref{proposition:nilpotent}, Lemma~\ref{lemma:levitzki.envelope}, and Theorem~\ref{theorem:stable.groups}. (Alternatively, perhaps more directly, the statement may be deduced by combining Proposition~\ref{proposition:nilpotent} with Remark~\ref{remark:levitzki.stable} and Proposition~\ref{proposition:stable.groups}.) \end{proof}

The following remark shows that the extremely amenable groups resulting from Theorem~\ref{theorem:stable.groups} (or Corollary~\ref{corollary:levitzki.extremely.amenable}, resp.) are never abelian.

\begin{remark}\label{remark:nontrivial.levitzki} Let $R$ be an irreducible, continuous ring with $\vert \rho_{R}(R) \vert \geq 4$. Let $E \in \Nestmax (R)$. Combining Corollary~\ref{corollary:maximal.nests} with Remark~\ref{remark:range} and Theorem~\ref{theorem:continuous.rank.rings}(2), we find $e \in \I_{4}(E)$ such that $\rho_{R}(\bar{e}_{1}) = \rho_{R}(\bar{e}_{2}) = \rho_{R}(\bar{e}_{3}) \in (0,1)$. Thanks to~\cite[II.XVII, Theorem~17.1(d), p.~224]{VonNeumannBook}, there exist $a \in \bar{e}_{1}R\bar{e}_{2}$, $a' \in \bar{e}_{2}R\bar{e}_{1}$, $b \in \bar{e}_{2}R\bar{e}_{3}$ and $b' \in \bar{e}_{3}R\bar{e}_{2}$ with $\bar{e}_{2} = a'\bar{e}_{1}a$ and $\bar{e}_{3} = b'\bar{e}_{2}b$. We observe that $b'a'\bar{e}_{1}ab = b'\bar{e}_{2}b = \bar{e}_{3} \ne 0$ and hence $ab\ne 0$, while \begin{displaymath}
	ba \, = \, b\bar{e}_{3}\bar{e}_{1}a \, \stackrel{\ref{remark:nest.rings}(1)}{=} \, 0 .
\end{displaymath} This entails that \begin{displaymath}
	(1+a)(1+b) \, = \, 1+a+b+ab \, \ne \, 1+a+b \, = \, 1+a+b+ba \, = \, (1+b)(1+a) .
\end{displaymath} Using Lemma~\ref{lemma:nilpotent.nest.ideal}, one readily checks that $\{ a,b \} \subseteq \Ker \pi_{E,e}$, which implies that $1+\Ker \pi_{E,e}$ is non-abelian. Now, if $G \leq \GL(R_{E})$, then \begin{displaymath}
	1+ \Ker \pi_{E,e} \, \stackrel{\ref{lemma:envelope.basic}}{\subseteq} \, [G]_{E,e} \, \subseteq \, [G]_{E} ,
\end{displaymath} wherefore $[G]_{E}$ is non-abelian, too. \end{remark}

For the remainder of this section, we turn to the phenomenon of dynamical inertness, as defined in the paragraph preceding Corollary~\ref{corollary:inert}.

\begin{lem}\label{lemma:inert} Let $G$ be a topological group. \begin{itemize}
	\item[$(1)$] If $G$ acts continuously on a compact Hausdorff space $X$, then the subset $\{ g \in G \mid \exists x \in X \colon \, gx = x \}$ is closed in $G$.
	\item[$(2)$] If $\bigcup \{ H \leq G \mid H \text{ extremely amenable} \}$ is dense in $G$, then $G$ is inert.
\end{itemize} \end{lem}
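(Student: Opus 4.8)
The plan is to establish item (1) by a standard closed-map argument and then to derive item (2) almost formally from (1) together with the definition of extreme amenability.

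For item (1), I would introduce the map $\Phi \colon G \times X \to X \times X$, $(g,x) \mapsto (gx,x)$, which is continuous since the action $G \times X \to X$ is (jointly) continuous and the second coordinate is a projection. As $X$ is Hausdorff, the diagonal $\Delta_{X} \defeq \{ (x,x) \mid x \in X \}$ is closed in $X \times X$, so $F \defeq \Phi^{-1}(\Delta_{X}) = \{ (g,x) \in G \times X \mid gx = x \}$ is closed in $G \times X$. Now, because $X$ is compact, the projection $p \colon G \times X \to G$ is a closed map; hence $\{ g \in G \mid \exists x \in X \colon gx = x \} = p(F)$ is closed in $G$, as claimed. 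The only point requiring a word of care is the closedness of $p$: either one cites the classical fact that projection along a compact factor is a closed map, or one verifies it directly by covering the compact slice $\{ g \} \times X$ by finitely many basic open boxes disjoint from a given closed $A \subseteq G \times X$ with $g \notin p(A)$ and intersecting their $G$-components to obtain a neighbourhood of $g$ missing $p(A)$.

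For item (2), I would fix an arbitrary continuous action of $G$ on a non-void compact Hausdorff space $X$ and show that every $g \in G$ has a fixed point in $X$. Set $C \defeq \{ g \in G \mid \exists x \in X \colon gx = x \}$, which is closed in $G$ by item (1). If $H \leq G$ is extremely amenable, then the restricted action of $H$ on the non-void compact Hausdorff space $X$ is continuous and therefore admits a fixed point $x_{0} \in X$; thus $h x_{0} = x_{0}$ for all $h \in H$, i.e.\ $H \subseteq C$. Hence $\bigcup \{ H \leq G \mid H \text{ extremely amenable} \} \subseteq C$, and since this union is dense in $G$ while $C$ is closed, we conclude $C = G$. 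In particular $g \in C$ for every $g \in G$, so the action admits the required fixed points; as the action was arbitrary, $G$ is inert.

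I do not anticipate a genuine obstacle in either part: the argument is short, and the only mildly non-trivial ingredient is the closedness of the projection along the compact factor $X$ used in item (1), which is classical.
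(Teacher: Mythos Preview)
Your proposal is correct and follows essentially the same argument as the paper: for (1) the paper also pulls back the diagonal along $(g,x)\mapsto (gx,x)$ and projects using that the projection along a compact factor is a closed map; for (2) the paper likewise observes that each extremely amenable subgroup lies in the closed set of elements admitting a fixed point and concludes by density.
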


\begin{proof} (1) Let $G$ act continuously on a compact Hausdorff space $X$. As $X$ is a Hausdorff space, $D \defeq \{ (x,x) \mid x \in X \}$ is closed in $X \times X$. Since the map $\phi \colon G \times X \to X \times X, \, (g,x) \mapsto (gx,x)$ is continuous, $\phi^{-1}(D)$ is closed in $G \times X$. Furthermore, compactness of $X$ implies that $\pi \colon G \times X \to G, \, (g,x) \mapsto g$ is a closed map~\cite[I.10.2, Corollary~5, p.~103]{bourbaki}. Hence, \begin{displaymath}
	\{ g \in G \mid \exists x \in X \colon \, gx = x \} \, = \, \pi\left(\phi^{-1}(D)\right)
\end{displaymath} is closed in $G$.
	
(2) Suppose that $S \defeq \bigcup \{ H \leq G \mid H \text{ extremely amenable} \}$ is dense in $G$. If $G$ acts continuously on a non-empty compact Hausdorff space $X$, then \begin{displaymath}
	S \, \subseteq \, \{ g \in G \mid \exists x \in X \colon \, gx = x \} \, \eqdef \, T,
\end{displaymath} thus $T$ must be dense in $G$, too, whence $T = G$ by~(1). \end{proof}

\begin{cor}\label{corollary:inert.final} Let $R$ be a non-discrete irreducible, continuous ring. Then the union of its extremely amenable topological subgroups is dense in $\GL(R)$. In particular, the topological group $\GL(R)$ is inert. \end{cor}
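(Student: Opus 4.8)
The plan is to assemble Corollary~\ref{corollary:inert.final} directly from three earlier results: the density of algebraic units (Proposition~\ref{proposition:algebraic.units}), the fact that algebraic units sit inside extremely amenable subgroups (Corollary~\ref{corollary:final}), and the abstract inertness criterion (Lemma~\ref{lemma:inert}(2)). No new construction is needed; the work is purely in chaining the implications in the right order.

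First I would recall that, since $R$ is a non-discrete irreducible, continuous ring, $\GL(R)$ carries the topology generated by the bi-invariant metric $d_{R}$ (Remark~\ref{remark:topological.ring}), and that by Proposition~\ref{proposition:algebraic.units} the set $A \defeq \{ a \in \GL(R) \mid a \text{ algebraic over } \cent(R) \}$ is dense in $\GL(R)$ with respect to $d_{R}$. Next, Corollary~\ref{corollary:final} tells us that every $a \in A$ is contained in some (locally solvable) extremely amenable topological subgroup of $\GL(R)$. Writing $S \defeq \bigcup \{ H \leq \GL(R) \mid H \text{ extremely amenable} \}$ for the union appearing in the statement, it follows that $A \subseteq S$, and hence $S$ is dense in $\GL(R)$. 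This establishes the first assertion of the corollary.

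For the second assertion, I would simply invoke Lemma~\ref{lemma:inert}(2): since the union of the extremely amenable topological subgroups of $\GL(R)$ is dense in $\GL(R)$, the topological group $\GL(R)$ is inert. (Under the hood, Lemma~\ref{lemma:inert}(1) uses the closedness of the fixed-point set of any element under a continuous action on a compact Hausdorff space, which in turn rests on the closed-projection property of compact spaces; but all of this is already packaged in Lemma~\ref{lemma:inert}.)

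I do not anticipate a genuine obstacle here: the proof is a two-line deduction, and the only care required is to make sure the topology on $\GL(R)$ referred to in Proposition~\ref{proposition:algebraic.units}, Corollary~\ref{corollary:final}, and Lemma~\ref{lemma:inert} is consistently the $\rho_{R}$-topology (equivalently, the one generated by $d_{R}$), which it is by Definition~\ref{definition:intrinsic.metric} and Remark~\ref{remark:topological.ring}. The conceptual weight of the result sits entirely in the earlier sections — the continuous triangularization theorem (Theorem~\ref{theorem:invariant.flags}), von Neumann's density of algebraic elements (Corollary~\ref{corollary:neumann}) and its unit-group refinement (Proposition~\ref{proposition:algebraic.units}), and the extreme amenability of nest envelopes (Theorem~\ref{theorem:stable.groups}) — so this final corollary is essentially a bookkeeping step.
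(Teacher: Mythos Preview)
Your proposal is correct and matches the paper's own proof essentially line for line: the paper likewise invokes Corollary~\ref{corollary:final} together with Proposition~\ref{proposition:algebraic.units} to obtain density of the union of extremely amenable subgroups, and then cites Lemma~\ref{lemma:inert}(2) for inertness. There is nothing to add or change.
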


\begin{proof} Thanks to Corollary~\ref{corollary:final} and Proposition~\ref{proposition:algebraic.units}, the union of its extremely amenable topological subgroups is dense in $\GL(R)$. Hence, $\GL(R)$ is inert by Lemma~\ref{lemma:inert}(2). \end{proof}

\appendix

\section{Nilpotency and Levitzki radical}\label{section:levitzki}

This appendix is devoted to some well-known facts about nilpotency and Levitzki radicals used in Sections~\ref{section:nest.envelopes}--\ref{section:stability}. We refer to~\cite{JacobsonBook} for background.

Let $R$ be a ring. If $k \in \N_{>0}$ and $N \subseteq R$, then we consider \begin{displaymath}
	N^{k} \, = \, \{ a_{1}\cdots a_{k} \mid a_{1},\ldots,a_{k} \in N \}
\end{displaymath} and we let $N^{(k)}$ denote the additive submonoid of $R$ generated by $N^{k}$. It is easy to see that, if $N$ is a left (right, two-sided) ideal of $R$, then so is $N^{(k)}$ for any $k \in \N_{>0}$. A subset $N \subseteq R$ is said to be \emph{nilpotent} if there exists $k \in \N_{>0}$ such that $N^{k} \subseteq \{ 0 \}$ (equivalently, $N^{(k)} = \{ 0 \}$). An element $a \in R$ is called \emph{nilpotent} if $\{ a \}$ is nilpotent, i.e., there exists $k \in \N_{>0}$ with $a^{k} = 0$. A subset $N \subseteq R$ is said to be \emph{nil} if every element of $N$ is nilpotent. A subring $S \leq R$ is called \emph{locally nilpotent} (or \emph{semi-nilpotent}~\cite{levitzki}) if every finitely generated subring of $S$ is nilpotent.

\begin{remark}\label{remark:nilpotent.element} Let $R$ be a unital ring. \begin{itemize}
	\item[$(1)$] If $a \in R$ and $n \in \N_{>0}$ such that $a^{n} = 0$, then $1-a \in \GL(R)$ and \begin{displaymath}
			\qquad (1-a)^{-1} \, = \, \sum\nolimits_{i=0}^{n-1}a^{i} \, = \, 1+\sum\nolimits_{i=1}^{n-1}a^{i} .
		\end{displaymath}
	\item[$(2)$] If $N$ is a nil subring of $R$, then $1+N$ is a subgroup of $\GL(R)$.
\end{itemize} \end{remark}

\begin{lem}\label{lemma:normal.subgroup} Let $N$ be a nil subring of a unital ring $R$. \begin{itemize}
	\item[$(1)$] If $N$ is a two-sided ideal of $R$, then $1+N \unlhd \GL(R)$.
	\item[$(2)$] If $M$ is a two-sided ideal of $N$, then $1+M \unlhd 1+N$.
	\item[$(3)$] If $k \in \N_{>0}$, then $1+N^{(k)} \unlhd 1+N$.
\end{itemize} \end{lem}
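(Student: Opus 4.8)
The statement is Lemma~\ref{lemma:normal.subgroup}, asserting three normality claims for subgroups of the form $1+I$ inside a unital ring $R$ where $N$ is a nil subring. The first order of business is to record that each of the sets in question really is a subgroup of the stated ambient group: $1+N \leq \GL(R)$ and $1+M$, $1+N^{(k)} \leq 1+N$ all follow from Remark~\ref{remark:nilpotent.element}(2) once we note that $M$ and $N^{(k)}$ are themselves nil subrings (being subsets of the nil set $N$, closed under the relevant operations — $M$ is a subring of $N$ by hypothesis, and $N^{(k)}$ is a subring of $R$ contained in $N$ since $N$ is a subring, hence a fortiori nil). With membership in place, normality in every case reduces to a single conjugation computation: given $a$ in the small ideal and $b$ in the big group (so $b = 1+c$ for suitable nilpotent $c$, with $b^{-1} = 1 + \sum_{i\geq 1}(-c)^i$ a polynomial in $c$ by Remark~\ref{remark:nilpotent.element}(1)), one expands $b(1+a)b^{-1} = 1 + bab^{-1}$ and checks that $bab^{-1}$ lies in the small ideal.

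\textbf{Key steps in order.} First, for part~(1): $N$ is a two-sided ideal of $R$, so for $a \in N$ and $b \in \GL(R)$ we have $bab^{-1} \in N$ directly from $N$ being a two-sided ideal of $R$ — no nilpotency of $b^{-1}$ is even needed here, just closure of the ideal $N$ under left and right multiplication by arbitrary ring elements. Hence $b(1+a)b^{-1} = 1 + bab^{-1} \in 1+N$, giving $1+N \unlhd \GL(R)$. Second, for part~(2): here $M$ is only a two-sided ideal of $N$, not of $R$, so we must use that the conjugating element lies in $1+N$. Write $b = 1+c$ with $c \in N$; then $b^{-1} = 1 + d$ where $d = \sum_{i=1}^{n-1}(-c)^i \in N$ (this is where we invoke Remark~\ref{remark:nilpotent.element}(1), using that $c$ is nilpotent as an element of the nil set $N$, and that $N$ is closed under addition and multiplication so $d \in N$). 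Then for $a \in M$, \[ bab^{-1} = (1+c)a(1+d) = a + ca + ad + cad, \] and each of $ca$, $ad$, $cad$ lies in $M$ because $M$ is a two-sided ideal of $N$ and $c,d \in N$ while $a \in M$; thus $bab^{-1} \in M$ and $b(1+a)b^{-1} \in 1+M$. Third, part~(3) is the special case of part~(2) with $M = N^{(k)}$: one only needs to know that $N^{(k)}$ is a two-sided ideal of $N$ (indeed of $R$), which is the standard fact recorded in the paragraph preceding the lemma — if $N$ is a (left/right/two-sided) ideal then so is $N^{(k)}$. So part~(3) follows immediately from part~(2), or if one prefers from part~(1).

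\textbf{Main obstacle.} There is no serious obstacle; the only point requiring a moment's care is the bookkeeping in part~(2), namely making sure that the inverse $b^{-1}$ can be written as $1 + d$ with $d \in N$ rather than merely $d \in R$, so that the products $ca, ad, cad$ land in the ideal $M$ of $N$ rather than only in some larger set. This is handled cleanly by the explicit geometric-series formula of Remark~\ref{remark:nilpotent.element}(1) together with the closure of the subring $N$ under addition and multiplication. Once that is observed, all three parts are short direct computations, and part~(3) is a corollary of part~(2).
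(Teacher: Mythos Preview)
Your proof is correct; parts~(1) and~(3) match the paper's argument exactly. For part~(2) you take a slightly different route: you expand $b^{-1}$ explicitly as $1+d$ with $d\in N$ via the geometric-series formula and then check term-by-term that $(1+c)a(1+d)\in 1+M$. The paper instead introduces the auxiliary unital subring $S=\{k\cdot 1+a\mid k\in\Z,\ a\in N\}$, observes that a two-sided ideal $M$ of $N$ is automatically a two-sided ideal of $S$, and then simply applies part~(1) with $R$ replaced by $S$ and $\GL(S)\supseteq 1+N$. The paper's trick is a bit slicker---it recycles part~(1) and avoids any explicit computation of inverses---while your direct approach is more hands-on but entirely self-contained; both are short and either would serve.
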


\begin{proof} (1) Suppose that $N$ is a two-sided ideal of $R$. If $a \in N$ and $g \in \GL(R)$, then $g(1+a)g^{-1} = 1+ gag^{-1} \in 1 + N$. Thus, $1+N \unlhd \GL(R)$ as desired. 
	
(2) Denote by $S$ the unital subring of $R$ generated by $N$, i.e., \begin{displaymath}
	S \, \defeq \, \{ k\cdot 1 + a \mid k \in \Z, \, a \in N \} .
\end{displaymath} By Remark~\ref{remark:nilpotent.element}(2), the set $1+N$ constitutes a subgroup of $\GL(S)$. Now, if $M$ is a two-sided ideal of $N$, then $M$ is a two-sided ideal of $S$, too, whence $1+M \unlhd \GL(S)$ by (1), and thus $1+M \unlhd 1+N$.
	
(3) If $k \in \N_{>0}$, then $N^{(k)}$ is a two-sided ideal of $N$, whence the claim follows by~(2). \end{proof}

\begin{lem}\label{lemma:nilpotent} Let $N$ be a nil subring of a unital ring $R$. Then, \begin{displaymath}
	\forall k,\ell \in \N_{>0} \, \forall a \in N^{(k)} \, \forall b \in N^{(\ell)} \colon \quad (1+a)(1+b) \, \in \,  (1+a+b)\left( 1+N^{(k+\ell)} \right) .
\end{displaymath} \end{lem}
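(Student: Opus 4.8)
The plan is to produce the required element of $N^{(k+\ell)}$ explicitly, as the ``correction term'' $c \defeq (1+a+b)^{-1}ab$, and then to check that it lies in the correct piece of the filtration $(N^{(j)})_{j}$. First I would record the elementary containments that make the bookkeeping work: since $N$ is a subring, $N^{k'} \subseteq N$ for all $k' \geq 1$, hence $N^{(k)}, N^{(\ell)} \subseteq N$; moreover, regrouping the leading factors of a long product shows $N^{m} \subseteq N^{k+\ell}$ (and hence $N^{(m)} \subseteq N^{(k+\ell)}$) for every $m \geq k+\ell$, and distributing sums shows $N^{(k)}N^{(\ell)} \subseteq N^{(k+\ell)}$ as well as $N \cdot N^{(k+\ell)} \subseteq N^{(k+\ell+1)}$. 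In particular $u \defeq a+b$ belongs to $N$, so $u$ is nilpotent, say $u^{n} = 0$, and applying Remark~\ref{remark:nilpotent.element}(1) to $1+u = 1-(-u)$ (noting $(-u)^{n}=0$) gives $(1+u)^{-1} = 1+d$ with $d \defeq \sum_{i=1}^{n-1}(-u)^{i} \in N$.

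Next I would set $c \defeq (1+d)ab = ab + d\,ab$ and verify $c \in N^{(k+\ell)}$. Indeed $ab \in N^{(k)}N^{(\ell)} \subseteq N^{(k+\ell)}$, and since $d \in N$ we get $d\,ab \in N \cdot N^{(k+\ell)} \subseteq N^{(k+\ell+1)} \subseteq N^{(k+\ell)}$; thus $c \in N^{(k+\ell)}$. The point of this choice of $c$ is the identity $(1+u)(1+d) = \bigl(1-(-u)\bigr)\sum_{i=0}^{n-1}(-u)^{i} = 1 - (-u)^{n} = 1$, which yields $(1+u)c = (1+u)(1+d)ab = ab$.

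Finally I would carry out the one-line computation
\[ (1+a+b)(1+c) \, = \, 1+a+b+c+(a+b)c \, = \, 1+a+b+(1+u)c \, = \, 1+a+b+ab \, = \, (1+a)(1+b) . \]
Since $1+c \in 1+N^{(k+\ell)}$, this exhibits $(1+a)(1+b)$ as a member of $(1+a+b)\bigl(1+N^{(k+\ell)}\bigr)$, as claimed.

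There is no serious obstacle here; the only point that needs genuine care is the filtration bookkeeping in the second step. Because $N$ is merely a subring of $R$ and not an ideal, one may not multiply $N^{(k+\ell)}$ on the left by arbitrary elements of $R$ without leaving the filtration — but left multiplication by an element of $N$ (which is all that occurs, via $d \in N$) only raises the degree, and that is exactly what we need. Everything else is a direct calculation using the invertibility of $1+u$.
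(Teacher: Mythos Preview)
Your proof is correct and follows essentially the same approach as the paper: both set $g=1+a+b$, note $g^{-1}\in 1+N$, and check that $g^{-1}ab\in N^{(k+\ell)}$ so that $(1+a)(1+b)=g+ab=g(1+g^{-1}ab)$ lies in $(1+a+b)(1+N^{(k+\ell)})$. The only difference is cosmetic---you write out the geometric-series formula for $g^{-1}=1+d$ explicitly, whereas the paper simply invokes $g^{-1}\in 1+N$ and the containment $(1+N)N^{(k+\ell)}\subseteq N^{(k+\ell)}$.
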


\begin{proof} Let $k,\ell \in \N_{>0}$, $a \in N^{(k)}$ and $b \in N^{(\ell)}$. Since $N$ is nil, Remark~\ref{remark:nilpotent.element} asserts that $g \defeq 1+a+b \, \in \, \GL(R)$ with $g^{-1}\! \in 1+N$. Furthermore, we observe that $c \defeq ab \in N^{(k+\ell)}$, whence $g^{-1}c \in (1+N)N^{(k+\ell)} \subseteq N^{(k+\ell)}$. Consequently, \begin{displaymath}
	(1+a)(1+b) \, = \, 1+a+b+ab \, = \, g+c \, = \, g\left(1+g^{-1}c\right) \, \in \, (1+a+b)\left( 1+N^{(k+\ell)} \right) 
\end{displaymath} as claimed. \end{proof}

As usual, by the \emph{center} of a group $G$ we mean its normal subgroup \begin{displaymath}
	\cent(G) \, \defeq \, \{ g \in G \mid \forall h \in G \colon \, gh=hg \} .
\end{displaymath}

\begin{lem}\label{lemma:nilpotent.2} Let $N$ be a nil subring of a unital ring $R$. For every $k \in \N_{>0}$, \begin{displaymath}
	1+N^{(k)} \big{/} \, 1+N^{(k+1)} \, \subseteq \, \cent\left( 1+N \, \big{/} \, 1+N^{(k+1)} \right) .
\end{displaymath} \end{lem}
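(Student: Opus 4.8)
The plan is to derive the statement from a single use of Lemma~\ref{lemma:nilpotent} with $\ell = 1$, applied twice. Before that, I would record the elementary containments $N^{(k+1)} \subseteq N^{(k)} \subseteq N$: a product of $k+1$ (resp. $k$) elements of $N$ is, after grouping the last two factors, also a product of $k$ (resp. fewer) elements, and $N$ is closed under multiplication and addition, so these are all nil subrings of $R$ (indeed two-sided ideals of $N$). Combined with Remark~\ref{remark:nilpotent.element}(2) and Lemma~\ref{lemma:normal.subgroup}(3), this guarantees that $1 + N^{(k+1)}$ and $1 + N^{(k)}$ are subgroups of $1 + N$ with $1 + N^{(k+1)} \unlhd 1 + N$, so that the quotient group $1 + N \big/ 1 + N^{(k+1)}$, its centre, and its subgroup $1 + N^{(k)} \big/ 1 + N^{(k+1)}$ are all well defined. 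I would also note the trivial fact that every element of $1+N$ has the form $1+b$ with $b \in N = N^{(1)}$.

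Next, fix $a \in N^{(k)}$ and an arbitrary $b \in N$. Applying Lemma~\ref{lemma:nilpotent} with the pair $(k,1)$ gives $(1+a)(1+b) \in (1+a+b)\bigl(1 + N^{(k+1)}\bigr)$, and applying it with the pair $(1,k)$ gives $(1+b)(1+a) \in (1+b+a)\bigl(1 + N^{(k+1)}\bigr)$. Since $1+a+b = 1+b+a$, the two products $(1+a)(1+b)$ and $(1+b)(1+a)$ lie in one and the same left coset of $1 + N^{(k+1)}$ in $1+N$; as $1+N^{(k+1)}$ is normal in $1+N$, this is exactly the assertion that the images of $1+a$ and $1+b$ commute in $1+N \big/ 1+N^{(k+1)}$. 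Letting $b$ range over $N$ (so that $(1+b)\,(1+N^{(k+1)})$ runs over all of $1+N \big/ 1+N^{(k+1)}$) shows the image of $1+a$ is central, and letting $a$ range over $N^{(k)}$ yields the claimed inclusion.

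I do not expect any real obstacle: the entire content is supplied by Lemma~\ref{lemma:nilpotent}, and the only point requiring a moment of care is the well-definedness bookkeeping (the relevant subsets being normal subgroups), which is dispatched by Remark~\ref{remark:nilpotent.element}(2) and Lemma~\ref{lemma:normal.subgroup}. The proof is therefore short, with the two symmetric invocations of Lemma~\ref{lemma:nilpotent} as its only substantive step.
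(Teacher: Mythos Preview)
Your proposal is correct and follows essentially the same approach as the paper: both apply Lemma~\ref{lemma:nilpotent} twice, once with parameters $(k,1)$ and once with $(1,k)$, to conclude that $(1+a)(1+b)$ and $(1+b)(1+a)$ lie in the same coset of $1+N^{(k+1)}$. The paper's version is simply terser, omitting the explicit well-definedness bookkeeping that you spell out via Remark~\ref{remark:nilpotent.element}(2) and Lemma~\ref{lemma:normal.subgroup}(3).
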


\begin{proof} Consider any $k \in \N_{>0}$. If $a \in N^{(k)}$, then \begin{align*}
	(1+a)(1+b)\left( 1+N^{(k+1)} \right) \, &\stackrel{\ref{lemma:nilpotent}}{=} \, (1+a+b)\left( 1+N^{(k+1)} \right) \\
		& = \, (1+b+a)\left( 1+N^{(k+1)} \right) \\
		& \stackrel{\ref{lemma:nilpotent}}{=} \, (1+b)(1+a)\left( 1+N^{(k+1)} \right)
\end{align*} for every $b \in N$, whence $1+a \in \cent\left( 1+N \, \big{/} \, 1+N^{(k+1)} \right)$. \end{proof}

Let us recall that a group $G$ is said to be \emph{nilpotent} if there exist $n \in \N$ and normal subgroups $G_{0},\ldots,G_{n} \unlhd G$ such that \begin{displaymath}
	\{ e \} \, = \, G_{0} \, \subseteq \, G_{1} \, \subseteq \, \ldots \, \subseteq \, G_{n-1} \, \subseteq \, G_{n} \, = \, G
\end{displaymath} and $G_{i+1}/G_{i} \subseteq \cent(G/G_{i})$ for each $i \in \{ 0,\ldots,n-1\}$. A group $G$ is called \emph{locally nilpotent} if every finitely generated subgroup of $G$ is nilpotent.

\begin{lem}\label{lemma:nilpotent.3} Let $N$ be a nilpotent subring of a unital ring $R$. Then $1+N$ is a nilpotent group. \end{lem}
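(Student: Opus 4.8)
The plan is to exhibit an explicit central series for $1+N$, built from the powers $N^{(k)}$. Since $N$ is nilpotent, there is some $m \in \N$ with $N^{(m+1)} = \{ 0 \}$; note also that $N^{(1)} = N$ because $N$ is closed under multiplication, and that $N^{(k+1)} \subseteq N^{(k)}$ for every $k \in \N_{>0}$, again since a product of $k+1$ elements of $N$ is a product of $k$ elements of $N$ after grouping the first two factors. In particular each $N^{(k)}$ is a nil subring of $R$, so that $1 + N^{(k)}$ is a subgroup of $\GL(R)$ by Remark~\ref{remark:nilpotent.element}(2), and $1 + N^{(m+1)} = \{ 1 \}$.

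Next I would set $G_{i} \defeq 1 + N^{(m+1-i)}$ for $i \in \{ 0,\ldots,m \}$. The observations above give $G_{0} = \{ 1 \}$, $G_{m} = 1 + N$, and $G_{i} \subseteq G_{i+1}$ for each $i$. Lemma~\ref{lemma:normal.subgroup}(3) shows $G_{i} \unlhd 1 + N$ for every $i \in \{ 0,\ldots,m \}$ (the exponent $m+1-i$ is always a positive integer). Finally, for $i \in \{ 0,\ldots,m-1\}$, writing $k \defeq m-i \in \N_{>0}$, we have $G_{i+1} = 1 + N^{(k)}$ and $G_{i} = 1 + N^{(k+1)}$, so Lemma~\ref{lemma:nilpotent.2} yields
\begin{displaymath}
	G_{i+1}\big/ G_{i} \, = \, \bigl(1+N^{(k)}\bigr)\big/\bigl(1+N^{(k+1)}\bigr) \, \subseteq \, \cent\bigl( (1+N)\big/\bigl(1+N^{(k+1)}\bigr)\bigr) \, = \, \cent\bigl((1+N)/G_{i}\bigr) .
\end{displaymath}
Thus $\{ e \} = G_{0} \subseteq G_{1} \subseteq \ldots \subseteq G_{m} = 1+N$ is a chain of normal subgroups of $1+N$ with central successive quotients, which is exactly the definition of $1+N$ being a nilpotent group.

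There is essentially no hard step here: the argument is routine bookkeeping with the indices of the filtration $\bigl(1+N^{(k)}\bigr)_{k}$, and all the substantive content has already been isolated in Lemmas~\ref{lemma:normal.subgroup}, \ref{lemma:nilpotent}, and~\ref{lemma:nilpotent.2}. The only points requiring a line of care are the degenerate case $N = \{ 0 \}$ (take $m = 0$, so the chain is the single term $\{ 1 \}$) and the verification that the exponents $m+1-i$ and $m-i$ appearing above stay in the ranges demanded by the cited lemmas.
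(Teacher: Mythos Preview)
Your proof is correct and follows essentially the same approach as the paper: both build the central series $G_{i} = 1 + N^{(n-i)}$ (with your $m+1$ playing the role of the paper's $n$) and invoke Lemma~\ref{lemma:normal.subgroup}(3) for normality and Lemma~\ref{lemma:nilpotent.2} for centrality of the successive quotients. One tiny slip: the equality $N^{(1)} = N$ holds because $N$ is closed under \emph{addition} (so the additive submonoid generated by $N^{1}=N$ is $N$ itself), not because of multiplicative closure; this does not affect the argument.
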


\begin{proof} Since $N$ is nilpotent, there exists some $n \in \N_{>0}$ such that $N^{n} = \{ 0 \}$. Let $G \defeq 1+N$. For each $i \in \{ 0,\ldots,n-1 \}$, consider \begin{displaymath}
	G_{i} \, \defeq \, 1+N^{(n-i)} \, \stackrel{\ref{lemma:normal.subgroup}(3)}{\unlhd} \, G .
\end{displaymath} Clearly, $G_{n-1} = G$. As $N^{n} = \{ 0 \}$ and therefore $N^{(n)} = \{ 0 \}$, also $G_{0} = \{ 1 \}$. Thus, \begin{displaymath}
	\{ 1 \} \, = \, G_{0} \, \subseteq \, G_{1} \, \subseteq \, \ldots \, \subseteq \, G_{n-2} \, \subseteq \, G_{n-1} \, = \, G .
\end{displaymath} Moreover, for every $i \in \{ 0,\ldots,n-2\}$, \begin{displaymath}
	G_{i+1}/G_{i} \, = \, 1+N^{(n-i-1)} \big{/} \, 1+N^{(n-i)} \, \stackrel{\ref{lemma:nilpotent.2}}{\subseteq} \, \cent\left( 1+N \, \big{/} \, 1+N^{(n-i)} \right) \, = \, \cent(G/G_{i}) .
\end{displaymath} This shows that $G$ is nilpotent. \end{proof}

Let $R$ be a unital ring. The \emph{Levitzki radical}~\cite{levitzki} of $R$ is defined as \begin{align*}
	\Lev R \, \defeq \, \sum \{ N \mid N \text{ locally nilpotent, two-sided ideal of } R \} .
\end{align*} As established by Levitzki~\cite[Theorem~2]{levitzki} (see also~\cite[Section~VIII.3]{JacobsonBook}), $\Lev R$ constitutes a locally nilpotent, two-sided ideal of $R$ and, moreover, contains every locally nilpotent, left (resp., right) ideal of $R$.

\begin{prop}\label{proposition:nilpotent} If $R$ is a unital ring, then the group $1+\Lev R$ is locally nilpotent, thus amenable. \end{prop}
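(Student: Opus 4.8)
The plan is to check local nilpotency straight from the definition and then deduce amenability from standard closure properties of the class of amenable groups. First I would fix an arbitrary finitely generated subgroup $H$ of $1+\Lev R$; it is generated by finitely many elements $1+a_{1},\ldots,1+a_{m}$ with $a_{1},\ldots,a_{m} \in \Lev R$. By Levitzki's theorem~\cite[Theorem~2]{levitzki}, $\Lev R$ is a locally nilpotent two-sided ideal of $R$, so the subring $S \leq R$ generated by $\{a_{1},\ldots,a_{m}\}$ is nilpotent, in particular nil.

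Next, Remark~\ref{remark:nilpotent.element}(2) shows that $1+S$ is a subgroup of $\GL(R)$, and Lemma~\ref{lemma:nilpotent.3} shows that this subgroup is nilpotent. Since $1+a_{1},\ldots,1+a_{m} \in 1+S$, the subgroup $H$ they generate satisfies $H \leq 1+S$; as subgroups of nilpotent groups are nilpotent, $H$ is nilpotent. Because $H$ was an arbitrary finitely generated subgroup, this shows that $1+\Lev R$ is locally nilpotent.

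Finally, every nilpotent group is solvable and hence amenable as a discrete group (see, e.g., \cite[Theorem~12.4(b,\,e)]{Wagon}), and the class of amenable groups is closed under directed unions (e.g., \cite[Theorem~4.7]{rickert} applied with the discrete topology). Since $1+\Lev R$ is the directed union of its finitely generated subgroups, each of which is nilpotent and therefore amenable, it follows that $1+\Lev R$ is amenable. The argument presents no real obstacle; the one point deserving care is that the finitely generated subgroup $H$ genuinely lies inside $1+S$ — which is precisely why $S$ must be taken to be the \emph{subring} generated by the $a_{i}$ (so as to be multiplicatively closed) and why nilpotency of $S$, via Remark~\ref{remark:nilpotent.element}, is what turns $1+S$ into a subgroup rather than a mere subset.
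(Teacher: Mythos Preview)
Your proposal is correct and follows essentially the same route as the paper: take a finitely generated subgroup of $1+\Lev R$, pass to the nilpotent subring $S$ generated by the corresponding elements of $\Lev R$, invoke Lemma~\ref{lemma:nilpotent.3} to see that $1+S$ is a nilpotent group, and conclude that the original subgroup is nilpotent. The only cosmetic difference is that the paper deduces amenability directly from local nilpotency via~\cite[Theorem~12.4(b,\,e,\,f)]{Wagon}, whereas you spell this out as ``each finitely generated piece is amenable'' plus closure under directed unions---but these amount to the same thing.
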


\begin{proof} Let $R$ be an arbitrary unital ring. Consider any finitely generated subgroup $G \leq 1+\Lev R$. Then we find a finite subset $F \subseteq \Lev R$ such that $1+F$ generates the group $G$. Since $\Lev R$ is a locally nilpotent ring, the subring $S$ of $\Lev R$ generated by $F$ is nilpotent, i.e., there exists $n \in \N_{>0}$ with $S^{n} = \{ 0 \}$. Hence, the group $1+S$ is nilpotent by Lemma~\ref{lemma:nilpotent.3}. As $1+F \subseteq 1+S$ and thus $G \subseteq 1+S$, it follows (see, e.g.,~\cite[Theorem~5.1.3(2)]{bechtell}) that $G$ is nilpotent, too. This shows that $1+\Lev R$ is locally nilpotent. In particular, $1+\Lev R$ is amenable (see, e.g.,~\cite[Theorem~12.4(b,\,e,\,f)]{Wagon}). \end{proof}

\section*{Acknowledgments}
	
The author is deeply indebted to Vladimir Pestov, Jan Pachl, Maxime Gheysens, Christian Rosendal, and Luis Carlos Suarez for their insightful and inspiring comments on earlier versions of this manuscript.


\end{document}